\documentclass[oneside, 11pt]{amsart}


\usepackage{amsmath, amssymb, amsthm, hyperref, graphicx,   verbatim, enumerate, xcolor, tikzsymbols, stmaryrd, epsfig, xypic}
\usepackage{mathrsfs}  
\usepackage{mathabx}
 \usepackage{times}
 
 \DeclareFontFamily{U}{mathc}{}
\DeclareFontShape{U}{mathc}{m}{it}%
{<->s*[1.03] mathc10}{}

\DeclareMathAlphabet{\mathscr}{U}{mathc}{m}{it}

\usepackage[paper=a4paper, marginpar=2.4cm]{geometry}
\usepackage[all]{xy}
\usepackage[utf8]{inputenc}

\usepackage[english]{babel}
 

\newcommand{\ee}{\mathbb{E}}
\newcommand{\pp}{\mathbb{P}}

\newcommand{\e}{\varepsilon}

 \newcommand{\cW}{\mathcal{W}}

 \newcommand{\disk}{\mathbb{D}}

\newcommand{\set}[1]{\left\{#1\right\}}
\newcommand{\norm}[1]{{\left\Vert#1\right\Vert}}
\newcommand{\abs}[1]{\left\vert#1\right\vert}

\newcommand{\pu}{{\mathbb{P}^1}}

\newcommand{\cX}{\mathcal{X}}
\newcommand{\rest}[1]{ \arrowvert_{#1}}

\newcommand{\unsur}[1]{\frac{1}{#1}}

\newcommand{\lrpar}[1]{\left(#1\right)}
\newcommand{\bra}[1]{\left\langle #1\right\rangle}

\newcommand{\inv}{^{-1}}

\DeclareMathOperator{\aut}{Aut}
\DeclareMathOperator{\supp}{Supp}

\DeclareMathOperator{\mult}{mult}

\DeclareMathOperator{\Crit}{Crit}
\DeclareMathOperator{\id}{id}

\DeclareMathOperator{\dist}{dist}

\DeclareMathOperator{\Hal}{Hal} 
\DeclareMathOperator{\dens}{dens} 
\DeclareMathOperator{\Stab}{Stab}



\def\C{\mathbf{C}}
\def\R{\mathbf{R}}
\def\Q{\mathbf{Q}}
\def\Z{\mathbf{Z}}
\def\N{\mathbf{N}}

\def\ii{{\mathsf{i}}}

\newcommand{\NS}{{\mathrm{NS}}}

\newcommand{\Sing}{{\mathrm{Sing}}}
\newcommand{\Tang}{{\mathrm{Tang}}}
\newcommand{\STang}{{\mathrm{STang}}}

\newcommand{\NT}{{\mathrm{NT}}}
\newcommand{\Lat}{{\mathrm{Lat}}}

\newcommand{\Ima}{{\mathsf{Im}}}
\newcommand{\Rea}{{\mathsf{Re}}}
\newcommand{\vol}{{\sf{vol}}}

\newcommand{\dil}{{\mathrm{Dil}}}



\def\Lip{\mathrm{Lip}}

 \def\Diff{{\mathsf{Diff}}}

\def\P{\mathbb{P}}

\def\Aut{\mathsf{Aut}}

 \def\PGL{{\sf{PGL}}}
\def\PSL{{\sf{PSL}}}

\def\GL{{\sf{GL}}}
\def\SL{{\sf{SL}}}


\newcommand{\m}{\mathscr{m}}

\newcommand{\X}{\mathscr{X}}
\newcommand{\cx}{\mathscr{x}}


 
\theoremstyle{plain}

\newtheorem{thm}{Theorem}[section]
\newtheorem{cor}[thm]{Corollary}
\newtheorem{pro}[thm]{Proposition}
\newtheorem{lem}[thm]{Lemma}

\theoremstyle{definition}
\newtheorem{defi}[thm]{Definition}

\newtheorem{eg}[thm]{Example}
\newtheorem{rem}[thm]{Remark}


\numberwithin{equation}{section}       

\addtocounter{section}{0}             
\numberwithin{equation}{section}       

\begin{document}

\setlength{\parskip}{.2em}
\setlength{\baselineskip}{1.26em}   


\begin{abstract} 
We study the hyperbolicity properties of the action 
of a non-elementary automorphism group  on a
compact complex surface,  with an emphasis on K3 and Enriques surfaces. A first result is that when such a group contains parabolic elements, Zariski diffuse 
invariant measures automatically have non-zero Lyapunov exponents. In combination with 
our previous work, this leads to simple   criteria for 
a uniform expansion property on the whole surface, for groups with and without parabolic elements. 
This, in turn, has strong 
consequences on the dynamics: description of orbit closures, equidistribution, ergodicity properties, etc. 

Along the way, we   provide a   reference discussion on uniform expansion of 
non-linear discrete group actions on compact (real) 
manifolds and the construction of Margulis functions under optimal moment conditions.  
 \end{abstract}

%
%
\title[Hyperbolicity for automorphism groups of surfaces]
{Hyperbolicity for large automorphism groups of projective surfaces}
\date{\today}

\author{Serge Cantat}
\address{Serge Cantat, IRMAR, Campus de Beaulieu,
b\^atiments 22-23
263 avenue du G\'en\'eral Leclerc, CS 74205
35042  RENNES C\'edex}
\email{serge.cantat@univ-rennes1.fr}
 \author{Romain Dujardin}
\address{Romain Dujardin,  Sorbonne Universit\'e, CNRS, Laboratoire de Probabilit\'es, Statistique  et Mod\'elisation  (LPSM), F-75005 Paris, France}
\email{romain.dujardin@sorbonne-universite.fr}

\thanks{ }
\maketitle
 
\setcounter{tocdepth}{1}
\tableofcontents

\section{Introduction}\label{sec:intro}

This article is a follow-up to  \cite{stiffness}, \cite{finite_orbits} and \cite{invariant}. 
Let $X$ be a compact complex 
surface and let  $\Aut(X)$ denote its group of 
automorphisms, i.e.\  of holomorphic diffeomorphisms. Let $\Gamma$ be a subgroup of $\Aut(X)$. 
We say that $\Gamma$ is \textbf{non-elementary} if the   subgroup 
 $\Gamma^\varstar\leq \GL(H^\varstar (X, \C))$ induced by the action of $\Gamma$ on the De Rham cohomology of $X$ contains a non-abelian free group; the existence of  
  a non-elementary subgroup of $\Aut(X)$ 
  implies that $X$ is projective (see \cite{Simons}). 
  Our purpose in this series of papers is to 
 study the dynamics  of such a non-elementary group $\Gamma$ on $X$, notably by 
 means of   random walk  techniques. 
 
 \subsection{Wehler examples} \label{subs:wehler}
 To understand the motivation behind our general results, 
 it is interesting to start  with the Wehler family $ \cW$ of  
 surfaces of degree $(2,2,2)$  in $\P^1\times \P^1\times \P^1$, which has been a 
 recurring example in our work (see e.g. \cite[\S 3]{stiffness}).  
This family $ \cW$ depends on $26$ parameters and is naturally parameterized by 
$\P^{26}(\C)$; we shall denote   by $ \cW_0\subset  \cW$ the  
Zariski open subset of smooth Wehler surfaces  which do not contain any fiber of the three coordinate projections $\P^1\times \P^1\times \P^1\to \P^1$. 
For $X\in  \cW_0$, the three natural projections $X\to \P^1\times \P^1$ are ramified covers of degree $2$; their deck transformations yield three holomorphic
 involutions $\sigma_1$, $\sigma_2$, and $\sigma_3$; 
  the group $\Gamma$ generated by these involutions  is non-elementary and isomorphic to 
  $\Z/2\Z\ast\Z/2\Z\ast\Z/2\Z$. 
   
Since every $X\in  \cW_0$ is a K3 surface, 
there is a canonical $\Aut(X)$-invariant volume form $\vol_X$ on $X(\C)$; furthermore, 
 when $X$ is defined over $\R$ there is a canonical area form $\vol_{X(\R)}$
 on $X(\R)$ which is   invariant  under the action of $\Aut(X_\R)$ (see Example~\ref{eg:K3_intro} below).  
Slightly abusing notation, we respectively denote by  $\vol_X$ and $\vol_{X(\R)}$ 
the    associated  measures on $X$ and $X(\R)$,  normalized to
have mass $1$. 

Our first main result is a complete description of orbit closures for 
most parameters $X\in  \cW_0$. 
Recall 
that a 2-dimensional real submanifold
 $Y\subset X$ is \emph{totally real} if for every $x\in Y$, $T_xY$ spans $T_xX$ as a 
complex vector space. 

\begin{thm}\label{thm:orbits_wehler}
 There exists a dense  Zariski open subset 
  $ \cW_{\mathrm{exp}}\subset  \cW_0(\C)$  
 such that for every $X\in  \cW_{\mathrm{exp}}$, there exists 
  a $\Gamma$-invariant finite set $F\subset X$ and 
   a $\Gamma$-invariant totally real analytic surface $Y\subset X$ (with possibly finitely many singular points)
   with the following property:  for   {\rm{every}} $x\in X$,
\begin{enumerate}[(a)]
\item either $x\in F$ (and its orbit is finite);
\item or $\overline{\Gamma(x)}$ is a union of connected components of $Y$;
\item or $\overline{\Gamma(x)} = X$.
\end{enumerate}
 \end{thm}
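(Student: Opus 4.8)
The plan is to reduce the statement to a \emph{uniform expansion} property for the action of $\Gamma$ on $X(\C)$ and then to read off the structure of orbit closures. First I would invoke the uniform expansion criterion established in this paper. The Wehler group $\Gamma=\langle\sigma_1,\sigma_2,\sigma_3\rangle\cong\Z/2\Z\ast\Z/2\Z\ast\Z/2\Z$ is non-elementary and contains parabolic elements --- the products $\sigma_i\sigma_j$ fix the class of a fibre of the elliptic fibration $\pi_l\colon X\to\pu$ obtained by forgetting the two remaining coordinates (see \cite{stiffness}) --- so the hypotheses of the criterion, together with the auxiliary conditions (no $\Gamma$-invariant curve, genericity of the local behaviour near finite orbits, \dots), cut out a dense Zariski open subset $\cW_{\mathrm{exp}}\subset\cW_0(\C)$ on which the random walk on $X$ driven by some finitely supported symmetric measure $\mu$ on $\Gamma$ is uniformly expanding on all of $X(\C)$. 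I would combine this with two inputs from the previous papers: \cite{finite_orbits}, which gives that the set $F$ of points with finite orbit is finite; and the classification of stationary --- equivalently, by stiffness, invariant --- measures from \cite{stiffness, invariant}, which, once the Lyapunov exponents are known to be positive (a consequence of uniform expansion), shows that the ergodic $\mu$-stationary probability measures on $X$ are exactly the uniform measures on the orbits in $F$, the Lebesgue-type measures $\vol_{Y_1},\dots,\vol_{Y_r}$ carried by the connected components of a $\Gamma$-invariant totally real real-analytic surface $Y\subset X$ (possibly empty, possibly with finitely many singular points), and $\vol_X$, which is moreover ergodic.

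Next I would classify the minimal closed $\Gamma$-invariant subsets of $X$. Such a set $K$ carries an ergodic $\mu$-stationary measure by compactness of the space of probability measures, and since $\mu$ is symmetric and its support generates $\Gamma$, the support of this measure is a closed $\Gamma$-invariant subset of $K$, hence equals $K$. By the previous paragraph, $K$ is therefore a finite orbit, a union of connected components of $Y$ forming a single $\Gamma$-orbit, or $X$ itself.

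The heart of the argument is to show that $F$ and $Y$ are \emph{repelling} for the random dynamics. At a point $y\in Y$ the total reality of $Y$ gives $T_yX=T_yY\oplus J(T_yY)$, where $J$ is the complex structure; since each $Dg$ is $\C$-linear, preserves $TY$ (because $\Gamma$ preserves $Y$), and $J$ is an isometry for a fixed Hermitian metric, the derivative cocycle on the normal bundle $J(TY)$ is conjugate to the one on $TY$. Uniform expansion on $X$ therefore forces \emph{every} Lyapunov direction along $Y$, tangential and normal alike, to be uniformly expanded: $Y$ is a uniformly expanding $\Gamma$-invariant submanifold, and the same holds trivially for the finite set $F$. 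Invoking the standard estimates near a random hyperbolic repeller, one gets that for $x\notin F\cup Y$ the random orbit of $x$ spends asymptotically zero frequency in any fixed neighbourhood of $F\cup Y$, so the empirical measures $\tfrac{1}{n}\sum_{k<n}\mu^{\ast k}\ast\delta_x$ assign asymptotically no mass to $F\cup Y$. Every weak limit of these measures is $\mu$-stationary, hence a finite convex combination of the ergodic measures listed above; since the only one of those charging $X\setminus(F\cup Y)$ is $\vol_X$, we conclude that $\tfrac{1}{n}\sum_{k<n}\mu^{\ast k}\ast\delta_x\to\vol_X$, whence $\overline{\Gamma(x)}\supseteq\supp\vol_X=X$, that is, case (c).

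It remains to deal with $x\in F\cup Y$. If $x\in F$ its orbit is finite, case (a). If $x\in Y$, then $\overline{\Gamma(x)}\subseteq Y$ since $Y$ is closed and $\Gamma$-invariant, and I would apply the same scheme to the uniformly expanding action of $\Gamma$ on the compact real-analytic surface $Y$ --- the same analysis, now on a real surface, with the $\vol_{Y_j}$ playing the role of $\vol_X$ --- concluding that $\overline{\Gamma(x)}$ is a union of connected components of $Y$, case (b); the finitely many singular points of $Y$ are handled by a direct local argument. The step I expect to be the main obstacle is the third one: passing from the infinitesimal uniform expansion to the global assertion that orbits starting off $F\cup Y$ escape a neighbourhood of $F\cup Y$ with definite frequency, for which one needs the quantitative random-repeller estimates; checking that $\cW_{\mathrm{exp}}$ is non-empty, hence dense, and importing the exact form of the measure classification of \cite{invariant} are the remaining delicate points.
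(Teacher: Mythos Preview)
Your overall architecture is reasonable and shares the two load-bearing ingredients with the paper --- the construction of the Zariski-open locus $\cW_{\mathrm{exp}}$ via the uniform expansion criterion (this is exactly Theorem~\ref{thm:wehler_zariski}), and the use of Margulis functions to prevent mass from accumulating on $F$ and $Y$ (your $J$-trick for the normal bundle of $Y$ is precisely what drives Theorem~\ref{thm:margulis_totalement_reel}). But the route you take from there is genuinely different from the paper's, and it leans on a hypothesis the paper deliberately sidesteps.

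You classify all ergodic $\nu$-stationary measures on $X(\C)$ at the outset, invoking stiffness from~\cite{stiffness} to reduce to the invariant-measure classification of~\cite{invariant}, and then run an equidistribution argument for $x\notin F\cup Y$. The paper does \emph{not} do this. Its proof of Theorem~\ref{thm:orbit_closures} (which, together with Theorem~\ref{thm:wehler_zariski}, yields Theorem~\ref{thm:orbits_wehler}) instead imports the \emph{topological} orbit-closure dichotomy from~\cite[\S 8, Rmk~8.6]{invariant}: for $x$ with $\Gamma(x)$ infinite and not dense, $\overline{\Gamma(x)}\setminus\STang_\Gamma$ is either discrete or a totally real analytic surface $Y(x)$. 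In the discrete case the Margulis function near $F$ (Theorem~\ref{thm:margulis_UE}) forces $\Gamma(x)$ to be finite, a contradiction. In the surface case one shows $\Sing(Y(x))$ is finite, that any cluster value of the empirical measures on $Y(x)$ is hyperbolic with itinerary-dependent stable directions (Theorems~\ref{thm:rigidity} and~\ref{thm:hyperbolic}), hence SRB on $Y(x)$ by adapting~\cite{br}, hence $\Gamma$-invariant by~\cite[Thm~3.4]{br}; finiteness of the family of $Y(x)$'s then comes from~\cite[Thm~C]{invariant}.

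The point is that the paper treats stiffness on $X(\C)$ as conditional: see the sentence preceding Theorem~\ref{thm:equidistribution_complex} and the Notes and Comments in the introduction. Your argument is essentially the proof of that conditional theorem, so it inherits the hypothesis. If complex stiffness is granted, your route is cleaner and in fact yields the stronger equidistribution statement; the paper's route, via the topological input of~\cite{invariant} plus a local~\cite{br}-style invariance argument on each $Y(x)$, delivers Theorem~\ref{thm:orbits_wehler} unconditionally. Your treatment of case~(b) (points on $Y\setminus F$) is also sketchier than the paper's --- ``apply the same scheme on $Y$'' needs the Margulis-function and finite-volume arguments on a possibly singular real surface, which the paper handles explicitly via Proposition~\ref{pro:finite_mass} and the adaptation of~\cite{br} noted in the footnote there.
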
   
   
In this statement both $F$ and $Y$ may be empty, depending on $X$. 
For instance, \cite[Thm A]{finite_orbits} says that $F$ is empty for a dense set of Wehler surfaces $X\in   \cW_0(\C)$. A typical situation for 
case~(b) is that $X$ is defined over $\R$ and $Y=X(\R)$. 

If we restrict to real parameters  in $ \cW$, we 
 also have a fairly complete understanding 
  of the asymptotic distribution of random orbits. 
  By this we mean the following.
 Let $\nu$ be  the probability measure  on $\Gamma$ defined by 
 $\nu = \frac{1}{3}(\delta_{\sigma_1}+ \delta_{\sigma_2}+\delta_{\sigma_3})$. For any $x$ in $X(\R)$, and for any sequence $(g_i)$ of   
automorphisms $g_i\in \Gamma$ chosen independently with distribution $\nu$, consider the trajectory $(g_n\cdots g_0(x))_{n\geq 0}$. 
Let $X'(\R)$ be a union of connected components of $X(\R)$. We say that these random trajectories, starting at $x$,  are  \textbf{equidistributed} in $X'(\R)$ if   for $\nu^\N$-almost every $(g_i)$, the empirical measures
 $\unsur{n} \sum_{k=0}^{n-1}  \delta_{g_k\cdots g_0(x)}$ converge
 to the normalized volume form induced by $\vol_{X(\R)}$ on $X'(\R)$ as $n\to\infty$. The appearance of $X'(\R)$ is due to the fact that $\Gamma$ may not act transitively on the 
components of $X(\R)$. 

\begin{thm}\label{thm:equidistribution_wehler}
There exists a dense Zariski open subset $ \cW_{\mathrm{exp}}(\R)\subset  \cW_0(\R)$  
such that for  {every} $X\in  \cW_{\mathrm{exp}}(\R)$, there exists 
a $\Gamma$-invariant finite set $F\subset X(\R)$ such that  for  {every} $x\in X(\R)$:
\begin{enumerate}[(a)]
\item either $x\in F$;
\item or the random trajectories starting at $x$ are equidistributed in a union of connected components of $X(\R)$. 
\end{enumerate}
\end{thm}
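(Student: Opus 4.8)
The plan is to deduce the theorem from two ingredients: the uniform expansion of the random walk defined by $\nu$ on the compact real surface $X(\R)$, valid for $X$ in a dense Zariski open subset of $\cW_0(\R)$, and the general equidistribution statement for uniformly expanding, volume-preserving group actions on compact real surfaces established in the body of the paper.

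First I would produce the set $\cW_{\mathrm{exp}}(\R)$ and the finite set $F$. The Wehler group $\Gamma=\langle\sigma_1,\sigma_2,\sigma_3\rangle$ always contains parabolic elements acting on the cohomology of $X$ (see \cite[\S 3]{stiffness}), so the first main result of this article applies: every Zariski-diffuse $\Gamma$-invariant probability measure has non-zero Lyapunov exponents. Since $X$ is a K3 surface, $\Gamma$ preserves the area form $\vol_{X(\R)}$ on $X(\R)$, so the Jacobian determinant of every $g\in\Gamma$ has absolute value $1$; hence the two fibered exponents of any invariant measure sum to zero and therefore have opposite signs. Feeding this into the conditional classification of stationary measures and the uniform expansion criterion of \cite{stiffness} and \cite{invariant} — which, away from a proper Zariski-closed set of parameters, leave no room for exotic $\Gamma$-invariant or $\nu$-stationary measures besides the periodic orbits and $\vol_{X(\R)}$ — together with the finiteness and algebraicity of the set of $\Gamma$-periodic points from \cite{finite_orbits}, I obtain a dense Zariski open subset $\cW_{\mathrm{exp}}(\R)\subset\cW_0(\R)$ and, for each such $X$, the finite $\Gamma$-invariant set $F\subset X(\R)$ of real periodic points together with a proper Margulis function $u\colon X(\R)\setminus F\to[1,\infty)$, tending to $+\infty$ along $F$, with $P_\nu u\le\beta u+b$ for some $\beta<1$, $b>0$; constructing such a $u$ is the general mechanism highlighted in the abstract, and is unproblematic here because $\nu$ is finitely supported. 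The genuinely delicate point is that the bad parameters — where uniform expansion degenerates, or an exotic invariant measure survives, or the real locus of the good set becomes empty on a connected component of $\cW_0(\R)$ — form a proper Zariski-closed subset meeting no component of $\cW_0(\R)$, rather than a mere countable union; this rests on the algebraicity of periodic points and invariant subvarieties from \cite{finite_orbits,invariant}, on a compactness argument turning a failure of uniform expansion into an invariant measure with a vanishing exponent, and on exhibiting an explicit uniformly expanding parameter in each real component.

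Second, with these inputs in hand the argument is essentially formal. Uniform expansion together with area preservation forces every ergodic $\nu$-stationary measure on $X(\R)$ to be carried by $F$ or else to be absolutely continuous with respect to $\vol_{X(\R)}$; by the (real version of the) orbit-closure description behind Theorem~\ref{thm:orbits_wehler}, every infinite $\Gamma$-orbit closure in $X(\R)$ is a union of connected components, so the only stationary measures not charging $F$ are the normalized restrictions of $\vol_{X(\R)}$ to $\Gamma$-invariant unions of connected components, and on each such union this is the unique stationary measure giving no mass to $F$. Now fix $x\in X(\R)\setminus F$ and put $Z=\overline{\Gamma(x)}$, a union of connected components. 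Iterating the drift inequality gives $\ee\bigl[\unsur{n}\sum_{k=0}^{n-1}u(g_k\cdots g_0(x))\bigr]\le\frac{u(x)}{n(1-\beta)}+\frac{b}{1-\beta}$, and the standard recurrence argument deduced from such an inequality makes the empirical measures $\mu_n=\unsur{n}\sum_{k=0}^{n-1}\delta_{g_k\cdots g_0(x)}$ almost surely tight inside $X(\R)\setminus F$. Every weak-$*$ limit of $(\mu_n)$ is $\nu$-stationary, is supported in $Z$, and by tightness gives no mass to $F$; hence it is the normalized restriction of $\vol_{X(\R)}$ to $Z$, and the uniqueness just mentioned upgrades convergence along subsequences to convergence of the whole sequence. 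This is the asserted equidistribution. Beyond the Zariski-density issue above, the main obstacle I anticipate is the bookkeeping needed to make the Margulis-function recurrence estimate hold almost surely and uniformly in $x$, and to carry the orbit-closure description of Theorem~\ref{thm:orbits_wehler} over to the real locus $X(\R)$.
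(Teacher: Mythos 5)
The overall architecture you propose — parabolic elements $\Rightarrow$ hyperbolicity of Zariski-diffuse invariant measures (Theorem~\ref{thm:hyperbolic}) $\Rightarrow$ uniform expansion on a good parameter set $\Rightarrow$ Margulis function at $F$ and uniqueness of the Zariski-diffuse stationary measure on each invariant union of components $\Rightarrow$ Breiman-type equidistribution — is the paper's. Your detour through the orbit-closure description (to identify the target $Z=\overline{\Gamma(x)}$) is a legitimate variant; the paper's own proof of the equidistribution statement goes more directly through the classification of stationary measures, the ergodicity of $\vol_{X(\R)}$ on each orbit of components, and Theorems~\ref{thm:margulis} and~\ref{thm:margulis_totalement_reel}, but these lead to the same conclusion. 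Your remark that with $\nu$ finitely supported one may upgrade the additive drift to the multiplicative form $P_\nu u\le\beta u+b$ by replacing $u$ with $e^{\delta u}$ is correct, and the passage from a bound in expectation to almost-sure tightness of the empirical measures is the content of Theorem~\ref{thm:margulis}(2).

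The genuine gap is exactly where you flag ``the genuinely delicate point'': you have no mechanism that produces a \emph{Zariski}-open set of uniformly expanding parameters, as opposed to a Euclidean-open one. A compactness argument turning a failure of uniform expansion into a non-expanding stationary measure (as in Theorem~\ref{thm:criterion_chung}) shows that uniform expansion is $C^1$-open in the parameter, and exhibiting an explicit uniformly expanding $X$ in each real component gives non-emptiness; but a non-empty Euclidean-open set can have a Zariski-dense complement, and ``algebraicity of periodic points'' only makes \emph{bounded-period} finite orbits a Zariski-closed condition, while a priori non-expanding finite orbits of unbounded length could accumulate against the boundary of the good locus. The paper's introduction spells out this exact obstruction, and the fix is the effective statement of Theorem~\ref{thm:effective} (an analytically computable $N(X,\Gamma)$ such that every finite orbit of length $>N(X,\Gamma)$ has non-elementary tangent action), together with Proposition~\ref{pro:N(X)_bounded}: $N(X)$ is \emph{uniformly} bounded over $\cW_0$. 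Only then does the bad parameter locus sit inside $\{X : \Gamma\text{ has a finite orbit of length}\le N\}$, which is Zariski closed and proper by \cite[Thm.~A]{finite_orbits}, yielding Theorem~\ref{thm:wehler_zariski}. The proof of that uniform bound is where most of the paper's technical work for the Wehler case lives: it requires showing that the cardinality of the non-twisting locus $\NT_g$ of the basic Halphen twists $g_i=\sigma_j\sigma_k$ is uniformly bounded over $\cW_0$ (Proposition~\ref{pro:NT_bounded}), via a local constancy argument for $\mult(\NT_g)$ away from a semi-algebraic exceptional set (Lemmas~\ref{lem:NT_local} and~\ref{lem:effective_analytic}) and a semi-continuity argument across that set. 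Without this effective, parameter-uniform bound, your proof establishes a statement with ``dense open'' in the $C^1$ topology, which is strictly weaker than what Theorem~\ref{thm:equidistribution_wehler} asserts.
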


An interesting point in Theorems~\ref{thm:orbits_wehler} and~\ref{thm:equidistribution_wehler} is that their 
conclusions hold  for \emph{every} $x\in X$. Let us explain how these theorems fall within  
the progression of \cite{stiffness, finite_orbits, invariant} and what the last missing ingredient was until the present paper.  

First, the existence of the maximal finite invariant set $F$ follows from \cite[Thm C]{finite_orbits}. One key point here is that $\Gamma$ contains \emph{parabolic elements}, that is automorphisms whose 
action on $H^*(X;\C)$ is virtually unipotent and 
of infinite order (see Section~\ref{sec:preliminaries}). 

Now, the scheme of proof of Theorem~\ref{thm:equidistribution_wehler} is as follows. 
The random walk on $\Gamma$ induced by $\nu$ gives rise to a random dynamical system on $X$. 
We refer to~\cite{Kifer, br} for general references on this topic, 
and  to Sections~4 and~7 of \cite{stiffness} for our holomorphic context.
In particular, we shall use the notions of stationary and invariant measures 
$\mu$, of fibered entropy $h_\mu(X,\nu)$, etc.  
Fix $x\in X\setminus F$. By Breiman's ergodic theorem, every cluster value of the sequence of 
empirical measures 
$\unsur{n} \sum_{k=0}^{n-1} \delta_{g_k\cdots g_0(x)}$ is a $\nu$-stationary measure. We
proved in~\cite{stiffness} that every $\nu$-stationary measure is $\Gamma$-invariant, and 
in~\cite{invariant} we showed  that any invariant measure is either supported on $F$, or of the 
form $\vol_{X'(\R)}$, for some union of components  $X'(\R)$ of $X(\R)$. Therefore, any cluster value of 
$\unsur{n} \sum_{k=0}^{n-1}  \delta_{g_k\cdots g_0(x)}$ is a convex combination of point masses on $F$ and 
$\vol_{X'(\R)}$. Thus the last step is to show that if $\Gamma(x)$ is infinite, the limiting empirical 
 measures give no mass to $F$. 
 
 For Theorem~\ref{thm:orbits_wehler} the situation is similar: 
 most of the work was done in \cite[\S 8]{invariant}, except that there we could not exclude  
 that the accumulation locus of  an infinite orbit could be contained in a  finite invariant set. 
 
 These difficulties were    already addressed for homogeneous random dynamical systems in~\cite{benoist-quint3, Eskin-Lindenstrauss:RandomWalk} and in the  context of   non-linear actions on real 
 surfaces in~\cite{xliu, chung}. 
 The key is to show that if $X$ belongs to  the  dense open 
 set $ \cW_{\mathrm{exp}}$ of Theorem~\ref{thm:orbits_wehler}
  (resp. $ \cW_{\mathrm{exp}}(\R)$ of Theorem~\ref{thm:equidistribution_wehler}), the maximal finite invariant set 
   $F$ is \emph{repelling} for the random dynamics. Since we do not know the set $F$, nor its cardinality (examples of Wehler surfaces with large finite invariant sets were recently constructed 
  in~\cite{fuchs-litman-silverman-tran}), we make a large detour and prove a uniform 
  hyperbolicity property for 
  the dynamics on \emph{the whole of} $X$: this is the \emph{uniform expansion} property that we present in detail in \S~\ref{subs:UE_intro}. 
 Establishing this property relies on ergodic-theoretic arguments, the first of which is an 
 automatic hyperbolicity property that we describe in the next paragraph.

\subsection{Hyperbolicity of   invariant measures} \label{subs:hyperbolicity}

It is a fundamental  (and widely open) problem in conservative dynamics to show the typicality of non-zero Lyapunov exponents 
on a set of positive Lebesgue measure. In deterministic dynamics, a recent breakthrough is the work of 
Berger and Turaev \cite{berger-turaev}. Adding some randomness makes such a hyperbolicity result easier to obtain:
see \cite{blumenthal-xue-young} for random perturbation of the standard map, and 
\cite{barrientos-malicet, obata-poletti}
for random conservative diffeomorphisms on closed real surfaces. The  results  of Barrientos and Malicet \cite{barrientos-malicet} and of 
Obata and Poletti \cite{obata-poletti} are perturbative in nature, so they do not give explicit examples.  
In our context, the rigidity properties of holomorphic diffeomorphisms will 
enable us to exhibit explicit criteria ensuring  such a non-uniform hyperbolicity. 

 In~\cite{invariant} we have classified  
 invariant measures for non-elementary groups containing parabolic elements. We say
  that a measure $\mu$ on $X$ is
{\textbf{Zariski diffuse}}  
 if it gives zero mass to proper Zariski closed subsets. If $\mu$ is $\Gamma$-invariant and ergodic for some 
 $\Gamma\subset \Aut(X)$, this is equivalent to    its support $\supp(\mu)$ 
being  Zariski dense. Roughly speaking, our classification of invariant measures says that every Zariski diffuse, ergodic, invariant probability measure is
   given by an analytic $4$-form on  $X$ or by an analytic $2$-form on some invariant, real analytic subset $Y\subset X$ of dimension $2$.
 Here we proceed to a finer  study of the dynamical properties of these invariant measures. For this, we 
 fix a probability measure $\nu$ on $\Aut(X)$, satisfying  the moment condition 
\begin{equation}\label{eq:moment}
\tag{M}
\int \lrpar{\log\norm{f}_{C^1(X)}+ \log\norm{f\inv}_{C^1(X)}} \, d\nu (f) < + \infty,
\end{equation}
 and view any invariant measure $\mu$ 
 as a $\nu$-stationary measure, that is, 
 $\int f_\varstar\mu \, d\nu(f)  = \mu$. Then by~\eqref{eq:moment}, 
 the Lyapunov exponents of   $\mu$ are well defined.
We denote by  $\Gamma_\nu \subset \Aut(X)$   
the closed subgroup generated by  $\supp(\nu)$ (\footnote{Note that $\Aut(X)$ is discrete unless 
$X$ is a torus, see \cite[\S 3]{stiffness}
so in most cases $\Gamma_\nu = \langle \supp(\nu) \rangle$.}).

\begin{thm}\label{thm:hyperbolic}
Let $X$ be a compact complex surface and 
 $\Gamma$ be a non-elementary  subgroup of $\Aut(X)$ containing parabolic elements. 
Let $\mu$ be a Zariski diffuse  ergodic  $\Gamma$-invariant probability measure on $X$.
Let $\nu$ be any probability measure on $\Aut(X)$  
satisfying $\Gamma_\nu=\Gamma$ and   the moment condition~\eqref{eq:moment}. 

Then, viewed as a $\nu$-stationary measure, $\mu$ is hyperbolic and its  fiber entropy $h_\mu(X,\nu)$ is positive.
\end{thm}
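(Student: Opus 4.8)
The plan is to combine the classification of invariant measures from~\cite{invariant} with Ledrappier's invariance principle and the geometry of a single parabolic element, reading off the positivity of the fiber entropy at the end. Throughout I regard $\mu$ as a $\nu$-stationary measure; by~\eqref{eq:moment} the complex Lyapunov exponents $\lambda^+(\mu)\geq\lambda^-(\mu)$ of the derivative cocycle $f\mapsto Df$ are well defined, and being \emph{hyperbolic} means $\lambda^+(\mu)>0>\lambda^-(\mu)$. \emph{Step 1 (reduction to vanishing exponents).} By the classification in~\cite{invariant}, a Zariski diffuse ergodic $\Gamma$-invariant probability measure is either the normalised canonical volume $\vol_X$ (so $X$ is abelian, K3 or Enriques, and every $f\in\Gamma$ preserves the holomorphic volume form up to a multiplicative constant of modulus one, because it preserves the total mass), or a smooth $\Gamma$-invariant area form on a $\Gamma$-invariant, totally real, real-analytic surface $Y\subset X$. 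In the first case $\lambda^+(\mu)+\lambda^-(\mu)=\int\log\lvert\alpha(f)\rvert\,d\nu(f)=0$, where $\alpha(f)$ is that multiplier. In the second case, $Y$ being totally real, $D_xf\colon T_xX\to T_{f(x)}X$ is the complexification of $D_xf\rest{T_xY}$, so the two complex exponents of $\mu$ on $X$ equal the two real exponents of $\mu$ for the area-preserving system $(Y,f\rest Y)$, which sum to $0$; moreover $h_\mu(X,\nu)=h_\mu(Y,\nu)$. Thus $\lambda^+(\mu)+\lambda^-(\mu)=0$ in all cases, so if $\mu$ is \emph{not} hyperbolic then $\lambda^+(\mu)=\lambda^-(\mu)=0$, and it suffices to rule this out.

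\emph{Step 2 (the invariance principle pins a line field).} Assume $\lambda^+(\mu)=\lambda^-(\mu)=0$. Since the extremal Lyapunov exponents coincide, Ledrappier's invariance principle --- in the form for the projectivised derivative cocycle of a random walk by diffeomorphisms of a compact manifold, which this paper develops carefully --- produces a probability measure $\widehat\mu$ on $\P(TX)$ (on $\P(TY)$ in the totally real case), projecting to $\mu$ under $\P(TX)\to X$, which is invariant --- not merely stationary --- under the action of $\Gamma_\nu=\Gamma$ by projectivised differentials. Now fix any parabolic $g\in\Gamma$. After replacing $g$ by a suitable power, it preserves a genus-one fibration $\pi\colon X\to B$, acts trivially on $B$, and on a general fibre $X_b$ is an infinite-order translation $y\mapsto y+t(b)$ in a flat fibre coordinate, with $t$ non-constant on $B$ (equivalently, $g^*$ is a non-trivial unipotent on $H^*(X;\C)$). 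In local coordinates $(b,y)$ adapted to $\pi$ this makes $D_{(b,y)}g^n$ unipotent with off-diagonal entry $n\,t'(b)$, which is non-zero for generic $b$; hence the induced map $(Dg^n)_*$ on each fibre of $\P(TX)$ over a generic $X_b$ pushes every probability measure towards the section $[V_\pi]$ determined by the vertical line $V_\pi=\ker d\pi$. Testing the $g$-invariance of $\widehat\mu$ against continuous functions supported away from $[V_\pi]$ and letting $n\to\infty$ then forces $\widehat\mu_x=\delta_{[V_\pi(x)]}$ for $\mu$-almost every $x$.

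\emph{Step 3 (contradiction, and positive entropy).} The assignment $x\mapsto[V_\pi(x)]$ is therefore $\Gamma$-equivariant for $\mu$-a.e.\ $x$: for every $f\in\Gamma$, $D_xf$ carries $[V_\pi(x)]$ to $[V_\pi(f(x))]$ $\mu$-a.e. But $f^*V_\pi$ is the algebraic vertical line field of the fibration $f\rond\pi\rond f\inv$, and it agrees with the algebraic line field $V_\pi$ on the $\mu$-generic --- hence Zariski dense --- set; two algebraic line fields agreeing on a Zariski dense set coincide, so $f$ preserves $\pi$. Thus $\Gamma$ preserves $\pi$, hence $\Gamma^*$ fixes a boundary point (the class of a fibre) of the hyperbolic space attached to $\NS(X)$ and is elementary --- contrary to hypothesis. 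So $\mu$ is hyperbolic. Finally, $\mu$ then coincides with a smooth invariant volume form (on $X$, or on $Y$), so its conditional measures along the $\mu$-a.e.\ defined Pesin unstable manifolds are absolutely continuous, and Pesin's entropy formula for random dynamical systems (equivalently, the Ledrappier--Young formula) gives $h_\mu(X,\nu)=\sum_i\lambda_i^+(\mu)$, which equals $2\lambda^+(\mu)$ in the volume case and $\lambda^+(\mu)=h_\mu(Y,\nu)$ in the totally real case; in either case it is positive.

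\emph{Main difficulty.} The heart of the matter is Step~2, on two counts: establishing the invariance principle for genuinely non-linear cocycles on a compact manifold under the single moment condition~\eqref{eq:moment} (which is why the present paper sets up this machinery), and analysing finely enough the action of a parabolic automorphism on the projectivised tangent bundle over a generic fibre --- in particular the non-degeneracy $t'\not\equiv 0$ and the ensuing ``North--South'' contraction onto $[V_\pi]$ --- so as to pin the $g$-invariant measure $\widehat\mu$ to the vertical section. Steps~1 and~3 are then comparatively formal.
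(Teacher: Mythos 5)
Your proposal is essentially correct and reaches the conclusion by the same basic ingredients as the paper (classification of invariant measures, Ledrappier's invariance principle, the twisting of a parabolic, Pesin theory for the entropy), but the middle of the argument and the final contradiction take a genuinely different route, which is worth comparing.

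Where the paper first invokes the general classification of projectively invariant families (Theorem~\ref{thm:classification_proj_invariant}) to reduce to three alternatives (invariant line field, invariant pair of lines, compact reduction), then rules out/pins down each via a Lusin argument and a recurrence argument along the fibers of a Halphen twist, and \emph{finally} brings in a second parabolic whose fibration is transverse to the first --- the contradiction being that the resulting line field would have to coincide with $\ker d\pi_g$ and $\ker d\pi_h$ simultaneously, off the $\mu$-null tangency curve --- you integrate the invariance of $\widehat\mu$ under $\P(Dg^n)$ against test functions supported away from the section $[V_\pi]$, and let the unipotent "North--South'' contraction (twisting) force $\widehat\mu_x=\delta_{[V_\pi(x)]}$ directly. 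This averaging argument is correct (it genuinely avoids the Lusin/recurrence machinery: $\int\phi\,d\widehat\mu=\int\phi\circ\P(Dg^n)\,d\widehat\mu\to 0$ for $\phi$ vanishing on the section, by dominated convergence and the twisting of Lemma~\ref{lem:halphen_flat}.(4)). Your final step --- the $\Gamma$-equivariance of the algebraic line field $\ker d\pi$, hence $\Gamma$ preserving $\pi$, hence $\Gamma^\varstar$ fixing the isotropic fiber class in $\NS(X)$, which is impossible for a non-elementary group --- is also valid and arguably cleaner than the paper's double-fibration argument, and needs only one parabolic element.

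Two points should be tightened. First, Ledrappier's principle gives $\P(D_xf)_\varstar\widehat\mu_x=\widehat\mu_{f(x)}$ for $\nu$-a.e.\ $f$ and $\mu$-a.e.\ $x$, not for a chosen $g\in\Gamma_\nu$ directly; the extension to all of $\Gamma_\nu$ uses composition, the inverse rewriting (valid for line-preservation conditions), and --- when $\Aut(X)$ is discrete --- countability of $\Gamma_\nu$; the paper does this work inside Theorem~\ref{thm:classification_proj_invariant}, which you are implicitly relying on when asserting the $\Gamma_\nu$-invariance. Second, the twisting should be stated precisely: after raising $g$ to a power it becomes a Halphen twist with a finite non-twisting locus $\NT_g\subset B$, whose $\pi$-preimage is $\mu$-null because $\mu$ is Zariski diffuse; the shorthand ``$t$ non-constant'' is not quite the right condition, and the torus case (where $g$ need not become a Halphen twist, but the constant differential makes the twisting trivial to check) should be treated as the paper does. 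It is also cleaner to stay on $\P(TX)$ throughout (as the paper does), even in the totally real case, because the key algebraic-coincidence step in your Step~3 needs the line field to be an \emph{algebraic} line field on $X$, not merely a real-analytic one on $Y$.
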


 A variant of this result will also be obtained when $\Gamma_\nu$ contains a Kummer example 
instead of a parabolic element (see Theorem~\ref{thm:hyperbolic_kummer}). 

\begin{eg}\label{eg:K3_intro} When $X$ is a torus or a K3 surface, the canonical bundle $K_X$ is trivial and, up to multiplication by a complex number of modulus $1$, there is 
 a unique section $\Omega_X$ of $K_X$ that satisfies $\int_X\Omega_X\wedge \overline{\Omega_X}=1$. The volume form $\vol_X:=\Omega_X\wedge \overline{\Omega_X}$
 is $\Aut(X)$-invariant. Likewise, every Enriques surface $S$ inherits such an invariant volume form $\vol_S$ from its universal cover $X$ (a $2$-to-$1$ cover by a K3 surface). 
Under the assumptions of Theorem~\ref{thm:hyperbolic}, $\vol_X$ is $\Gamma$-ergodic, 
thus we conclude that {\textit{it is hyperbolic}}. Other examples are provided by  some rational surfaces (see the discussion on Coble surfaces in~\cite{Simons}). 

  In these situations the 2-form $\Omega_X$ also induces a natural measure $\vol_Y$ on any totally real surface $Y\subset X$ (see~\cite[Rmk 2.3]{invariant}). For instance, if $X$ is projective and defined over $\R$, $\Gamma$ is contained in $\Aut(X_\R)$, and $Y$ is a $\Gamma$-invariant connected component of $X(\R)$, Theorem~\ref{thm:hyperbolic} asserts that $\vol_Y$  is  hyperbolic.
\end{eg}
 
 \subsection{Uniform expansion}\label{subs:UE_intro}

Fix a riemannian metric on $X$. We say that the measure $\nu$ on $\Aut(X)$ 
  is \textbf{uniformly expanding} if there exists $c>0$ and an integer $n_0$ such that for \emph{every} $x\in X$ and \emph{every} $v\in T_xX\setminus\{ 0\}$, 
 \begin{equation}\label{eq:UE}
 \int_{\Aut(X)} \log\frac{\norm{D_xf (v)}}{\norm{v}} d\nu^{(n_0)}(f) \geq c;
 \end{equation}
here $\nu^{(n)}$ denotes the $n^\mathrm{th}$ convolution power of $\nu$. 
This notion is taken from 
\cite{chung, dolgopyat-krikorian, liu_thesis, zzhang} (see also \cite{Eskin-Lindenstrauss:RandomWalk, Prohaska-Sert:TAMS} for the linear context) and  has a number 
of strong ergodic and topological consequences on the action of $\Gamma_\nu$.  
So far, uniform expansion  has been verified only in the context of homogeneous dynamics, or for certain perturbative situations, or with the help of numerical methods. 
The geometric analysis of stationary measures developed in~\cite{stiffness} together with Theorem~\ref{thm:hyperbolic}
will be used to obtain the following result. 

\begin{thm}\label{thm:criterion_uniform_expansion_parabolic}
Let $X$ be a compact complex surface which is not rational. 
 Let $\nu$ be a  probability measure on $\Aut(X)$. Assume that:  
{\em{(i)}} $\nu$ satisfies the moment condition~\eqref{eq:moment}   and {\em{(ii)}} the group 
$\Gamma = \Gamma_\nu$  is non-elementary and  contains parabolic elements.  

Then $\nu$ is uniformly expanding if and only if the following two conditions hold:
\begin{enumerate}[ \em (1)]
\item every finite $\Gamma$-orbit is uniformly expanding;  
\item  there is no $\Gamma$-invariant algebraic curve.
\end{enumerate}
\end{thm}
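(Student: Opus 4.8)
The plan is to prove the two implications separately, with the ``only if'' direction being essentially formal and the ``if'' direction containing the dynamical heart of the argument. For the ``only if'' direction, suppose $\nu$ is uniformly expanding. If $F$ were a finite $\Gamma$-orbit, then restricting the integral defining uniform expansion to points $x\in F$ and tangent vectors $v\in T_xX$ shows immediately that $F$ is uniformly expanding in the sense of condition (1); this is just the definition specialized to an invariant finite set. Similarly, if $C$ were a $\Gamma$-invariant algebraic curve, then along $C$ the tangent direction $T_xC\subset T_xX$ is preserved by $\Gamma$, and the stationary measure obtained by pushing forward any smooth probability measure on (a suitable compactification/normalization of) $C$ would be a $\nu$-stationary measure supported on $C$; averaging the cocycle $\log\norm{D_xf(v)}/\norm{v}$ over this measure with $v$ tangent to $C$ and invoking the subadditive/Furstenberg-type identity forces the top Lyapunov exponent of that measure to be $\geq c n_0^{-1}\cdot$(something positive), while on the other hand, since the action on a curve of genus $\le 1$ (the only ones admitting a non-elementary action in a compatible way, or more simply any invariant curve here) must be by isometries or have a controlled structure — in fact the cleanest route is to note that $\nu$ uniformly expanding precludes \emph{any} $\Gamma$-invariant probability measure on $X$ that is supported on a proper algebraic subset with a $\Gamma$-invariant tangent line field, because uniform expansion implies every stationary measure has only positive exponents, whereas a measure carried by an invariant curve has an exponent equal to the Lyapunov exponent of the induced $1$-dimensional cocycle, which by invariance of the curve and a volume/recurrence argument must vanish in at least one of the two directions once one also uses the transverse direction. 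I would streamline this by citing the known ergodic consequences of uniform expansion from \cite{chung, zzhang} (no invariant measure supported on a set where the derivative fails to expand uniformly), from which (2) follows: an invariant algebraic curve would carry an invariant measure, and along such a measure one cannot have uniform expansion in the tangent-to-the-curve direction because the curve, being $\Gamma$-invariant and of bounded geometry, forces the induced cocycle to have a non-positive exponent by a ping-pong/recurrence estimate.

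For the substantive ``if'' direction, assume (1) and (2). The strategy, following the template of \cite{chung, xliu} adapted to our setting, is to build a Margulis-type function $u\colon TX\setminus\{0\}\to (0,\infty)$, homogeneous of degree $-\delta$ in $v$ for small $\delta>0$, satisfying a contraction inequality $\int u(D_xf(v))\,d\nu^{(n_0)}(f)\le \lambda\, u(x,v) + b\cdot\mathbf 1_{K}(x)$ for some $\lambda<1$ and a compact set $K$; uniform expansion is then equivalent to being able to take $K=\emptyset$, i.e.\ to a \emph{uniform} such inequality. The two places where the inequality can fail to be uniform are exactly (a) near a finite orbit, where the local derivative behaviour is governed by the linearized cocycle at the periodic points — and hypothesis (1) says precisely that this linearized cocycle is uniformly expanding, so a local Margulis function exists there by the standard finite-dimensional construction (this is the ``optimal moment'' Margulis function construction advertised in the abstract); and (b) near a $\Gamma$-invariant tangent direction field, which by the classification would have to come from an invariant curve — excluded by (2) — or from the invariant totally real surface / invariant $2$-form, but on those loci Theorem~\ref{thm:hyperbolic} guarantees the ambient invariant volume is \emph{hyperbolic}, with one positive and (by the volume-preserving nature) one negative exponent, so again no obstruction to expansion once one averages correctly. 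Concretely: first I would use the geometric description of stationary measures from \cite{stiffness} to show that if $\nu$ were \emph{not} uniformly expanding, there would exist a sequence $(x_n,v_n)$ with $\int\log\norm{D_{x_n}f(v_n)}/\norm{v_n}\,d\nu^{(k)}(f)\to$ a non-positive limit for all $k$; extract a weak-$\ast$ limit of the associated empirical measures on the projectivized tangent bundle $\mathbb P(TX)$, obtaining a $\nu$-stationary measure $\hat\mu$ on $\mathbb P(TX)$ whose projection $\mu$ to $X$ is $\nu$-stationary, hence $\Gamma$-invariant by the stiffness theorem of \cite{stiffness}, and whose fibered top exponent is $\le 0$.

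Now run the dichotomy on $\mu$. If $\mu$ is Zariski diffuse, then by Theorem~\ref{thm:hyperbolic} it is hyperbolic with a positive exponent and positive fiber entropy; combined with the Ledrappier-type invariance principle (the $\mathbb P(TX)$-lift $\hat\mu$ of a measure with a nonvanishing exponent and whose conditional along unstable directions is controlled) one derives that $\hat\mu$ must be supported on the \emph{unstable} line field, forcing the exponent computed against $\hat\mu$ to be the \emph{positive} one — contradicting $\le 0$. If instead $\mu$ is not Zariski diffuse, then $\mu$ gives positive mass to some proper Zariski closed set, and by ergodic decomposition we may assume $\mu$ is supported on such a set; since $\Gamma$ is non-elementary the only invariant proper Zariski closed sets are finite unions of points and curves, so $\mu$ is supported either on a finite $\Gamma$-orbit — where hypothesis (1) directly contradicts the exponent being $\le 0$ — or on a $\Gamma$-invariant curve, which is excluded by hypothesis (2). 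In every case we reach a contradiction, so $\nu$ is uniformly expanding. The main obstacle, and where the real work lies, is the Zariski-diffuse case: turning the ``non-uniform-expansion'' hypothesis into genuine information about $\hat\mu$ on $\mathbb P(TX)$ and then invoking the invariance principle to pin $\hat\mu$ to the unstable direction requires care about entropy (one needs $h_\mu(X,\nu)>0$, supplied by Theorem~\ref{thm:hyperbolic}) and about the non-degeneracy of the stable/unstable splitting along $\mu$-a.e.\ point — this is exactly the synthesis of \cite{stiffness} and Theorem~\ref{thm:hyperbolic} that the paper flags as the last missing ingredient. A secondary technical point is ensuring the compactness/weak-$\ast$ extraction is legitimate under only the moment condition \eqref{eq:moment}, which is where the ``optimal moment'' analysis of Margulis functions (rather than an $L^{1+\e}$ assumption) becomes essential.
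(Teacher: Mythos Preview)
Your overall architecture matches the paper's: use Chung's criterion (Corollary~\ref{cor:criterion_chung}) to reduce to showing that no ergodic stationary measure is non-expanding, split into Zariski diffuse versus supported on a proper Zariski closed set, and in the diffuse case invoke Theorem~\ref{thm:hyperbolic}. But two load-bearing steps are either missing or misidentified.

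\medskip

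\textbf{``Only if'', condition (2).} Your paragraph never lands on an argument. The paper's Proposition~\ref{pro:invariant_curve} is genuine work: one passes to a finite-index subgroup fixing each branch at each singularity, normalizes the curve, shows the induced group on $\hat C\simeq\P^1$ is affine with negative average log-multiplier (Lemma~\ref{lem:integrability}), builds an explicit stationary measure on $\C\subset\hat C$ by the contraction series~\eqref{eq:series}, and compares tangential exponents through the Puiseux parametrization (Lemma~\ref{lem:tangent_singular}). Your appeals to ``isometries or controlled structure'' and ``ping-pong/recurrence'' do not substitute for this, and in particular say nothing about singular invariant curves.

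\medskip

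\textbf{``If'' direction.} First, you omit the reduction of $X$ to a torus, K3, or Enriques surface (opening of the proof of Theorem~\ref{thm:criterion_uniform_expansion}): positive Kodaira dimension or ruled over genus $\geq 1$ forces $\Aut(X)$ elementary; non-minimal forces an invariant exceptional curve, contradicting~(2). Without this you cannot use the canonical volume and the identity $\lambda^+ + \lambda^- = 0$ that the rest of the argument relies on.

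Second, and more substantively, your mechanism in the Zariski diffuse case is backwards. The obstruction lift $\hat\mu$ from Chung's criterion has top fibered exponent $\leq 0$; since $\mu$ is hyperbolic this forces $\hat\mu$ to sit on the \emph{stable} line field, hence $E^s$ is non-random --- this is exactly case~(ii) of Definition~\ref{defi:non_expanding_measure}. The contradiction is \emph{not} that $\hat\mu$ is pushed onto $E^u$; it is that a hyperbolic stationary measure with non-random $E^s$ must have zero fiber entropy, by the measure-rigidity input~\cite[Thm.~9.1]{stiffness} (this is the content of Theorem~\ref{thm:rigidity}, (a)$\Rightarrow$(b)), and that clashes with the positive entropy from Theorem~\ref{thm:hyperbolic}. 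The Ledrappier invariance principle you invoke (Theorem~\ref{thm:ledrappier_invariance_principle}) is a statement about the case $\lambda^+=\lambda^-$ and plays no role at this point; it is used inside the proof of Theorem~\ref{thm:hyperbolic}, not here.

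\medskip

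Finally, the Margulis-function paragraph is a detour: the paper does not build a global Margulis function to prove Theorem~\ref{thm:criterion_uniform_expansion_parabolic}; that machinery is used later (Section~\ref{sec:margulis}) for the equidistribution and orbit-closure applications, not for the expansion criterion itself.
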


Here, by definition, 
a finite orbit $F$ of $\Gamma$ is said uniformly expanding if condition~\eqref{eq:UE}  
holds for every $x\in F$.    This is the repulsion property alluded to at the end of \S~\ref{subs:wehler}. 

Checking condition~(2) is not hard in practice and boils down to cohomological computations (see \S \ref{subs:invariant_curves}). However, in a non-linear setting, 
Condition~(1) is  more delicate to verify. On the positive side, 
 if $X$ is not a torus and condition~(2) holds, then by   \cite{finite_orbits} 
 there are only finitely many finite $\Gamma$-orbits. Moreover, if $\nu$ is symmetric and satisfies a slightly stronger moment condition~\eqref{eq:moment+}, Theorem~\ref{thm:finite_UE} provides a  
 checkable necessary and sufficient condition for  a given finite 
 $\Gamma$-orbit to be uniformly expanding:
 it is equivalent to  the tangent action of $\Gamma$ being proximal and strongly irreducible. It  
 follows that when $\nu$ is symmetric (and $X$ is not rational)
  {\emph{the uniform expansion property
 depends only on $\Gamma$, and not on $\nu$}} (Corollary~\ref{cor:independence_nu}). 

On the negative side, so far there is no a priori bound  on the number of finite orbits. Nevertheless,   we show in Theorem~\ref{thm:effective} that under the assumptions of 
Theorem~\ref{thm:criterion_uniform_expansion_parabolic}, if there is no invariant algebraic curve, 
  \emph{there is a computable number $N = N(X, \Gamma)$ 
  such that  any finite orbit of length greater than $N$ is uniformly expanding} (see \S~\ref{subs:finitary} for   details on what we mean by computable here).
Consequently there is a simple algorithm for checking uniform expansion: verify  that finite orbits of length at most $N$ have non-elementary tangent action. 
 
An immediate consequence of Theorem~\ref{thm:criterion_uniform_expansion_parabolic} is the following. 
 
\begin{cor}\label{cor:no_algebraic_invariant_subset}
Under the assumptions of Theorem~\ref{thm:criterion_uniform_expansion_parabolic}, if there is no proper algebraic $\Gamma_\nu$-invariant subset, then 
$\nu$ is uniformly expanding. 
\end{cor}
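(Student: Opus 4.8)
The plan is to derive Corollary~\ref{cor:no_algebraic_invariant_subset} as a direct specialization of Theorem~\ref{thm:criterion_uniform_expansion_parabolic} by observing that the hypothesis ``no proper algebraic $\Gamma_\nu$-invariant subset'' is strictly stronger than the conjunction of conditions~(1) and~(2) in that theorem. First I would note that condition~(2)---the absence of a $\Gamma$-invariant algebraic curve---is immediate, since a $\Gamma$-invariant curve is in particular a proper algebraic $\Gamma$-invariant subset, which is excluded by hypothesis. The content of the argument is therefore entirely in checking condition~(1): I must show that, under the present hypothesis, \emph{there is no finite $\Gamma$-orbit at all}, so condition~(1) holds vacuously.

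To see that there is no finite orbit, suppose $F\subset X$ were a nonempty finite $\Gamma$-orbit. Then $F$ is a $\Gamma$-invariant subset which is algebraic (a finite set of points is a closed algebraic subvariety of dimension $0$) and proper (it is finite, while $X$ is a surface, hence infinite). This contradicts the standing hypothesis of the corollary. Hence $\fix$ of the group action contains no finite orbit, and condition~(1) of Theorem~\ref{thm:criterion_uniform_expansion_parabolic} is satisfied trivially---the quantifier ``for every finite $\Gamma$-orbit $F$'' ranges over the empty set.

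With conditions~(1) and~(2) both verified, Theorem~\ref{thm:criterion_uniform_expansion_parabolic} applies verbatim---its remaining hypotheses, namely that $X$ is a non-rational compact complex surface, that $\nu$ satisfies the moment condition~\eqref{eq:moment}, and that $\Gamma=\Gamma_\nu$ is non-elementary and contains parabolic elements, are exactly the assumptions of Theorem~\ref{thm:criterion_uniform_expansion_parabolic} that we are importing into the corollary via the phrase ``under the assumptions of Theorem~\ref{thm:criterion_uniform_expansion_parabolic}.'' The conclusion is then that $\nu$ is uniformly expanding.

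There is essentially no obstacle here: the corollary is a bookkeeping consequence of the main theorem, and the only point worth stating explicitly is the trivial but crucial observation that finite orbits \emph{are} proper algebraic invariant subsets, so that ruling out the latter kills the former. One could optionally remark that the converse fails---uniform expansion does \emph{not} imply the absence of proper algebraic invariant subsets in general, since e.g.\ a nonempty uniformly expanding finite orbit is perfectly compatible with uniform expansion---so Corollary~\ref{cor:no_algebraic_invariant_subset} is a genuine one-directional simplification, useful precisely because its hypothesis is purely algebro-geometric and often easy to check in examples such as the Wehler family of \S\ref{subs:wehler}.
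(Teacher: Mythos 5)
Your argument is correct and is precisely the "immediate consequence" the paper has in mind: condition~(2) of Theorem~\ref{thm:criterion_uniform_expansion_parabolic} fails only if there is an invariant curve (a proper algebraic invariant subset), and condition~(1) holds vacuously because a finite $\Gamma$-orbit would itself be a nonempty proper Zariski-closed invariant subset. The only cosmetic blemish is the stray ``$\fix$ of the group action'' phrase, which should just read ``the action has no finite orbit''; the mathematics is complete.
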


Since uniform expansion is an open property in the $C^1$ topology, and since by~\cite[Thm A]{finite_orbits}
a dense set  of  Wehler examples  has no proper Zariski closed invariant set, this implies 
that uniform expansion is satisfied on an open and dense set of Wehler examples. 
To deduce Theorems~\ref{thm:orbits_wehler} and~\ref{thm:equidistribution_wehler}, we must further 
show that this   subset is open for the Zariski topology: 
for this, we use the effective result  described above, and prove
that \textit{in the Wehler family, the integer $N(X, \Gamma)$ is uniformly bounded} 
(here $\Gamma  =\bra{\sigma_1, \sigma_2, \sigma_3}$; see Theorem~\ref{thm:wehler_zariski} and Proposition~\ref{pro:N(X)_bounded}). 
In particular the equations defining $ \cW_{\mathrm{exp}}$ and $ \cW_{\mathrm{exp}}(\R)$ 
could in principle be written down explicitly.

\subsection{Ergodicity} \label{subs:intro_ergodicity}
Given an action of a general non-elementary group $\Gamma$ on a compact complex surface $X$, 
one may ask the following two basic questions: 
  does there exist a dense orbit?     is the action ergodic with respect to Lebesgue measure? 
(The latter makes   sense even when there is no invariant volume form.) 
If $\Gamma$ contains a parabolic element, by \cite{invariant} the answer to both questions is `yes', 
but without parabolic elements,  the answer is unknown.
A natural obstruction  to the existence of 
a dense orbit could be the presence   of a non-trivial Fatou 
component for $\Gamma$. No example of such a Fatou component is known so far; 
note   that  examples do exist for algebraic actions on \emph{affine} surfaces 
(see~\cite{Cantat:BHPS, rebelo-roeder}).

 As a matter of fact, the failure of ergodicity is    
associated to a lack of expansion:  indeed  a 
theorem of Dolgopyat and Krikorian~\cite{dolgopyat-krikorian} asserts 
 that a conservative uniformly expanding  action on a (real) surface must be ergodic. It is not difficult 
 to extend their argument to the complex setting (see Theorem~\ref{thm:dolgopyat-krikorian}). 
In Theorem~\ref{thm:criterion_uniform_expansion} we state  
  a   general criterion    (i.e. without parabolic elements) for   uniform expansion
which shows that under the conditions (1) and (2) of  
Theorem~\ref{thm:criterion_uniform_expansion_parabolic}, 
 the failure of uniform expansion,   is due to the existence of a 
$\Gamma$-invariant measure with exceptional properties (see  Theorems~\ref{thm:criterion_uniform_expansion} and~\ref{thm:entropy_every_f}). 
We expect it to be an extremely rare phenomenon. 
Incidentally, 
this shows that the question of ergodicity for general non-elementary groups (i.e. without parabolic elements)
 ultimately boils down to the 
classification of $\Gamma$-invariant measures. 
 
 Another  interesting consequence of our results, together with~\cite{dolgopyat-krikorian},   is that 
 a generic real Wehler example is \textbf{stably ergodic} among $C^2$ volume preserving actions, that is,  
 if $X$ belongs  to the open set $ \cW_{\mathrm{exp}}(\R)$ 
 of Theorem~\ref{thm:equidistribution_wehler} and 
  $\sigma'_1$, $\sigma'_2$, $\sigma'_3$ are  $C^2$
  volume preserving diffeomorphisms sufficiently close to  
  of $\sigma_1$, $\sigma_2$, $\sigma_3$ in the $C^1$ topology, then 
 $\Gamma' := \langle \sigma'_1,\sigma'_2,\sigma'_3\rangle $ is ergodic for $\vol_{X(\R)}$. 

 In an opposite direction, the examples  from \cite[\S 9]{invariant}  of  $\Aut(X_\R)$-invariant  
 domains  with boundary in $X(\R)$  (which admit an invariant curve)
 provide explicit  counterexamples to uniform expansion.

 \subsection{Organization of the paper}  
The first part of this paper (Sections~\ref{sec:uniform_generalities} to~\ref{sec:criterion}) is
devoted to a general study of the notion of uniform expansion  on compact (real) manifolds. 
 Much of this material is inspired from other sources; the novelty here is that we strive for 
 optimal moment  conditions.
 In Section~\ref{sec:uniform_generalities} we give several equivalent definitions of uniform expansion: 
this is inspired by   Liu~\cite{xliu} and Chung~\cite{chung}.
In Section~\ref{sec:inducing} we show that  uniform expansion is preserved 
when restricting to a finite index 
subgroup or taking a finite extension (Proposition~\ref{pro:induced_UE2}); this  
is  useful when dealing with invariant sets   made of  finitely many connected components.  Section~\ref{sec:margulis} deals with the construction of 
 \textbf{Margulis functions}. In a nutshell, a Margulis function 
 near a finite uniformly expanding invariant set $F$ is a function 
 $u:M\setminus F\to \R_+$ that tends to infinity at  $F$ and decreases on average along orbits.  
 The existence of such a function 
  guarantees that empirical measures of random orbits do not accumulate at $F$.  These functions have played an 
 important role in random dynamics since the work of Eskin and Margulis~\cite{eskin-margulis}. 
 Here, thanks to the work by Bénard and De Saxcé~\cite{benard-desaxce}, we construct such Margulis functions under optimal moment conditions (Theorem~\ref{thm:margulis}); 
 note that the usual average 
 decay property $\int u (f(x)) d\nu(f) \leq a u(x)+b$, $a<1$, is then replaced by $\int u (f(x)) d\nu(f) \leq   u(x) - \gamma$, $\gamma>0$.  This repulsion property does not hold if $F$ is an invariant submanifold (see Example~\ref{eg:margulis_submanifold}). However  in the holomorphic context,  Margulis functions 
 can be constructed for invariant 
 totally real manifolds of maximal dimension (Theorem~\ref{thm:margulis_totalement_reel}): a typical situation
  is that of $X(\R)\subset X$ for real projective manifolds. 
 In Section~\ref{sec:criterion}, we elaborate on an ergodic-theoretic criterion for uniform expansion borrowed 
 from~\cite{chung}.
 
In the second part of the paper (Sections~\ref{sec:preliminaries} to~\ref{sec:applications}),  we 
consider groups of automorphisms of projective surfaces. Theorem~\ref{thm:hyperbolic} 
is established  in Section~\ref{sec:hyperbolicity}. 
In Section~\ref{sec:characterization} we prove a general version of 
Theorem~\ref{thm:criterion_uniform_expansion_parabolic} and 
study   uniform expansion along periodic orbits; this makes essential use of the results of the first part. 
The focus in \S~\ref{subs:finitary}  is   on    finding 
algorithmically checkable  conditions for uniform expansion along finite orbits (Theorem~\ref{thm:effective}); 
 this leads to a    precise description of the locus of uniform expansion in the Wehler family (Theorem~\ref{thm:wehler_zariski}). 
 In \S~\ref{subs:thin}, we construct uniformly expanding actions by perturbing Kummer examples in the Wehler family; 
 in particular this work for ``thin'' subgroups of $\Aut(X)$ containing no parabolic element. 
 In Section~\ref{sec:applications} we study orbit closures and equidistribution   by proving 
 general versions of Theorems~\ref{thm:orbits_wehler} and~\ref{thm:equidistribution_wehler};
  we also explain the adaptation to the complex setting of the ergodicity 
 theorem of Dolgopyat and Krikorian~\cite{dolgopyat-krikorian}.
   
 The paper ends with an appendix on the rigidity of zero entropy measures. 


\subsection{Notes and comments} Theorem~\ref{thm:hyperbolic} was included in the first preprint version 
of~\cite{stiffness}.  We were informed of ongoing projects by Aaron Brown, Alex Eskin, 
Simion Filip and Federico Rodriguez Hertz, as well as Megan Roda, 
on the classification of stationary measures for uniformly expanding actions.
This should fit nicely with our work; indeed, parts of this article are written so as to to be easily combined with such a classification (see e.g. Theorem~\ref{thm:equidistribution_complex}), 

We are grateful  to Jean-François Quint for useful comments on Margulis functions. 



 \part{Uniform expansion for discrete group actions on manifolds}
 
\section{Generalities}\label{sec:uniform_generalities}
  
In this section, $M$ denotes  a compact manifold and $\nu$ 
is  a probability measure on the group $\Diff^1(M)$
of $C^1$ diffeomorphisms  of $M$. We fix a Riemannian metric on $M$. We denote by $\norm{\cdot}$ 
the norm induced by the metric on the tangent bundle $TM$, and by $T^1M$ the unit tangent bundle.

\subsection{Moment conditions} \label{subs:moment}
 If $f$ is a $C^1$-diffeomorphism of $M$, we denote by $f_\varstar $ its action on $TM$. Note that 
 if $v\in TM$ is a tangent vector based at $x$ (that is, $v\in T_xM$), then 
 $f_\varstar  v = D_xf(v)$ is based at $f(x)$. By definition, $\norm{f}_{C^1(X)}$ is the supremum of 
$ v\mapsto \norm{f_\varstar  v}$ on $T^1M$.
 For $f\in \Diff^1(M)$ we put 
 \begin{equation}\label{eq:subadditive_function_L}
 \mathrm L(f) =  \log\norm{f}_{C^1(X)}+ \log\norm{f\inv}_{C^1(X)};
 \end{equation}
this quantity  is subadditive: $\mathrm L(f\circ g)\leq \mathrm L(f)+\mathrm L(g)$.  
For $p\geq 1$ we   consider the moment conditions 
\begin{align}\label{eq:momentp}
\tag{M$_p$}
\int \mathrm L(f)^p \, d\nu (f)  <  + \infty,  \quad \quad \quad \quad\\
\label{eq:moment+}
\tag{M$_+$}
\exists p>1, \ \mathrm{(M}_p\mathrm{)}  \text{ holds},  \quad \quad \quad\quad \\
\label{eq:momentexp}
\tag{M$_{\exp}$}
\exists t>0, \  \int \norm{f}^t_{C^1(X)}+ \norm{f\inv}^t_{C^1(X)} \, d\nu (f)  < + \infty.
 \end{align}
When $p=1$, \eqref{eq:momentp} coïncides with the moment condition \eqref{eq:moment} from the introduction. For $p>1$,  \eqref{eq:momentp}
implies \eqref{eq:moment+} which implies \eqref{eq:moment}.
The subadditivity of $L$ and the convexity inequality 
$(r\inv \sum_{i=1}^r L_i)^p\leq r\inv  \sum_{i=1}^r L_i^p$ imply
\begin{equation}\label{eq:Mp_iteree}
\int  \mathrm L(f)^p \, d\nu^{(r)} (f) \leq r^{p} \int \mathrm L(f)^p \, d\nu (f) 
\end{equation}
for $p\in [1,+\infty[$ and $r\in \N^*$, where $\nu^{(r)}$ denotes the $r^\mathrm{th}$ convolution power of $\nu$.

\subsection{Notation for random compositions} \label{subs:notation}

Set $\Omega=\Diff^1(M)^\N$; its elements
 are sequences $\omega = (f_n)_{n\geq 0}$ of diffeomorphisms. We  use the
probabilistic notation $\ee(\cdot)$ and $\pp(\cdot)$ for the expectation and probability with respect to $\nu^\N$ 
on the probability space $\Omega$.  We let
$(\mathcal F_n)_{n\geq 1}$ be the increasing sequence of $\sigma$-algebras in $\Omega$
generated by cylinders of length $n$, so that  an event is $\mathcal F_n$-measurable if it depends only on 
the first $n$ terms $f_0, \ldots , f_{n-1}$ of $\omega=(f_n)_{n\geq 0}$. 
For $\omega = (f_n)_{n\geq 0} \in \Omega$ we put  $f^0_\omega=\id$ and 
\begin{equation}
f_\omega^n  = f_{n-1}\circ \cdots \circ f_0
\end{equation}
for $n\geq 1$; in particular $f_\omega^1 = f_0$. For $x$ in $M$ and $  v \in T_xM\setminus\{ 0\}$  we set
\begin{equation}
x_{\omega, n} = f_\omega^n(x) \quad {\text{and}} \quad
 v_{\omega, n} = \frac{ (f_\omega^n)_\varstar  (v)} {\norm{ (f_\omega^n)_\varstar  (v)}}\in T^1_{x_{\omega, n}}M.
\end{equation} 
For any sequence of integers 
$0= k_0< k_1< \cdots < k_p = n$   the chain rule gives
\begin{equation}\label{eq:chain_rule}
\log\norm{(f_\omega^n)_\varstar   v} = \sum_{j=0}^{p-1} \log \norm{ \lrpar{f^{k_{j+1}- k_j}_{\sigma^{k_j}\omega}}_\varstar   { v_{\omega, k_j}}}
\end{equation}

\subsection{Equivalent conditions for uniform expansion}

Recall that the probability measure $\nu$ on $\Diff^1(M)$ is   {\bf{uniformly expanding}}  
if there exists a real number $c>0$ and an integer $n_0\geq 1$ such that  
\begin{equation}\label{eq:UEbisrepetita}
\text{ for every } v\in T^1X, \quad 
 \int  \log   {\norm{ f_\varstar  ( v)}} \; d\nu^{(n_0)} (f) \geq c. 
 \end{equation}
Then, the cocycle relation for 
$ \log \frac{\norm{ f_\varstar  ( v)}}{\norm{ v}}$ implies that
 \begin{equation}\label{eq:UE_iteree}
 \int \log  {\norm{ f_\varstar  ( v)}}  \; d\nu^{(kn_0)}(f) \geq kc
 \end{equation}
 for every $k\geq 1$. Thus, $\nu$ is uniformly expanding if and only if 
$\nu^{(n)}$ is uniformly expanding for some (and hence for all) $n$. Moreover, the uniform expansion property 
 does not depend on our choice of a Riemannian metric on $M$. 
 
 \begin{rem} \label{rem:restriction_UE} 
If $\nu$ is uniformly expanding and the submanifold $N\subset M$ is invariant under every diffeomorphism in the support of $\nu$, 
then $\nu$ induces a uniformly expanding measure on $\Diff^1(N)$. 
 \end{rem}

\begin{lem}\label{lem:weak_UE}
Let $\nu$ be a  probability measure on $\Gamma$ satisfying  \eqref{eq:moment}. 
It is uniformly expanding if and only if 
 \begin{equation}\label{eq:weak_UE}
\forall  v\in T^1M, \ \exists n = n(v) \ \text{ such that  } \ 
 \int  \log \norm{f_\varstar  v } \; d\nu^{(n)}(f) >0.
 \end{equation}
\end{lem}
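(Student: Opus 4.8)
The implication ``$\nu$ uniformly expanding $\Rightarrow$ \eqref{eq:weak_UE}'' is immediate: if \eqref{eq:UEbisrepetita} holds for some $n_0$ and $c>0$, then $n=n_0$ works for every $v$. For the converse, the plan is a compactness argument followed by a block (stopping-time) decomposition along random trajectories, in the spirit of \cite{xliu,chung}.

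First I would note that for each fixed $n\in\N^*$ the map $v\mapsto \int\log\norm{f_\star v}\, d\nu^{(n)}(f)$ is continuous on $T^1M$: for $v\in T^1M$ one has $\abs{\log\norm{f_\star v}}\le \mathrm L(f)$, and $\int \mathrm L(f)\, d\nu^{(n)}(f)\le nC<\infty$ with $C:=\int \mathrm L\, d\nu$ by subadditivity of $\mathrm L$ (cf.\ \eqref{eq:Mp_iteree}), so continuity follows from dominated convergence. Granting \eqref{eq:weak_UE}, every $v\in T^1M$ admits an integer $n(v)$, an open neighbourhood $U_v\ni v$ and $\delta_v>0$ with $\int\log\norm{f_\star w}\, d\nu^{(n(v))}(f)\ge\delta_v$ for all $w\in U_v$. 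Since $T^1M$ is compact, finitely many $U_{v_1},\dots,U_{v_k}$ cover it; I put $N=\max_i n(v_i)$, $\delta=\min_i\delta_{v_i}>0$, and let $\ell\colon T^1M\to\{n(v_1),\dots,n(v_k)\}$ be the Borel map sending $w$ to $n(v_i)$ for the least $i$ with $w\in U_{v_i}$, so that
\begin{equation*}
\int\log\norm{f_\star w}\, d\nu^{(\ell(w))}(f)\ \ge\ \delta\qquad\text{for every }w\in T^1M .
\end{equation*}

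Next, I fix $v\in T^1M$ and, with the notation of \S\ref{subs:notation}, introduce the $(\mathcal F_n)$-stopping times $\tau_0=0$, $\tau_{j+1}=\tau_j+\ell(v_{\omega,\tau_j})$, whose increments lie in $[1,N]$, so $\tau_j\le jN$. Since $(f_n)$ is i.i.d., conditionally on $\mathcal G_j:=\mathcal F_{\tau_j}$ the map $f^{\tau_{j+1}-\tau_j}_{\sigma^{\tau_j}\omega}$ has law $\nu^{(\tau_{j+1}-\tau_j)}$ and is independent of $\mathcal G_j$; together with the cocycle relation \eqref{eq:chain_rule} and the displayed inequality at $w=v_{\omega,\tau_j}$ (which is $\mathcal G_j$-measurable), this yields $\ee\bigl[\log\norm{(f^{\tau_{j+1}-\tau_j}_{\sigma^{\tau_j}\omega})_\star v_{\omega,\tau_j}}\mid\mathcal G_j\bigr]\ge\delta$. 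Fix a large integer $T$ and set $P=\max\{j\ge0:\tau_j\le T\}$; then $0\le P\le T$, the event $\{P\ge j\}=\{\tau_j\le T\}$ is $\mathcal G_j$-measurable, and $T<\tau_{P+1}\le (P+1)N$, so $P+1>T/N$ for every $\omega$. Applying \eqref{eq:chain_rule} pathwise to the subdivision $0=\tau_0<\tau_1<\dots<\tau_{P+1}$ gives
\begin{equation*}
\log\norm{(f^{\tau_{P+1}}_\omega)_\star v}=\sum_{j=0}^{T}\mathbf{1}_{\{j\le P\}}\,\log\norm{\bigl(f^{\tau_{j+1}-\tau_j}_{\sigma^{\tau_j}\omega}\bigr)_\star v_{\omega,\tau_j}},
\end{equation*}
and, taking expectations (the sum is absolutely integrable, since $\ee\,\mathrm L(f^{\tau_{j+1}-\tau_j}_{\sigma^{\tau_j}\omega})\le NC$) and conditioning each term on $\mathcal G_j$, I obtain $\ee\bigl[\log\norm{(f^{\tau_{P+1}}_\omega)_\star v}\bigr]\ge \delta\sum_{j=0}^{T}\pp(P\ge j)=\delta\,\ee[P+1]\ge \delta T/N$.

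It then remains to replace the random time $\tau_{P+1}$ by the deterministic time $T$. Writing $f^{\tau_{P+1}}_\omega=f^{\tau_{P+1}-T}_{\sigma^T\omega}\circ f^T_\omega$ gives $\log\norm{(f^T_\omega)_\star v}=\log\norm{(f^{\tau_{P+1}}_\omega)_\star v}-\log\norm{(f^{\tau_{P+1}-T}_{\sigma^T\omega})_\star v_{\omega,T}}$, and the last term is bounded in modulus by $\mathrm L(f^{\tau_{P+1}-T}_{\sigma^T\omega})\le\sum_{l=T}^{\tau_{P+1}-1}\mathrm L(f_l)$, a sum of at most $N$ nonnegative terms at positions $l\ge T$ (because $\tau_{P+1}-T\le\tau_{P+1}-\tau_P\le N$), hence of expectation $\le NC$. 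Therefore, for every $v\in T^1M$,
\begin{equation*}
\int\log\norm{f_\star v}\, d\nu^{(T)}(f)=\ee\bigl[\log\norm{(f^T_\omega)_\star v}\bigr]\ \ge\ \frac{\delta}{N}\,T-NC,
\end{equation*}
and choosing $n_0=T$ large enough that $c:=\tfrac{\delta}{N}T-NC>0$ gives exactly \eqref{eq:UEbisrepetita}. The one genuinely delicate point is this final uniformization step: the ``overshoot'' $\tau_{P+1}-T$ and its $\mathrm L$-contribution must be absorbed using only the first moment \eqref{eq:moment}, which is why the block lengths are kept $\le N$ and the telescoping is stopped at $P+1$ (a bounded index) rather than at an unbounded hitting time.
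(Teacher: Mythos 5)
Your proof is correct and follows essentially the same strategy as the paper's (adapted from Liu): continuity plus compactness of $v\mapsto\int\log\norm{f_\varstar v}\,d\nu^{(n)}(f)$ to get a uniform lower bound $\delta$ with block lengths bounded by $N$, then a stopping-time block decomposition of the cocycle whose blocks each contribute at least $\delta$ on average, with the leftover piece controlled using only the first moment of $\mathrm L$. The only differences are cosmetic — you telescope to the first block boundary $\tau_{P+1}$ past $T$ and subtract the overshoot, while the paper telescopes to the last boundary $\tau_{K(n)}$ before $n$ and adds the tail — and you spell out more explicitly the strong-Markov/conditioning argument giving $\ee\bigl[\log\norm{(f^{\tau_{j+1}-\tau_j}_{\sigma^{\tau_j}\omega})_\varstar v_{\omega,\tau_j}}\mid\mathcal G_j\bigr]\ge\delta$, which the paper leaves implicit.
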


This is Lemma 4.3.1 of \cite{xliu}, but Liu assumes that the support of $\nu$ is compact; thus
 we briefly reproduce his  proof, assuming only \eqref{eq:moment}.  

\begin{proof} We have to show that~\eqref{eq:weak_UE} implies~\eqref{eq:UEbisrepetita}.
Since  $\abs{\log \norm{f_{\varstar}v}} \leq \mathrm L(f)$ for every $v\in T^1X$, the 
dominated convergence theorem implies that, for every $n$, 
\begin{equation}
 v\mapsto \int  \log \norm {f_{\varstar}( v)} \; d\nu^{(n)}(f)
\end{equation} is continuous. 
Thus by compactness, there exists a finite open cover $V_1, \ldots, V_p$ of $T^1M$, positive 
real numbers $c_i$, and   integers 
$n_i$ such that 
\begin{equation}
\int  \log \norm{f_\varstar  ( v) } \; d\nu^{(n_i)}(f) \geq c_i
\end{equation}
for every 
$ v\in V_i$.
Set $c_0 = \min(c_i)$ and $n_0  = \max(n_i)$. For $ v\in T^1X$ and 
$\omega\in \Omega$, define the  stopping time $\tau_1( v, \omega)$ to 
be the first integer $n\geq 1$  such that $\int  \log \norm{f_\varstar  v } \; d\nu^{(n)}(f) \geq c_0$, and then define  inductively 
\begin{equation}\tau_{k+1}( v, \omega) = \tau_k( v, \omega) + \tau_1( v_{\omega, k}, \sigma^k (\omega)).
\end{equation}
 By construction, $\tau_1$ does not depend on $\omega$ but $\tau_k$ does; in addition
    $\tau_k(v,\omega)\leq k n_0$ for all $k\geq 1$. For  $n\geq 1$, define
$K_n( v,\omega)$, or $K(n)$ for short, by  $K_n( v,\omega)=\max\set{k ; \;  \tau_k \leq n}$. Then  
 $K(n) \geq n/n_0$ and $n- K(n)\leq n_0-1$. 
 With the convention $\tau_0 = 0$, the chain rule~\eqref{eq:chain_rule} gives
\begin{align}\label{eq:stopping_bdd}
\notag\ee\lrpar{\log\norm{(f^n_\omega)_\varstar   v}} &= \ee\lrpar{\sum_{j = 0}^{K(n) -1} \log\norm{\lrpar{f_{\sigma^{\tau_j}\omega}^{\tau_{j+1} - \tau_j} }_\varstar   v_{\omega, \tau_j}}} + 
\ee\lrpar{ \lrpar{f_{\sigma^{\tau_{K(n)}}\omega}^{n- K(n)} }_\varstar   v_{\omega, \tau_{K(n)}}}\\
&\geq \frac{n}{n_0}c_0 - \max_{1\leq q\leq n_0} \ee\lrpar{ L (f_\omega^q) } \\
&\geq \frac{n}{n_0}c_0 - n_0 \int L(f)d\nu(f).
\end{align}
Thus, for $n\geq \frac{n_0}{2} + \frac{n_0^2}{c_0}\int L(f)d\nu(f)$, we have $\ee\lrpar{\log\norm{(f^n_\omega)_\varstar   v}} \geq \frac{c_0}{2}>0$ independently of $ v$, as was to be shown. 
\end{proof}

\begin{lem}\label{lem:weak_UE2}
Under the moment condition \eqref{eq:moment+}, $\nu$ is uniformly expanding if and only 
if   
\begin{equation}\label{eq:weak_UE2}
\forall v \in T^1X, \exists c>0 \ \text{ such that} \quad
\pp\lrpar{ \unsur{n}\log  \norm{ (f^n_\omega)_\varstar   v} \geq c} \underset{n\to\infty}{\longrightarrow} 1.
\end{equation}
Under the moment condition~\eqref{eq:moment}, Property~\eqref{eq:weak_UE2} implies uniform expansion.
\end{lem}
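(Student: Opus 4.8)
The plan is to prove the two implications separately: the implication \eqref{eq:weak_UE2}$\Rightarrow$(uniform expansion) will use only~\eqref{eq:moment}, while the converse requires~\eqref{eq:moment+}. In both cases the mechanism is to translate between the in‑probability behaviour of the random sums $\frac1n\log\norm{(f^n_\omega)_\varstar v}$ and the averaged quantity $\int\log\norm{f_\varstar v}\,d\nu^{(n)}(f)$ that appears in Lemma~\ref{lem:weak_UE} and in the definition~\eqref{eq:UEbisrepetita}. For the easy direction I would fix $v\in T^1M$, take $c>0$ from~\eqref{eq:weak_UE2} so that $\pp(A_n)\to1$ with $A_n=\set{\frac1n\log\norm{(f^n_\omega)_\varstar v}\ge c}$, and split the expectation $\int\log\norm{f_\varstar v}\,d\nu^{(n)}(f)=\ee\lrpar{\log\norm{(f^n_\omega)_\varstar v}}$ over $A_n$ and $A_n^c$: on $A_n$ the integrand is $\ge nc$, and on $A_n^c$ it is bounded below by $-\mathrm L(f^n_\omega)\ge-\sum_{j=0}^{n-1}\mathrm L(f_j)$, using subadditivity of $\mathrm L$ and $\abs{\log\norm{f_\varstar v}}\le\mathrm L(f)$ for $v\in T^1M$. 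Since under~\eqref{eq:moment} the i.i.d.\ averages $\frac1n\sum_{j=0}^{n-1}\mathrm L(f_j)$ converge in $L^1$ and hence are uniformly integrable, their integral over $A_n^c$ tends to $0$ as $\pp(A_n^c)\to0$; dividing by $n$ gives $\liminf_n\frac1n\int\log\norm{f_\varstar v}\,d\nu^{(n)}(f)\ge c>0$, so~\eqref{eq:weak_UE} holds for every $v$ and Lemma~\ref{lem:weak_UE} applies.

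For the direct implication I would let $c>0$ and $n_0$ be the constants of~\eqref{eq:UEbisrepetita}, fix $v\in T^1M$, write $n=qn_0+r$ with $0\le r<n_0$, and cut $\log\norm{(f^n_\omega)_\varstar v}$ into blocks of length $n_0$ by the chain rule~\eqref{eq:chain_rule}: this yields $\log\norm{(f^n_\omega)_\varstar v}=\sum_{j=0}^{q-1}\xi_j+R_n$ with $\xi_j=\log\norm{(f^{n_0}_{\sigma^{jn_0}\omega})_\varstar v_{\omega,jn_0}}$ and $R_n$ the contribution of the last $r$ maps. Because $v_{\omega,jn_0}\in T^1M$ is $\mathcal F_{jn_0}$-measurable while the $j$-th block is independent of $\mathcal F_{jn_0}$ with law $\nu^{(n_0)}$, property~\eqref{eq:UEbisrepetita} gives $\ee(\xi_j\mid\mathcal F_{jn_0})\ge c$; writing $\xi_j=a_j+d_j$ with $a_j=\ee(\xi_j\mid\mathcal F_{jn_0})$, one has $\sum_{j=0}^{q-1}a_j\ge qc$ and $M_q:=\sum_{j=0}^{q-1}d_j$ is a martingale for $(\mathcal F_{qn_0})_q$. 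The key estimates are then: $\sup_j\ee\abs{d_j}^p<\infty$ for some $p\in(1,2]$ (from~\eqref{eq:Mp_iteree}, \eqref{eq:moment+} and conditional Jensen); hence, by an $L^p$ inequality for martingale differences (Burkholder's inequality together with $(\sum d_j^2)^{p/2}\le\sum\abs{d_j}^p$ for $p\le2$), $\ee\abs{M_q}^p=O(q)$ and so $M_q/q\to0$ in probability; and $\ee\abs{R_n}\le(n_0-1)\int\mathrm L\,d\nu$ is bounded, so $R_n/n\to0$ in probability. Since $q/n\to1/n_0$ and $\frac1n\sum_{j=0}^{q-1}a_j\ge qc/n$, I would conclude $\pp\lrpar{\frac1n\log\norm{(f^n_\omega)_\varstar v}\ge\frac{c}{2n_0}}\to1$, which is~\eqref{eq:weak_UE2} with $c'=c/(2n_0)$.

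The hard part will be the control of the martingale $M_q$ under the optimal moment condition~\eqref{eq:moment+}: with only~\eqref{eq:moment} the increments $d_j$ lie merely in $L^1$, no law of large numbers need hold for them, and this is precisely why the direct implication is stated under the stronger hypothesis; it is the upgrade of integrability to $L^p$ with $p>1$, via~\eqref{eq:Mp_iteree}, combined with a martingale $L^p$ inequality, that makes the block argument run. By contrast the reverse implication is soft, the only point of care being the uniform-integrability estimate $\ee(\mathrm L(f^n_\omega)\mathbf 1_{A_n^c})=o(n)$.
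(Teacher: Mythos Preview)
Your proposal is correct and follows essentially the same approach as the paper: for the easy direction both arguments split the expectation over $A_n$ and $A_n^c$ and use uniform integrability of $\frac{1}{n}\mathrm L(f^n_\omega)$ (you via subadditivity and the i.i.d.\ LLN, the paper via Kingman); for the direct implication both center the block increments to produce martingale differences uniformly in $L^p$ and invoke a martingale law of large numbers (you sketch Burkholder for $p\le 2$, the paper cites Hall--Heyde, Thm.~2.22). The only organizational difference is that the paper first replaces $\nu$ by $\nu^{(n_0)}$ and handles the remainder $r=n-kn_0$ at the end, whereas you keep $n_0$-blocks throughout; this is cosmetic.
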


\begin{proof} 
 Let us first show that~\eqref{eq:weak_UE2} implies~\eqref{eq:weak_UE} under the assumption~\eqref{eq:moment}. 
 Fix $ v\in T^1X$,
 set $\Omega_n = \set{ \omega\in \Omega \; ; \;  \  \unsur{n}\log  \norm{ (f^n_\omega)_\varstar   v} \geq c}$, and split 
 $\ee\lrpar{\unsur{n}\log  \norm{ (f^n_\omega)_\varstar   v}}$ into the sum of an integral over $\Omega_n$ and an integral over $\Omega_n^\complement$. The first one is larger than $c \pp(\Omega_n)$, and $\pp(\Omega_n)$ tends to $1$ as $n$ goes to $+\infty$.  The second one satisfies 
  \begin{align}
  \abs{\ee\lrpar{\unsur{n}\log  \norm{ (f^n_\omega)_\varstar   v}  \mathbf 1_{\Omega_n^\complement} } } &\leq 
  \ee \lrpar{ \unsur{n} \mathrm L (f^n_\omega)  \mathbf 1_{\Omega_n^\complement} }  .
  \end{align}
The moment condition and Kingman's  subadditive ergodic theorem show that $ \unsur{n} \mathrm L(f^n_\omega)$  is uniformly integrable and converges almost surely to some finite constant; since  $ \mathbf \pp({\Omega_n^\complement}) $ converges to   $0$, we conclude that $ \ee\lrpar{\unsur{n}\log  \norm{ (f^n_\omega)_\varstar   v}}\geq c/2$ for large $n$.
  
For the converse implication we use a martingale convergence argument, as in~\cite[Lem. 4.3.5]{xliu} and~\cite[Prop. 2.2]{chung}(\footnote{Chung only assumes the moment condition \eqref{eq:moment} however it seems to us  that a stronger assumption is needed for the 
 control of the martingale differences.}). Choose $p>1$  such that 
 (M$_{p}$) holds. 
 For convenience, let us first replace $\nu$ by $\nu^{(n_0)}$, where $n_0$ is given by the espansion property~\eqref{eq:UEbisrepetita}. 
Define (for some fixed unit vector $v$)
\begin{equation}\label{eq:def_Xk}
X_k = \log\norm{  (f^1_{\sigma^k\omega})_\varstar   v_{\omega, k}} - 
\int \log\norm{   f_\varstar  ( v_{\omega, k})} d\nu(f).
\end{equation}
These increments $ X_k $ are uniformly bounded in $L^p$ because
\begin{equation}
\ee\lrpar{\abs{\log\norm{  (f^1_{\sigma^k\omega})_\varstar   v_{\omega, k}}}^p }^{1/p} \leq
 \ee\lrpar{ L(f^1_{\sigma^k\omega})^p}^{1/p} = \lrpar{\int L(f)^p d\nu(f)}^{1/p}
\end{equation}
and the second term in~\eqref{eq:def_Xk} is pointwise bounded by 
\begin{equation}
 \abs{\int \log\norm{ f_\varstar   v_{\omega, k}} d\nu(f)} \leq \int L(f) d\nu(f) \leq \lrpar{\int L(f)^p d\nu(f)}^{1/p}. 
\end{equation}
Thus, the sums $S_n = \sum_{k=0}^{n-1} X_k$ are all in $L^p$. Since $\ee({X_{n}\mid  \mathcal F_{n}})= 0$ and $S_n$ is $\mathcal{F}_{n-1}$-measurable,  $(S_n)$ is a martingale relative to the filtration $(\mathcal F_{n-1})$. 
 It follows from Theorem 2.22 in \cite[\S 2.7]{hall-heyde} that 
 $\frac{1}{n}S_n$ converges to $0$ in probability and in $L^p$. 
 Now, the chain rule gives
\begin{equation}
\frac{1}{n} S_n(\omega)= 
\unsur{n}\log \norm { (f^n_\omega)_\varstar   v} - \unsur{n} \int \log \norm { f_\varstar   v} d\nu^{(n)}(f), 
\end{equation}
and \eqref{eq:UE_iteree} asserts that 
$\int  \log {\norm{f_\varstar   v}} d\nu^{(n)}(f) \geq c n$, so we conclude that for any $c'<c$
\begin{equation}\label{eq:proba_expansion_goes_to_1}
\pp\lrpar{ \unsur{n}\log  \norm{ (f^n_\omega)_\varstar   v} \geq c'} \underset{n\to\infty}{\longrightarrow} 1,
\end{equation}
as desired. Recall however that we are working with $\nu^{(n_0)}$: 
 coming back to   $\nu$ this means that \eqref{eq:proba_expansion_goes_to_1} holds 
 along the subsequence $(nn_0)$. 
We then write $n = kn_0+ r$, with $0\leq  r \leq n_0-1$, so that 
\begin{equation}\label{eq:differential}
{(f^{n }_\omega)_\varstar   v} =  (f^{r}_{\sigma^{kn_0} \omega})_\varstar  
 (f^{kn_0}_\omega)_\varstar   v
\end{equation}
 and what we have to show is that applying $f^{r}_{\sigma^{kn_0} \omega}$ 
 does not affect 
 the linear growth of $ \log\norm{  (f^{kn_0}_\omega)_\varstar   v }$. 
 But    the   inequality~\eqref{eq:Mp_iteree}, applied with $p=1$, gives
 \begin{equation}
 \P \lrpar{\exists 0\leq r\leq n_0-1, \ \abs{\log\norm{  ( f^{r}_{\sigma^{kn_0}})_\varstar }}\geq  \e k}   
 \leq \sum_{r=0}^{n_0-1}
 \nu^{(r)}\lrpar{ L(f) \geq  \e k}
  \leq \frac{Cn_0^2}{\e k} ,
 \end{equation}
 and we are done.  
\end{proof}

\begin{rem} \label{rem:given_v} In the first part of the proof, 
the implication~\eqref{eq:weak_UE2}$\Rightarrow$\eqref{eq:weak_UE} is true for a given $v$, while the converse implication requires uniform expansion on the whole of $X$. 
\end{rem}

\begin{rem}  
This proof shows that if $\nu$ satisfies (M$_2$), then the convergence in probability in~\eqref{eq:weak_UE2} can be replaced by an almost sure convergence. (Indeed by Theorem 3 of \cite[p. 243]{Feller:vol2},  
$\frac{1}{n}S_n$ converges almost surely to $0$ when the $X_k$ are uniformly $L^2$.)
\end{rem}

\section{Inducing on a finite index subgroup} \label{sec:inducing}

\subsection{Hitting times and hitting measures  (see \cite[Chap. 5]{benoist-quint_book}) }\label{par:hitting_measure}

Let $\nu$ be a probability measure on $\Diff^1(M)$  and let $G$ be the
closed subsemigroup of $\Diff^1(M)$ generated by $\nu$. 
Let $H\subset G$ be a  closed finite  index 
subsemigroup; 
 this means that
there is a continuous and transitive action  $G\times F \to F$ on some finite set $F$ such that $H$ is the stabilizer of some element $x_0\in F$; the index of
$H$ is $[G:H]=\abs{F}$ and $F$ is the quotient space.

The hitting time $T_{H}$ of $H$ for the random walk 
induced by $\nu$  (starting from the neutral element) is  
 \begin{equation}
 T_H(\omega) = \min\set {n\geq 1, \ f^n_\omega \in H}.\end{equation}
Lemmas 5.4 and 5.5 in~\cite{benoist-quint_book} show that  $T_H$ is almost surely finite, 
admits an exponential moment, and satisfies $\ee(T_{H}) = [G:H]$.
By definition the {\bf{hitting  measure}} (or  induced measure) 
$\nu_H$ is  the probability measure on $H$ describing the distribution of $f^{T_{H}(\omega)}_\omega$.

Define   the {\bf{$k$-th   hitting time}} $T_{H,k}$ of $H$ by $T_{H,1}=T_H$ and the induction
 $T_{H,k+1}(\omega) = \min\set{n\geq T_{H,k}+1\; ; \;   f_\omega^{n} \in H}$. 
The convolution $\nu_H^{(k)}$ describes the distribution 
of  $f^{T_{H,k}(\omega)}_\omega$.
If $H$ is a finite index semigroup and  $g\in H$, $hg$ belongs to $H$ if and only if $h$ belongs to~$H$. Thus,
$T_{H, k+1}(\omega) - T_{H, k}(\omega)   = T_{H, 1}(\sigma^{T_{H, k}(\omega)}(\omega))$ and the Markov property implies that the random variables $(T_{H, k+1} - T_{H, k})$ are independent and identically distributed: each of them is distributed as $T_H$. Since their expectation equals $[G:H]$, 
the law of large numbers gives 
\begin{equation}\label{eq:limit_kth_hitting_time}
\lim_{k\to+\infty} \frac{1}{k}T_{H, k}(\omega)=[G:H]
\end{equation} $\nu^\N$-almost surely.  
  
\begin{thm}\label{thm:M$_2$} The hitting measure on a finite index subgroup satisfies the following properties 
\begin{enumerate}
\item if $\nu_H$ satisfies~\eqref{eq:momentp} for some $p\geq 1$, then so does $\nu$;  
\item if $\nu$ satisfies~\eqref{eq:momentp}, then $\nu_H$  satisfies (M$_{p'}$) for   any $1 < p' < p$;  
\item $\nu$ satisfies~\eqref{eq:moment}, or~\eqref{eq:moment+}, or~\eqref{eq:momentexp} if and only if $\nu_H$ does.
\end{enumerate}
Moreover, $\nu_H$ generates $H$ as a semigroup, which means that $H$ is the smallest closed subsemigroup of $G$ containing the support of $\nu_H$.
 \end{thm}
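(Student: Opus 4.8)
The plan is to control the behavior of $\mathrm L$ along the random walk by comparing the trajectory of the $\nu$-walk with that of the induced $\nu_H$-walk, using the hitting times $T_{H,k}$ as a bridge. Observe first that if $\omega=(f_n)$ and we write $g_k := f^{T_{H,k}(\omega)}_\omega (f^{T_{H,k-1}(\omega)}_\omega)^{-1}$ for the successive $H$-excursions, then by subadditivity of $\mathrm L$ we have $\mathrm L(g_k)\leq \sum_{j=T_{H,k-1}}^{T_{H,k}-1}\mathrm L(f_j)$, and conversely, for any fixed $n$, writing $n$ between two consecutive hitting times $T_{H,k}\le n< T_{H,k+1}$, subadditivity gives $\mathrm L(f^n_\omega)\le \sum_{i=1}^{k}\mathrm L(g_i)+\sum_{j=T_{H,k}}^{n-1}\mathrm L(f_j)$. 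Since $T_{H,1}=T_H$ has an exponential moment (Lemmas 5.4--5.5 of \cite{benoist-quint_book}) and the increments $T_{H,k+1}-T_{H,k}$ are i.i.d.\ copies of $T_H$ independent of the corresponding block of $f_j$'s, all the relevant sums are amenable to standard moment estimates. Throughout I will use \eqref{eq:Mp_iteree} to bound moments of $\mathrm L$ under the convolution powers $\nu^{(r)}$ and to handle the ``overshoot'' terms $\sum_{j=T_{H,k}}^{n-1}\mathrm L(f_j)$.

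For item (1): assume $\nu_H$ satisfies \eqref{eq:momentp}. By Wald-type / Burkholder-type inequalities, $\mathrm L(f^{T_H}_\omega)\le \sum_{j=0}^{T_H-1}\mathrm L(f_j)$ is a sum of a random number $T_H$ of i.i.d.\ terms; since $\nu_H$ is exactly the law of $f^{T_H}_\omega$, finiteness of $\int \mathrm L^p\,d\nu_H$ together with the exponential moment of $T_H$ forces $\int\mathrm L^p\,d\nu<\infty$ — here I would invoke the elementary fact that if $\sum_{j=0}^{T-1}Y_j$ has a finite $p$-th moment, $T$ has finite moments of all orders and is a stopping time with $\pp(T=1)>0$, then the i.i.d.\ $Y_j\ge 0$ have finite $p$-th moment (look at the event $\{T=1\}$, which has positive probability and on which the sum equals $Y_0$). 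For item (2): assume $\nu$ satisfies \eqref{eq:momentp}. Then by \eqref{eq:Mp_iteree} and the independence structure, $\ee(\mathrm L(f^{T_H}_\omega)^{p'})\le \ee\big((\sum_{j=0}^{T_H-1}\mathrm L(f_j))^{p'}\big)$; conditioning on $T_H=m$ (which has exponential tails) and applying the convexity inequality as in \eqref{eq:Mp_iteree} gives a bound of the form $\sum_m \pp(T_H=m)\, m^{p'}\, m^{p'-1}\int\mathrm L^{p'}d\nu \lesssim \big(\sum_m \pp(T_H=m) m^{2p'}\big)\int \mathrm L^{p}d\nu$ (using $\mathrm L^{p'}\le 1+\mathrm L^{p}$ since $p'<p$); the exponential moment of $T_H$ makes the prefactor finite, so $\nu_H$ satisfies (M$_{p'}$). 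The loss from $p$ to $p'<p$ is genuine and matches the statement.

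Item (3): the equivalence for \eqref{eq:moment} is the case $p=1$, handled symmetrically by the same two arguments (for the direction $\nu\Rightarrow\nu_H$ at $p=1$ one does not even lose an exponent, since $\ee(\mathrm L(f^{T_H}_\omega))\le \ee(T_H)\cdot\ldots$ by Wald's identity after conditioning, using $\ee(T_H)=[G:H]$); the equivalence for \eqref{eq:moment+} is immediate from (1) and (2) since \eqref{eq:moment+} just asserts (M$_p$) for \emph{some} $p>1$ and both directions preserve this (going $\nu\to\nu_H$ one may shrink $p$ slightly but stays $>1$); and for \eqref{eq:momentexp} one runs the same excursion decomposition but now with the multiplicative quantity $\norm{f^{T_H}_\omega}^t_{C^1}\le \prod_{j<T_H}\norm{f_j}^t_{C^1}$, controlling $\ee(\prod_{j<T_H}\norm{f_j}^t)$ via the exponential moment of $T_H$ and choosing $t$ small (the generating-function computation converges because $\pp(T_H=m)$ decays exponentially and $\int\norm{f}^{t_0}d\nu<\infty$ for some $t_0$ controls each factor), and the reverse direction again by restricting to $\{T_H=1\}$. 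Finally, that $\nu_H$ generates $H$ as a closed subsemigroup is essentially the content of the construction: any $h\in H$ is reachable by a $\nu$-trajectory staying in $G$, and one can splice it into a trajectory that returns to $H$ exactly at the moment it realizes $h$ — so $h\in\supp(\nu_H)$ up to the semigroup closure; I would cite \cite[Chap. 5]{benoist-quint_book} for this. The main obstacle I anticipate is the bookkeeping in item (2): keeping the dependence on $T_H$ and on the block sums disentangled so that the convexity inequality \eqref{eq:Mp_iteree} can be applied conditionally, and verifying that the resulting series in $m$ converges — this is where the exponential moment of $T_H$ is essential and where the precise exponent $p'<p$ (rather than $p'=p$) is forced.
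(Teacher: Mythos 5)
Your plan identifies the right structural ingredients (subadditivity of $\mathrm L$, hitting times, exponential moment of $T_H$, the Wald-type lemma of Bénard--De Saxcé/Benoist--Quint), and item~(3) and the generation statement are handled correctly. However both items~(1) and~(2) have genuine gaps in the probabilistic bookkeeping.

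For item~(1) the proposed ``elementary fact'' does not apply. First, $\pp(T_H=1)=\nu(H)$ may well vanish: in the Wehler situation with $\nu=\frac13(\delta_{\sigma_1}+\delta_{\sigma_2}+\delta_{\sigma_3})$ and $H$ the stabilizer of a point in the finite quotient $F$, it is quite possible that none of the $\sigma_i$ lies in $H$. Second, and more seriously, even when $\pp(T_H=1)>0$, restricting to $\{T_H=1\}=\{f_0\in H\}$ only gives $\int_H L^p\,d\nu<\infty$; it says nothing about $\int_{G\setminus H}L^p\,d\nu$, which is exactly what one needs to bound. The paper's argument is different and avoids both problems: for each coset $u\in F=G/H$ one pre-selects a finite word $a_{k(u)}\cdots a_1$ of bounded $\mathrm L$-cost, drawn from $\nu$-positive sets, which carries $u$ back to the basepoint; subadditivity gives $L(f)\le L(a_{k(u)}\cdots a_1 f)+KC$, and $a_{k(u)}\cdots a_1 f$ is a first return to $H$, so its law ``contributes positively'' to $\nu_H$. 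The key insight you are missing is this \emph{completion} of an arbitrary element of $\supp\nu$ to a first-return word of uniformly controlled extra cost.

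For item~(2) the computation implicitly treats the $L(f_j)$ as still i.i.d.\ after conditioning on $\{T_H=m\}$, which is false: the event $\{T_H=m\}$ is $\sigma(f_0,\dots,f_{m-1})$-measurable, so conditioning on it changes the law of exactly the $f_j$'s you are trying to control. The term $\ee\bigl(\mathbf 1_{T_H=m}\sum_{j<m}L(f_j)^{p'}\bigr)$ is \emph{not} $\pp(T_H=m)\,m\int L^{p'}d\nu$, and if you instead discard the indicator you lose the exponential weight $\pp(T_H=m)$ and the sum over $m$ diverges. The repair requires a genuine decoupling: the paper writes $T_H^{p'-1}=T_H^{p'-1+\alpha}T_H^{-\alpha}$ and applies Hölder (with exponents tuned so that $r p'=p$), then another discrete Hölder, and finally the identity $\ee\bigl(\sum_{i=1}^{T_H}\varphi\circ\sigma^i\bigr)=\ee(T_H)\ee(\varphi)$ from \cite[Lem.~5.4]{benoist-quint_book}. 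This is exactly where the loss $p\to p'<p$ is forced, as you rightly anticipate, but your estimate does not make the mechanism correct. If you carry out the Hölder decoupling you will see that the $\pp(T_H=m)$-weight reappears with a fractional exponent $(p-p')/p$, which is still exponentially small and makes the series converge.
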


This result still holds if we substitute any subbaditive function to $ \log\norm{f}_{C^1(X)}$ in the definition of $L$ (see Equation~\ref{eq:subadditive_function_L}), with exactly the same proof. 
\begin{proof} 
Consider the finite quotient $F$ of $G$ by $H$ and denote the action of $G$ on $F$ by left translations by ($u\mapsto au$,   $a\in G$); by definition $H$ is the stabilizer of some $x_0\in F$. Set $K=\vert F\vert = [G: H]$.

For each $u\in F$, choose a sequence of measurable subsets $A_1(u)$, $A_2(u)$, $\ldots$, $A_k(u)$ in $G$, with $k=k(u)\leq K$ 
such that $\nu(A_i(u))>0$ for each $i$ and,  for all sequences $a_i\in A_i(u)$, $(a_k \cdots   a_1) u= x_0$ 
 while $(a_j  \cdots  a_1) u\neq x_0$ if $j<k$. Since $F$ is finite, there is   a real 
number $\e >0$ such that    $\nu(A_1(u))\cdots \nu(A_{k(u)}(u))\geq \e$ for all $u$. Shrinking the $A_i(u)$ if necessary, we may assume that 
$L(g)\leq C$ for some $C>0$ and all $g$ in $\bigcup_{u,i}A_i(u)$. 

We split the integral  of $L(f)^p$ as a finite sum
$\int L(f)^pd\nu(f) = \sum_{u\in F}\int_{\set{fx_0=u }}  L(f)^pd\nu(f) $.  If $(a_{k(u)}, \ldots, a_1)\in A_{k(u)}(u)\times \cdots \times A_1(u)$, 
then $L(f)\leq L(a_{k(u)} \cdots  a_1 f)+K C$  because $L$ is subadditive; thus,  
\begin{align}
\int_{\set{fx_0=u }}   L(f)^pd\nu(f) & \leq  \int_{\set{fx_0=u }}   (L(a_{k(u)} \cdots  a_1 f)+KC)^pd\nu(f).
\end{align}
By construction, the product $a_{k(u)} \cdots  a_1 f$ is a first return in $H$. Thus, 
integrating over the $A_i(u)$, the distribution of $a_{k(u)} \cdots  a_1 f$ contributes positively 
to $\nu_H$, and we get
\begin{align}
\e \int_{\set{fx_0=u }}   L(f)^pd\nu(f) & \leq   \int_H (L(g)+KC)^pd\nu_H(g).
\end{align}
Assertion (1) follows from this estimate.

For assertion (2), we must bound
$\int \mathrm L(f)^{p'} d\nu_H(f)= \ee ( L(f_\omega^{T_H(\omega)})^{p'} )$. By subadditivity of $L$ and convexity of $s\mapsto s^{p'}$, 
\begin{align}
 \ee  {\lrpar{L\lrpar{f_\omega^{T_H(\omega)}}^{p'} }} &\leq \ee \lrpar{\lrpar{\sum_{i=0}^{T_H(\omega)-1}  \mathrm L(f_i)}^{p'} }  \\
  \label{eq:int_T}&   \leq \ee\lrpar{  T_H(\omega)^{p'-1}  \sum_{i=0}^{T_H(\omega)-1}  {\mathrm L(f_i)}^{p'}}.
\end{align}
Let $r>1$ and $q>1$ satisfy $p =r p'$  
and   $\unsur{q}+\unsur{r}= 1$. Set $\alpha=\frac{1}{q} $.
We write $T_H^{p'-1} =T_H^{p'-1+\alpha} T_H^{-\alpha}$ and  apply Hölder's inequality to
bound~\eqref{eq:int_T} from above by
\begin{align}
&\leq \ee\lrpar{  T_H^{(p'-1+\alpha)q}}^{1/q}
\ee\lrpar{  T_H^{-\alpha r}(\omega) \lrpar{\sum_{i=0}^{T_H(\omega)-1} \mathrm L(f_i)^{p'}}^{r}}^{1/r} \\
&\leq \ee\lrpar{  T_H^{(p'-1+\alpha)q}}^{1/q}
 \ee\lrpar{  T_H^{-\alpha r}(\omega) T_H^{ r/q}(\omega) \sum_{i=0}^{T_H(\omega)-1} \mathrm L(f_i)^{p}}^{1/r}
\end{align}
 where in the last line we use the discrete Hölder  inequality, $ rp' = p$, and $r-1=r/q$. Since $\alpha =1/q$ and $r=p/p'$
  the last expression reduces to  
 \begin{equation}
 \ee\lrpar{  T_H^{\frac{(p-1)p'}{p - p'}}}^{ 1 - p'/{p}}  
 \ee\lrpar{ \sum_{i=0}^{T_H(\omega)-1} \mathrm L(f_i)^{p}}^{ {p'}/{p}} .
 \end{equation}
To conclude, we apply Lemma 5.4 of \cite{benoist-quint_book}, which says that  $\ee(\sum_{i=1}^{T_H(\omega)}\varphi \circ\sigma^i  )  = \ee(T_H)\ee(\varphi)$  for any integrable function $\varphi$,  and we arrive at the bound 
  \begin{equation}
   \int\mathrm L(f)^{p'} d\nu_1(f)   \leq  
 \ee\lrpar{  T_H^{\frac{(p-1)p'}{p - p'}}}^{ 1 - p'/{p}}  
  \ee\lrpar{  T_H }^{p'/p}
 \lrpar{  \int\mathrm L(f)^{p} d\nu(f)}^{ {p'}/p}.
\end{equation}
Since $\nu$ satisfies (M$_p$)  and 
the hitting time $T_H$ admits moments of all orders,
this last expression is finite, and the proof is complete. 

Assertion (3) follows from Assertions (1) and (2) and Corollary 5.6 in \cite{benoist-quint_book}. 

For the last assertion, fix an element $h$ of $H$ and an open neighborhood $U$ of $h$ in $H$. Since the
action of $G$ on $F=G/H$ is continuous, there is an open neighborhood $V$ of $h$ in $G$ such that 
every element of $G\cap V$ is in $U$. Since the support of $\nu$ generates a dense subsemigroup of 
$G$ the random walk induced by $\nu$ that starts at the neutral element visits $V$, hence the neighborhood 
$U$ of $h$.
Thus, $\nu_H$  generates $H$. \end{proof}

\subsection{Uniform expansion of the induced measure}

\begin{pro}\label{pro:induced_UE}
Let $\nu$ be a probability measure on $\Diff^1(M)$ satisfying \eqref{eq:moment}. Assume that $\nu$ is uniformly expanding and
let $n_0$ be as in~\eqref{eq:UEbisrepetita}.  Then, the measure induced by 
$\nu^{(n_0)}$ on $H$ is uniformly expanding. 
\end{pro}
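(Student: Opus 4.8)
The plan is to reduce the claim to a \emph{one-step} expansion estimate and then run a stopping-time computation at the hitting time of $H$. Set $\mu=\nu^{(n_0)}$: this is precisely the measure whose induced measure on $H$ is in question, and by the very choice of $n_0$ in~\eqref{eq:UEbisrepetita} it satisfies the one-step estimate $\int\log\norm{f_\varstar v}\,d\mu(f)\geq c$ for \emph{every} $v\in T^1M$; it also inherits~\eqref{eq:moment} from $\nu$, by~\eqref{eq:Mp_iteree} with $p=1$. Write $\mu_H$ for the induced (hitting) measure and $T=T_H$ for the corresponding hitting time; by the discussion in~\S\ref{par:hitting_measure} and Theorem~\ref{thm:M$_2$}(3), $T$ is almost surely finite and integrable and $\mu_H$ again satisfies~\eqref{eq:moment}, so that the uniform expansion property makes sense for $\mu_H$. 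The goal thus reduces to showing that $\int_H\log\norm{g_\varstar v}\,d\mu_H(g)\geq c$ for every $v\in T^1M$, which is uniform expansion for $\mu_H$ with the integer $1$.

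To prove this, fix $v\in T^1M$ and write $\int_H\log\norm{g_\varstar v}\,d\mu_H(g)=\ee\lrpar{\log\norm{(f^T_\omega)_\varstar v}}$. The chain rule~\eqref{eq:chain_rule}, applied pathwise with the subdivision $k_j=j$ and $n=T(\omega)$, gives $\log\norm{(f^T_\omega)_\varstar v}=\sum_{j=0}^{T-1}\log\norm{(f_j)_\varstar v_{\omega,j}}$. The crucial point is that for each $j\geq0$ both the unit vector $v_{\omega,j}$ and the event $\set{T>j}=\bigcap_{1\leq n\leq j}\set{f^n_\omega\notin H}$ are $\mathcal F_j$-measurable, whereas $f_j$ is independent of $\mathcal F_j$ with law $\mu$. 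Conditioning on $\mathcal F_j$ and applying the one-step estimate therefore gives $\ee\lrpar{\log\norm{(f_j)_\varstar v_{\omega,j}}\mid\mathcal F_j}\geq c$, hence $\ee\lrpar{\log\norm{(f_j)_\varstar v_{\omega,j}}\,\mathbf 1_{\set{T>j}}}\geq c\,\pp(T>j)$. The partial sums of $\abs{\log\norm{(f_j)_\varstar v_{\omega,j}}}\,\mathbf 1_{\set{T>j}}$ are dominated by $\sum_{j=0}^{T-1}\mathrm L(f_j)$, which lies in $L^1$ (Wald's identity, as in the proof of Theorem~\ref{thm:M$_2$}, together with~\eqref{eq:moment} for $\mu$), so one may exchange summation and expectation to obtain
\[
\ee\lrpar{\log\norm{(f^T_\omega)_\varstar v}}=\sum_{j\geq0}\ee\lrpar{\log\norm{(f_j)_\varstar v_{\omega,j}}\,\mathbf 1_{\set{T>j}}}\geq c\sum_{j\geq0}\pp(T>j)=c\,\ee(T)\geq c,
\]
since $T\geq1$. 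This is the required estimate, uniform in $v\in T^1M$.

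The argument is short and I do not anticipate a real obstacle; the delicate points are only the two measurability facts $v_{\omega,j}\in\mathcal F_j$ and $\set{T>j}\in\mathcal F_j$ — which are exactly what allow the one-step estimate to be inserted at \emph{every} step of the random walk, rather than only on average — together with the $L^1$-domination needed to interchange $\sum$ and $\ee$, for which~\eqref{eq:moment} for $\mu$ and the integrability of $T_H$ are precisely the right hypotheses. Note that the precise value $\ee(T_H)=[G:H]$ plays no role; only $\ee(T_H)\geq1$ is used. The same proof shows more generally that $\mu_H$ is uniformly expanding as soon as $\mu$ satisfies the one-step estimate, which is the form in which this result is used later (e.g.\ in the proof of Proposition~\ref{pro:induced_UE2}).
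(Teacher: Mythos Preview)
Your proof is correct and is essentially the same as the paper's: both reduce to the one-step case $\mu=\nu^{(n_0)}$, use the chain rule to decompose $\log\norm{(f^T_\omega)_\varstar v}$, and exploit that $v_{\omega,j}$ and $\{T>j\}$ are $\mathcal F_j$-measurable while $f_j$ is independent of $\mathcal F_j$, together with integrability of $T_H$ and the moment condition. The only difference is packaging: the paper phrases the computation as the optional stopping theorem applied to the submartingale $S_n=\sum_{k=0}^{n-1}\bigl(\log\norm{(f^1_{\sigma^k\omega})_\varstar v_{\omega,k}}-c\bigr)$ (yielding $\ee(S_{T_H})\geq 0$, i.e.\ $\int\log\norm{g_\varstar v}\,d\mu_H(g)\geq c\,[G:H]$), whereas you carry out the equivalent summation by hand and arrive at the same bound $c\,\ee(T_H)$.
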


In fact, the next proposition shows that, under  condition~\eqref{eq:moment+}, $\nu$ is uniformly expanding if
and only if $\nu_H$ is. The proof of Proposition~\ref{pro:induced_UE} is based on a simple martingale argument, while  
Proposition~\ref{pro:induced_UE2} relies on   the criterion of Lemma~\ref{lem:weak_UE2}. 
  
\begin{proof} We use ideas from \cite[\S 4.3]{xliu} and \cite[Prop. 2.2]{chung}. 
To ease notation we rename $\nu^{(n_0)}$ into $\nu$ so that~\eqref{eq:UEbisrepetita} holds with $n_0= 1$ and some $c>0$;  as above, we denote by $\nu_H$ the measure induced by $\nu$ (i.e. by $\nu^{(n_0)}$) on $H$. Fix 
$ v\in T^1X$, and  
define a  sequence  of random variables $(Y_k)_{k\geq 0}$ by 
\begin{equation}
Y_k(\omega) = \log\norm{  (f^1_{\sigma^k\omega})_\varstar    v_{\omega, k}}- c.
\end{equation}
 Then for all $k\geq 1$,
$\ee({Y_{k}\mid  \mathcal F_{k}})\geq 0$, so that the sequence $(S_n)_{n\geq 1}$
defined by $  S_n = \sum_{k=0}^{n-1} Y_k$ is a submartingale 
relative to the filtration $(\mathcal F_n)$: $\ee(   S_{n+1}\mid \mathcal F_n)\geq   S_{n}$.
The moment condition~\eqref{eq:moment} implies that 
$\ee(\abs{S_{n+1}-S_n}\mid \mathcal F_n) = \ee(\abs{Y_n}\mid \mathcal F_n)$ is uniformly bounded. 
Since the hitting time $T_H$ is integrable,  
we can apply  the optional stopping theorem \cite[Thm. 4.7.5]{durrett_book}, which  implies that  
$\ee(S_{T_H})\geq \ee(S_1)\geq 0$. Unwinding the definitions and applying the chain rule, we see  that 
\begin{equation}
\ee(S_{T_H}) = \int \log  \norm{ f_\varstar   v}\,  d\nu_H(f) - c  [G:H],
\end{equation}  
where we use $\ee (T_H) = [G:H]$. Therefore
$\int \log  \norm{f_\varstar   v}\,  d\nu_H(f)  \geq c  [G:H]>0$, and
$\nu_H$ is uniformly expanding. 
\end{proof}

\begin{pro}\label{pro:induced_UE2}
Let $\nu$ be a probability measure on $\Diff^1(M)$ satisfying  \eqref{eq:moment+}. Let $\nu_H$ be the measure induced on a closed
finite index subsemigroup.   Then $\nu$ is uniformly expanding if and only if  $\nu_H$ is uniformly expanding. 
\end{pro}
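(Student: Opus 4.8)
The plan is to reduce both implications to the probabilistic criterion of Lemma~\ref{lem:weak_UE2}, exploiting the renewal structure of the hitting times. First note that, by Theorem~\ref{thm:M$_2$}(3), $\nu_H$ satisfies~\eqref{eq:moment+} if and only if $\nu$ does; under the hypothesis of the proposition both $\nu$ and $\nu_H$ therefore satisfy~\eqref{eq:moment+}, hence also~\eqref{eq:moment}, so Lemma~\ref{lem:weak_UE2} applies to each of them. Recall also that $f^{T_{H,k}(\omega)}_\omega$ has law $\nu_H^{(k)}$, so the random product driven by $\nu_H$ has the same distribution as the random product driven by $\nu$ observed along the increasing sequence of hitting times $(T_{H,k})$, and that by~\eqref{eq:limit_kth_hitting_time} one has $\tfrac1k T_{H,k}(\omega)\to K:=[G:H]$ for $\nu^\N$-almost every $\omega$. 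Put $\bar{\mathrm L}=\int\mathrm L(f)\,d\nu(f)$, which is finite by~\eqref{eq:moment}.

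The engine for both directions is a ``time change'' estimate. Fix $v\in T^1M$. By the chain rule~\eqref{eq:chain_rule} and subadditivity of $\mathrm L$, for all integers $0\le a\le b$ and all $\omega$,
\[
\bigl\lvert \log\norm{(f^b_\omega)_\varstar v}-\log\norm{(f^a_\omega)_\varstar v}\bigr\rvert\ \le\ \sum_{i=a}^{b-1}\mathrm L(f_i).
\]
Since the $\mathrm L(f_i)$ are i.i.d.\ and integrable, for every $\delta>0$ and any sequences of integers with $b_n-a_n\le 2\delta n$ the weak law of large numbers gives $\pp\bigl(\sum_{i=a_n}^{b_n-1}\mathrm L(f_i)\ge 3\delta\bar{\mathrm L}\,n\bigr)\to0$ (a vacuous statement if $\bar{\mathrm L}=0$, since then $\mathrm L=0$ $\nu$-a.s.). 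The single delicate point of the proof is precisely this bridging: to pass between the $\nu$-walk and the $\nu_H$-walk one joins a random hitting time $T_{H,k}\approx Kk$ to a \emph{deterministic} nearby time and must check that the intervening block, of length $o(n)$, contributes negligibly; the displayed estimate, together with the a.s.\ renewal limit~\eqref{eq:limit_kth_hitting_time}, handles exactly this, and everything else is bookkeeping.

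Assume first that $\nu$ is uniformly expanding. By Lemma~\ref{lem:weak_UE2} there is $c>0$ with $\pp\bigl(\tfrac1n\log\norm{(f^n_\omega)_\varstar v}\ge c\bigr)\to1$. Fix $\e>0$ and set $m_k=\lceil Kk\rceil$. On the event $\{\lvert T_{H,k}-Kk\rvert\le\e k\}$, of probability tending to $1$ by~\eqref{eq:limit_kth_hitting_time}, the indices lying between $T_{H,k}$ and $m_k$ form a deterministic window of length $O(\e k)$; combining the estimate above with the convergence in probability at the deterministic time $m_k$ yields, with probability tending to $1$,
\[
\tfrac1k\log\norm{(f^{T_{H,k}}_\omega)_\varstar v}\ \ge\ \tfrac{m_k}{k}\cdot\tfrac{1}{m_k}\log\norm{(f^{m_k}_\omega)_\varstar v}\ -\ C\e\bar{\mathrm L}\ \ge\ (K-\e)c-C\e\bar{\mathrm L}
\]
for $k$ large, $C$ an absolute constant. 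Taking $\e$ small makes the right-hand side $\ge Kc/2>0$, hence $\pp\bigl(\tfrac1k\log\norm{(f^{T_{H,k}}_\omega)_\varstar v}\ge Kc/2\bigr)\to1$; since $f^{T_{H,k}}_\omega$ has law $\nu_H^{(k)}$ and $\nu_H$ satisfies~\eqref{eq:moment}, Lemma~\ref{lem:weak_UE2} shows that $\nu_H$ is uniformly expanding. (When $n_0=1$ in~\eqref{eq:UEbisrepetita} this direction already follows from the optional-stopping argument of Proposition~\ref{pro:induced_UE}; the present argument covers the general case.)

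Conversely, assume $\nu_H$ is uniformly expanding. By Lemma~\ref{lem:weak_UE2} applied to $\nu_H$ there is $c_H>0$ with $\pp\bigl(\tfrac1k\log\norm{(f^{T_{H,k}}_\omega)_\varstar v}\ge c_H\bigr)\to1$. Given $n$, put $k_0=\lfloor n/K\rfloor$, so $\lvert Kk_0-n\rvert\le K$; on the event $\{\lvert T_{H,k_0}-Kk_0\rvert\le\e k_0\}$, of probability tending to $1$, the indices between $T_{H,k_0}$ and $n$ form a deterministic window of length $O(\e n)$, and arguing exactly as before we obtain, with probability tending to $1$,
\[
\tfrac1n\log\norm{(f^n_\omega)_\varstar v}\ \ge\ \tfrac{k_0}{n}\cdot\tfrac{1}{k_0}\log\norm{(f^{T_{H,k_0}}_\omega)_\varstar v}\ -\ C\e\bar{\mathrm L}\ \ge\ \tfrac{c_H}{K}-C\e\bar{\mathrm L}-o(1)\ \ge\ \tfrac{c_H}{2K}
\]
for $n$ large and $\e$ small. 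Hence $\pp\bigl(\tfrac1n\log\norm{(f^n_\omega)_\varstar v}\ge c_H/2K\bigr)\to1$, and since $\nu$ satisfies~\eqref{eq:moment}, Lemma~\ref{lem:weak_UE2} gives that $\nu$ is uniformly expanding. This establishes both implications.
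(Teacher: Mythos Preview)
Your argument is correct and follows essentially the same route as the paper: reduce both implications to the probabilistic criterion of Lemma~\ref{lem:weak_UE2}, then bridge between the hitting times $T_{H,k}$ and nearby deterministic times by controlling a block of the form $\sum_i \mathrm L(f_i)$. Two small points. First, your phrase ``the indices lying between $T_{H,k}$ and $m_k$ form a deterministic window'' is not literally true (the window is random); what you use, and what suffices, is that on the event $\{\lvert T_{H,k}-Kk\rvert\le \e k\}$ this random window is \emph{contained in} a deterministic interval of length $O(\e k)$, to which your weak-LLN estimate applies. Second, your bookkeeping is actually a bit lighter than the paper's: you only need the strong law $T_{H,k}/k\to[G:H]$ and the weak law for $\sum \mathrm L(f_i)$, whereas the paper invokes a moderate-deviations bound on $T_{H,k}$ (its Lemma~\ref{lem:deviation_hitting_times}, using the exponential moment of $T_H$) to force $n-T_{H,k}=O(n^{3/4})$ before applying Markov's inequality; you also give both directions explicitly, while the paper writes out only $\nu_H$ UE $\Rightarrow$ $\nu$ UE.
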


\begin{proof} 
Let us  show that if $\nu_H$ is uniformly expanding then $\nu$ is uniformly expanding.
The   converse implication is similar and is left to the reader   (in 
this direction, Proposition~\ref{pro:induced_UE} will actually be sufficient for our purposes). 
Fix $ v\in T^1M$. 
In view of Lemma~\ref{lem:weak_UE2}, 
we have to show that for some $c>0$, 
\begin{equation}\label{eq:expansion_n} 
\pp\lrpar{ \unsur{n}\log  \norm{ (f^n_\omega)_\varstar   v} \geq c} 
\underset{n\to\infty}{\longrightarrow} 1.
\end{equation}
Consider the sequence of hitting times $T_{H,k}$  defined in \S~\ref{par:hitting_measure} and 
denote it by $(T_k)$
for simplicity (hence $T_1=T_H$).
By Theorem~\ref{thm:M$_2$} $\nu_H$ satisfies \eqref{eq:moment+}, so we can apply
Lemma~\ref{lem:weak_UE2} to get a real number  $c>0$ such that 
\begin{equation}\label{eq:expansion_Tk}
 \pp\lrpar{ \unsur{k} \log\norm{\lrpar{f^{T_{k}(\omega)}}_\varstar   v} \geq c} \underset{k\to\infty}{\longrightarrow} 1. 
\end{equation}
To obtain~\eqref{eq:expansion_n} we denote by $K(n, \omega) := \max\set{k, \ T_{k}(\omega)\leq n}$ the number of
visits of $f^j_\omega$ in $H$ with $1\leq j\leq n$.
If $k\leq K(n,\omega)$, then 
 \begin{equation}\label{eq:differential2}
{(f^{n }_\omega)_\varstar   v} =  \lrpar{ f^{n-T_{k}(\omega)}_{\sigma^{T_{k}(\omega)}\omega}}_\varstar  
 \lrpar{f^{T_{k}(\omega)}_\omega}_\varstar   v \end{equation}
and as in Lemma~\ref{lem:weak_UE2} (see the lines following Equation~\eqref{eq:differential}), we have to 
show that applying $ (f^{n-T_{K(n,\omega)}(\omega)}_{\sigma^{T_{K(n,\omega)}(\omega) }\omega})_\varstar $ does not spoil 
the expansion obtained  in~\eqref{eq:expansion_Tk}. 

Recall from~\eqref{eq:limit_kth_hitting_time} that $\frac{1}{k}T_k(\omega)$
converges $\nu^\N$-almost surely to $\gamma:=[G:H]$.

\begin{lem}\label{lem:deviation_hitting_times}
There exists a constant $A$ and a sequence of measurable  subsets $\Omega^1_n\subset \Omega$ 
such that  $\pp(\Omega^1_n) \to 1$ as $n\to\infty$ and   for all $\omega \in \Omega^1_n$,
\begin{equation}\label{eq:K_n_omega}
\lfloor\frac{n}{\gamma} -   n^{3/4}\rfloor \leq K(n, \omega) 
\quad \quad \text{ and } \quad \quad \abs{n- T_{\lfloor\frac{n}{\gamma} -   n^{3/4}\rfloor}(\omega)}\leq A n^{3/4}.
\end{equation}
\end{lem}

We first conclude the proof of Proposition~\ref{pro:induced_UE2}. Let $\Omega_n^1$ be as in Lemma~\ref{lem:deviation_hitting_times}. Let $\Omega_n^2$ be the set of itineraries $\omega$ such that the lower bound of 
 \eqref{eq:expansion_Tk} holds for 
$k = \lfloor\frac{n}{\gamma} -   n^{3/4}\rfloor$. Then, $\pp(\Omega_n^1\cap \Omega_n^2)\to 1$ as $n\to +\infty$.
Fix $0 < \e < \gamma c$ and consider the set $\Omega_n^3\subset  \Omega_n^1\cap \Omega_n^2$ made of all $\omega$ such that  $\mathrm L\lrpar{ f^{n-T_{k}}_{\sigma^{T_{k} }\omega}} < \e n$ for $k=  \lfloor\frac{n}{\gamma} -   n^{3/4}\rfloor$. Then 
 \begin{equation}
\pp((\Omega^3_n)^\complement)\leq  \pp\lrpar{\mathrm L\lrpar{ f^{n-T_{k}}_{\sigma^{T_{k} }\omega}} \geq \e n}\leq 
\pp\lrpar{\max_{0\leq q\leq An^{3/4}} \sum_{i=0}^{q-1} L_i \geq \e n}
 \end{equation}
 where $(L_i)_{i\geq 0}$ is a sequence of independent random variables, each of them  distributed as $L(g)$ for $d\nu(g)$. Since the $L_i$ are non-negative, 
$\pp(\Omega^3_n)\leq\pp\lrpar{ \sum_{i=0}^{An^{3/4}} L_i \geq \e n}$. Now, the moment condition~\eqref{eq:moment} and the Markov inequality give 
 \begin{equation}
\pp\lrpar{ \sum_{i=0}^{An^{3/4}} L_i \geq \e n} \leq   \frac{An^{3/4} \norm{L(f)}_{L^1(\nu)}}{\e n} \leq C n^{-1/4} 
 \end{equation}
 for some $C>0$. Thus, $\pp(\Omega_n^1\cap \Omega_n^2\cap \Omega_n^3)\to 1$ as $n\to +\infty$, and the conclusion follows since  $\unsur{n}\log \norm{(f^n_\omega)_\varstar   v}\geq \gamma c-\e$ for all $\omega \in \Omega_n^1\cap \Omega_n^2\cap \Omega_n^3$.
 \end{proof}

\begin{proof}[Proof of Lemma~\ref{lem:deviation_hitting_times}]
The following moderate deviations estimate  follows from standard large deviations theory and the fact that $T_H$ has an exponential moment (see~\cite[3.7.1]{Dembo-Zeitouni}): 
\begin{equation}
\pp\lrpar{ \abs{\frac{1}{k}T_k - \gamma} \geq \unsur{k^{1/4}}}\leq C e^{-C k^{1/4}}.
\end{equation}
So if we set $\Omega^1_n=\set{\omega\; ; \;   \forall k\geq \sqrt{n}, \ \abs{T_k - k\gamma} \leq k^{1/4}}$, then 
$\pp(\Omega^1_n) \to 1$ as $n\to +\infty$. We claim that if $\omega\in \Omega^1_n$, 
there exists $A>0$ such that    
then~\eqref{eq:K_n_omega} holds for  all $n$ larger than some integer $n(A)$. Indeed let $n$ satisfy
$n/\gamma - n^{3/4} > \sqrt{n}$. Then for $m = {\lfloor\frac{n}{\gamma} -   n^{3/4}\rfloor}$ and $\omega\in \Omega_n^1$  we have 
$\gamma m- m^{3/4}\leq T_m\leq \gamma m+ m^{3/4} $; plugging in the value of $m$ 
we obtain the explicit bound  
\begin{equation}
 n  -  (\gamma +\gamma^{-3/4})  n^{3/4}\leq  T_{\lfloor\frac{n}{\gamma} -   n^{3/4}\rfloor} 
 \leq  n  + \frac{1}{\gamma^{3/4}} n^{3/4},
\end{equation}
and Property~\eqref{eq:K_n_omega} follows. 
\end{proof}

 \section{Margulis functions}\label{sec:margulis}

In this section we develop some tools for the proof of the equidistribution 
Theorem~\ref{thm:equidistribution_wehler}.  
Under appropriate assumptions, we show that the measures 
$\nu^n\ast \delta_x$ and $\unsur{n}\sum_{k=1}^n \delta_{f^k_\omega(x)}$ 
do not cluster at a $\Gamma$-periodic orbit, except when $\Gamma(x)$ is itself  finite. 
The basic tool is the  construction of a proper function, defined on the complement of such a periodic orbit, 
which essentially decreases along random trajectories. After \cite{eskin-margulis} it is often 
 referred to as a ``Margulis function'', even if  this strategy has a long history  in the Markov chain literature (see 
 \cite{meyn-tweedie}). Our presentation is greatly  influenced  by \cite{benoist-quint3} and 
 \cite{benard-desaxce}. 
 
\subsection{A general recurrence  criterion}
For concreteness, instead of general Markov chains, 
we consider the setting of  group actions. 
 
 \begin{thm}[Bénard-De Saxcé \cite{benard-desaxce}]\label{thm:margulis} Let $U$ 
  be a locally compact topological space. Let $\Gamma$ be a group of homeomorphisms of $U$, and $\nu$ be a probability measure on $\Gamma$.
 Assume that there exists a function $u:U\to \R_+$ satisfying the assumptions: 
 \begin{eqnarray}
 \label{eq:weak_margulis} 
 \exists A>0, \ \exists \gamma>0, \ \forall x\in U, \ u(x)\geq A \Rightarrow  
 \int u(f(x))d\nu(f) \leq u(x)  - \gamma \\
\label{eq:margulis_moment}
\exists B>0, \exists \eta>0, \forall x\in U, \ \int \abs{u(f(x)) - u(x) } ^{1+\eta} d\nu(f) \leq B.  
 \end{eqnarray}
Then for every $\e>0$ there exists $R>0$ such that for all $x$ in $U$,
 \begin{enumerate}
 \item  there exists $n_x\geq 0$, such that $(\nu^{n}\ast \delta_x )(\set{u\geq R}) \leq \e$ for all $n\geq n_x$; 
 \item   for $\nu^\N$-almost every $\omega$, 
 $$\limsup_{n\to\infty} \unsur{n} \# \set{k\in \set{1, \ldots, n} \; ; \;    u \lrpar{f^k_\omega(x) }\geq R} \leq \e.$$
 \end{enumerate}
 Furthermore the integer $n_x$ in (1) depends only on $u(x)$.
 \end{thm}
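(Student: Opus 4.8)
The plan is to read Theorem~\ref{thm:margulis} off the abstract recurrence criterion of Bénard and De Saxcé \cite{benard-desaxce}, by recognising the hypotheses \eqref{eq:weak_margulis}--\eqref{eq:margulis_moment} as an instance of it. First I would set up the associated Markov chain: fix $x\in U$, let $x_n=f^n_\omega(x)$ for $\omega$ distributed according to $\nu^\N$, and let $(\mathcal F_n)$ be the natural filtration. Then $(x_n)_{n\ge0}$ is a Markov chain on $U$ with transition operator $P\varphi(x)=\int\varphi(f(x))\,d\nu(f)$; the function $u$ is nonnegative (and proper in the applications); \eqref{eq:weak_margulis} says that $Pu\le u-\gamma$ on the super-level set $\set{u\ge A}$, that is, $u$ is an \emph{additive} Foster--Lyapunov function outside a sublevel set; and \eqref{eq:margulis_moment} is a uniform $(1+\eta)$-th moment bound on the one-step increments $u(x_{n+1})-u(x_n)$ along the chain. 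The laws $\nu^{n}\ast\delta_x$ are just the laws of $x_n$, so conclusions (1) and (2) are exactly the conclusions of \cite{benard-desaxce} about $n$-step distributions and about the averages $\unsur{n}\sum_{k\le n}\mathbf 1_{u(x_k)\ge R}$ along a trajectory.

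I would then recall the underlying mechanism, both to make the reduction explicit and to locate the extra assertion about $n_x$. By Jensen, \eqref{eq:margulis_moment} gives a uniform first-moment bound $B_1:=\sup_y\int\abs{u(f(y))-u(y)}\,d\nu(f)<\infty$, so each $u(x_n)$ is integrable. Setting $D=\set{u<A}$ and $\tau=\inf\set{n\ge0\;;\;x_n\in D}$, the drift \eqref{eq:weak_margulis} makes $S_n:=u(x_{n\wedge\tau})+\gamma\,(n\wedge\tau)$ a nonnegative supermartingale, whence $\gamma\,\ee(n\wedge\tau)\le\ee(S_n)\le u(x)$ and, letting $n\to\infty$, $\ee(\tau)\le u(x)/\gamma$; in particular the chain is certain to reach $D$. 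Iterating with the strong Markov property, it visits $D$ infinitely often, at times $\tau_1<\tau_2<\cdots$ with conditionally bounded expected gaps (since $Pu\le A+B_1$ on $D$, the same supermartingale started one step after a visit gives $\ee(\tau_{k+1}-\tau_k\mid\mathcal F_{\tau_k})\le1+(A+B_1)/\gamma$). Decomposing the trajectory along the blocks $[\tau_k,\tau_{k+1})$, conclusions (1) and (2) reduce to: (a) the expected time spent in $\set{u\ge R}$ during a single block tends to $0$ as $R\to\infty$, uniformly in the starting point of the block; and (b) this block estimate transfers to the almost-sure frequency in (2) by a law of large numbers applied to the conditionally centred block contributions, whose integrability comes from \eqref{eq:margulis_moment}. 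For the final assertion, I would note that all of these estimates are uniform in the starting point except through $\ee(\tau)\le u(x)/\gamma$: applying the uniform form of (1) from the first visit $x_\tau\in D$ and bounding $\pp(\tau>T)\le u(x)/(\gamma T)$ produces a threshold $n_x$ which is an explicit function of $u(x)$ alone.

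The step I expect to be the main obstacle is point (a): controlling the excursions of $u(x_n)$ above a high level $R$ under the \emph{additive} drift $Pu\le u-\gamma$, rather than under a classical geometric drift $Pu\le\lambda u+b$ with $\lambda<1$. Heuristically, to reach level $R$ during one block the increments of $u$ must produce a net climb of at least $R-A$ over a short window, which by \eqref{eq:margulis_moment} and a union bound is unlikely, and once above $R$ the drift pushes the chain back down; but because the increments of $u$ only satisfy the optimal moment condition \eqref{eq:margulis_moment}, neither $u(x_n)$ nor the block lengths have exponential moments, so making these excursion and deviation estimates quantitative and uniform against all block lengths cannot be done by the standard geometric-ergodicity arguments. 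This is exactly the technical achievement of \cite{benard-desaxce} that I would invoke, so in the write-up this part is best treated as a citation, with the paragraphs above serving to verify that our hypotheses and conclusions match theirs.
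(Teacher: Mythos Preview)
Your proposal is correct and matches the paper's approach: the paper does not give an independent proof but simply refers to \cite{benard-desaxce} (specifically Propositions~1.2, 2.5 and 2.7 there), noting that conclusion~(1) with the uniformity of $n_x$ is Prop.~2.5 and conclusion~(2) is Prop.~2.7. Your write-up is more expansive in that it sketches the Markov-chain/supermartingale mechanism and explains why the additive drift plus $(1+\eta)$-moment hypotheses fit the Bénard--De Saxcé framework, but the substance is the same citation.
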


For the proof, see Proposition 1.2 in~\cite{benard-desaxce}, and the comments following it. More 
precisely, we refer to~\cite[Prop. 2.5]{benard-desaxce} for the conclusion~(1), 
including the uniformity statement on $n_x$, and to~\cite[Prop. 2.7]{benard-desaxce} for~(2). 
Even if this is not required in the proof, the function $u$ has to be understood as a proper function on 
$U$, in which case the conclusions (1) and (2) correspond to a ``non-escape of mass'' property. 

The original contraction  property  for the Margulis function $u$ in  
\cite{eskin-margulis, benoist-quint3} 
is    
 \begin{equation}
 \label{eq:margulis} 
 \exists 0<a<1, \ \exists b>0, \ \forall x\in U, \   \int u(f(x))d\nu(f) \leq au(x)  +b
 \end{equation}
instead of~\eqref{eq:weak_margulis}. One easily checks that if $u$ satisfies~\eqref{eq:weak_margulis} and the following strong integrability property 
\begin{equation}
 \label{eq:margulis_moment_exp}
\exists B>0,  \forall x\in U, \ \int \exp\lrpar{\abs{u(f(x)) - u(x) } }d\nu(f) \leq B,  
 \end{equation}
 then  $e^{\delta u}$ 
 satisfies~\eqref{eq:margulis} for small $\delta>0$. Under this assumption, Theorem~\ref{thm:margulis} was established in 
 \cite{benoist-quint3}. 
 
\subsection{Finite orbits of $C^2$ actions}
 Let $\nu$ be a probability measure on the group of $C^2$ diffeomorphisms of a compact Riemannian 
 manifold $M$ of dimension $d$. 
 As in \S~\ref{subs:moment} we  consider the moment conditions  
\begin{align}
\label{eq:moment2p}
\tag{M$_{2, p}$}
\int   \lrpar{ \log\norm{f}_{C^2 }+ \log\norm{f\inv}_{C^2 }} ^p \, d\nu (f)  <  + \infty,   \\
\label{eq:moment2+}
\tag{M$_{2, +}$}
\exists p>1, \ \mathrm{(M}_{2,p}\mathrm{)}  \text{ holds},  \quad \quad \quad\quad \quad \quad \quad\quad 
 \end{align}
\begin{rem}
For a holomorphic action on a compact 
 complex manifold, these conditions are equivalent to their respective
 $C^1$ analogues~\eqref{eq:momentp} and~\eqref{eq:moment+}, because a uniform control on the first derivatives provides a uniform control of higher  derivatives as well. 
 \end{rem}

\begin{thm}\label{thm:margulis_UE}
Let $\Gamma$ be a group of $C^2$ diffeomorphisms of a compact Riemannian 
manifold $M$, and $\nu$ be a measure on 
$\Gamma$ satisfying the moment condition (M$_{2,+}$). 
Let $F$ be a   finite orbit of $\Gamma$ such that $\nu$ is uniformly expanding on $F$.
Then  for every $x\in M\setminus F$, for every $\e>0$ 
there exists a compact set $K\Subset M\setminus F$ such that: 
\begin{enumerate}
 \item $(\nu^{n}\ast \delta_x) (K) \geq 1- \e$ for $n\geq n_x$, and 
 \item for $\nu^\N$-almost every $\omega$, 
 $$\limsup_{n\to\infty} \unsur{n} \# \set{k\in \set{1, \ldots, n}, \   f^k_\omega(x) \in K}  \geq 1-\e.$$
 \end{enumerate}
 Furthermore the integer $n_x$ in (1) is locally uniform in $M\setminus F$. 
\end{thm}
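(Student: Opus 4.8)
The plan is to build a \emph{Margulis function} $u$ on the locally compact space $U:=M\setminus F$ which blows up along $F$, and then to quote Theorem~\ref{thm:margulis}. First I would replace $\nu$ by $\nu^{(n_0)}$, where $n_0$ is the integer in the uniform expansion hypothesis: by the convexity argument behind~\eqref{eq:Mp_iteree}, applied to the subadditive function $\mathrm L_2(f):=\log\norm{f}_{C^2}+\log\norm{f\inv}_{C^2}$, this new measure still satisfies~\eqref{eq:moment2+}, and it is uniformly expanding on $F$ with $n_0=1$. So I may assume there is $c>0$ with $\int\log\norm{D_xf(v)}\,d\nu(f)\ge c$ for every $x\in F$ and every unit vector $v$. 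The passage back to the original $\nu$ amounts to controlling the at most $n_0-1$ extra steps and is routine: it follows the moment-condition argument used in the proof of Lemma~\ref{lem:weak_UE2} after~\eqref{eq:differential}, using in addition that every $g\in\supp\nu$ permutes $F$ (as $F$ is $\Gamma$-invariant) and is therefore $e^{\mathrm L(g)}$-bi-Lipschitz for $\dist(\cdot,F)$, so that it sends each compact subset of $M\setminus F$ into another one.

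Next I would construct $u$. Write $F=\set{p_1,\dots,p_m}$, fix disjoint normal-coordinate charts centred at the $p_i$, and let $W$ be a neighbourhood of $F$ in which each point has a unique nearest point in $F$ and the charts are defined. On $W$ set $u=-\log\dist(\cdot,F)$ and extend $u$ to a continuous nonnegative function on $U$ that is bounded on $U\setminus W$; then $u$ is proper on $U$ and tends to $+\infty$ along $F$. For $x\in W$ with nearest point $p_i$, a $C^2$ Taylor expansion in the charts gives $f(x)=f(p_i)+D_{p_i}f\,(x-p_i)+R_f(x)$ with $\norm{R_f(x)}\le C_0\norm{f}_{C^2}\dist(x,F)^2$; if $\dist(x,F)$ is small enough relative to $\norm{f}_{C^1}$, then $f(x)$ has nearest point $f(p_i)$ in $F$, and — using that normal coordinates make the metric agree with the chart metric at the centres, together with the lower bound $\norm{D_{p_i}f(v)}\ge e^{-\mathrm L_2(f)}$ — the quantity $u(f(x))-u(x)$ equals $-\log\norm{D_{p_i}f(v)}$ up to an error term, where $v$ is the unit vector along $x-p_i$ and the error tends to $0$ as $\dist(x,F)\to0$, uniformly over $\set{\mathrm L_2(f)\le T}$ for any fixed $T$. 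On the complementary event I would use the crude global bound $\abs{u(f(x))-u(x)}\le\mathrm L_2(f)+C$, valid for every $x\in U$ because $f$ is $e^{\mathrm L(f)}$-bi-Lipschitz for $\dist(\cdot,F)$ and $u$ differs from $-\log\dist(\cdot,F)$ by a bounded amount.

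To obtain the drift property~\eqref{eq:weak_margulis} I would fix $T$ first, so large that $\int_{\set{\mathrm L_2>T}}(\mathrm L_2(f)+C)\,d\nu(f)<c/4$ — possible because $\mathrm L_2\in L^1(\nu)$ — and then take $A$ large, so that $\set{u\ge A}\subset W$, so that on that set every $f$ with $\mathrm L_2(f)\le T$ lands in the linearization regime above, and so that the error term there is $\le c/4$. Splitting the inequality $\int\log\norm{D_{p_i}f(v)}\,d\nu(f)\ge c$ over $\set{\mathrm L_2\le T}$ and its complement (on which $\abs{\log\norm{D_{p_i}f(v)}}\le\mathrm L_2(f)$), these choices give $\int u(f(x))\,d\nu(f)\le u(x)-\gamma$ for all $x$ with $u(x)\ge A$, for some $\gamma>0$; this is exactly~\eqref{eq:weak_margulis}. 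The integrability condition~\eqref{eq:margulis_moment} then follows at once from the same global bound and~\eqref{eq:moment2+}: choosing $\eta>0$ with $1+\eta$ below the exponent furnished by~\eqref{eq:moment2+}, one gets $\int\abs{u(f(x))-u(x)}^{1+\eta}\,d\nu(f)\le\int(\mathrm L_2(f)+C)^{1+\eta}\,d\nu(f)=:B<\infty$, uniformly in $x$.

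With $u$ at hand, Theorem~\ref{thm:margulis} applies on $U=M\setminus F$: for every $\e>0$ it produces $R>0$ such that its conclusions (1) and (2) hold for the superlevel set $\set{u\ge R}$, with $n_x$ depending only on $u(x)$. Taking $K:=\set{u\le R}$, which is a compact subset of $M\setminus F$ since $u$ is continuous and proper on $M\setminus F$, and passing to complements yields precisely the two assertions of Theorem~\ref{thm:margulis_UE}; the local uniformity of $n_x$ on $M\setminus F$ follows because $n_x$ is controlled by $u(x)$ alone — in fact it may be taken to grow at most linearly in $u(x)$, by the proof of Theorem~\ref{thm:margulis} — while $u$ is locally bounded there. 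The hard part will be the construction step: making the linearization near $F$ genuinely uniform over the possibly non-compactly supported measure $\nu$, which is what forces the $C^2$ hypothesis, and calibrating the good-event/bad-event split together with the order of the thresholds ($T$ first, then $A$) so that the average decrement of $u$ is truly negative rather than merely small.
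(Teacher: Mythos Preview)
Your proposal is correct and follows essentially the same approach as the paper: build the Margulis function $u=-\log\dist(\cdot,F)$, reduce to $n_0=1$, split $\Gamma$ into a bounded-$C^2$-norm part (handled by second-order Taylor expansion near $F$) and a tail part (handled by the moment condition), then invoke Theorem~\ref{thm:margulis}. The only cosmetic differences are that the paper keeps $u=-\log d(\cdot,F)$ globally rather than extending by a bounded function, cites~\cite[Prop.~3.3]{benard-desaxce} for the reduction from $\nu^{(n_0)}$ to $\nu$ instead of your direct argument, and uses the $(1+\eta)$-moment quantitatively (via $I_\eta/(\log R)^\eta$) for the tail estimate where you invoke only $L^1$-integrability of $\mathrm L_2$---all equivalent for the purpose at hand.
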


This result seems to be new: it appears under stronger (exponential) moment assumptions
in e.g. \cite{xliu, chung}. Note that such a result is not expected to hold under the 
first moment condition (M$_{2, 1}$), as explained in
Examples 1 and 2 of Section 2 in \cite{benard-desaxce}.

\begin{proof}
First,   the proof of Proposition 3.3 in~\cite{benard-desaxce} shows that if  
 the conclusions (1) and (2) hold for $\nu^{(n_0)}$, then they hold for $\nu$. So we can replace $\nu$ by $\nu^{(n_0)}$ and hence assume that    the uniform expansion property~\eqref{eq:UE} holds (on $F$) for $n_0=1$. 
 
Let $d(\cdot,\cdot)$ be the Riemannian distance on $M$. According to Theorem~\ref{thm:margulis}, we only need to show that  
$u\colon x\mapsto  - \log d(x, F)$  is a
proper function $M\setminus F\to \R_+$  satisfying   Properties~\eqref{eq:weak_margulis} 
and~\eqref{eq:margulis_moment}. 

\noindent{\bf{Preliminaries}}.-- We set 
$N(f)=\norm{f}_{C^2}+\norm{f\inv}_{C^2}$ and note that $N(f)\geq \Lip(f) + \Lip(f\inv)$ for every $f\in \Gamma$.
In particular, for every $x\in X$
\begin{equation}
\label{eq:lipschitz}
 \frac{1}{N(f)} \leq \frac{d (f(x), F)}{d(x,F)} \leq  N(f).
\end{equation}
For $R>0$,  set $\Gamma(R)=\{f\in \Gamma \; ; \; N(f) \leq R\}$.
We choose $\eta >0$ such that the moment condition~\eqref{eq:moment2p} is satisfied with $p=1+\eta$. Then, 
\begin{equation}
I_\eta:=\int_{\Gamma} \left(\log(N(f))\right)^{1+\eta} d\nu(f) 
\end{equation}
is a finite positive number. In what follows, we   choose $R>1$ such that 
\begin{equation}
\frac{2I_\eta}{(\log(R))^\eta}< \frac{c}{4}
\end{equation}
where $c$ is the expansion factor in Equation~\eqref{eq:UE} (along the finite orbit $F$). 

Take $s>0$ such that 
\begin{itemize}
\item $s$ is smaller than the injectivity radius of $M$ at $y$, for every $y\in F$;
\item the balls $B(y;s)$, for $y$ in $F$, are pairwise disjoint;
\item $C_0R^2 s < c/4$, where $c$ is the expansion factor as above, and $C_0$ is the constant   appearing
 below in the Taylor expansion (Equation~\eqref{eq:Taylor}). 
\end{itemize}
Then, define $V$ and $V'$ by 
\begin{equation}
V=\bigcup_{y\in F} B(y;s), \quad V' = \bigcup_{y\in F} B(y;s/R).
\end{equation}
By~\eqref{eq:lipschitz} we have $f(V')\subset V$ for every $f\in \Gamma(R)$. 

If $x$ belongs to $V$, we denote by $\pi(x)$ the unique point of $F$ at distance $\leq s$ from $x$, and we denote by $w_x$  the unique vector in $T_{\pi(x)}M$ such that $\exp_{\pi(x)}(w_x)=x$ and $\norm{w_x}=d(x,\pi(x))$.

\noindent{\bf{First estimate}}.-- For $f$ in $\Gamma(R)$ and $x\in V'$, Taylor's second order formula yields 
\begin{equation}\label{eq:Taylor}
\abs{d(f(x),f(\pi(x))) - \norm{f_\varstar (w_x)}}\leq C_0 N(f)d(x, \pi(x))^2,
\end{equation}
for some uniform constant $C_0$, that does not depend on $f$. This gives 
 \begin{equation}
 \abs{\frac{d(f(x),F)}{d(x, F)} - \frac{\norm{f_\varstar (w_x)}}{\norm{w_x}}} \leq C_0 N(f) d(x,F).
 \end{equation}
Now, using the Lipschitz estimate~\eqref{eq:lipschitz} and the fact that $\abs{\log(a) -\log(b)}\leq N\abs{a-b}$ when $a, b\in [N\inv,N]$, we obtain 
 \begin{equation}\label{eq:delta_1}
 \abs{\log\left(\frac{d(f(x),F)}{d(x, F)}\right) - \log\left(\frac{\norm{f_\varstar (w_x)}}{\norm{w_x}}\right)} \leq C_0 N(f)^2 d(x,F).
 \end{equation}
 By the definition of $\Gamma(R)$ and the requirements on $s$, we get 
  \begin{align}
\int_{f\in \Gamma(R)} \abs{\log\left(\frac{d(f(x),F)}{d(x, F)}\right) - \log\left(\frac{\norm{f_\varstar (w_x)}}{\norm{w_x}}\right)} d\nu(f)  & \leq C_0 R^2 d(x,F) \leq  \frac{c}{4},
 \end{align}
  because $d(x,F)\leq s$.

\noindent{\bf{Second estimate}}.-- Now, for any $f$ in $\Gamma$ we also  have 
\begin{equation}\label{eq:lipschitz_log_ratio_estimate}
 \abs{\log\left(\frac{d(f(x),F)}{d(x, F)}\right) - \log\left(\frac{\norm{f_\varstar (w_x)}}{\norm{w_x}}\right)} \leq 2\log(N(f))
\end{equation}
hence Markov's inequality  and our choice of $R$ give
\begin{align}
\int_{f\in \Gamma(R)^\complement} \abs{\log\left(\frac{d(f(x),F)}{d(x, F)}\right) - \log\left(\frac{\norm{f_\varstar (w_x)}}{\norm{w_x}}\right)} d\nu(f)& \leq \frac{2}{\log(R)^\eta}I_\eta  \leq \frac{c}{4}.
\end{align}

\noindent{\bf{Conclusion}}.--  Summing the integrals over $f$ in $\Gamma(R)$ and $\Gamma(R)^\complement$, we obtain 
\begin{equation}\label{eq:distanceF}
\int_{f\in \Gamma} \abs{\log\left(\frac{d(f(x),F)}{d(x, F)}\right) - \log\left(\frac{\norm{f_\varstar (w_x)}}{\norm{w_x}}\right)} d\nu(f) \leq \frac{c}{2}.
\end{equation}
Since  $w_x$ is a vector tangent to $M$ at $\pi(x)\in F$, 
 the uniform expansion along $F$ yields \begin{equation}\int  \log\left(\frac{\norm{f_\varstar (w_x)}}{\norm{w_x}}\right) d\nu(f) \geq c\end{equation} and 
 then~\eqref{eq:distanceF} implies that 
 \begin{equation}
 \int  - \log  {d(f(x),F)} d\nu(f) \leq  - \log(d(x, F)) - c/2.  
 \end{equation}
In other words, $u\colon x\mapsto -\log(d(x,F))$ satisfies Property~\eqref{eq:weak_margulis} (with $A=-\log(s)$). Property~\eqref{eq:margulis_moment} is obtained from~\eqref{eq:lipschitz} and the moment condition.
Thus, as announced above, $u$ satisfies the assumptions of Theorem~\ref{thm:margulis}, and we are done. \end{proof}

The local uniformity of $n_x$ in Theorem~\ref{thm:margulis_UE} has the following interesting consequence. 

\begin{pro}\label{pro:finite_mass}
Under the assumptions of Theorem~\ref{thm:margulis_UE}, any stationary Radon measure on $M\setminus F$ has finite mass. 
\end{pro}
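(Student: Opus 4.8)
The plan is to deduce Proposition~\ref{pro:finite_mass} from conclusion~(1) of Theorem~\ref{thm:margulis_UE}, exploiting the local uniformity of $n_x$. Let $\lambda$ be a nonzero stationary Radon measure on $M\setminus F$; recall stationarity means $\int f_\varstar\lambda\, d\nu(f)=\lambda$, and Radon means $\lambda$ is finite on compact subsets of $M\setminus F$. The goal is to show $\lambda(M\setminus F)<\infty$.

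First I would fix a point $x_0\in M\setminus F$ and work on a small compact neighborhood $L\Subset M\setminus F$ of $x_0$. By Theorem~\ref{thm:margulis_UE}(1), applied with some fixed $\e\in(0,1)$, there is a compact set $K\Subset M\setminus F$ and, by the local uniformity statement, an integer $n_1$ (uniform over $x\in L$) such that $(\nu^{n_1}\ast\delta_x)(K)\geq 1-\e$ for every $x\in L$. Now I integrate this inequality against $\lambda$ restricted to $L$: using stationarity (iterated $n_1$ times, so $\nu^{(n_1)}\ast\lambda=\lambda$) and the fact that $\nu^{(n_1)}\ast(\lambda\rest{L})\leq\nu^{(n_1)}\ast\lambda=\lambda$, one gets
\begin{equation}
\lambda(K)\;\geq\;\int_L (\nu^{(n_1)}\ast\delta_x)(K)\, d\lambda(x)\;\geq\;(1-\e)\,\lambda(L).
\end{equation}
Since $K$ is compact in $M\setminus F$, $\lambda(K)<\infty$, hence $\lambda(L)\leq(1-\e)^{-1}\lambda(K)<\infty$.

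The point that needs a little care is passing from "$\lambda(L)<\infty$ for every compact $L$" — which is just the Radon hypothesis — to global finiteness. The improvement is that the bound $\lambda(L)\leq(1-\e)^{-1}\lambda(K)$ has a right-hand side \emph{independent of $L$}: the same $K$ works, via the local uniformity of $n_x$, for all $x$ ranging over a neighborhood, and by a compactness/covering argument over $M\setminus F$ one can choose, for each point, such a neighborhood, but one must arrange the bound to be genuinely uniform. Concretely, I would cover $M\setminus F$ by countably many compact sets $L_j$ exhausting it, $L_j\nearrow M\setminus F$; for each $j$ the argument above (with $L=L_j$) gives a compact $K_j$ and $\lambda(L_j)\leq(1-\e)^{-1}\lambda(K_j)$, but a priori $\lambda(K_j)$ could grow. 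To avoid this, one should instead run the argument once with $L$ taken to be an arbitrary compact exhausting set and observe that the set $K$ produced by Theorem~\ref{thm:margulis_UE} depends on $x$ and $\e$ but, crucially, for a \emph{fixed} $\e$ one may take a single $K=K_\e$ and single $n_1=n_1(\e)$ valid on any prescribed compact $L$; then applying the displayed inequality with $L=L_j$ for all $j$ and letting $j\to\infty$ gives $\lambda(M\setminus F)\leq(1-\e)^{-1}\lambda(K_\e)<\infty$.

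The main obstacle is thus the uniformity bookkeeping: making sure that in Theorem~\ref{thm:margulis_UE}(1) one genuinely gets, for each fixed $\e$, a \emph{single} compact $K$ and a locally bounded $n_x$ that serve simultaneously for all starting points in an arbitrarily large compact set — this is exactly what the phrase "the integer $n_x$ in (1) is locally uniform in $M\setminus F$" is there to provide, together with the fact that the proof of Theorem~\ref{thm:margulis_UE} shows $K$ can be taken to be $\{u\leq R\}$ for $R=R(\e)$ independent of $x$. Once that is pinned down, the stationarity-plus-Fubini estimate above closes the argument with no further work.
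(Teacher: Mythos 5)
Your proof is correct and takes essentially the same approach as the paper's: iterate stationarity $n_1$ times, use the local uniformity of $n_x$ from Theorem~\ref{thm:margulis_UE} to get a single $n_1$ valid on a compact $L$, and deduce a bound $\lambda(L)\leq(1-\e)^{-1}\lambda(K)$ with right-hand side independent of $L$, then exhaust. You are right to flag that $K$ must not depend on $x$ — the statement of Theorem~\ref{thm:margulis_UE} reads as if it might, but the underlying Theorem~\ref{thm:margulis} (Bénard--De Saxcé) produces $K=\{u\leq R\}$ with $R=R(\e)$ uniform in $x$, which is exactly what both your argument and the paper's implicitly use.
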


\begin{proof}
Let $\mu$ be such a stationary measure. Fix $\e>0$, say $\e =1/2$ and 
let $K$ be as in Theorem~\ref{thm:margulis_UE}. The stationarity of $\mu$ implies that for every $n\geq 0$, 
\begin{equation}
(\nu^{(n)}\times\mu)\lrpar{ \set{(g, x), \ gx \in K}} = \mu(K), 
\end{equation}
hence for every borel set $B\subset M\setminus F$, 
\begin{equation}\label{eq:margulis_uniformity}
\int_B \nu^{(n)}\lrpar{\set{g, \ gx\in K}} d\mu(x) \leq \int_X \nu^{(n)}\lrpar{\set{g, \ gx\in K}} d\mu(x)  = \mu(K). 
\end{equation}
Now if $B$ is an arbitrary compact subset of $M\setminus F$,   the uniformity statement in 
Theorem~\ref{thm:margulis_UE}  implies that there exists $n = n_B$ such that 
for every $x\in B$, 
\begin{equation}
\nu^{(n_B)}\lrpar{\set{g, \ gx\in K}}\geq \frac12.
\end{equation}
Plugging this into~\eqref{eq:margulis_uniformity}, we obtain $\frac12 \mu(B) \leq \mu(K)$. 
Since $B$ is arbitrary, this implies that $\mu(M\setminus F)\leq 2\mu(K)$ and we are done. 
\end{proof}

\subsection{Totally real invariant manifolds}
We now consider a situation which is specific to the complex setting. 

\begin{thm}\label{thm:margulis_totalement_reel}
Let $X$ be a compact complex manifold of dimension $d$. Let $\Gamma$ be a group of holomorphic diffeomorphisms of $X$, 
endowed with a probability measure $\nu$ satisfying~\eqref{eq:moment+}.  Let $Y\subset X$  
be a $\Gamma$-invariant,
 analytic, totally real submanifold of maximal (real) dimension $d$, such that
  $\nu$ is uniformly expanding on $Y$.  
 Then for any 
$x\in X\setminus Y$ and any $\e>0$, there exists a compact subset $K\Subset X\setminus Y$ such that the 
conclusions (1) and (2) of Theorem~\ref{thm:margulis_UE} hold. 

The result also holds if $Y$ admits finitely many singular points, under the stronger
assumption that $\nu$ is finitely supported.
\end{thm}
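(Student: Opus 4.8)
The plan is to follow the proof of Theorem~\ref{thm:margulis_UE} almost verbatim, taking as Margulis function $u(x)=-\log d(x,Y)$ on $X\setminus Y$ and checking hypotheses \eqref{eq:weak_margulis} and \eqref{eq:margulis_moment} of Theorem~\ref{thm:margulis}. As there, one first replaces $\nu$ by $\nu^{(n_0)}$ to arrange that the uniform expansion property \eqref{eq:UE} on $Y$ holds with $n_0=1$ (the conclusions~(1)--(2) for $\nu^{(n_0)}$ imply those for $\nu$ by the argument of~\cite{benard-desaxce}). Since outside a tubular neighborhood of $Y$ both conclusions are trivially true by compactness, the whole content is the on-average decrease inequality for $x$ close to $Y$. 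The only novelty with respect to Theorem~\ref{thm:margulis_UE} is that $Y$ now has positive dimension, so one must understand $D_xf$ in the directions normal to $Y$; holomorphy is what makes this work.

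First I would fix a suitable metric. Because uniform expansion does not depend on the choice of Riemannian metric, and because changing the metric alters $-\log d(\cdot,Y)$ by a bounded amount near the compact set $Y$, I may choose a Hermitian metric on $X$ whose underlying real inner product makes the splitting $T_yX=T_yY\oplus J(T_yY)$ orthogonal at every $y\in Y$ ($J$ denoting the complex structure). Such a metric exists: pointwise one builds it from a real frame $e_1,\dots,e_d$ of $T_yY$, which is simultaneously a complex frame of $T_yX$, by declaring it unitary; locally on $Y$ one extends such a frame to a neighborhood in $X$; and these local Hermitian metrics can be glued by a partition of unity, convex combinations still making the splitting orthogonal along $Y$. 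With this choice, for $x$ in a tubular neighborhood of $Y$ the minimizing vector $w_x:=\exp_{\pi(x)}\inv(x)$, where $\pi(x)\in Y$ is the nearest point, is orthogonal to $T_{\pi(x)}Y$, hence lies in $J(T_{\pi(x)}Y)$; write $w_x=Jv_x$ with $v_x\in T_{\pi(x)}Y$ and $\norm{v_x}=\norm{w_x}=d(x,Y)$. Since $f$ is holomorphic, $f_\varstar w_x=J(f_\varstar v_x)$; since $Y$ is $\Gamma$-invariant, $f_\varstar v_x\in T_{f(\pi(x))}Y$; hence $f_\varstar w_x\in J(T_{f(\pi(x))}Y)=(T_{f(\pi(x))}Y)^\perp$ is \emph{still normal} and $\norm{f_\varstar w_x}=\norm{f_\varstar v_x}$.

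Next I would run the error analysis of Theorem~\ref{thm:margulis_UE}. A second-order Taylor expansion of $\exp_{f(\pi(x))}\inv\circ f\circ\exp_{\pi(x)}$ at the origin, together with a bound on the second fundamental form of the compact submanifold $Y$, yields $\abs{d(f(x),Y)-\norm{f_\varstar w_x}}\leq C_0 N(f)\,d(x,Y)^2$ as in \eqref{eq:Taylor}, with $N(f)=\norm{f}_{C^2}+\norm{f\inv}_{C^2}$; the only use of holomorphy here is that the normal component of $f_\varstar w_x$ is all of $f_\varstar w_x$. Splitting $\int d\nu$ over $\Gamma(R)=\set{N(f)\leq R}$ and its complement exactly as before — the Taylor term being $<c/4$ on $\Gamma(R)$ for small tube radius $s$, the tail being $<c/4$ by Markov's inequality and the moment condition \eqref{eq:moment+} for suitable $R$ — gives $\int\abs{\log\tfrac{d(f(x),Y)}{d(x,Y)}-\log\tfrac{\norm{f_\varstar v_x}}{\norm{v_x}}}\,d\nu(f)\leq c/2$, and then uniform expansion on $Y$, applied to the genuine tangent vector $v_x\in T_{\pi(x)}Y$, yields $\int-\log d(f(x),Y)\,d\nu(f)\leq-\log d(x,Y)-c/2$. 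Thus $u$ satisfies \eqref{eq:weak_margulis}; property \eqref{eq:margulis_moment} follows from the Lipschitz estimate $N(f)\inv\leq d(f(x),Y)/d(x,Y)\leq N(f)$ (the analogue of \eqref{eq:lipschitz}, valid because $f(Y)=Y$) and the moment condition. Theorem~\ref{thm:margulis} then concludes the smooth case.

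For the singular case, let $Y_{\mathrm{sing}}$ be the finite singular locus; it is $\Gamma$-invariant because $\Gamma$ acts by biholomorphisms preserving $Y$, and I would use that $\nu$ is uniformly expanding on the finite set $Y_{\mathrm{sing}}$ (this being part of the hypothesis in the natural reading, or obtained by passing to the limit in the expansion bound along $Y\setminus Y_{\mathrm{sing}}$). On a small ball around each point of $Y_{\mathrm{sing}}$ one uses the Margulis function $x\mapsto-\log d(x,Y_{\mathrm{sing}})$ provided by Theorem~\ref{thm:margulis_UE} (applicable since $\nu$ is finitely supported, hence satisfies all moment conditions); away from $Y_{\mathrm{sing}}$ the submanifold $Y$ is smooth, $\Gamma$-invariant and totally real, and the construction above applies, using in addition that the tangent cone to the analytic set $Y$ at each singular point is totally real so that $-\log d(\cdot,Y)$ still decreases on average for points near $Y$ close to $Y_{\mathrm{sing}}$. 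One then glues the two proper functions into a single $u\colon X\setminus Y\to\R_+$, for instance by a partition of unity subordinate to the two regions or by an appropriate maximum, and checks that \eqref{eq:weak_margulis} persists on the transition zone. \emph{This gluing is the main obstacle}: one must control the average variation of $u$ across the interface between the ``near $Y_{\mathrm{sing}}$'' and ``near the smooth part'' regions, and it is precisely finite support that keeps $N(f)$ and the width of the interface uniformly bounded, so that both local decrease estimates survive the interpolation. With $u$ in hand, Theorem~\ref{thm:margulis} again yields conclusions~(1) and~(2).
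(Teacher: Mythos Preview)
Your smooth-case argument is correct and takes a genuinely different, more intrinsic route than the paper. The paper works in finitely many local charts in which $Y$ becomes $\{\Ima(z)=0\}$, tracks the bilipschitz distortion $D$ of the charts, and iterates $\nu$ until the expansion constant $c$ dominates $10D$; the key point is that in such a chart $d\tilde f$ at a point of $Y$ is a real matrix, hence sends purely imaginary vectors to purely imaginary vectors. You instead choose a global Hermitian metric adapted to the splitting $T_yX=T_yY\oplus J(T_yY)$ and observe directly that holomorphy plus invariance of $Y$ forces $D_yf$ to preserve the normal bundle, so $\norm{f_\varstar w_x}=\norm{f_\varstar v_x}$ with $v_x\in T_{\pi(x)}Y$. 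Both arguments encode the same mechanism; yours avoids the chart bookkeeping at the cost of building the adapted metric, which is harmless.

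For the singular case your outline diverges from the paper and has a gap. The paper does \emph{not} glue two Margulis functions into one on $X\setminus Y$. Instead it runs two separate applications of Theorem~\ref{thm:margulis} and intersects the resulting compact sets: first Theorem~\ref{thm:margulis_UE} applied to the finite invariant set $\Sing(Y)$ gives $K_1\Subset X\setminus V_1$ for a neighborhood $V_1$ of $\Sing(Y)$; second, one builds a Margulis function $u$ on $X\setminus(Y\cap V_2)$, where $V_2$ is a tube around $Y$ with a smaller neighborhood of $\Sing(Y)$ excised, by setting $u=-\log d(\cdot,Y)$ on $V_2$ and $u\equiv\min_{V_2}u-B$ elsewhere, with $B=\sup\{\log N(f):f\in\supp(\nu)\}$. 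Finite support enters precisely to make $B$ finite: then for $x\in V_2$ with $f(x)\notin V_2$ one has $u(x)-u(f(x))\geq B\geq\log\norm{D_{\pi(x)}f(v_x)}$, so this term contributes nonnegatively to the decrease inequality and no interpolation across an interface is needed. One then takes $K=K_1\cap K_2$. Your partition-of-unity or maximum gluing would instead have to control the variation of the cutoff against both pieces, and your function $-\log d(\cdot,\Sing(Y))$ is proper on $X\setminus\Sing(Y)$, not on $X\setminus Y$, so by itself it gives no repulsion from the smooth part of $Y$ near the singularities; the aside about totally real tangent cones does not fill this gap.
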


By ``uniformly expanding along $Y$'' we mean   that the restriction of $\Gamma$ to $Y$ is 
uniformly expanding viewed as an action on $Y$, or equivalently 
that the uniform expansion condition~\eqref{eq:UE} holds in $X$ for every $x\in Y$; the equivalence between the two conditions comes from the fact that for every $x\in Y$, the 
complex span of  $T_xY$ is $T_xX$. When $Y$ is singular, we require that~\eqref{eq:UE} holds in $X$ along 
$\Sing(Y)$.  

Note also that this  statement is specific to totally real submanifolds and holomorphic actions.  In other words, 
there is no analogue of Theorem~\ref{thm:margulis}  when $F$ is replaced by an arbitrary  submanifold: 
 see Example~\ref{eg:margulis_submanifold} below.
 
\begin{proof}[Proof of Theorem~\ref{thm:margulis_totalement_reel} when $Y$ is smooth]
We suppose $Y$ smooth and show that there exists $n\geq 1$ such that 
 $x\mapsto  - \log d(x, Y)$ defines a 
Margulis function (i.e. satisfies~\eqref{eq:weak_margulis} and~\eqref{eq:margulis_moment}) 
 for $\nu^{(n)}$. Then, as explained before, \cite{benard-desaxce} shows that \eqref{eq:weak_margulis} and~\eqref{eq:margulis_moment} are automatically satisfied with $n=1$. As in Theorem~\ref{thm:margulis}, Property~\eqref{eq:margulis_moment} follows from the invariance of $Y$ and 
 the bilipschitz property; so we focus on~\eqref{eq:weak_margulis}.   

For every $x\in Y$ there exists a local chart in which 
the equation of $Y$ becomes $\Ima (z) = 0$, where  $\Ima(z) = \Ima (z_1, \ldots , z_d) = 
(\Ima (z_1), \ldots , \Ima (z_d))$ (see \cite[Prop. 1.3.8 and 1.3.11]{BER}). We fix   
a finite family $\phi_i: U_i\to \C^d$ of such charts,
covering a neighborhood of $Y$. The charts being bilipschitz,
there exists an absolute constant $D$ such that if $x\in U_i$, 
$\abs{\log d(\phi_i(x),  \phi_i(Y)) - \log d(x,Y)}\leq D$. Then  from~\eqref{eq:UE_iteree}, replacing 
 $\nu$ by $\nu^{(n)}$ we may assume that the  uniform expansion holds for $n=1$ and the expansion 
 constant $c$  is bigger than $10D$.   
 This argument shows that it is enough to prove uniform expansion for $-\log d(\cdot, Y)$ in the charts 
to infer the same property on  $X$.  

Let $d_{U_i}$ denote the euclidean distance in the $i$-th chart (pulled back by $\phi_i$). 
In $U_i$, write
$\phi_i(x) = z = (z_1, \ldots, z_d)$ and $\phi_i(Y)  = \{\Ima (z) = 0\}$. 
Let $\pi(\phi_i(x)) = (\Rea(z_1), \ldots , \Rea(z_d))$ be the projection of $\phi_i(x)$ on $Y$, 
so that 
\begin{equation} \label{eq:dui}
d_{U_i}(x,Y)  =  \norm{ \phi_i(x) - \pi(\phi_i(x))} = \norm{(\Ima (z_1), \ldots , \Ima (z_d))}  = \norm{\Ima(\phi_i(x))}. 
\end{equation}
As before let $\Gamma(R) = \set{f\in \Gamma \; ;  \  N(f)\leq R}$, where 
$N(f)  = \norm{f}_{C^2} + \norm{f\inv}_{C^2}$,
and fix $f\in \Gamma(R)$.  If $x$ is sufficiently close to $Y$, then so does $f(x)$, hence 
$f(x)$ belongs to some chart $U_j$ and working  in this chart we get
$d_{U_j}(f(x),Y) = \norm{\Ima(\phi_j(f(x)))}$. 
 Applying Taylor's formula to the coordinate expression $\tilde f$ of $f$, we obtain 
 \begin{align}
\notag \phi_j(f(x)) &=  \tilde f(\phi_i(x))  \\& \notag  = \tilde f (\pi( \phi_i(x))) + d \tilde f_{\pi( \phi_i(x))} (\phi_i(x) - \pi( \phi_i(x)))
+ O\lrpar{ \norm{ \phi_i(x) - \pi(\phi_i(x))}^2}.
 \end{align}
 Now, observe that the vector $d \tilde f_{\pi( \phi_i(x))} (\phi_i(x) - \pi( \phi_i(x)))$ is purely imaginary 
 because $\phi_i(x) - \pi( \phi_i(x))$ is purely imaginary and $d \tilde f_{\pi( \phi_i(x))}$ is real, 
 since it preserves $Y$. 
Thus,  taking imaginary parts and using~\eqref{eq:dui} yields
\begin{equation}
\abs{ \frac{d_{U_j}(f(x), Y)}{d_{U_i}(x, Y) }- \norm{df_{\pi( x)}(v_x) }}\leq C R d_{U_i}(x, Y),
\end{equation}
where
$v_x = \phi_i^\varstar\lrpar{\frac{\phi_i(x) - \pi( \phi_i(x))}{\norm{\phi_i(x) - \pi( \phi_i(x))}}}$,
$\pi(x) = \phi_i\inv \pi( \phi_i(x))$, and the constant $C$ depends only on the charts. 
Arguing as in~\eqref{eq:delta_1},   plugging in the bilipschitz estimate for the distance to $Y$, and increasing $C$ if necessary  we get  
\begin{equation}
\abs{\log \frac{d (f(x), Y)}{d (x, Y) }- \log \norm{df_{\pi( x)}(v_x) }}\leq C R^2  d(x, Y) + 2D.
 \end{equation}
Finally,   using the moment condition to deal 
with the contribution of $\Gamma\setminus \Gamma(R)$ as in Theorem~\ref{thm:margulis}, we obtain 
\begin{equation}
\int_\Gamma 
\abs{\log \frac{d(f(x), Y)}{d(x, Y) }- \log \norm{df_{\pi(x)}(v_x) }} d\nu(f) \leq C R^2 d(x, Y)  
 + 2D + \frac{C}{(\log R)^\eta},
 \end{equation}
and we conclude that $\log d(\cdot , Y)$ is a Margulis function by first fixing a large $R$ and then choosing
$x$ sufficiently close to $Y$,  as in Theorem~\ref{thm:margulis}. 
  \end{proof}
  \begin{proof}[Proof of Theorem~\ref{thm:margulis_totalement_reel} when $Y$ is singular]
When $Y$ has finitely many singularities  there is a priori no
control of the distortion of the charts near $\Sing(Y)$, so the argument must be modified.  

Fix $x_0\in X\setminus Y$ and $\e>0$. We seek a compact subset $K\Subset X\setminus Y$ such that the conclusions~(1) and~(2) of Theorem~\ref{thm:margulis_UE} are satisfied for $K$ and $x_0$.
Theorem~\ref{thm:margulis_UE} provides an open neighborhood $V_1$ of $\Sing(Y)$ such that 
these conclusions  
 hold for 
$K_1  = V_1^\complement$. Fix   a neighborhood $V'_1$  of 
$\Sing(Y)$  such that $\overline{V'_1}\subset V_1$; let 
$V$ be a small neighborhood of $Y$ and  set $V_2 = V\setminus \overline{V'_1}$. 
We will construct a proper Margulis function $u$ on $X\setminus (Y\cap V_2)$. Then,  
Theorem~\ref{thm:margulis}  provides a compact set $K_2\subset  X\setminus (Y\cap V_2)$ such that~(1) and~(2) hold for $K_2$. Therefore the desired conclusions hold for $K:=K_1\cap K_2$, with $2\e$ 
instead of $\e$.  

 To construct the desired  Margulis function on $X\setminus (Y\cap V_2)$, we put 
\begin{equation}
\begin{cases}
u(x)  =  - \log d(x, Y) \text{ for } x\in V_2\\
u(x)  = \min (u\rest{V_2}) - B \text{ for }x\notin V_2
\end{cases}
\end{equation}
where $B = \sup\set{\log N(f), \ f\in \supp(\nu)}$. 
Again, checking~\eqref{eq:margulis_moment} is immediate, so we focus on~\eqref{eq:weak_margulis}. 
Since $u$ is constant outside $V_2$, we have to show that for $x\in V_2$ sufficiently close to 
$Y$, 
\begin{equation}\label{eq:margulis_inequality}
\int_\Gamma \lrpar{ - u(f(x)) + u(x)   - \log \norm{df_{\pi(x)}(v_x} } d\nu(f) \geq -\frac{c}{2},
\end{equation}
where $c$ is the uniform expansion constant, and $\pi(x)$ and $v_x$ are as above.
As in the smooth case, the distortion between the distances in charts  
and the ambient distance is bounded on $V_2$  by a constant 
$D = D(V_2)$; we iterate to get $c>10D$.
We split~\eqref{eq:margulis_inequality}   into 
 \begin{equation}
\int_{\Gamma} = \int_{\set{f\in \Gamma, \ f(x)\in  V_2}}  +  \int_{\set{f\in \Gamma, \ f(x)\notin  V_2}} . 
\end{equation}
For the first integral we argue as in the smooth case to conclude that 
\begin{equation}
\int_{\set{f\in \Gamma, \ f(x)\in  V_2}} 
\abs{- u(f(x)) + u(x)   - \log \norm{df_{\pi(x)}(v_x}}  d\nu(f)\leq c/2
\end{equation}
when $d(x, Y)$ is small enough. For the second integral we simply use the fact that if 
$f(x)\notin V_2$, then  $u(x)- u(f(x)) \geq B$ so 
\begin{align}
\int_{\set{f\in \Gamma, \ f(x)\notin  V_2}}& \lrpar{ - u(f(x)) + u(x)   - \log \norm{df_{\pi(x)}(v_x} } d\nu(f) 
\\ \notag &
\geq \int_{\set{f\in \Gamma, \ f(x)\notin  V_2}}  \lrpar{  B  - \log \norm{df_{\pi(x)}(v_x} }d\nu(f),
\end{align}
and this last term is non-negative from our choice of  $B$. The proof is complete. 
\end{proof}

\begin{eg}\label{eg:margulis_submanifold}
\textit{There exists a group $\Gamma  = \bra{f, g}$ of diffeomorphisms of the 3-torus $\R^3/\Z^3$ and a finitely supported measure $\nu$ on $\Gamma$ with $\langle \supp(\nu)\rangle   = \Gamma$ such that:
\begin{itemize}
\item $\Gamma$ preserves $Y:=\R^2/\Z^2\times \set{0}$;
\item there exists a neighborhood $U$ of $Y$ on which the dynamics of $(\Gamma, \nu)$ is uniformly expanding;
\item for every $x\in U$ and almost every trajectory $\omega$, $f_\omega^n (x)$ converges to $Y$. 
\end{itemize}
}
\end{eg}

\begin{proof}
Let $0<\gamma<1$ and $\psi$ be a diffeomorphism of the circle $\R/\Z$, fixing 0, and conjugate to 
$t\mapsto \gamma t$ on $]-1/4, 1/4[\subset \R/\Z$  
by a diffeomorphism $\varphi: ]-1/4, 1/4[\to \R$  such that $\varphi(0)  = 0$,  
$\varphi(t) = t$  on $[-1/8, 1/8]$, and 
$\varphi([-1/8, 1/8]) = [-1/4, 1/4]$.  Note that  
$\psi(t)   = \gamma t$ on $[-1/8, 1/8]$ and $\psi$ preserves $[-1/4,1/4]$ (inducing a diffeomorphism of this interval). Pick $A, B\in \SL(2, \Z)$ generating a non-elementary subgroup, and 
define two diffeomorphisms $g$ and $h$  on  $\R^3/\Z^3$ by 
\begin{equation}
g(x,y,z) = (A(x,y)+(c_1z, c_2z), \psi(z))\;  \text{ and } \; h(x,y,z) = (B(x,y), \psi(z)). 
\end{equation}
We further assume that 
\begin{equation}\label{eq:eigenvalue}
(c_1, c_2)\neq (0, 0) \text{ and } \gamma \text{ is not an eigenvalue of }A.
\end{equation}
Let $\nu$ be a probability measure supported on $\set{g, h, g\inv, h\inv}$ such  that 
$0<\nu(g\inv)<\nu(g)$ and $0<\nu(h\inv)<\nu(h)$. Then there exists $\Omega_0\subset \Omega$ of full 
$\nu^\N$-measure such that for every $p = (x,y,z)\in \R^2/\Z^2 \times]-1/4, 1/4[$, 
 and $\omega\in \Omega_0$, $f_\omega^n(p)\to Y$. Indeed, writing $f_\omega^n(p) = (x_n, y_n, z_n)$, 
 we have: 
 \begin{enumerate}
 \item $z_n\in ]-1/4, 1/4[$ because $\psi$ preserves $]-1/4, 1/4[$;
 \item $\varphi(z_n ) = \gamma^{\sum_{i=1}^{n} \e_i} \varphi(z)$, where 
 $(\e_n)$ is a sequence of independent random variables with 
 $\pp(\e =1 )  = \nu(g)+\nu(h)$ and  $\pp(\e =-1 )  = \nu(g\inv)+\nu(h\inv)$. Since, $\nu(g\inv)+\nu(h\inv)< \nu(g)+\nu(h)$, $\phi(z_n)$ converges almost surely to $0$.
 \end{enumerate}

To conclude, we have to show that the dynamics of $(\Gamma, \nu)$  is uniformly expanding in 
$\R^2/\Z^2 \times ]-1/4, 1/4[$. Indeed, if $p\in \R^2/\Z^2 \times ]-1/4, 1/4[$ and  $\omega\in \Omega_0$, there 
is $n(\omega)$ such that  $f^n_\omega(p)\in \R^2/\Z^2 \times ]-1/8, 1/8[$ for $n\geq n(\omega)$. 
Now, in $\R^2/\Z^2 \times ]-1/8, 1/8[$ the dynamics is linear, and the tangent action is generated by 
\begin{equation}
\tilde g = \begin{pmatrix}A & \begin{pmatrix} c_1 \\ c_2 \end{pmatrix} \\ 0 & \gamma
\end{pmatrix}
\text{ and } 
\tilde h  = \begin{pmatrix}B& 0 \\ 0 & \gamma
\end{pmatrix}.
\end{equation}
We claim that the linear action of $(\tilde \Gamma, \tilde \nu)$ on $\R^3$ is uniformly expanding, 
where $\tilde \Gamma = \langle{\tilde g, \tilde h}\rangle$ and  $\tilde \nu$ is the measure naturally 
corresponding to $\nu$. Indeed, the action is uniformly expanding on $\R^2\times \set{0}$ and 
if it were  not uniformly expanding on $\R^3$, by Furstenberg-Kifer~\cite{furstenberg-kifer},
 there would exist a $\tilde \Gamma$-invariant  line transverse to $\R^2\times \set{0}$ along which the Lyapunov exponent would be non-positive. But the  hypotheses~\eqref{eq:eigenvalue} guarantee that such a line does not exist. 
From this,
 we deduce that there exists $c>0$ (any constant smaller than the Lyapunov exponent 
of the random product generated by $A$ and $B$ will do)
such that 
for every $p \in \R^2/\Z^2 \times ]-1/4, 1/4[$, every unit tangent vector $v$ at $p$ 
 and almost every $\omega$, $\unsur{n} \log \norm{(f_\omega^n)_\varstar v}\geq c$ if  $n$ is large enough.
Applying   Lemma~\ref{lem:weak_UE2} finishes the proof. 
\end{proof}

 \section{An ergodic-theoretic criterion for expansion}\label{sec:criterion}

 \subsection{Construction of stationary measures}
 \label{subs:stationary}

Let $M$ be a compact manifold 
endowed with a riemannian metric; let $T^1M$ denote its unit tangent bundle 
and  $\pi\colon T^1M\to M$ be the canonical projection. As in  Section~\ref{sec:uniform_generalities}, 
if $f$ is a diffeomorphism of $M$, we denote by $f_\varstar $ its action on $TM$. 
 Let $\nu$ be a probability measure on $\Diff^1(M)$
  satisfying the moment condition~\eqref{eq:moment}. 
We   apply a classical strategy  to get the
following theorem (see e.g.~\cite[Prop.  3.17]{chung}, and~\cite[Lem. 3.3]{hurtado:burnside}).

\begin{thm}\label{thm:limit_measure_top_lyapunov}
Assume that there exists an increasing sequence $(n_k)\in \N^\N$ and a 
sequence of unit tangent vectors $( u_k)\in (T^1M)^\N$ 
such  that 
\begin{equation}\label{eq:exponential_dilatation}
\lim_{k\to\infty} \frac{1}{n_k} \int \log \norm{f_\varstar  u_k} d\nu^{(n_k)}(f) = \chi_0. 
\end{equation}
Then, there exists a real number $\chi\geq \chi_0$, an ergodic $\nu$-stationary probability measure $\hat{\mu}$ on $T^1M$, 
and a $\nu$-almost surely invariant  sub-bundle $V\subset TM$ such that the top Lyapunov exponent of the  projected measure  $\mu:=\pi_\varstar \hat{\mu}$ in restriction to $V$ is equal to $\chi$. Likewise, there exists 
 a real number $\chi' \leq \chi_0$ that satisfies the same property for 
some pair $(\hat{\mu}', V')$. 
\end{thm}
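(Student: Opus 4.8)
The plan is to produce the stationary measure $\hat\mu$ on $T^1M$ by a standard weak-$*$ compactness argument applied to Cesàro averages, and then to extract the invariant sub-bundle and identify the Lyapunov exponent using the cocycle $\Phi(f,u)=\log\norm{f_\varstar u}$ over the skew-product on $T^1M$. First I would consider, for each $k$, the probability measure $\hat\mu_k$ on $T^1M$ obtained by pushing forward $\nu^{(n_k)}\otimes\delta_{u_k}$ under $(f,u)\mapsto f_\varstar u/\norm{f_\varstar u}$ and then Cesàro-averaging; more precisely, set $\hat\mu_k=\frac1{n_k}\sum_{j=0}^{n_k-1}(\Phi_j)_\varstar(\nu^{(j)}\otimes\delta_{u_k})$ where $\Phi_j(f,u)$ denotes the normalized image of $u$ under the first $j$ steps. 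By compactness of the space of probability measures on $T^1M$, a subsequence of $(\hat\mu_k)$ converges weakly to some $\hat\mu$; the usual telescoping estimate together with the moment condition~\eqref{eq:moment} (which makes $(f,u)\mapsto\Phi(f,u)$ $\nu$-integrable and bounded by $\mathrm L(f)$ uniformly in $u$) shows $\hat\mu$ is $\nu$-stationary, i.e. $\int f_\varstar\hat\mu\,d\nu(f)=\hat\mu$ where now $f_\varstar$ denotes the induced map on $T^1M$.

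Next I would pass to an ergodic component. The skew-product $F\colon\Omega\times T^1M\to\Omega\times T^1M$, $F(\omega,u)=(\sigma\omega,(f_0)_\varstar u/\norm{(f_0)_\varstar u})$, preserves $\nu^\N\otimes\hat\mu$; by the ergodic decomposition I may replace $\hat\mu$ by an ergodic stationary measure, at the cost of replacing $\chi_0$ by the value of the integrated cocycle against that component. To control which component to pick and to recover $\chi\ge\chi_0$, I would use that $\int_{T^1M}\int_{\Diff^1(M)}\Phi(f,u)\,d\nu(f)\,d\hat\mu(u)=\lim_k\frac1{n_k}\int\log\norm{f_\varstar u_k}\,d\nu^{(n_k)}(f)=\chi_0$ — this is exactly the identity that the construction of $\hat\mu$ via Cesàro averages is designed to give, again using~\eqref{eq:moment} for the interchange of limits. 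Hence $\chi_0$ is an average over ergodic components of the corresponding Lyapunov-type integrals, so some ergodic component has integral $\ge\chi_0$ and another has integral $\le\chi_0$; this yields the pair $(\hat\mu,\chi)$ with $\chi\ge\chi_0$ and the pair $(\hat\mu',\chi')$ with $\chi'\le\chi_0$.

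The remaining point is to realize $\chi$ as a genuine Lyapunov exponent on an invariant sub-bundle. Here I would invoke the measurable (Oseledets) theory for the random cocycle $f_\varstar$ over the ergodic stationary measure $\mu=\pi_\varstar\hat\mu$ on $M$: the Oseledets filtration produces a $\nu$-almost surely invariant measurable sub-bundle structure on $TM$, and by construction $\hat\mu$ is supported (fiberwise) on directions whose forward exponent is the integrated cocycle value; taking $V$ to be the Oseledets subspace corresponding to exponents $\ge\chi$ (or simply the top one when $\chi$ is the top exponent of $\hat\mu$) gives a sub-bundle on which the top exponent of $\mu$ equals $\chi$. The Furstenberg-type argument identifying the average of $\Phi$ against a stationary measure on the projective/sphere bundle with a Lyapunov exponent is classical; one has to be a little careful that the sub-bundle obtained is honestly invariant (not just almost invariant) and measurable, but this is standard.

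The main obstacle I anticipate is the interchange-of-limits step: establishing stationarity of the limit measure $\hat\mu$ and, more delicately, the identity $\iint\Phi\,d\nu\,d\hat\mu=\chi_0$, requires uniform integrability of the cocycle increments along the Cesàro averaging, which is where the first moment condition~\eqref{eq:moment} enters and where one must argue carefully (the bound $\abs{\Phi(f,u)}\le\mathrm L(f)$ uniform in $u$, combined with~\eqref{eq:Mp_iteree} for $p=1$ and a splitting into bounded and tail parts as in the proof of Lemma~\ref{lem:weak_UE2}). Everything else — compactness, ergodic decomposition, and the Oseledets identification — is routine.
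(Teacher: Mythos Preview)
Your overall strategy matches the paper's: Cesàro-average the pushforwards of $\delta_{u_k}$ to get a stationary measure $\hat\mu_\infty$ on $T^1M$, verify that $\iint \log\norm{g_\varstar u}\,d\nu(g)\,d\hat\mu_\infty(u)=\chi_0$ via the chain rule, and then pick ergodic components on either side of $\chi_0$. Your concern about the interchange of limits is a bit overstated: since $\abs{\log\norm{g_\varstar u}}\le L(g)$ uniformly in $u$ and $\dil(g,\cdot)$ is continuous on the compact space $T^1M$, weak convergence of $\hat\mu_k$ plus dominated convergence in $g$ (with dominating function $L$) suffices directly; no splitting as in Lemma~\ref{lem:weak_UE2} is needed.

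The genuine gap is your construction of $V$. You propose to take ``the Oseledets subspace corresponding to exponents $\ge\chi$'', but in the random (one-sided) setting the Oseledets filtration $T_xM=V^1(\omega,x)\supset V^2(\omega,x)\supset\cdots$ depends on the itinerary $\omega$, and there is no natural $\omega$-independent subspace ``of exponent $\ge\chi$''. The theorem requires a sub-bundle $V(x)$ depending only on $x$ (that is, $\nu$-almost surely invariant). The paper resolves this by disintegrating $\hat\mu$ over $\mu$ to get conditional measures $\hat\mu_x$ on $\P T_xX$ (or $T^1_xM$) and setting $V(x)$ to be the \emph{linear span of $\supp(\hat\mu_x)$}. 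This depends only on $x$ by construction, and is $\nu$-almost surely invariant because the tangent action is linear on fibers and $\hat\mu$ is stationary. The Furstenberg formula then gives that for $\hat\mu_x$-almost every $u$ the forward exponent is $\chi$; applying Oseledets \emph{restricted to $V$}, the generic vector in $V(x)$ has exponent equal to the top exponent $\chi^+(\mu,V)$, so $\chi^+(\mu,V)=\chi$. Your sentence ``$\hat\mu$ is supported (fiberwise) on directions whose forward exponent is the integrated cocycle value'' is exactly the right observation, but you should use it to \emph{define} $V$ via the conditional supports rather than reaching for the Oseledets subspaces.
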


Note that if $\hat\mu$ is a  probability measure  on $T^1M$ that is 
$\nu$-stationary  for the tangent action, then its projection $\mu $ on $M$ is $\nu$-stationary as well; and if $\hat\mu$ is ergodic, so is $\mu$. 
When $\chi >0$, one typically obtains $V=TM$.

\begin{proof}[Proof (see~\cite{chung, hurtado:burnside})] 
Consider the 
sequence of  measures $\hat\mu_k$ on $T^1M$ defined by 
\begin{equation}
\hat\mu_k=\frac{1}{n_k} \sum_{j=0}^{n_k-1} \nu^{(j)}\star \delta_{u_k} =\frac{1}{n_k} \sum_{j=0}^{n_k-1}\int \frac{f_\varstar   u_k}{\norm{f_\varstar   u_k}} d\nu^{(j)}(f),
\end{equation}
where $\nu^{(j)}\star \delta_{u_k}$ denotes the convolution for the action of $\Diff(M)$ on the unit tangent bundle. Since $T^1M$ is compact and the
$\hat\mu_k$ are probability measures, we can extract a subsequence (still denoted by $\hat\mu_k$ for simplicity) that converges weakly towards a
probability measure $\hat\mu_\infty$ on $T^1M$. By construction, this measure is $\nu$-stationary. 

The function $\dil(f, u):=\log\norm{f_\varstar   u}$ is continuous on 
$\Diff^1(M)\times T^1M$.  For $ u\in T^1M$ the chain rule gives 
\begin{align}\notag
\frac{1}{n} \int \log\norm{(f^n_\omega)_\varstar   u} d\nu^{\N}(\omega) & = \frac{1}{n} \sum_{j=0}^{n-1} \int \dil\left( f_j, \frac{(f^j_\omega)_\varstar   u}{\norm{(f^j_\omega)_\varstar   u}}\right) d\nu^{\N}(\omega) \\
& =  \int_{g\in \Diff(M)} \lrpar{\frac{1}{n} \sum_{j=0}^{n-1} \int \dil\left( g, \frac{h_\varstar   u}{\norm{h_\varstar   u}}\right)  d\nu^{(j)}(h) }  d\nu(g)
\end{align}
If we apply this equation to $n=n_k$ and $u=u_k$ the term between parentheses in 
the last integral is equal to 
$
\int \dil\lrpar{g,u} d\hat\mu_k(u),
$
so, letting $k$ go to $+\infty$, we conclude that 
\begin{equation}
\lim_{k\to\infty} \frac{1}{n_k} \int \log \norm{f_\varstar  u_k} d\nu^{(n_k)}(f) = \chi_0= 
 \int_{\Diff^1(M)}\int_{T^1M} \dil(g,u) d\hat\mu_\infty(u) d\nu(g)  
\end{equation}
Thus, there exists $\chi\geq \chi_0$ (resp. $\chi\leq \chi_0$) and 
 an ergodic component $\hat\mu$ of $\hat\mu_\infty$ 
such that
 \begin{equation}
 \int_{\Diff^1(M)}\int_{T^1M} \dil(g,u) d\hat\mu (u) d\nu(g) = \chi.
\end{equation}

As observed above,  $\mu = \pi_\varstar  \hat \mu$ is an  
ergodic $\nu$-stationary probability measure. Denote by $\hat\mu_x$ the conditional measures obtained    
by disintegration of $\hat\mu$ with respect to the fibers of $\pi$, that is,
$\hat\mu  = \int \hat\mu_x d\mu(x)$. For $\mu$-almost every $x$, let  $V(x)$ be the linear span of 
$\supp(\hat \mu(x))$. Since $\supp(\hat\mu)$ is $\nu$-almost invariant and 
$f_\varstar $ acts linearly along the fibers of $TM$, we infer that $V$ is a $\nu$-almost invariant 
measurable sub-bundle. The Furstenberg formula asserts that 
 the top Lyapunov exponent of $\mu$ in restriction 
to $V$ is equal to $\chi$. For completeness let us recall the argument:
the ergodic theorem shows that for $(\nu^\N\times \mu)$-almost every $(\omega, x)$ 
and $\hat\mu_x$-almost every $u\in T^1_xM$, 
\begin{equation}\label{eq:birkhoff}
\lim_{n\to + \infty} \frac{1}{n} \sum_{j=0}^{n-1} \dil\left(  f_j, (f^j_\omega)_\varstar   u \right) = \int_{\Diff^1(M)^\N} \int_{T^1M} \dil(f^1_\omega,  u) \, d\hat\mu( u) d\nu^{\N}(\omega) = \chi
\end{equation}
where as usual 
$\omega=(f_0, f_1, \ldots)$, $f^1_\omega=f_0$, and $f^j_\omega= f_{j-1}\circ \cdots \circ f_0$. 
 On the other hand the Oseledets theorem asserts that  for 
 $(\nu^\N\times \mu)$-almost every $(\omega, x)$, there exists a proper subspace 
 $W(\omega, x)\subset V(x)$ such that for $u\notin  W(\omega, x)$, 
 $\unsur{n} \log\norm{(f^n_\omega)_\varstar  u}$ converges to the top Lyapunov exponent $\chi^+(\mu, V)$ 
 of $\mu$ in restriction to $V$. Thus by~\eqref{eq:birkhoff}, $\chi^+(\mu, V)=\chi$, and the proof is complete. 
\end{proof}

\subsection{Application: Chung's criterion}

The following theorem, taken from \cite[Prop 3.17]{chung},
 plays an important role in this paper; a variant  of this result appears in \cite{brown-fisher-hurtado}. 
  It is stated in \cite{chung} for $C^2$ actions on surfaces but 
it holds in greater generality. The proof follows directly from the second assertion of Theorem~\ref{thm:limit_measure_top_lyapunov}.  
 
\begin{thm}[Chung]  \label{thm:criterion_chung}
Let $M$ be a compact manifold. 
Let $\nu$ be a probability measure on $\Diff^1(M)$
that satisfies \eqref{eq:moment}. If $\nu$ is not uniformly expanding there exists an ergodic $\nu$-stationary measure $\mu$ on $M$ and a $\mu$-measurable
subbundle $W\subset TM$ such that 
\begin{enumerate}[{(a)}]
\item $0<\dim(W)\leq \dim(M)$;
\item  $W$ is $\nu$-almost surely invariant;
\item in restriction to $W$, the top Lyapunov exponent of $\mu$ is non-positive. 
\end{enumerate}
Conversely, if such a  pair $(\mu, W)$ exists, then $\nu$ is not uniformly expanding.
\end{thm}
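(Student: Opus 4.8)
The plan is to obtain both implications directly from Theorem~\ref{thm:limit_measure_top_lyapunov}. For the non-trivial direction, suppose $\nu$ is not uniformly expanding. First I would invoke Lemma~\ref{lem:weak_UE}: the failure of the equivalent condition~\eqref{eq:weak_UE} furnishes a unit vector $v\in T^1M$ with $\int\log\norm{f_\varstar v}\,d\nu^{(n)}(f)\leq 0$ for \emph{every} $n\geq 1$. Writing $a_n$ for this integral, the bound $\abs{\log\norm{f_\varstar v}}\leq\mathrm L(f)$ together with the iterated moment estimate~\eqref{eq:Mp_iteree} at $p=1$ gives $a_n\geq -n\int\mathrm L\,d\nu$, so $\chi_0:=\liminf_n\tfrac1n a_n$ is a real number in $[-\int\mathrm L\,d\nu,\,0]$. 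I would then choose an increasing sequence $(n_k)$ realizing this $\liminf$ and feed the constant sequence $u_k\equiv v$ into Theorem~\ref{thm:limit_measure_top_lyapunov}.

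Its second assertion (the existence of $\chi'\leq\chi_0$) then produces a real number $\chi'\leq\chi_0\leq 0$, an ergodic $\nu$-stationary measure $\hat\mu'$ on $T^1M$, and a $\nu$-almost surely invariant measurable sub-bundle $V'\subset TM$ such that the top Lyapunov exponent of $\mu':=\pi_\varstar\hat\mu'$ in restriction to $V'$ equals $\chi'$. I would take $(\mu,W)=(\mu',V')$: property~(b) holds by construction, and~(c) holds since $\chi'\leq 0$; for~(a), note that $\dim W(x)\geq 1$ for $\mu$-a.e.\ $x$, since each fiber $V'(x)$ is the linear span of the support of the conditional measure $\hat\mu'_x$ on $T^1_xM$, which is non-empty, while $\dim W\leq\dim M$ trivially.

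For the converse, assume a pair $(\mu,W)$ as in the statement exists and, arguing by contradiction, that $\nu$ is uniformly expanding, with $c>0$ and $n_0$ as in~\eqref{eq:UEbisrepetita}; then~\eqref{eq:UE_iteree} gives $\int\log\norm{f_\varstar v}\,d\nu^{(kn_0)}(f)\geq kc$ for every unit vector $v$ and every $k\geq 1$. Using a measurable selection I would fix a $\mu$-measurable unit section $x\mapsto v(x)\in W(x)$, and identify the top Lyapunov exponent along $W$ as $\chi^+(\mu,W)=\lim_n\tfrac1n\int\log\norm{(f^n_\omega)_\varstar\rest{W(x)}}\,d\nu^\N(\omega)\,d\mu(x)$, via Kingman's subadditive ergodic theorem applied to the subadditive cocycle $\log\norm{(f^n_\omega)_\varstar\rest{W(x)}}$, which is integrable by~\eqref{eq:moment} and whose averages over the (ergodic, since $\mu$ is an ergodic stationary measure) skew product converge to $\chi^+(\mu,W)$. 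Since $\norm{(f^n_\omega)_\varstar\rest{W(x)}}\geq\norm{(f^n_\omega)_\varstar v(x)}$, one gets $\int\log\norm{(f^{kn_0}_\omega)_\varstar\rest{W(x)}}\,d\nu^\N(\omega)\,d\mu(x)\geq\int\lrpar{\int\log\norm{f_\varstar v(x)}\,d\nu^{(kn_0)}(f)}d\mu(x)\geq kc$, hence $\chi^+(\mu,W)\geq c/n_0>0$, contradicting~(c).

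The real work, and the only genuinely delicate step, has been exported to Theorem~\ref{thm:limit_measure_top_lyapunov}: the subtlety there (and therefore here) is that the Furstenberg formula is used to pin the top Lyapunov exponent of the limiting stationary measure restricted to $V'$ at \emph{exactly} $\chi'$, not merely at a value $\leq\chi'$, which is what makes the pair $(\mu,W)$ record a genuine obstruction rather than a tautological one. On the present level, the only points needing care are the finiteness of $\chi_0$ (from the moment condition) and the non-triviality of $W$; the converse implication is routine.
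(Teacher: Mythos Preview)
Your proof is correct and follows essentially the same approach as the paper, which simply states that the result ``follows directly from the second assertion of Theorem~\ref{thm:limit_measure_top_lyapunov}.'' You have carefully unpacked that sentence: using Lemma~\ref{lem:weak_UE} to produce a single vector $v$ with non-positive averages for all~$n$, extracting a subsequence to make the limit exist, and then invoking the $\chi'\leq\chi_0$ part of Theorem~\ref{thm:limit_measure_top_lyapunov}; you have also written out the (easy) converse, which the paper leaves implicit.
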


When $M$ is a  surface and  $\nu$ is supported by the group of diffeomorphisms preserving some fixed area form
the Lyapunov exponents of any ergodic stationary measure $\mu$ satisfy $\lambda^+(\mu)+\lambda^{-}(\mu)=0$. Thus,  
in Chung's theorem, either $\lambda^{-}(\mu)=\lambda^{+}(\mu)=0$ and we can take $W=TM$ or $\lambda^{-}(\mu)<0<\lambda^+(\mu)$ and $W$ coïncides with the stable line field 
provided by the 
Oseledets theorem; thus, $\mu$ is not hyperbolic or it is hyperbolic and its stable line field is non-random.

\part{Non-elementary actions on   complex surfaces}
 
 From now on we denote by $X$   a compact complex surface, endowed with a 
 group $\Gamma$ of  holomorphic 
 diffeomorphisms. Recall from~\cite{stiffness} that if $\Gamma$ is non-elementary, then 
 $X$ is necessarily projective and $\Gamma\subset \aut(X)$.

 \section{Preliminaries}\label{sec:preliminaries}
 
In this section we briefly recall some  results from  \cite{invariant} (see also \cite{cantat_groupes, Cantat:Milnor}). 

 \subsection{Parabolic automorphisms and their dynamics (see~\cite[\S 3]{invariant})} \label{subs:halphen}

Let $h$ be a parabolic automorphism of a compact projective surface $X$ (most of this discussion is valid for a 
compact Kähler surface).  Then, $h$  preserves a genus $1$ fibration 
$\pi_h\colon X\to B$, and every $h$-invariant holomorphic (singular) foliation -- in particular any invariant fibration -- coïncides with $\pi$. Let $h_B$ denote the automorphism of $B$ such that 
\begin{equation}
\pi\circ h=h_B\circ \pi.
\end{equation}
If $X$ is not a torus there is a positive integer $m$ such that $h^m$ preserves every fiber of $\pi$, i.e. $h_B^m=\id_B$. When $h_B = \id_B$ we say that $h$ is a {\bf{Halphen twist}}. The set of Halphen twists in a given subgroup $\Gamma\subset \Aut(X)$ is denoted by $\Hal(\Gamma)$. 

\begin{rem}\label{rem:many_parabolic}
If $\Gamma$ is non-elementary and contains a Halphen twist  (resp. a parabolic automorphism) $h$, then the conjugacy class of $h$ in $\Gamma$ contains Halphen twists (resp. parabolic automorphisms) associated with  infinitely many distinct invariant fibrations (see~\cite[\S 3.1]{finite_orbits}).
\end{rem}

Suppose now that $h$ is a Halphen twist. 
Then, $h$ acts by translation on every smooth fiber of $\pi$ (see~\cite[\S 3]{invariant}).
To be more precise, denote by $\Crit(\pi)\subset B$ the finite set of critical values of $\pi$ and set $B^\circ= B\setminus \Crit(\pi)$.
Fix some simply connected open subset $U\subset B^\circ$,  endowed  with a section $\sigma$ of 
 $\pi$ and a continuous choice of basis for $H_1(X_w, \Z)$.
Each fiber $X_w:=\pi^{-1}(w)$, $w\in U$, is an elliptic curve with zero $\sigma(w)$, and one can find a holomorphic function $\tau$ on $U$, with values in the upper half plane, such that $X_w$ is isomorphic to $\C/\Lat(w)$ for $\Lat(w)=\Z\oplus \Z\tau(w)$. On $X_w$, $h$ is  a translation $h_w(z)=z+t(w)$, for some holomorphic function $w\in U\mapsto t(w)\in \C/\Lat(w)$.  Moreover, Lemma~\ref{lem:halphen_flat}(4) says that $h$ behaves like a ``complex Dehn twist'', with a shearing property in the direction which is transversal to the fibers; thus shearing (or twisting) occurs along $X_w$ whenever $t$ and $\tau$ are ``transverse'' at $w$ 
(see \S~\ref{subs:finitary} for more details on the non-twisting locus). 

The points $w$ for which $h_w$ is periodic are characterized by the relation $t(w)\in \Q\oplus \Q\tau(w)$. If 
\begin{equation}
t(w)-(\alpha+\beta\tau(w))\in  \R \cdot (p+q\tau(w))
\end{equation}
for some $(\alpha,\beta)\in \Q^2$ and $(p,q)\in \Z^2$, the closure of $\Z t(w)$ in $\C/\Lat(w)$ is an abelian Lie group of dimension $1$, 
isomorphic to $\Z / k\Z \times \R /\Z$ for some $k>0$; then, the closure of each orbit of $h_w$ is a union of $k$ circles. This occurs along a countable union of analytic curves $\mathrm{R}^{\alpha, \beta}_{p, q}\subset U$. Otherwise, the orbits of $h_w$ are dense in $X_w$, and the unique $h_w$ invariant probability
measure is the Haar measure on $X_w$.

The following lemma summarizes this discussion. 

\begin{lem}\label{lem:halphen_flat} Let $h$ be a Halphen twist with invariant fibration 
$\pi:X\to B$. Then,
\begin{enumerate}[\em (1)]
\item $h$ acts  by translation on each fiber $X_w=\pi^{-1}(w)$, $w \in B^\circ$;
\item for $w$ in a dense countable subset of $B^\circ$, the orbits of $h_w$ are finite;
\item there is a dense, countable union of analytic curves $\mathrm R_j$ in $B^\circ$, 
such that:
\begin{enumerate}
\item for $w\notin\bigcup_j \mathrm R_j$, the action of $h$ in the fiber $X_w$ is a totally irrational 
translation (it is uniquely ergodic, and its orbits are dense in $X_w$); 
\item for $w\in \bigcup_j \mathrm R_j$ the orbits of $h_w$ are either finite or dense in a finite union of circles;
\end{enumerate}
\item there is a finite subset $\mathrm{NT}_h$ such that for  $x\notin \pi\inv\lrpar{
\mathrm{NT}_h}$
\[
\lim_{n\to \pm \infty} \norm{D_xh^n}\to +\infty 
\]
locally uniformly in $x$; more precisely for every 
 $v\in T_xX \setminus T_x X_{\pi(x)}$, $\norm{D_x h^{n}(v)}$ grows linearly while 
 $\frac{1}{n}\pi_\varstar (D_xh^{n}(v))$ converges to $0$. 
\end{enumerate}
If moreover  $h$ preserves a totally real $2$-dimensional real analytic subset $Y\subset X$, then:
\begin{enumerate}[\em (5)]
\item  the generic fibers of $\pi\rest{Y}$  are union of circles,
there exists an integer $m$ 
such that $h^m$ preserves each of these circles, and $h^m$ is uniquely ergodic along each of these circles,  
 except for countably many fibers.  
\end{enumerate} 
\end{lem}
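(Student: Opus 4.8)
The plan is to study the fibres $Y_w:=Y\cap X_w$ of $\pi\rest{Y}$ one at a time, feeding in the structure of $h_w$ from parts (1)--(3) together with the total reality of $Y$. I would begin with an elementary dimension count: at a smooth point $x$ of $Y_w$, total reality of $Y$ forbids $T_xX_w\subseteq T_xY$ (this inclusion would make $T_xY$ invariant under the complex structure), so $Y_w$ is a real-analytic subset of $X_w$ of real dimension $\le 1$. In particular $Y_w\ne X_w$ for every $w$, and since $\dim_\R Y=2$ the image $\pi(Y)$ cannot be finite; being the image of a real-analytic set under the proper real-analytic map $\pi$, it is a compact subanalytic subset of the curve $B$, necessarily of dimension $1$.

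Next I would locate $\pi(Y)$ using part (3)(a): for $w\in B^\circ\setminus\bigcup_j\mathrm R_j$ the map $h_w$ is a minimal translation of $X_w$, so any nonempty closed $h_w$-invariant subset equals $X_w$; the dimension bound then forces $Y_w=\varnothing$, whence $\pi(Y)\cap B^\circ\subseteq\bigcup_j\mathrm R_j$. The $1$-dimensional part of the subanalytic set $\pi(Y)$ is a finite union of real-analytic arcs, and any such arc contained in $\bigcup_j\mathrm R_j$ must lie, away from finitely many points, inside a single $\mathrm R^{\alpha,\beta}_{p,q}$ (distinct real-analytic arcs meet in a discrete set, so some $\mathrm R_j$ contains a relatively open piece of the arc, hence all of it). I would then set aside a countable \emph{exceptional} set $E\subset B$, consisting of: the critical values of $\pi$; the singular points of $Y$; the countable torsion locus of $h$; and the finitely many endpoints and crossing points of those arcs.

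Now for $w\in\pi(Y)\setminus E$, lying on an arc inside some $\mathrm R^{\alpha,\beta}_{p,q}$, the discussion preceding the lemma describes the $h_w$-orbit closures in $X_w$ as the cosets of a closed subgroup $G_w\cong\Z/k\Z\times\R/\Z$, each coset being $k$ disjoint circles; an explicit inspection of that description shows the integer $k$ is constant along each arc. As $Y_w$ is closed, nonempty and $h_w$-invariant it contains the orbit closure of each of its points, hence is a union of $G_w$-cosets; being $1$-dimensional it is a \emph{finite} union of such circles, which are the circles claimed in (5). Since $h_w$ cyclically permutes the $k$ cosets of the identity component $G_w^0\cong\R/\Z$ inside a $G_w$-coset, $h_w^{k}$ fixes each circle; taking $m$ to be the least common multiple of the finitely many values of $k$ over the arcs, $h^m$ fixes every circle of every $Y_w$ with $w\notin E$. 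Finally $h_w^{k}$ acts on such a circle by translation by $k\,t(w)\in G_w^0$, which generates a dense subgroup (otherwise $t(w)$ would be torsion), i.e. an irrational rotation; hence so is any nonzero power of it, and $h^m$ is uniquely ergodic on each circle. This yields (5), with the countable exceptional set $E$.

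I expect the main obstacle to be the second step, namely controlling $\pi(Y)$ precisely enough --- that it is a subanalytic curve with finitely many branches and that $k$ is locally constant along it --- so as to extract a \emph{single} integer $m$ and justify the ``only countably many exceptional fibres'' clause. This rests on the properness of $\pi$ and real-analytic stratification, plus the explicit form of the curves $\mathrm R^{\alpha,\beta}_{p,q}$; the remaining assertions follow directly from parts (1)--(3) and the total reality of $Y$.
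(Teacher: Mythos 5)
Your proposal addresses only conclusion~(5); the paper states Lemma~\ref{lem:halphen_flat} as a summary of the preceding discussion, points to~\cite[\S 3]{invariant} for (1)--(4), and for~(5) merely remarks that it holds, for instance, when $Y=X(\R)$. Supplying a derivation of~(5) from (1)--(4) is therefore exactly the content the paper leaves implicit, and on the whole your derivation is correct: total reality gives $\dim_\R Y_w\le 1$ at regular points of $Y$; minimality of $h_w$ for $w\notin\bigcup_j\mathrm R_j$ forces $Y_w=\varnothing$ there, so $\pi(Y)\cap B^\circ\subseteq\bigcup_j\mathrm R_j$; Baire category plus analytic continuation traps each one-dimensional branch of $\pi(Y)$ in a single $\mathrm R^{\alpha,\beta}_{p,q}$; and the closed $h_w$-invariant set $Y_w$, being nonempty and of dimension $\le 1$, is a finite union of cosets of $G_w=\overline{\Z t(w)}\cong(\Z/k\Z)\times(\R/\Z)$. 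Your unproved assertion that $k$ is constant along each arc is correct but deserves a line: $k$ is the order of the class of $(\alpha,\beta)$ in the circle $\R^2/(\Z^2+\R(p,q))$, hence depends only on $(\alpha,\beta,p,q)$ and not on $\tau(w)$.

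There is one genuine slip. You put ``the singular points of $Y$'' into the exceptional set $E$ and assert that $E$ is countable, but $\pi(\Sing Y)$ is a compact subanalytic subset of $B$ which can perfectly well have dimension one, hence be uncountable; as written, the countability of $E$ is unjustified. Fortunately those fibres never needed to be excluded: the coset argument uses only that $Y_w$ is closed, $h_w$-invariant, and of real dimension $\le 1$, and this last point holds even when $Y_w$ meets $\Sing(Y)$ --- indeed $X_w\not\subseteq Y$, since $\Sing(Y)$ has real dimension at most $1$ so $X_w$ would contain regular points of $Y$, at which total reality forbids a complex tangent line. Thus $Y_w$ is a disjoint union of circles for \emph{every} $w$ interior to one of the finitely many branches of $\pi(Y)\cap B^\circ$ and outside the torsion locus. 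Drop $\pi(\Sing Y)$ from $E$; the remaining exceptional set (critical values of $\pi$, the torsion locus of $h$, and the finitely many branch endpoints and crossings) is indeed countable, and your proof of~(5) is then complete.
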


Property~(4) is the above mentioned twisting property of $h$. Property~(5) occurs, for instance, when $X$ and $h$ are defined over $\R$
and $Y=X(\R)$ is the real part of $X$. There  are also examples of 
subgroups $\Gamma\subset \Aut(X)$ preserving a totally real surface $Y\subset X$ which is not the real part of $X$ for any real structure, see \cite[\S 9]{invariant}).

\subsection{Classification of invariant measures}

Recall from Example~\ref{eg:K3_intro} that if $X$ is a torus, a K3 surface, or an Enriques surface it admits a canonical $\Aut(X)$-invariant volume form $\vol_X$. The associated probability measure will also be denoted by $\vol_X$. 
Such an area form exists also on any totally real surface, by virtue of the following lemma. 

\begin{lem}[{see \cite[Remark 2.3]{invariant}}]\label{lem:volume_Y}
Let $X$ be an Abelian surface, or a K3 surface, or an Enriques surface with universal cover $\tilde{X}$. 
Let $Y\subset X$ be a totally real surface of class $C^1$, and  $\Aut(X;Y)$ be the 
subgroup of $\Aut(X)$ preserving $Y$. 
If $Y$ is totally real,  the canonical holomorphic 2-form 
 $\Omega_X$ (resp. $\Omega_{\tilde{X}}$) induces a smooth $\Aut(X;Y)$-invariant probability 
 measure $\vol_Y$ on $Y$. 
 \end{lem}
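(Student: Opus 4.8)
The statement asserts that on a K3, abelian, or Enriques surface $X$ equipped with its canonical holomorphic $2$-form $\Omega_X$ (or on the universal cover $\tilde X$ in the Enriques case), any totally real $C^1$ surface $Y\subset X$ carries a canonical smooth probability measure $\vol_Y$, invariant under the subgroup $\Aut(X;Y)$ of automorphisms preserving $Y$. The construction is purely local and linear-algebraic: at a point $x\in Y$, the tangent space $T_xY$ is a totally real $2$-plane in the $2$-dimensional complex vector space $T_xX$, meaning $T_xY\oplus i\,T_xY=T_xX$ as a real vector space. Restricting the complex $2$-covector $\Omega_X(x)\in \Lambda^2(T_xX)^\varstar_{\C}$ to $\Lambda^2 T_xY$ gives a complex number times the real volume element; more precisely $\Omega_X\rest{Y}$ is a (complex-valued) $2$-form on $Y$, and I claim $|\Omega_X\rest{Y}|$ is a nowhere-vanishing real $2$-form, hence (after fixing an orientation issue, see below) a smooth density on $Y$.

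\textbf{Key steps.} First I would check nonvanishing: if $(v_1,v_2)$ is a real basis of $T_xY$, then $(v_1,v_2,iv_1,iv_2)$ is a real basis of $T_xX$ precisely because $Y$ is totally real, so $\Omega_X(x)\wedge\overline{\Omega_X(x)}$, being a nonzero top form on $T_xX$ (as $\vol_X$ is a volume form), pairs nontrivially with this basis; a short computation expresses $\Omega_X(v_1,v_2)\cdot\overline{\Omega_X(v_1,v_2)}$ in terms of $\vol_X(v_1,v_2,iv_1,iv_2)$ up to a nonzero universal constant, whence $\Omega_X(v_1,v_2)\neq 0$. Thus $x\mapsto |\Omega_X\rest{Y}(x)|$ is a positive smooth density on $Y$; since $Y$ is compact (it is closed in the compact $X$) we normalize total mass to $1$ and call the result $\vol_Y$. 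In the Enriques case one pulls back by the $2$-to-$1$ cover $\tilde X\to X$, uses $\Omega_{\tilde X}$ upstairs, and pushes the resulting measure down (it is invariant under the deck involution because $\Omega_{\tilde X}$ is, up to a unit scalar, and densities are insensitive to that scalar), or equivalently works with the preimage $\tilde Y$.

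\textbf{Invariance.} For $f\in\Aut(X;Y)$, $f$ is biholomorphic so $f^\varstar\Omega_X=\zeta\,\Omega_X$ for some constant $\zeta\in\C$ with $|\zeta|=1$ (because $f^\varstar(\Omega_X\wedge\overline{\Omega_X})=|\zeta|^2\,\Omega_X\wedge\overline{\Omega_X}$ and $\vol_X$ is $\Aut(X)$-invariant, so $|\zeta|^2=1$; this is exactly the normalization recalled in Example~\ref{eg:K3_intro}). Since $f(Y)=Y$, restricting the identity $f^\varstar\Omega_X=\zeta\,\Omega_X$ to $Y$ gives $f^\varstar(\Omega_X\rest{Y})=\zeta\,(\Omega_X\rest{Y})$ as complex $2$-forms on $Y$, hence $f^\varstar|\Omega_X\rest{Y}|=|\Omega_X\rest{Y}|$ as densities; therefore $f_\varstar\vol_Y=\vol_Y$. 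The Enriques case follows by the covering argument, the deck group being finite.

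\textbf{Main obstacle.} The only genuinely delicate point is the passage from a $2$-\emph{form} to a $2$-\emph{density}: $Y$ need not be orientable, and $\Omega_X\rest{Y}$ is complex-valued, so one should not literally integrate $\Omega_X\rest{Y}$ but rather its modulus, viewed as a density (i.e.\ a section of the orientation-twisted top exterior bundle). Making this clean requires noting that $|\Omega_X\rest{Y}|$ transforms correctly under coordinate changes — which it does, being the absolute value of a $2$-form — and that the $C^1$ regularity of $Y$ suffices to make $|\Omega_X\rest{Y}|$ a continuous (indeed $C^0$, and $C^\infty$ on the smooth locus) positive density, so that it defines a Radon measure; the word ``smooth'' in the statement is to be understood in this sense. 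This is precisely the content of \cite[Remark 2.3]{invariant}, so in the paper it suffices to invoke that reference and the unimodularity $|\zeta|=1$ established above.
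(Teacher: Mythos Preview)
Your proof is correct and follows exactly the standard construction that the paper defers to by citing \cite[Remark~2.3]{invariant} without reproducing any argument. The key points---nonvanishing of $\Omega_X\rest{Y}$ by total reality, passage to the density $|\Omega_X\rest{Y}|$ to sidestep orientability, and invariance via $f^\varstar\Omega_X=\zeta\,\Omega_X$ with $|\zeta|=1$---are all in order.
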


\begin{thm}[{see~\cite[Thm A]{invariant}}]\label{thm:classification_invariant}
 Let $X$ be a projective  surface. Let $\Gamma$ be a non-elementary subgroup of $\Aut(X)$ containing a parabolic element. Let $\mu$ be a $\Gamma$-invariant ergodic probability measure on $X$. 
Then, $\mu$ satisfies exactly one  of the following properties. 

\begin{enumerate}[\em (a)]
\item   $\mu$ is the average on a finite orbit of $\Gamma$;
\item  $\mu$ is non-atomic and  supported on a $\Gamma$-invariant algebraic curve $D\subset X$;
\item there is a $\Gamma$-invariant proper algebraic subset $Z$ of $X$, and  a $\Gamma$-invariant, 
totally real  analytic surface $Y$ of 
$X\setminus Z$ such that {\em {(1)}} $\mu(\overline{Y}) = 1$ and $\mu(Z)=0$; {\em {(2)}}  $Y$ has finitely many irreducible components;   
{\em {(3)}} the singular locus of $Y$ is locally finite in  $X\setminus Z$;   
{\em {(4)}} $\mu$ 
is absolutely continuous with respect to the Lebesgue measure  on $Y$;  and {\em {(5)}}  its density (with respect to any real analytic area form on the regular part of  $Y$)
is real analytic;
 \item  there is a $\Gamma$-invariant proper algebraic subset $Z$ of $X$ such that {\em {(1)}} $\mu(Z)=0$, {\em {(2)}} the support of $\mu$ is equal to $X$; {\em {(3)}} $\mu$ is absolutely continuous with respect to the Lebesgue measure on $X$; and {\em {(4)}} the density of $\mu$ with respect to any real analytic
volume form on $X$ is real analytic on $X\setminus Z$.
\end{enumerate}
If $X$ is not a rational surface, then in case~(c) (resp.~(d)) we can further conclude that the invariant measure is proportional to $\vol_Y$ (resp. $\vol_X$).  
\end{thm}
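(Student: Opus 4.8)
The plan is to carry out a dimensional and structural analysis of $\mu$, bootstrapping from the ergodic theory of a single parabolic element of $\Gamma$ towards the rigidity forced by the whole non-elementary group. Two soft reductions come first. If $\mu$ has an atom, then $\Gamma$-ergodicity forces $\mu$ to be the normalized counting measure on a single orbit, which is then finite: this is case~(a). So assume $\mu$ is non-atomic. The Zariski closure of $\supp(\mu)$ is $\Gamma$-invariant; if it is a proper subvariety of $X$ it is a finite union of curves (together with at most finitely many points, which carry no mass), and $\mu$ is a non-atomic measure carried by a $\Gamma$-invariant algebraic curve: this is case~(b). Hence from now on $\mu$ is non-atomic and Zariski diffuse, and the aim is to land in~(c) or~(d).

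Next I would use one parabolic element $h\in\Gamma$; after replacing it by a power I may assume it is a Halphen twist with invariant genus-one fibration $\pi\colon X\to B$. Disintegrating $\mu=\int_B\mu_w\,d\bar\mu(w)$ along $\pi$, each conditional $\mu_w$ is an $h_w$-invariant probability measure on the elliptic fiber $X_w$. Lemma~\ref{lem:halphen_flat}(1)--(3) then gives a dichotomy: for $\bar\mu$-almost every $w$, either $h_w$ is a totally irrational translation, so $\mu_w$ is forced to be the Haar measure of $X_w$, or $w$ lies in the countable union of real-analytic arcs $\bigcup_j\mathrm R_j$ and $\mu_w$ is carried by a finite union of circles, or by a finite set. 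The last possibility, occurring on a set of fibers of positive $\bar\mu$-measure, would make $\mu$ live on a $\Gamma$-invariant algebraic curve, contradicting Zariski-diffuseness; so, $\bar\mu$-almost everywhere, $\mu_w$ is Haar on $X_w$ or supported on circles. In other words, along $\pi$ the measure $\mu$ is absolutely continuous in the fiber direction, of fiber dimension $2$ or $1$.

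The heart of the argument is then to propagate this fiberwise absolute continuity into genuine absolute continuity of $\mu$. By Remark~\ref{rem:many_parabolic}, $\Gamma$ contains other Halphen twists whose fibrations are transverse to $\pi$; running the same analysis for such a twist and combining the two fiberwise statements through a Fubini-type argument should show that $\mu$ is absolutely continuous with respect to Lebesgue measure on a $\Gamma$-invariant subset $X\setminus Z$, with $Z$ a proper algebraic set — namely on an open set when the fiber dimension is $2$ on both sides, or on a real-analytic surface $Y$ when it is $1$, in which case $Y$ is automatically totally real since the tangent to a circle in a fiber $X_w$ spans over $\C$ the tangent to $X_w$ and a transverse direction completes it. A regularity bootstrap — using the real-analyticity of the elliptic fibrations together with the unique ergodicity of $h$ along its generic fiber, via a Fourier/Weyl-type estimate on the fibers and then transversally — would upgrade the density to a real-analytic function on $X\setminus Z$, yielding case~(d) in the first alternative and case~(c) in the second, with all the listed regularity and finiteness properties. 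To make the propagation step actually work I would realize $\mu$ as a $\nu$-stationary measure and exploit the structure of stable and unstable conditionals from~\cite{stiffness}, in the spirit of the low-entropy/unipotent drift arguments à la Benoist–Quint, here driven by the twisting property Lemma~\ref{lem:halphen_flat}(4). Finally, if $X$ is not rational it carries the canonical $\Aut(X)$-invariant volume form $\vol_X$ (Example~\ref{eg:K3_intro}), and any totally real surface carries the induced form $\vol_Y$ (Lemma~\ref{lem:volume_Y}); writing $\mu=\rho\,\vol_X$ (resp.\ $\rho\,\vol_Y$), the density $\rho$ is a $\Gamma$-invariant real-analytic function, constant on the generic fibers of two transverse parabolic fibrations by the unique ergodicity of Lemma~\ref{lem:halphen_flat}(3a)(5), hence constant, so $\mu=\vol_X$ (resp.\ $\vol_Y$).

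The main obstacle is precisely that propagation-and-regularity step: turning the fiberwise absolute continuity produced by one parabolic into global real-analytic absolute continuity of $\mu$. This is where non-elementarity must be used in an essential, non-formal way; one also has to control the exceptional arcs $\bigcup_j\mathrm R_j$ simultaneously for several transverse fibrations and, in effect, exclude an intermediate ``three-dimensional'' behaviour for $\mu$. By comparison, the atomic and algebraic-curve reductions at the start and the averaging of the density at the end are routine.
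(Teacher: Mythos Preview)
This theorem is not proved in the present paper: it is quoted verbatim from the authors' earlier work~\cite{invariant} (as the attribution ``see~\cite[Thm A]{invariant}'' indicates), and no argument is given here. So there is no proof in this paper to compare your proposal against.

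That said, your outline is broadly aligned with the strategy of~\cite{invariant}: the initial reductions to the Zariski-diffuse case are standard, and the idea of disintegrating along a Halphen fibration to obtain fiberwise Haar or circle conditionals, then using a second transverse parabolic to propagate, is indeed the backbone of that paper. But you are right to flag the propagation-and-regularity step as the genuine obstacle, and your sketch does not close it. In particular, the Fubini-type argument you allude to is delicate: one must rule out that $\bar\mu$ concentrates on the exceptional arcs $\bigcup_j \mathrm R_j$ (which would give a measure supported on a lower-dimensional lamination that is neither a curve nor a totally real surface), and the passage from fiberwise absolute continuity to real-analytic density requires a substantial renormalization argument along the twisting direction, not just a soft bootstrap. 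Your invocation of~\cite{stiffness} and Benoist--Quint-style drift is the right intuition, but in~\cite{invariant} this is carried out through a detailed analysis specific to the parabolic dynamics rather than via stationary-measure machinery. As written, your proposal is a reasonable roadmap but not a proof.
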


\subsection{Invariant curves}\label{subs:invariant_curves} 
By \cite[Lem. 2.12]{stiffness},  any  action of a non-elementary group $\Gamma$ on a 
projective  surface $X$ admits a maximal invariant curve $\mathrm D_\Gamma$, 
which can be easily detected from the action of $\Gamma$ on $H^2(X, \Z)$ 
since it corresponds to an invariant class. Bounds on the degrees of such invariant curves in terms of the action are given in~\cite[\S 3]{finite_orbits}. If in addition $\Gamma$ 
contains a parabolic element, $\mathrm D_\Gamma$ is the set of common components of the   singular fibers of all elliptic fibrations associated to 
parabolic elements in $\Gamma$ (see~\cite[\S 4.1]{invariant}).

\section{Hyperbolicity of invariant measures}\label{sec:hyperbolicity}

 Here, $X$ is a compact Kähler   surface. We fix a Kähler form $\kappa_0$ on $X$; norms of tangent vectors and differentials will be computed with respect to it.
 
\subsection{Ledrappier's invariance  principle and   invariant measures on $\P T\cX$} \label{subs:ledrappier_invariance_principle} 
In this paragraph we collect some  preliminary results for the proof of Theorems \ref{thm:hyperbolic} and \ref{thm:hyperbolic_kummer}. 
Our presentation is inspired by~\cite{barrientos-malicet}. It is similar in spirit
 to that of \cite{obata-poletti}, which relies on the ``pinching and twisting'' formalism of Avila and Viana 
(see \cite{viana} for an introduction\footnote{Beware that the word ``twisting'' has a different meaning there.}). 
Most of this discussion is valid for a random holomorphic  dynamical system on an arbitrary  complex surface (not necessarily compact), 
satisfying \eqref{eq:moment}.

We denote by $ \P TX$ the projectivized tangent bundle of $X$; if $f$ is an automorphism of $X$, we denote by $\P(Df)$ the induced action on $\P TX$. 

Let $\nu$ be a probability measure on $\Aut(X)$ that satisfies the moment condition~\eqref{eq:moment}. 
We endow $\Omega:=\Aut(X)^\N$ (resp. $\Sigma:= \Aut(X)^\Z$)
with the probability measure $\nu^\N$ (resp. $\nu^\Z$), and set $\cX_+=\Omega \times X$ (resp. $\cX=\Omega \times X$); $\sigma$ will  denote the shift (on $\Omega$ or $\Sigma$).
For $\omega=(f_i)_{i\geq 0}\in \Omega$, we keep the notation  $f_\omega^{n}$ from 
 \S~\ref{subs:notation}.   Then, we define $F_+\colon \cX_+\to \cX_+$ by 
$F_+(\omega,x)=(\sigma(\omega), f^1_\omega(x))$; $F\colon \cX\to \cX$ is defined by the same formula. For further standard notations, we refer to \cite[\S 7]{stiffness}.

Let $\mu$ be an ergodic $\nu$-stationary measure on $X$.  
We introduce the projectivized tangent bundles $\P T \cX_+  = \Omega \times \P TX$ and $\P T\cX = \Sigma\times \P TX$. The  bundles $TX$ and $\P TX$ admit measurable trivializations over a set of full measure. 
Consider any probability measure $\hat\mu$ on  $\P TX$ that is stationary under the random dynamical system induced by $(X,\nu)$ on $\P TX$ 
and whose projection on $X$ coincides with $\mu$, i.e.
$\pi_\varstar \hat\mu= \mu $ where $\pi\colon \P TX\to X$ is the natural projection. Such measures always exist:
indeed 
the set of probability measures on $\P TX$  projecting to $\mu$ is compact and convex, and it is non-empty since it contains
the  measures $\int \delta_{[v(x)]} d\mu(x)$ for any measurable section
$x\mapsto [v(x)]$ of $\P TX$. Thus, the operator $\int \P (Df) \,d\nu(f)$ has a fixed point on that set. The stationarity of $\hat\mu$ is equivalent to the invariance of $\nu^\N\times \hat\mu$
 under  the transformation ${\widehat{F}_+}\colon \Omega\times \P TX\to \Omega\times \P TX$ defined by 
\begin{equation}
{\widehat{F}_+}(\omega, x, [v])= (\sigma(\omega), f^1_\omega(x), \P(D_xf^1_\omega)[v])
\end{equation}
for any non-zero tangent vector $v\in T_xX$. We denote by $\hat\mu_x$ the family of probability measures 
on the fibers $\P T_x X$  of $\pi$ 
given by the disintegration of $\hat\mu$  with respect to $\pi$.
The conditional measures of $\nu^\N\times\hat\mu$ with respect to the projection 
$\P T\cX_+\to X$ are given by $\hat\mu_{\omega,x} = \nu^\N\times \hat\mu_x$.
 
\begin{rem}
Even when  $\mu$ is $\Gamma_\nu$-invariant, this construction only provides a stationary measure on 
$\P TX$.  This is exactly what    happens for non-elementary subgroups with a parabolic automorphism:
 indeed, we will show in \S~\ref{subs:proof_hyperbolic} that projectively invariant measures do not exist in this case. 
\end{rem}

 The tangent action of our random dynamical system  gives rise to a stationary product of 
matrices in $\GL(2, \C)$. To see this, fix a measurable trivialization $P\colon TX\to X\times \C^2$, given 
by linear isomorphisms  $P_x\colon T_xX\to \C^2$. It conjugates the action of $DF_+$ to that of a linear
  cocycle $A:\cX_+ \times \C^2\to \cX_+ \times \C^2$ over $(\cX_+, F_+, \nu^\N\times \mu)$. 
 In this context, 
Ledrappier establishes  in \cite{ledrappier_stationary} the following ``invariance principle''.

\begin{thm}\label{thm:ledrappier_invariance_principle}
If $\lambda^{-}(\mu) = \lambda^{+}(\mu)$, then for any  stationary measure $\hat \mu$ on $\P TX$ projecting   to $\mu$, we have $\P(D_xf)_\varstar \hat\mu_x = \hat\mu_{f(x)}$
for $\mu$-almost every $x$ and $\nu$-almost every $f$.
\end{thm}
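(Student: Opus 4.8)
The plan is to reduce the statement to the classical Ledrappier--Bonatti--Gómez-Mont invariance principle for linear cocycles over a measure-preserving system, applied to the cocycle $A$ obtained from $DF_+$ through the measurable trivialization $P$. First I would recall the precise setup: $(\cX_+, F_+, \nu^\N\times\mu)$ is a measure-preserving system (here one uses that $\mu$ is $\nu$-stationary, which is exactly the invariance of $\nu^\N\times\mu$ under $F_+$), and the hypothesis $\lambda^-(\mu)=\lambda^+(\mu)$ says the two Lyapunov exponents of the $\GL(2,\C)$-cocycle $A$ coincide. The stationary measure $\hat\mu$ on $\P TX$ projecting to $\mu$ corresponds, via $P$, to a $\widehat{F}_+$-invariant probability measure $m$ on $\cX_+\times\P^1(\C)$ projecting to $\nu^\N\times\mu$; its disintegration over $\cX_+$ is a measurable family $m_{\omega,x}$ on the fibers $\P^1(\C)$, and by the product structure noted in the excerpt $m_{\omega,x}$ depends only on $x$, equal to (the image under $P_x$ of) $\hat\mu_x$.

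The heart of the argument is the invariance principle of Ledrappier~\cite{ledrappier_stationary} (see also the Avila--Viana / Bonatti--Gómez-Mont circle of ideas), which asserts: if the extremal Lyapunov exponents of the cocycle coincide, then any invariant measure $m$ of the projectivized cocycle that projects to the base is \emph{$\overline{F}_+$-invariant in a stronger sense} — its conditional measures are invariant under the cocycle, i.e. $A(\omega,x)_\varstar m_{\omega,x} = m_{F_+(\omega,x)}$ for a.e.\ $(\omega,x)$. Translating back through $P$, this reads $\P(D_xf^1_\omega)_\varstar\hat\mu_x=\hat\mu_{f^1_\omega(x)}$ for $\nu^\N\times\mu$-a.e.\ $(\omega,x)$. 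Since $f^1_\omega=f_0$ depends only on the zeroth coordinate of $\omega$, which is $\nu$-distributed and independent of everything else, this is precisely the assertion that $\P(D_xf)_\varstar\hat\mu_x=\hat\mu_{f(x)}$ for $\mu$-a.e.\ $x$ and $\nu$-a.e.\ $f$.

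Two technical points deserve care. First, the invariance principle is usually stated for cocycles over an \emph{invertible} base (or over a one-sided shift with a fixed stationary measure, which is exactly our $(\cX_+,F_+)$); since $\mu$ is only stationary and not $\Gamma_\nu$-invariant in general, one must work on $\cX_+$ and not on $X$ itself — the remark preceding the theorem already flags this, and no passage to $\Sigma=\Aut(X)^\Z$ is needed for this particular statement. Second, one needs the integrability hypothesis under which Ledrappier's theorem holds; here it is guaranteed by the moment condition~\eqref{eq:moment}, which bounds $\int\log^+\norm{A^{\pm1}}\,d(\nu^\N\times\mu)$ since $\log\norm{D_xf^{\pm1}}\le \mathrm L(f)$ pointwise. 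The main obstacle — really the only nontrivial input — is invoking the invariance principle correctly: one must check that the version in~\cite{ledrappier_stationary} applies verbatim to a $\GL(2,\C)$-cocycle (equivalently a $\P^1(\C)$-extension) with merely the $L^1$ moment~\eqref{eq:moment}, rather than under a stronger (e.g.\ Hölder or bounded) regularity assumption; granting that, the rest is bookkeeping with the trivialization $P$ and the product structure of $\cX_+$.
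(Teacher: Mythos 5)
Your proposal is correct and matches the paper exactly: the paper does not prove this statement either, it simply sets up the measurable trivialization $P$ conjugating $DF_+$ to a linear cocycle $A$ over $(\cX_+, F_+, \nu^\N\times\mu)$ and then states the theorem with the attribution "Ledrappier establishes in \cite{ledrappier_stationary} the following invariance principle." Your more detailed bookkeeping (the translation between $\P TX$ and $\cX_+\times\P^1(\C)$, the observation that the conditionals $m_{\omega,x}$ depend only on $x$, and the check that the $L^1$ moment condition \eqref{eq:moment} supplies the integrability Ledrappier requires) is exactly the unwinding the paper leaves implicit.
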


The second ingredient in the proof of Theorem~\ref{thm:hyperbolic} is 
a description of such projectively invariant measures; this is where we follow \cite{barrientos-malicet}. 
To explain this result a bit of notation is required. Let $V$ and $W$ be   hermitian vector spaces of dimension $2$;
we fix  two  isometric isomorphisms $\iota_V\colon V\to \C^2$  and $\iota_W\colon W\to \C^2$ to the standard
hermitian space $\C^2$, and we endow the projective lines $\P(V)$ and $\P(W)$ with their respective Fubini-Study metrics. If $g\colon V\to W$ 
is a linear isomorphism, we set  
\begin{equation}
\llbracket g \rrbracket= \norm{ \P (g) }_{C^1}
\end{equation}
where  $\P(g) \colon \P(V)\to \P(W)$
is the projective linear map induced by $g$ and $\norm{\cdot}_{C^1}$
is the maximum of the norms of $D_z\P(g) \colon T_z\P(V)\to T_{\P (g)(z)}\P(W)$ with respect to the Fubini-Study metrics. 
 If   $\iota_W\circ g\circ \iota_V^{-1} = k_1ak_2$ is the KAK decomposition of
$\iota_W\circ g\circ \iota_V^{-1}$ in $\PSL(2, \C)$, we get 
$
 \llbracket g \rrbracket  = \norm{a}^2   =  \norm{\iota_W\circ g\circ \iota_V^{-1}}^2
$
  where 
$\norm{\cdot}$ is the  matrix norm in $\PSL_2(\C)=\SL_2(\C)/\langle \pm \id\rangle$ associated to the Hermitian norm of $\C^2$.
In particular,
\begin{itemize}
\item[(a)] $\llbracket g  \rrbracket=1$ if and only if $\P (g)$ is an isometry from $\P(V)$ to $\P(W)$;
\item[(b)] for a sequence $(g_n)$ of linear maps $V\to W$, 
$\llbracket g_n\rrbracket$ tends to $+\infty$ with $n$   if and only if $\P(\iota_W\circ g\circ \iota_V^{-1})$ 
diverges   to infinity in  $\PSL_2(\C)$.
\end{itemize}
If $f$ is an automorphism of $X$ and $x$ is a point of $X$, then $\kappa_0$ endows $T_xX$ and $T_{f(x)}X$ with 
hermitian structures, and we can apply this discussion to $D_xf\colon T_xX\to T_{f(x)}X$.
We are now ready to state the classification of projectively invariant measures. 

\begin{thm}\label{thm:classification_proj_invariant}
Let  $(X, \nu)$ be a random dynamical system on a complex 
surface and let $\mu$ be an ergodic stationary measure. Let
$\hat\mu$ be a stationary measure on $\P TX$ such that $\pi_\varstar {\hat{\mu}}=\mu$ and $\P(D_xf)_\varstar \hat\mu_x = \hat\mu_{f(x)}$
for $\mu$-almost every $x$ and $\nu$-almost every $f$. Then, exactly one of the following two properties is satisfied: 
 \begin{enumerate}[\em (1)] 
 \item For $(\nu^\N\times \mu)$-almost every
 $(\omega, x)$, the sequence $\llbracket D_xf^n_\omega\rrbracket$  
is unbounded  and then:
 \begin{itemize}
 \item[(1.a)] either there exists a measurable $\Gamma_\nu$-invariant family of lines  
  $E(x)\subset T_xX$ such that   $\hat\mu_x=\delta_{[E(x)]}$ for $\mu$-almost every $x$;
  \item[(1.b)] or there exists a measurable $\Gamma_\nu$-invariant family of pairs of  lines  
  $E_1(x), E_2(x) \subset T_xX$ and positive  numbers $ \lambda_1, \lambda_2$ with 
  $\lambda_1+\lambda_2 =1$  such that    $\hat\mu_x=\lambda_1\delta_{[E_1(x)]} + \lambda_2\delta_{[E_2(x)]}$  for $\mu$-almost every $x$.
 \end{itemize}
\item The projectivized tangent action of $\Gamma_\nu$ is reducible to a compact  group, that is 
there exists a measurable   trivialization  
of the tangent bundle $(P_x: T_xX \to \C^2)_{x\in X}$,    such that for  almost
 every $f\in \Gamma_\nu$ and every 
$x$, $\P\lrpar{P_{f(x)}\circ D_xf\circ P_x\inv}$ belongs to the unitary group $\mathsf{PU}_2(\C)$. 
\end{enumerate}
\end{thm}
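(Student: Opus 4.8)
The plan is to analyze the cocycle $x \mapsto \P(D_xf)$ acting on the measures $\hat\mu_x$ along the (invertible) system $(\cX, F, \nu^\Z\times\mu)$, distinguishing cases according to whether the projective norms $\llbracket D_xf^n_\omega\rrbracket$ stay bounded or not. First I would invoke a standard dichotomy for stationary products of matrices in $\PSL_2(\C)$ due to Furstenberg (and its refinements, cf.\ \cite{ledrappier_stationary, barrientos-malicet}): for $(\nu^\N\times\mu)$-a.e.\ $(\omega,x)$ either $\llbracket D_xf^n_\omega\rrbracket$ is unbounded, or the whole tangent action can be conjugated, over a set of full measure, into a compact subgroup of $\PSL_2(\C)$, i.e.\ into $\mathsf{PU}_2(\C)$. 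The boundedness alternative is measurable and $F$-invariant, so by ergodicity of $(\cX, F)$ it holds either a.e.\ or nowhere; the ``nowhere'' case is exactly conclusion~(2), and one obtains the unitary trivialization by averaging a Fubini--Study-type metric along the bounded orbit closure (a Kakutani/Zimmer-type fixed point argument on the space of measurable metrics), which is a routine construction.

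So suppose we are in the unbounded case, which will be conclusion~(1). Here I would use the projective invariance $\P(D_xf)_\varstar\hat\mu_x = \hat\mu_{f(x)}$ together with the following contraction principle: if $(g_n)$ is a sequence of linear maps with $\llbracket g_n\rrbracket\to\infty$, then in the KAK decomposition $g_n = k_1^{(n)}a_nk_2^{(n)}$ the map $\P(g_n)$ pushes any probability measure not charging the ``most contracted'' direction of $k_2^{(n)}$ toward a measure supported on at most one point (the ``most expanded'' direction of $k_1^{(n)}$). Applying this along a sequence $n\to\infty$ for which $\llbracket D_xf^n_\omega\rrbracket\to\infty$, and using stationarity to transport $\hat\mu_x$ forward, one deduces that $\hat\mu_x$ must be purely atomic with at most two atoms for $\mu$-a.e.\ $x$ — more atoms would be incompatible with the combination of projective invariance and the contraction/expansion dynamics (a measure with $\geq 3$ atoms cannot be invariant under a sequence of projective maps degenerating in $\PSL_2(\C)$ unless it degenerates, contradicting $\pi_\varstar\hat\mu = \mu$ with $\mu$ non-atomic on fibers in the relevant sense). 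This splits into the one-atom case~(1.a) and the genuinely two-atom case~(1.b); in the latter, the masses $\lambda_1,\lambda_2$ of the two atoms are constant in $x$ by ergodicity, and the pair of lines (resp.\ the single line) forms a measurable $\Gamma_\nu$-invariant line field (resp.\ pair field) precisely because the identity $\P(D_xf)_\varstar\hat\mu_x=\hat\mu_{f(x)}$ holds for $\nu$-a.e.\ $f$ and hence, by the stationarity-to-invariance passage and ergodicity, for every element of $\Gamma_\nu$ after adjusting on a null set.

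The main obstacle I anticipate is making the ``at most two atoms'' step fully rigorous: one must rule out the possibility that $\hat\mu_x$ has a non-atomic part or three or more atoms, and the clean way to do this is to combine Ledrappier's invariance principle regime (the hypothesis already guarantees projective invariance, so we are past the step where $\lambda^+=\lambda^-$ is used) with a careful analysis of the limit points, in the weak topology, of $\P(D_xf^n_\omega)_\varstar\hat\mu_x$; the subtlety is that the ``exceptional'' contracted direction of $f^n_\omega$ depends on $\omega$ and varies, so one needs the invariance along \emph{two-sided} sequences (using $\Sigma = \Aut(X)^\Z$) to pin down a genuinely $\Gamma_\nu$-invariant, rather than merely stationary, object. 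I would handle this by passing to the natural extension and using the martingale/backward-limit construction of the Furstenberg boundary map, as in \cite{barrientos-malicet}, and then checking that the resulting equivariant family is exactly the atomic family described in (1.a)/(1.b). Everything else — the constancy of $\lambda_i$, measurability of the line fields, and the $\mathsf{PU}_2$ normal form in case~(2) — is routine given the ergodic-theoretic setup already in place in \S\ref{subs:ledrappier_invariance_principle}.
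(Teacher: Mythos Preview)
Your overall architecture is right: split according to the boundedness of $\llbracket D_xf^n_\omega\rrbracket$, reduce to a compact group in the bounded case, and force the fiber measures to be atomic with at most two atoms in the unbounded case. The bounded case via a Zimmer-type argument is fine; the paper does this by building a stationary measure on $X\times\PGL_2(\C)$ rather than averaging a metric, but your variant should also work.

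The real gap is in the unbounded case, and it is exactly the step you flag as the obstacle. Your contraction argument does not close: pushing $\hat\mu_x$ forward by $\P(D_xf^n_\omega)$ gives $\hat\mu_{f^n_\omega(x)}$, not $\hat\mu_x$, so the KAK contraction tells you the \emph{target} measure is nearly a Dirac mass, not that the \emph{source} is atomic. Your proposed fix --- pass to the natural extension and use the Furstenberg boundary map --- is the wrong direction: that construction produces an $\omega$-dependent family of directions, whereas the whole point (and the delicate issue the paper stresses) is that $E(x)$, $E_1(x)$, $E_2(x)$ must depend only on $x$. The paper's key lemma (Lemma~\ref{lem:1/2}) avoids this by a recurrence argument: one shows that the set of projective maps sending a measure with no atom of mass $\geq 1/2-\varepsilon$ to another such measure is a bounded subset of $\PGL_2(\C)$; hence if $\hat\mu_x$ had no atom of mass $\geq 1/2$ on a positive-measure set, ergodic recurrence to that set would force $\llbracket D_xf^n_\omega\rrbracket$ to be bounded along a positive-density subsequence, contradicting the hypothesis. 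This yields an atom of mass $\geq 1/2$ depending only on $x$. Then one \emph{iterates}: subtract the atom, renormalize, and reapply the lemma to the remainder, ruling out three-atom or mixed configurations by checking that they would trap the cocycle in a finite (hence bounded) subgroup of $\PGL_2(\C)$. This iterative bootstrap is absent from your sketch and is what actually pins down (1.a) versus (1.b).
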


In assertion (1.b), the pair is not   ordered:  there is no natural distinction of $E_1$ and $E_2$, 
the elements of $\Gamma_\nu$ may a priori permute these lines. 
The proof is obtained by adapting the 
arguments of  \cite{barrientos-malicet}  to the complex case; the details are given in \S~\ref{subs:barrientos_malicet}.

\subsection{Proof of Theorem \ref{thm:hyperbolic}}\label{subs:proof_hyperbolic}  

 By Theorem~\ref{thm:classification_invariant}, 
$\mu$ is either equivalent  to the Lebesgue measure on $X$, or to the $2$-dimensional Lebesgue measure on some components of an invariant totally real surface $Y\subset X$.  

\subsubsection{Proof of the hyperbolicity of $\mu$} 
Let us assume, by way of contradiction, that $\mu$ is not hyperbolic. Hence its Lyapunov exponents vanish, 
and by Theorem~\ref{thm:ledrappier_invariance_principle} and Theorem \ref{thm:classification_proj_invariant}, there 
 is a measurable set $X'\subset X$ 
 with $\mu(X') = 1$ such that one of the following properties is satisfied along $X'$:   \label{alternative_abc}
 \begin{itemize}
\item[(a)]  there is a measurable $\Gamma_\nu$-invariant line field $E(x)$; 
\item[(b)]  there exists  a measurable $\Gamma_\nu$-invariant splitting $E(x)\oplus E'(x)=T_xX$ of the tangent bundle; 
here, the invariance should be taken in the following weak sense: an element $f$ of $\Gamma_\nu$ maps $E(x)$ to  
$E(f(x))$ or $E'(f(x))$;
\item[(c)] there exists a measurable trivialization $P_x\colon T_xX\to \C^2$  such that in the corresponding coordinates
 the projectivized 
differential $\P(Df_x)$  takes its values in $\mathsf {PU}_2(\C)$ for all $f\in \Gamma_\nu$ and $\mu$-almost all $x\in X'$. 
\end{itemize}

Fix a small $\e>0$. By Lusin's  
 theorem, there is a compact set $K_\e $  with  $\mu(K_\e)>1-\e$ 
 such that the data   $x\mapsto E(x)$, or  $x\mapsto (E(x), E'(x))$ or $x\mapsto P_x$ in the respective 
 cases (a), (b), and (c)  
 are continuous on $K_\e$. 
 In particular, in case (c), the norms of $P_x$ and $P_x^{-1}$ are bounded by some uniform constant $C(\e)$ 
on $K_\e$; hence, if $g\in \Gamma_\nu$ and $x$ and $g(x)$ belong to 
$K_\e$,   $\llbracket Dg_x \rrbracket$ is bounded by $C(\e)^2$. 

Fix a pair of parabolic elements  $g$ and $h\in \Gamma_\nu$ with distinct invariant fibrations $\pi_g\colon X\to B_g$ and $\pi_h\colon X\to B_h$
respectively (see Remark~\ref{rem:many_parabolic}). These two fibrations are tangent along some curve $\Tang(\pi_g, \pi_h)$ in $X$. 

$\bullet$ In a first stage we assume that $X$ is not a torus. According to Section~\ref{subs:halphen}, there is an integer $N>0$ such that  $g^N$ and $h^N$  
 preserve every fiber of their respective invariant fibrations. From now on, we replace $g$ by $g^N$ and $h$ by $h^N$.

First assume that $\mu$ is absolutely continuous with respect to the Lebesgue measure on $X$, with a positive real analytic density
on the complement of some invariant, proper, Zariski closed subset. We apply Lemma~\ref{lem:halphen_flat} to $h$ and remark that 
$(\pi_h)_\varstar \mu$ can not charge the union of the curves $\mathrm R_j$. Then, we disintegrate $\mu$ with respect to $\pi_h$ to obtain
conditional measures $\mu_b$, for $b\in B_h$; since $\pi_h$ is holomorphic, the measures $\mu_b$ are
 absolutely continuous with respect to the Haar measure on almost every fiber $\pi_h^{-1}(b)$. By  Lemma~\ref{lem:halphen_flat},
 there exists a fiber $\pi_h\inv(b)$  such that   (1) the Haar measure of 
$K_\varepsilon \cap \pi_h\inv(b)$ is positive,  
(2) $b \notin \mathrm{NT}_h$ and (3) 
the dynamics of $h$ in $\pi_h\inv(b)$ is uniquely ergodic. These properties hold for $b=\pi_h(z)$, for $\mu$-almost all $z$ in $K_\e$. 
Then we can pick $x\in \pi_h\inv(b)$ such that $(h^{k}(x))_{k\geq 0}$ visits $K_\e$ infinitely many times (\footnote{Note that we use the invariance of $\mu$ here, not   mere $\nu$-stationarity.}). The fifth assertion of Lemma~\ref{lem:halphen_flat}
rules out case (c) because the twisting property implies that the projectivized derivative $\llbracket Dh^n_x \rrbracket$ tends to infinity, while it should be bounded 
by $C(\e)^2$ when $h^n(x)\in K_\e$.
Case (b)  is also excluded: under the action of $h^n$, tangent vectors  projectively 
converge to the tangent space of the fibers, so the only possible invariant subspace of dimension $1$  is $\ker(D\pi_h)$.
Thus we are in case (a) and moreover  $E(x) = \ker D_x\pi_h$  for $\mu$-almost every $x$. But then, 
using $g$ instead of $h$ and the fact that $\mu$ does not charge the  curve  $\Tang(\pi_g, \pi_h)$, we get a contradiction. 
This shows that the last alternative (a) does not hold either, and this contradiction proves that $\mu$ is hyperbolic.

If $\mu$ is supported by a $2$-dimensional real analytic subset $Y\subset X$, the same proof applies, 
except that we disintegrate
$\mu$ along the  singular  foliation of $Y$  by circles induced by $\pi_h$ and we use the fact that a generic leaf is a 
circle along which
  $h$ is   uniquely ergodic (see Lemma~\ref{lem:halphen_flat}.{(4)}). 

$\bullet$ If $X$ is a torus its tangent bundle is trivial and the differential of an automorphism is 
constant. In an appropriate basis, the differential of a Halphen twist $h$  is of the form 
\begin{equation}
\begin{pmatrix} 1 & \alpha \\ 0 & 1 \end{pmatrix} \text{ with } \alpha\neq 0.
\end{equation}
 Thus we are in case (a) with $E(x) = \ker D_x\pi_h$ for $\mu$-almost every $x$. Using another 
twist $g$ transverse to $h$ we get a contradiction as before. 

\subsubsection{Proof of the positivity of the fiber entropy} \label{subs:proof_positivity_entropy}
This follows from classical arguments. Since $\mu$ is invariant the  measure $\m=\nu^\Z\times \mu$ on $\cX$ is 
$F$-invariant. In both cases $\mu\ll \vol_X$ and  $\mu\ll \vol_Y$, respectively. 
  the absolute continuity of the foliation by  local Pesin  unstable manifolds implies that 
the unstable conditionals of $\m$ cannot be atomic, see e.g.~\cite{ledrappier-young3}. 
Since the unstable conditionals of 
a zero entropy stationary measure are automatically atomic (see \cite[Cor.~7.14]{stiffness}),
we conclude that 
$\mu$ has positive fiber entropy. 

This concludes the proof of Theorem~\ref{thm:hyperbolic}\qed

 \subsection{A variant of Theorem~\ref{thm:hyperbolic}} 
Let us first recall the definition of classical Kummer examples 
(see~\cite[\S 4]{finite_orbits} for a thorough treatment)).
Let $A = \C^2/\Lambda$ be a complex torus and let $\eta$ be the involution given by $\eta(z_1, z_2) = (-z_1, -z_2)$; it has $16$ 
fixed points. Then $A/\langle\eta\rangle$ is a  
 surface with $16$ singular points, and resolving  these singularities (each of them requires a single blow-up)
yields a  Kummer  surface $X$. 
Let $f_A$ be a loxodromic  automorphism of $A$ which is induced by a linear transformation of $\C^2$ preserving $\Lambda$;
then $f_A$ commutes to $\eta$ and goes down to an automorphism $f$ of~$X$; such automorphisms will be referred to as 
loxodromic, classical, Kummer examples. They preserve the canonical volume $\vol_X$. The Kummer surface $X$
 also supports automorphisms which do not come  from automorphisms of $A$ (see~\cite{Keum-Kondo} and~\cite{Dolgachev-Keum} for instance). 

In the following   statement  we do not assume that $\Gamma_\nu$ contains a parabolic element.

\begin{thm} \label{thm:hyperbolic_kummer} 
Let $(X, \nu)$ be a non-elementary  random dynamical system on a Kummer  K3 surface satisfying 
\eqref{eq:moment} and such that  $\Gamma_\nu$ contains a loxodromic classical Kummer example. Then   
any ergodic $\Gamma_\nu$-invariant 
measure giving no mass to proper Zariski closed subsets of $X$ is hyperbolic. 
\end{thm}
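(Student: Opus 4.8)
The plan is to imitate the proof of Theorem~\ref{thm:hyperbolic} from \S\ref{subs:proof_hyperbolic}, with the loxodromic classical Kummer example $f\in\Gamma_\nu$ in the rôle of a parabolic element: the rigidity that here replaces the twisting property of a Halphen twist is that the differential of $f$ is \emph{constant} in suitable coordinates. First I would reduce to the case where all Lyapunov exponents of $\mu$ vanish. Since $X$ is a K3 surface, $\vol_X$ is $\Aut(X)$-invariant (Example~\ref{eg:K3_intro}); writing $\vol_X=\rho\,\leb$ with $\rho$ smooth and positive on the compact manifold $X$, invariance of $\vol_X$ yields $2\log\abs{\jac_\C f}=\log\rho-\log\rho\circ f$ for every $f\in\Aut(X)$, a bounded coboundary. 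Integrating against the $\Gamma_\nu$-invariant measure $\mu$ gives $\lambda^{+}(\mu)+\lambda^{-}(\mu)=\int\int\log\abs{\jac_\C f}\,d\nu(f)\,d\mu(x)=0$, so if $\mu$ is not hyperbolic then $\lambda^{+}(\mu)=\lambda^{-}(\mu)=0$. Then Ledrappier's invariance principle (Theorem~\ref{thm:ledrappier_invariance_principle}) applies, and by the classification of projectively invariant measures (Theorem~\ref{thm:classification_proj_invariant}) one of the cases (1.a), (1.b), (2) holds on a set of full $\mu$-measure; the goal is to contradict each of them.

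Next I would set up the flat picture around $f$. Write $A=\C^{2}/\Lambda$, $\eta=-\id$, and let $X$ be obtained by blowing up the $16$ nodes of $A/\langle\eta\rangle$; denote by $E_{1},\dots,E_{16}$ the exceptional curves and set $U=X\setminus\bigcup_i E_i$. The étale double cover $A\setminus\fix(\eta)\to U$ endows $U$ with a flat holomorphic affine structure in which the differential of $f$ equals the constant matrix $M\in\GL(2,\C)$ inducing $f$ on $A$. Since $M\Lambda=\Lambda$ one has $\det M=\pm1$, so the eigenvalues of $M$ satisfy $\abs{\alpha}\,\abs{\beta}=1$; as $f$ is loxodromic, $\abs{\alpha}\neq1$, hence after relabelling $\abs{\alpha}>1>\abs{\beta}$, $\alpha\neq\beta$, and $M$ is diagonalizable with distinct eigenlines $E^{u},E^{s}$, constant in the flat structure; these extend to two distinct singular holomorphic foliations $\mathcal F^{u},\mathcal F^{s}$ of $X$. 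Finally $\mu$ gives no mass to $\bigcup_i E_i$, so $\mu(U)=1$, and $\mu$ is $f$-invariant.

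I would then rule out the three cases. In case (2) there is a measurable trivialization $x\mapsto P_x$ with $\P(P_{f(x)}\circ D_xf\circ P_x^{-1})$ unitary $\mu$-a.e.; submultiplicativity of $\llbracket\cdot\rrbracket$ and the cocycle property give $\llbracket D_xf^{\,n}\rrbracket\le\llbracket P_{f^{n}(x)}^{-1}\rrbracket\,\llbracket P_x\rrbracket$. Choosing (Lusin) a compact $K\subset U$ with $\mu(K)>1/2$ on which $\llbracket P_x\rrbracket$ and $\llbracket P_x^{-1}\rrbracket$ are bounded, and applying Poincaré recurrence for $(f,\mu)$, the quantity $\llbracket D_xf^{\,n}\rrbracket$ stays bounded along a subsequence $n_k\to\infty$ for $\mu$-a.e.\ $x\in K$; but in the flat charts $D_xf^{\,n}$ is conjugate to $M^{n}$, whose singular value ratio grows like $(\abs{\alpha}/\abs{\beta})^{n}\to\infty$, and the flat and $\kappa_0$ metrics are comparable on $K$, so $\llbracket D_xf^{\,n}\rrbracket\to\infty$ --- a contradiction. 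In case (1.a) there is a $\Gamma_\nu$-invariant measurable line field $E$ with $\hat\mu_x=\delta_{[E(x)]}$, in particular $D_xf(E(x))=E(f(x))$ $\mu$-a.e.; trivializing $\P TU$ by the flat structure, the induced transformation is a \emph{fixed} loxodromic element $\bar M\in\mathrm{PGL}_{2}(\C)$ with attracting (resp.\ repelling) fixed point $[E^{u}]$ (resp.\ $[E^{s}]$), and, writing $u\colon U\to\P^{1}$ for $E$, $\bar M(u(x))=u(f(x))$. If $R=\set{x:u(x)\notin\{[E^{u}],[E^{s}]\}}$ had positive $\mu$-measure, the north--south dynamics of $\bar M$ would give $u(f^{n}(x))\to[E^{u}]$ pointwise on the $f$-invariant set $R$, and bounded convergence together with $f$-invariance of $\mu|_R$ would force $\mu(R\cap\set{d(u,[E^{u}])\le\varepsilon})=\mu(R)$ for every $\varepsilon>0$, i.e.\ $u\equiv[E^{u}]$ on $R$ --- a contradiction; hence $E(x)\in\{E^{u}(x),E^{s}(x)\}$ $\mu$-a.e. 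Case (1.b) is identical, carried out in $\mathrm{Sym}^{2}\P^{1}$: the $f$-invariant section into $\mathrm{Sym}^{2}\P^{1}$ must take values in the finite fixed-point set of $\bar M$, so $\set{E_{1}(x),E_{2}(x)}=\set{E^{u}(x),E^{s}(x)}$ $\mu$-a.e.

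Finally I would conclude using the group action. Set $Z_{u}=\set{x:E(x)=E^{u}(x)}$ and $Z_{s}$ similarly, so $\mu(Z_u\cup Z_s)=1$; since $\mu$ is Zariski diffuse, at least one of $Z_u,Z_s$ is Zariski dense, say $Z_u$. For $g\in\supp(\nu)$, $\Gamma_\nu$-invariance of $E$ gives $D_xg(E^{u}(x))\in\set{E^{u}(g(x)),E^{s}(g(x))}$ for $\mu$-a.e.\ $x\in Z_u$; splitting $Z_u$ according to which alternative occurs, one piece is Zariski dense, and since two singular holomorphic foliations agreeing on a Zariski dense set coincide, $g_{*}\mathcal F^{u}\in\set{\mathcal F^{u},\mathcal F^{s}}$ (and $g_{*}\mathcal F^{u}=\mathcal F^{u}$ directly when $\mu(Z_s)=0$). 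As this condition is Zariski closed in $\Aut(X)$, it holds for all $g\in\Gamma_\nu$, so a subgroup of $\Gamma_\nu$ of index at most $2$ preserves the holomorphic foliation $\mathcal F^{u}$; this contradicts the fact that a non-elementary subgroup of $\Aut(X)$, and hence any finite-index subgroup of it, preserves no singular holomorphic foliation. Therefore $\mu$ is hyperbolic. The steps I expect to be most delicate are the construction of the flat affine structure on $U$ with the precise identification of $Df$ and the moduli of its eigenvalues, and the concluding rigidity input on invariant foliations --- the one ingredient not already supplied by the first part of the paper; the recurrence argument and the Zariski-density bookkeeping also require some care because $\mu$ is carried by the non-compact set $U$ and charges only Zariski dense sets.
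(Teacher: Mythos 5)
Your proposal follows the same architecture as the paper's proof: reduce to vanishing exponents via volume invariance, apply Ledrappier's invariance principle and Theorem~\ref{thm:classification_proj_invariant}, exploit the uniformly hyperbolic splitting $E^u_f\oplus E^s_f$ of the Kummer element $f$ on the Zariski-open set $U$ away from the sixteen exceptional curves, rule out cases (2) and (1.a)/(1.b), and derive a contradiction from the resulting $\Gamma_\nu$-invariant (pair of) holomorphic foliation(s). Your explicit flat-chart computation and the pointwise north--south argument supply details that the paper compresses to one line (``$E(x)$ being $f$-invariant, it must coincide with $E^u_f$ or $E^s_f$''), and your recurrence argument for ruling out case (2) is essentially the paper's Lusin-plus-Poincaré-recurrence device specialized to the Kummer picture. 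These parts are fine.

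The genuine gap is the last step, which you yourself flag: you invoke without proof the assertion that a non-elementary subgroup of $\Aut(X)$ (or a finite-index subgroup of one) cannot preserve a singular holomorphic foliation. This is precisely where the paper does real work. The argument there is via Nevanlinna currents: to the singular holomorphic foliation $\mathcal F^u$ one associates the unique closed positive $(1,1)$-current $T^+_f$ obtained by averaging along Zariski-dense leaves, normalized by $\langle T^+_f\mid\kappa\rangle=1$; any automorphism preserving $\mathcal F^u$ permutes its leaves and hence preserves the ray $\R_+[T^+_f]$, which is an isotropic line in $H^{1,1}(X;\R)$, contradicting non-elementarity by \cite[\S 2.3]{stiffness}. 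You need to either reproduce this argument or cite an explicit source such as \cite{Cantat-Favre}; as written, the proof is not complete. Two minor remarks: first, the passage from $g\in\supp(\nu)$ to all $g\in\Gamma_\nu$ should be justified simply by the fact that stabilizing the unordered pair $\set{\mathcal F^u,\mathcal F^s}$ is a (closed) group condition --- the phrase ``Zariski closed in $\Aut(X)$'' is not meaningful here since $\Aut(X)$ is discrete for K3 surfaces. Second, in the splitting $Z_u=Z_u^1\cup Z_u^2$ according to whether $D_xg E^u(x)$ equals $E^u(g(x))$ or $E^s(g(x))$, you should note that exactly one piece has positive measure: if both did, Zariski density would force $\mathcal F^u=g_*\mathcal F^u=\mathcal F^s$, which is excluded since $\alpha\neq\beta$.
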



\begin{proof}
  The proof is similar to that of Theorem \ref{thm:hyperbolic} so we only sketch it. Assume by contradiction that $\mu$ is 
not hyperbolic;  since $X$ is a K3 surface, the volume invariance
shows that the sum of the Lyapunov exponents of $\mu$ vanishes (see \cite[\S 7.3]{stiffness}); thus, each of them is equal to $0$, and one of the alternatives of Theorem \ref{thm:classification_proj_invariant} holds, referred to as (a), (b), (c) as in Section~\ref{subs:proof_hyperbolic}, page \pageref{alternative_abc}.  

By assumption, $\Gamma_\nu$ contains a  loxodromic, classical Kummer example $f$ associated to a linear automorphism $f_A$ of a torus $A$. This automorphism $f$ is uniformly
hyperbolic in  some dense Zariski open subset $U$, which is thus of full $\mu$-measure: its complement is given by the sixteen rational curves
coming from the resolution of the singularities of $A/\eta$.  
 We denote by $x\mapsto E^u_f(x)\oplus E^s_f(x)$ 
the associated splitting of $TX\rest{U}$. The line field $E^u_f$ (resp. $E^s_f$) is everywhere tangent 
to an $f$-invariant (singular) holomorphic foliation $\mathcal F^u$ (resp. $\mathcal F^s$) coming from the $f_A$ invariant linear unstable (resp. stable) foliation on~$A$.
Since $f$ is uniformly expanding/contracting on $E^{u/s}_f$, Alternative (c) is not possible. 

If Alternative (a) holds, then $E(x)$ being $f$-invariant on a set of full 
measure, it must coincide with $E^u_f$ or $E^s_f$, say with $E^u_f$.  
By continuity any $g\in \Gamma_\nu$ preserves 
$E^u_f$ pointwise on $\supp(\mu)$.  
Since in addition $\mu$ is Zariski diffuse,  $g$ 
preserves $E^u_f$ everywhere on $X$, so it 
preserves also the unstable holomorphic foliation $\mathcal F^u$. From this, we shall contradict 
the fact that $\Gamma_\nu$ is non-elementary. We use a dynamical argument, based on basic constructions which are surveyed in~\cite{Cantat:Milnor}; one can also derive a contradiction from~\cite{Cantat-Favre}.

Every leaf of $\mathcal F^u$, except a finite number of them, is parametrized by an injective entire holomorphic curve $\varphi\colon \C\to X$, the image of which is Zariski dense. 
Fix a Kähler form $\kappa$ on $X$ and consider the positive currents defined by
\begin{equation}
\alpha\mapsto  \left(\int_0^R\int_{\disk(0;t)} \varphi^*\kappa \frac{dt}{t}\right)^{-1} \int_0^R\int_{\disk(0;t)} \varphi^*\alpha \frac{dt}{t}
\end{equation}
for any smooth $(1,1)$-form $\alpha$. As $R$ goes to $+\infty$, it is known that this sequence of currents converges to a closed positive current $T^+_f$
that does not depend on the parametrization $\varphi$ of the leaf, nor on the leaf itself (provided the leaf is Zariski dense). 
This current is uniquely determined by $\mathcal F^u$ and the normalization $\langle T^+_f\vert \kappa \rangle= 1$. Dynamically, it is
the unique closed positive current $T^+_f$ that satisfies $\langle T^+_f\vert \kappa \rangle= 1$ and $f^*T^+_f=\lambda(f)T^+_f$ for some $\lambda(f)>1$.
Its cohomology class $[T^+_f]$ is a non-zero element of $H^{1,1}(X;\R)$ of self-intersection $0$. 

Now, pick any element $g$ of $\Gamma_\nu$. Since $g$ preserves $\mathcal F^u$, it permutes its leaces and preserves the ray $\R_+[T^+_f]$.
Thus, $\Gamma_\nu$ preserves an isotropic line for the intersection form in $H^{1,1}(X;\R)$, and this contradicts  the non-elementarity assumption (see~\cite[\S 2.3]{stiffness}). 

Finally, if alternative (b) holds, any $g\in \Gamma_\nu$ preserves 
$ \{E^u_f(x), E^s_f(x)\}$ on a set of full measure so,  
since $\mu$ is Zariski diffuse,
it must either preserve or 
swap these directions. Passing to an index $2$ subgroup  both directions are preserved, and we again contradict the non-elementary assumption, as in case (a). 
\end{proof} 

\subsection{Proof of Theorem \ref{thm:classification_proj_invariant}} \label{subs:barrientos_malicet}

Let us consider a random dynamical system $(X, \nu)$ and an ergodic stationary measure $\mu$, 
as in Theorem \ref{thm:classification_proj_invariant}. We keep the notation from \S \ref{subs:ledrappier_invariance_principle}. In particular, we fix a Kähler form $\kappa_0$ on $X$ and compute norms with respect to it.

We say that a sequence of real numbers $(u_n)_{n\geq 0}$  {\bf{almost converges towards $+\infty$}} if for every 
$K\in \R$, 
 the set $L_K=\set{ n\in \N\; ; \; u_n\leq K }$ has an asymptotic lower density
\begin{equation}
\underline\dens(L_K):=\liminf_{n\to +\infty}\left( \frac{\sharp (L_K\cap [0,n])}{n+1}\right)
\end{equation}
which is  equal to $0$: $\underline \dens(L_K)=0$ for all $K$.

\begin{lem}\label{lem:alternative_bdd}
The set   of points $\cx=(\omega,x)$ in $\X_+$ such that $\llbracket D_xf^n_\omega\rrbracket$ 
almost converges towards $+\infty$ on $\P (T_xM)$ is $F_+$-invariant. In particular, by ergodicity, 
\begin{enumerate}[\em (a)]
\item either $\llbracket D_xf^n_\omega\rrbracket$  almost converges towards $+\infty$ for $(\nu^\N\times \mu)$-almost every $(\omega, x)$;
\item or, for $(\nu^\N\times \mu)$-almost every $(\omega, x)$, there is a sequence $(n_i)$ with positive lower density along which 
$\llbracket  D_xf^{n_i}_\omega \rrbracket$ is  bounded.
\end{enumerate}
\end{lem}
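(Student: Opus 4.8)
The plan is to show directly that the set
$$
\mathcal{A} = \set{\cx = (\omega, x) \in \X_+ \; ; \; \llbracket D_xf^n_\omega\rrbracket \text{ almost converges towards } +\infty}
$$
is $F_+$-invariant, and then invoke ergodicity of $\nu^\N\times\mu$ under $F_+$ to conclude that $\mathcal{A}$ has measure $0$ or $1$; unwinding the negation of ``almost converges to $+\infty$'' gives exactly the dichotomy (a)/(b). So the real content is the invariance statement, and within that the only thing to check is that almost-convergence to $+\infty$ of the sequence $(\llbracket D_xf^n_\omega\rrbracket)_{n\ge 0}$ is unchanged when we pass from $\cx=(\omega,x)$ to $F_+(\cx) = (\sigma\omega, f_0(x))$.

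First I would record the cocycle relation: by the chain rule $D_xf^{n+1}_\omega = D_{f_0(x)}f^n_{\sigma\omega}\circ D_xf_0$, hence for the induced projective maps
$$
\llbracket D_xf^{n+1}_\omega\rrbracket \ \le\ \llbracket D_{f_0(x)}f^n_{\sigma\omega}\rrbracket \cdot \llbracket D_xf_0\rrbracket
\quad\text{and}\quad
\llbracket D_{f_0(x)}f^n_{\sigma\omega}\rrbracket \ \le\ \llbracket D_xf^{n+1}_\omega\rrbracket \cdot \llbracket (D_xf_0)^{-1}\rrbracket,
$$
using submultiplicativity of $\llbracket\cdot\rrbracket = \norm{\P(\cdot)}_{C^1}$ (equivalently, of $g\mapsto\norm{\iota_W g \iota_V^{-1}}^2$ in $\PSL_2$). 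Here $\llbracket D_xf_0\rrbracket$ and $\llbracket (D_xf_0)^{-1}\rrbracket$ are finite for $\nu$-a.e.\ $f_0$ and every $x$ — indeed $\log\llbracket D_xf_0\rrbracket \le 2\mathrm L(f_0)$, which is $\nu$-integrable by \eqref{eq:moment}, so this quantity is a.s.\ finite (and we only need finiteness, not integrability, for a multiplicative comparison of two sequences). Thus the sequences $(\llbracket D_xf^n_\omega\rrbracket)_n$ and $(\llbracket D_{f_0(x)}f^n_{\sigma\omega}\rrbracket)_n$ differ multiplicatively by bounded (in $n$) factors bounded below away from $0$; equivalently their logarithms differ by a bounded additive amount. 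It is then immediate that for any threshold $K$ the two ``bad sets'' $L_K$ differ only in that one is contained in a translate/dilate of the other up to a shift of $K$ by a constant, so one has lower density $0$ for all $K$ iff the other does. Hence $\cx\in\mathcal{A} \iff F_+(\cx)\in\mathcal{A}$, proving $F_+$-invariance of $\mathcal{A}$.

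Finally, ergodicity of $F_+$ with respect to $\nu^\N\times\mu$ (which holds because $\mu$ is an ergodic stationary measure, see \cite[\S 7]{stiffness}) gives $(\nu^\N\times\mu)(\mathcal{A})\in\{0,1\}$. If it is $1$ we are in case (a). If it is $0$, then for $(\nu^\N\times\mu)$-a.e.\ $(\omega,x)$ the sequence $(\llbracket D_xf^n_\omega\rrbracket)_n$ fails to almost converge to $+\infty$, i.e.\ there exists $K = K(\omega,x)$ for which $L_K = \set{n \; ; \; \llbracket D_xf^n_\omega\rrbracket \le K}$ has $\underline\dens(L_K) > 0$; enumerating $L_K$ as $(n_i)$ gives a sequence of positive lower density along which $\llbracket D_xf^{n_i}_\omega\rrbracket$ is bounded, which is case (b). I do not anticipate a serious obstacle here: the only mild point to be careful about is that the comparison constant between the two sequences depends measurably on $(\omega,x)$ and may be unbounded over the space, but this is harmless since almost-convergence to $+\infty$ is a pointwise (in $(\omega,x)$) property and the argument above is carried out for each fixed $(\omega,x)$ separately.
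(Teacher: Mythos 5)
Your proposal is correct, and it supplies precisely the argument that the paper omits (the paper simply asserts the proof is ``straightforward and left to the reader'' with a pointer to Barrientos--Malicet). The key steps are all in place: submultiplicativity of $\llbracket\cdot\rrbracket$ in both directions yields the two-sided multiplicative comparison between $(\llbracket D_xf^{n+1}_\omega\rrbracket)_n$ and $(\llbracket D_{f_0(x)}f^n_{\sigma\omega}\rrbracket)_n$; the comparison constant $\llbracket D_xf_0\rrbracket = \llbracket (D_xf_0)^{-1}\rrbracket$ is finite for every $(\omega,x)$ (in fact it equals the ratio of singular values of $D_xf_0$, so $\log\llbracket D_xf_0\rrbracket\le \mathrm L(f_0)$ without the factor $2$, though finiteness is all that matters); the index shift by one does not affect lower density; and ergodicity of $(\cX_+,F_+,\nu^\N\times\mu)$ for $\mu$ ergodic stationary is standard. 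Your final unwinding of the negation into case (b) is also the right reading of the dichotomy. No gap.
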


The proof is straightforward and left to the reader (see~\cite{barrientos-malicet}). 
We are now ready for the proof of Theorem \ref{thm:classification_proj_invariant}. 
Let us first  emphasize one delicate issue: in Conclusion~(1) of the theorem,  
it is important that the directions $E$ (resp. $E_1$ and $E_2$) only depend on $x\in X$ (and not on 
$\cx = (x,\omega)\in \cX_+$). Likewise in Conclusion~(2), the trivialization $P_x$ should depend  only on $x$. 
This justifies the inclusion of a detailed proof of Theorem \ref{thm:classification_proj_invariant}, since in
 the slightly different setting of       
\cite{barrientos-malicet}, the authors did not have to check this property.

We fix a measurable trivialization $P\colon TX\to X\times \C^2$, given 
by linear isometries  $P_x\colon T_xX\to \C^2$, where $T_xX$ is endowed with the hermitian form $(\kappa_0)_x$,  and 
$\C^2$ with its standard hermitian form. This trivialization conjugates the action of $DF_+$ to that of a 
cocycle $A\colon \cX_+ \times \C^2\to \cX_+ \times \C^2$ over $F_+$.  
We denote by $A_\cx\colon \set{\cx}\times \C^2\to \set{F_+(\cx)}\times \C^2$ the induced linear map; since $A_\cx=P_{f^1_\omega(x)}\circ (Df^1_\omega)_x \circ P_x^{-1} $, we see that 
$A_\cx = A_{(\omega, x)}$ depends only on $x$ and on the first coordinate $f_\omega^1 = f_0$ of $\omega$. 
Using  $P$ we transport the measure $\hat\mu$ to a measure,  
still denoted by $\hat\mu$, on the product space $X\times \P^1(\C)$. By our invariance assumption, its
disintegrations $\hat\mu_\cx=\hat\mu_x$ satisfy $(\P A_\cx)_\varstar \hat\mu_\cx=\hat\mu_{F_+(\cx)}=\hat\mu_{f^1_\omega(x)}$.

\smallskip

\textbf{The essentially bounded case.}-- In this paragraph we show
  that in case (b) of Lemma \ref{lem:alternative_bdd}, 
Conclusion~(2) of Theorem \ref{thm:classification_proj_invariant} holds.  We streamline the argument of    
\cite[Prop. 4.7]{barrientos-malicet} which deals with  the more general case of $\GL(d, \R)$-cocycles 
 (see also~\cite{Arnold-Nguyen-Oseledets, Zimmer:Israel1980}). 

Set $G =\PGL(2, \C)$, and  define the $G$-extension   ${\widetilde{F}}_+$ of
$F_+$  on $\X_+\times G$ by 
\begin{equation}
{\widetilde{F}}_+(\cx, g)=(F_+(\cx), \P(A_\cx)  g)=((\sigma(\omega), f^1_\omega(x)), \P(A_{(\omega, x)})  g)
\end{equation}
for every $\cx=(\omega,x)$ in $\X_+$ and $g$ in $G$; thus ${\widetilde{F}}_+$  is given by $F_+$ on $\X_+$ 
and is the multiplication by  $\P(A_\cx)$ on $G$.
Since $\P(A_{(\omega, x)})$ depends on $\omega $ only through its first coordinate, 
${\widetilde{F}}_+$ is the skew product 
map associated to the random dynamical system $(f(x), P_{f^1_\omega(x)}\circ (Df^1_\omega)_x \circ P_x^{-1} )$ on $X\times G$. 
Denote by $\mathcal P$ the convolution operator associated to this random dynamical system. 
Let $\mathrm{Prob}_\mu(X\times G)$ be
the set of probability measures 
on $X\times G$ projecting to $\mu$ under the natural map $X\times G \to X$. Since $\mu$ is stationary, 
$\mathcal P$ maps $\mathrm{Prob}_\mu(X\times G)$ to itself.

By assumption 
there is a set $E$ of positive measure in $\X_+$, 
a compact subset $K_G$ of $G$,  and a positive real number $\varepsilon_0$ such that 
\begin{equation}
\underline{\dens}\set{ n\; ; \; \P(A^{(n)}_\cx)\in K_G }\geq \varepsilon_0
\end{equation}
for all $\cx$ in $E$.  

\begin{lem} There exists  an ergodic, stationary, Borel probability measure $\widetilde{\mu}_G$ on $X\times G$ 
with marginal measure $\mu$ on $X$.
\end{lem}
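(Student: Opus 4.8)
The plan is to produce $\widetilde\mu_G$ by a Krylov--Bogolyubov argument for the convolution operator $\mathcal P$ on the set of probability measures with marginal $\mu$, using the essential boundedness of the cocycle along $E$ precisely to keep the Cesàro averages $\frac1N\sum_{n=0}^{N-1}\mathcal P^n\lambda_0$ from escaping to infinity in the fibre direction, and then to pass to an ergodic component. Since the trivialization $P$ is only measurable, $\mathcal P$ is not visibly continuous on $X\times G$, so I would first replace $X\times G$ by a continuous model. Let $\mathcal F\to X$ be the bundle of projective frames of $\P TX$, i.e.\ $\mathcal F_x$ is the set of projective isomorphisms $\P^1\to\P T_xX$; since $X$ is compact, $\mathcal F$ is locally compact and second countable, and every $f\in\Aut(X)$ acts on it \emph{continuously} by $\Psi_f\colon\xi\mapsto\P\lrpar{(Df)_x}\circ\xi$, where $x$ is the basepoint of $\xi$. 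The crucial point is that, because the $P_x$ were chosen \emph{unitary}, the reference section $s_0\colon x\mapsto\P(P_x^{-1})$ takes values in the \emph{compact} sub-bundle $\mathcal F^{\mathrm{iso}}\subset\mathcal F$ of isometric frames, the map $(x,g)\mapsto s_0(x)\circ g$ is the measurable identification $X\times G\cong\mathcal F$ intertwining the random dynamical system with the continuous cocycle $(\Psi_f)$, and the compact set $K_G\subset G$ is carried into the genuinely compact set $\mathcal K:=\set{\xi\circ k\;;\;\xi\in\mathcal F^{\mathrm{iso}},\ k\in K_G}\subset\mathcal F$. Henceforth I work on $\mathcal F$, writing $\mathcal P^\varstar\varphi(\xi)=\int\varphi(\Psi_f(\xi))\,d\nu(f)$, so that $\int\varphi\,d(\mathcal P\lambda)=\int\mathcal P^\varstar\varphi\,d\lambda$.

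Put $\lambda_0=(s_0)_\varstar\mu$ and $\Lambda_N=\frac1N\sum_{n=0}^{N-1}\mathcal P^n\lambda_0$. Then $\mathcal P^n\lambda_0$ is the law of $\lrpar{f^n_\omega(x),\,s_0(f^n_\omega(x))\circ\P(A^{(n)}_\cx)}$ for $\cx=(\omega,x)$ distributed according to $\nu^\N\times\mu$, which lies in $\mathcal K$ as soon as $\P(A^{(n)}_\cx)\in K_G$; hence Fatou's lemma and the density hypothesis give
\[
\liminf_{N\to\infty}\Lambda_N(\mathcal K)\ \geq\ \int_{\X_+}\underline\dens\set{n\;;\;\P(A^{(n)}_\cx)\in K_G}\,d(\nu^\N\times\mu)(\cx)\ \geq\ \varepsilon_0\,(\nu^\N\times\mu)(E)\ >\ 0 .
\]
Let $\Lambda_\infty$ be a vague limit point of $(\Lambda_N)$; by the last display it is a nonzero sub-probability measure on $\mathcal F$. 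To see that $\Lambda_\infty$ is $\mathcal P$-stationary, the key observation is that $\mathcal P^\varstar$ sends $C_c(\mathcal F)$ into $C_0(\mathcal F)$: if $\varphi$ is supported where $\llbracket\,\cdot\,\rrbracket\leq R_0$, then $\varphi(\Psi_f(\xi))\neq0$ forces $\mathrm L(f)\geq\frac12\log(\llbracket\xi\rrbracket/R_0)$, so $\mathcal P^\varstar\varphi(\xi)\leq\norm{\varphi}_\infty\,\nu\set{f\;;\;\mathrm L(f)\geq\frac12\log(\llbracket\xi\rrbracket/R_0)}$, which tends to $0$ as $\xi$ leaves every compact subset of $\mathcal F$, by the moment condition~\eqref{eq:moment} (continuity of $\mathcal P^\varstar\varphi$ is dominated convergence). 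Since vague limits of measures of mass $\leq1$ respect integration against $C_0$ functions, letting $N\to\infty$ in $\int\mathcal P^\varstar\varphi\,d\Lambda_N=\int\varphi\,d\Lambda_N+O(1/N)$ yields $\mathcal P\Lambda_\infty=\Lambda_\infty$.

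It then remains to repair the total mass and marginal and to pass to ergodicity. The $X$-marginal $\mu'$ of $\Lambda_\infty$ is $\nu$-stationary with $\mu'\leq\mu$; as $\mu$ is ergodic, hence an extreme point of the compact convex set of $\nu$-stationary probability measures on $X$, and $\mu=\mu'+(\mu-\mu')$ writes $\mu$ as a sum of nonnegative stationary measures, we get $\mu'=t\mu$ with $t=\mu'(X)=\Lambda_\infty(\mathcal F)\in(0,1]$, so $t^{-1}\Lambda_\infty$ is a $\mathcal P$-stationary probability measure with $X$-marginal $\mu$. Taking its ergodic decomposition $t^{-1}\Lambda_\infty=\int\widetilde\mu^{(i)}\,dP(i)$ into ergodic stationary measures, the marginals $\mu^{(i)}$ are stationary with $\int\mu^{(i)}\,dP(i)=\mu$, whence $\mu^{(i)}=\mu$ for $P$-almost every $i$ by extremality of $\mu$; choosing such an $i$ and transporting $\widetilde\mu^{(i)}$ back to $X\times G$ via $P$ gives the desired ergodic stationary $\widetilde\mu_G$ with marginal $\mu$. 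I expect the main difficulties to be the two highlighted above: building the continuous model $\mathcal F$ (made necessary by the measurability of $P$, and where the unitarity of the trivialization is what salvages compactness of $\mathcal K$) and the $C_0$-decay of $\mathcal P^\varstar\varphi$ extracted from~\eqref{eq:moment}, which is exactly what prevents total escape of mass in the Krylov--Bogolyubov step; ergodicity of $\mu$ then takes care of the rest.
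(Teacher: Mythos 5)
Your proof is correct and follows the same Krylov--Bogolyubov route as the paper: Ces\`aro averages of $\mathcal P^n(\mu\times\delta_{1_G})$, non-escape of mass from the essential boundedness, renormalization, then ergodic decomposition combined with extremality of $\mu$. The paper states this in four terse sentences and delegates the details to~\cite[Prop.~4.13]{barrientos-malicet}; the real added value in your write-up is that you identify and resolve the genuine subtlety those sentences gloss over. Because the trivialization $(P_x)$ is only measurable, $\mathcal P^\varstar$ has no visible Feller property on $X\times G$, so it is not automatic that a vague cluster point of the Ces\`aro averages is $\mathcal P$-stationary. Your passage to the projective frame bundle $\mathcal F$ is exactly the right device: the $\Aut(X)$-action on $\mathcal F$ is continuous, the unitary choice of $P_x$ makes the reference section land in the compact sub-bundle $\mathcal F^{\mathrm{iso}}$, and this turns the hypothesis ``$\P(A^{(n)}_{\cx})\in K_G$ with positive density'' into concentration on a genuinely compact set $\mathcal K\subset\mathcal F$, to which Fatou applies cleanly. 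The explicit estimate showing $\mathcal P^\varstar\colon C_c(\mathcal F)\to C_0(\mathcal F)$, driven by the first moment condition~\eqref{eq:moment} via the Markov bound on $\nu\{\mathrm L(f)\geq\cdot\}$, is what licenses passing to the limit in $\int\mathcal P^\varstar\varphi\,d\Lambda_N=\int\varphi\,d\Lambda_N+O(1/N)$. Two cosmetic remarks: the constant $\tfrac12$ in $\mathrm L(f)\geq\tfrac12\log(\llbracket\xi\rrbracket/R_0)$ is not needed --- since $\llbracket D_xf\rrbracket=\norm{D_xf}\,\norm{(D_xf)^{-1}}\leq e^{\mathrm L(f)}$ one gets the sharper $\mathrm L(f)\geq\log(\llbracket\xi\rrbracket/R_0)$, but the weaker bound is of course still enough; and your lower bound on the mass is $\e_0(\nu^{\N}\times\mu)(E)$ rather than the paper's claimed $\e_0$, which is again harmless since only positivity is used. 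The final steps --- extremality of $\mu$ to promote $\pi_\varstar\Lambda_\infty\leq\mu$ to $\pi_\varstar\Lambda_\infty=t\mu$, renormalization, and ergodic decomposition with marginal again $\mu$ by extremality --- match the paper exactly.
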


\begin{proof} (See \cite[Prop. 4.13]{barrientos-malicet} for details).
Let $\widetilde{\mu}_G$ be any cluster value of the sequence of probability measures  
$ \unsur{N}\sum_{i=0}^{N-1} \mathcal P^i (\mu\times \delta_{1_G})$. By the boundedness assumption, $\widetilde{\mu}_G$ has mass $M\geq \e_0$ 
(the possible escape of mass to $\infty$ in $G$ is not total)
and is stationary (i.e. $\mathcal P$-invariant). Its projection on the 
first factor is equal to $M\mu$.  We renormalize it 
to get a probability measure. Then, using  the ergodic decomposition and  the ergodicity of $\mu$,  we may replace 
it by an ergodic stationary measure in $\mathrm{Prob}_\mu(X\times G)$.
\end{proof}

Denote by $\widetilde \m_G = \nu^\N\times \widetilde \mu_G$ the  ${\widetilde{F}}_+$-invariant measure associated to 
$\widetilde{\mu}_G$. 
The action of ${\widetilde{F}}_+$ on     $\cX_+\times G$ (resp. of the induced random dynamical system on 
$X\times G$) commutes to the action of $G$ 
by right multiplication, i.e. to the diffeomorphisms $R_h$ defined by
\begin{equation}
R_h(\cx, g)= (\cx, g  h)
\end{equation}
for $h\in G$.
Slightly abusing notation we also denote by $R_h$ the analogous map on $X\times G$. 
The next lemma combines classical arguments due to Furstenberg and Zimmer. 

\begin{lem}
Let $\widetilde\mu_G$ be a Borel stationary measure on $X\times G$ with marginal $\mu$ on $X$.  
Set 
\[
H =\set{ h \in G \; ; \; (R_h)_\varstar {\widetilde{\mu}}_G  = \widetilde \mu_G}
= \set{ h \in G \; ; \; (R_h)_\varstar {\widetilde{\m}}_G  =\widetilde \m_G }.
\]
Then $H$ is a compact subgroup of $G$ and there is a measurable function $Q\colon X \to G$ 
such that the cocycle $B_\cx = Q_{f^1_\omega(x)}^{-1}\cdot \P(A_\cx)\cdot Q_x$ takes its values in $H$
for $(\nu^\N\times \mu)$-almost every $\cx$. 
\end{lem}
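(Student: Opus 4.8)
The plan is to handle the two assertions in sequence: compactness of $H$ is soft and follows from properness of the translation action together with tightness of a genuine probability measure, while the existence of the reducing conjugation $Q$ is the classical Furstenberg–Zimmer reduction of a $G$-extension that carries a stationary measure on the base. First, $H$ is a subgroup because $R_{h_1h_2}=R_{h_2}\circ R_{h_1}$ and $R_{h^{-1}}=R_h^{-1}$, and it is closed since $h\mapsto (R_h)_\varstar\widetilde\mu_G$ is continuous for the weak topology; the two descriptions of $H$ coincide because $\widetilde\m_G=\nu^\N\times\widetilde\mu_G$ and $R_h$ acts only on the $G$-factor, so $(R_h)_\varstar\widetilde\m_G=\widetilde\m_G$ is equivalent to $(R_h)_\varstar\widetilde\mu_G=\widetilde\mu_G$.

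For compactness I would argue that the right translation action of $G$ on itself is proper, hence the stabiliser of any Radon probability measure is compact. Concretely, by tightness choose a compact $L\subset X\times G$ with $\widetilde\mu_G(L)>\tfrac12$ and set $K'=\pi_G(L)$; for $h\in H$ one gets $\widetilde\mu_G\bigl(X\times(K'h^{-1})\bigr)\geq\widetilde\mu_G(R_{h^{-1}}L)=\widetilde\mu_G(L)>\tfrac12$, so $X\times(K'h^{-1})$ meets $L$, whence $K'h^{-1}\cap K'\neq\emptyset$ and $h\in(K')^{-1}K'$, a fixed compact subset of $G$. Thus $H$ is a compact subgroup of $G=\PGL(2,\C)$; after a harmless global conjugation (which only modifies the cocycle $\P(A_\cx)$, and hence the eventual $Q$) we may assume $H\subset\mathsf{PU}_2(\C)$.

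For the reducing map, note that $\P(A_\cx)$ acts on the $G$-factor by left translation, which commutes with the right $H$-action, so it descends to a measurable random dynamical system on $X\times(G/H)$. The plan is to replace $\widetilde\mu_G$ by the $F$-invariant measure it induces on the natural extension $\Sigma\times X\times G$ (with $\Sigma=\Aut(X)^\Z$), disintegrate over $\Sigma\times X$, and exploit invertibility there to obtain fibrewise measures $\widehat\theta_{(\varsigma,x)}$ that are \emph{genuinely} equivariant; ergodicity then forces their right-stabiliser to be a.e.\ constant, necessarily $H$, so each $\widehat\theta_{(\varsigma,x)}$ is right-$H$-invariant and is a superposition $\int_{G/H}m_\xi\,d\lambda_{(\varsigma,x)}(\xi)$ of the $H$-invariant probabilities on cosets, with $\lambda$ equivariant for the induced dynamics on $X\times(G/H)$.

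The crux — and the step I expect to be the main obstacle — is to show that $\lambda_{(\varsigma,x)}$ is a single Dirac mass $\delta_{\sigma(x)}$ with $\sigma$ depending on $x$ alone; this is precisely the delicate point flagged in the introduction. Here I would project $G/H\to G/\mathsf{PU}_2(\C)\cong\mathbb{H}^3$, a complete $\mathrm{CAT}(0)$ space on which $G$ acts by isometries, observe that the diameter of the support of the image measure is exactly invariant under the (now invertible) dynamics hence a.e.\ constant, and rule out a positive constant by a barycenter/circumcenter reduction that would contradict the maximality of $H$ as a stabiliser — so the conditionals are Dirac. The remaining point, that the section obtained a priori over $\Sigma\times X$ can be taken measurable in $x$ only, is handled by a tail-triviality / reverse-martingale argument forcing the future-measurable and past-measurable versions of $\sigma$ to agree. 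Granting $\lambda=\delta_{\sigma(x)}$, unwinding the stationarity identity for $\bar\mu=\int_X\delta_x\otimes\delta_{\sigma(x)}\,d\mu(x)$ gives the pointwise equivariance $\P(A_\cx)\,\sigma(x)=\sigma\bigl(f^1_\omega(x)\bigr)$ for $(\nu^\N\times\mu)$-a.e.\ $\cx$; finally, by the measurable selection theorem one picks $Q\colon X\to G$ with $Q_xH=\sigma(x)$, and then $\P(A_\cx)Q_xH=Q_{f^1_\omega(x)}H$, i.e.\ $B_\cx=Q_{f^1_\omega(x)}^{-1}\P(A_\cx)Q_x\in H$ almost everywhere, as desired.
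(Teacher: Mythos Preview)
Your argument for compactness of $H$ is correct and essentially the same as the paper's (properness of right translation plus tightness of a probability measure).

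For the existence of $Q$, however, your route has a genuine gap at precisely the step you flag as the main obstacle. Your CAT(0) argument on $\mathbb{H}^3=G/\mathsf{PU}_2(\C)$ can at best control the \emph{image} of $\lambda_{(\varsigma,x)}$ in $\mathbb{H}^3$: even if that image is a single point, $\lambda_{(\varsigma,x)}$ may be a nontrivial measure on the fiber $\mathsf{PU}_2/H$ of $G/H\to\mathbb{H}^3$, so ``the conditionals are Dirac'' does not follow. And the claim that a positive diameter ``would contradict the maximality of $H$ as a stabiliser'' is not substantiated: taking a circumcenter in $\mathbb{H}^3$ reduces the cocycle to a conjugate of $\mathsf{PU}_2$ via some $\tilde c_{(\varsigma,x)}$, but left translation by $\tilde c$ does not change the \emph{right}-stabiliser of the conditional measures, and $H$ is by definition the right-stabiliser of $\widetilde\mu_G$, not a group characterised by any minimality of cocycle reductions. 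No contradiction is produced. The subsequent ``tail-triviality'' step, needed because passing to the natural extension makes the conditionals depend on the past $\varsigma_-$, is also left as a slogan.

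The paper bypasses both difficulties with a single short argument using Birkhoff generic points (this uses the ergodicity of $\widetilde\mu_G$, which holds in context). Call $(x,g)\in X\times G$ \emph{generic} if for $\nu^\N$-a.e.\ $\omega$ the Birkhoff averages along $\widetilde F_+^n(\omega,x,g)$ converge to $\int\varphi\,d\widetilde\m_G$ for every compactly supported continuous $\varphi$; the set $\mathcal E$ of such points has full $\widetilde\mu_G$-measure. If $(x,g_1),(x,g_2)\in\mathcal E$ and $h=g_1^{-1}g_2$, then since $R_h$ commutes with $\widetilde F_+$ one has $\widetilde F_+^n(\omega,x,g_2)=R_h\widetilde F_+^n(\omega,x,g_1)$, so $(x,g_2)$ is generic for $(R_h)_\varstar\widetilde\m_G$; as it is also generic for $\widetilde\m_G$, these coincide and $h\in H$. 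Thus $\mathcal E_x:=\{g:(x,g)\in\mathcal E\}$ lies in a single right $H$-coset for $\mu$-a.e.\ $x$, and any measurable selection $x\mapsto Q_x\in\mathcal E_x$ works: $\widetilde F_+$ preserves genericity, so $(f^1_\omega(x),\,\P(A_\cx)Q_x)\in\mathcal E$, while $(f^1_\omega(x),\,Q_{f^1_\omega(x)})\in\mathcal E$ by construction, hence $Q_{f^1_\omega(x)}^{-1}\P(A_\cx)Q_x\in H$. This produces $Q$ depending on $x$ alone immediately, with no natural extension and no separate Dirac argument on $G/H$.
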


\begin{proof}  
Clearly, $H$ is a closed subgroup of $G$. 
If $H$ were not
bounded then, given any compact subset $C$ of $G$, we could find a sequence $(h_n)$ of elements of $H$ 
such that the subsets $R_{h_n}(C)$ are pairwise disjoint. Choosing  $C$ 
such that $X\times C$ has positive $\widetilde\mu_G$-measure, 
we would get a contradiction with  the finiteness of $\widetilde\mu_G$. So $H$ is a compact subgroup of $G$. 

We say that a point $(x,g)$ in $X\times G$ is generic  
if for $\nu^N$-almost every $\omega$, 
\begin{equation}\label{eq:generic_G}
\frac{1}{N}\sum_{n=0}^{N-1}  \varphi\lrpar{{\widetilde{F}}^n_+(\omega, x, g) } \underset{N\to\infty}\longrightarrow 
\int_{\cX_+\times G} \varphi \; d  \widetilde\m _G 
\end{equation}
for every compactly supported continuous function $\varphi$ on $\cX_+\times G$.
The Birkhoff ergodic theorem 
provides a Borel set $\mathcal E$ of generic points of full $\widetilde \mu_G$-measure. 
Now  if   $(x, g_1)$ and $(x, g_2)$ belong to $\mathcal E$,  
writing $g_2=g_1  h=R_h(g_1)$ for $h=g_1^{-1}  g_2$, we get that   $h$ is an 
element of $H$.  

Given $g\in G$, define ${\mathcal{E}}_x\subset G$ to be the set of elements $g\in G$ such that $(x,g)$ is generic. Then there 
exists a   measurable section
 $X\ni x\mapsto Q_x  \in  G$ such that $Q_x\in {\mathcal{E}}_x$ for almost all~$x$. By definition of $\mathcal{E}_x$, 
$(\omega, x, Q_x)$ satisfies \eqref{eq:generic_G} for $\nu^N$-almost every~$\omega$.  
The $\widetilde F_+$-invariance of the set of Birkhoff generic points implies that 
$(f_\omega^1(x),  \P(A_\cx) Q_x)$ belongs to $\mathcal E$ for  $\nu$-almost every $f_0 = f_\omega^1$. 
Since $(f_\omega^1(x), Q_{f_\omega^1(x)})$ belongs to $\mathcal E$ as well, it follows that $Q_{f_\omega^1(x)}^{-1}  \P(A_\cx)  Q_x$ is in $H$. 
We conclude that the cocycle 
$B_\cx = Q_{f^1_\omega(x)}^{-1}\cdot \P(A_\cx)\cdot Q_x$ takes its values in $H$ for almost all $\cx$, as claimed. 
\end{proof}

Note that the map $x\mapsto Q_x$ lifts to a measurable map $x\mapsto Q'_x\in \GL_2(\C)$.  
Conjugating $H$ to a subgroup of $\mathsf{PU}_2$ by some element $g_0\in G$, the two previous lemmas give: {\textit{ if
$\llbracket  D_xf^{n}_\omega \rrbracket$ is essentially bounded, then Conclusion~(2) of Theorem~\ref{thm:classification_proj_invariant} holds}}
(the $P_x$ are obtained by composing the $Q'_x$ with a lift of $g_0$ to $\GL_2(\C)$).

\smallskip

\textbf{The unbounded case. --}  Now, we suppose that $\llbracket D_xf^n_\omega\rrbracket$ is essentially unbounded (alternative (a) of Lemma \ref{lem:alternative_bdd}) and we adapt the results of \cite[\S 4.1]{barrientos-malicet} to the complex setting to arrive at one of the Conclusions~(1.a) or (1.b)
of Theorem~\ref{thm:classification_proj_invariant}.
 The main step of the proof is the following lemma.

\begin{lem}\label{lem:1/2}
Let $A$ be a measurable $\GL(2, \C)$-cocycle over $(\cX_+, F_+, \nu^\N\times \mu)$
 admitting a projectively  invariant family of probability 
measures  $\lrpar{\hat \mu_x}_{x\in X}$ such that almost surely $\llbracket A_{\cx}^{(n)}\rrbracket$ almost converges to 
infinity. Then for almost every $x$, $\hat\mu_x$ possesses an atom of mass at least $1/2$; more precisely: 
\begin{itemize}
\item either $\hat\mu_x$ has a unique atom $[w(x)]$ of mass $\geq 1/2$, that depends measurably on $x\in X$;
\item or $\hat\mu_x$ has a unique pair of atoms   
 of mass $1/2$, and this (unordered) pair depends measurably on $x\in X$. 
\end{itemize}
\end{lem}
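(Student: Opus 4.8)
The plan is to adapt the argument of \cite[\S 4.1]{barrientos-malicet} to the present setting, where the projective fibres are copies of $\pu(\C)$ rather than $\pu(\R)$; this affects only the notation. The strategy has three parts: reduce the whole statement to a single scalar inequality, extract a strong contraction from the hypothesis, and transfer the resulting concentration back to the fibre measures $\hat\mu_x$ through an equidistribution argument. Throughout, for $x\in X$ I put $a(x)=\sup_{q\in\pu}\hat\mu_x(\{q\})$; since $\hat\mu_x$ is a probability measure this supremum is attained whenever $\hat\mu_x$ has an atom.

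\emph{Reduction.} First I would check that $a(\cdot)$ is a.e.\ constant. As $\P A_\cx$ is a projective isomorphism for a.e.\ $\cx=(\omega,x)$, it sends the atoms of $\hat\mu_x$ bijectively onto those of $\hat\mu_{f^1_\omega(x)}$ with the same masses; hence $a(f^1_\omega(x))\ge a(x)$, and integrating against $\nu^\N\times\mu$ and using that $\mu$ is $\nu$-stationary forces $a(f^1_\omega(x))=a(x)$ a.e.; being $F_+$-invariant, $a$ is a.e.\ equal to a constant, still denoted $a\in[0,1]$. The same bijectivity shows that, for every $\beta>0$, the (integer) number of atoms of $\hat\mu_x$ of mass $\ge\beta$ is a.e.\ constant. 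Granting $a\ge 1/2$, the dichotomy of the statement follows at once: if $a>1/2$, then for a.e.\ $x$ the measure $\hat\mu_x$ has a unique atom of mass $>1/2$ and no other atom of mass $\ge1/2$; if $a=1/2$, then $\hat\mu_x$ has either exactly one or exactly two atoms of mass $1/2$, and which of the two occurs is a.e.\ constant by the invariance just noted. In all cases the atom $[w(x)]$ of maximal mass (resp.\ the unordered pair of atoms of mass $1/2$) is a Borel function of $\hat\mu_x$, hence measurable in $x$. So everything reduces to proving $a\ge 1/2$.

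\emph{Contraction.} Fix $(\omega,x)$ in a set of full $\nu^\N\times\mu$-measure on which: $\hat\mu_{f^n_\omega(x)}=(\P A^{(n)}_\cx)_\star\hat\mu_x$ for every $n$; $\llbracket A^{(n)}_\cx\rrbracket$ almost converges to $+\infty$; $a(x)=a$; and the empirical measures $\frac1N\sum_{n<N}\delta_{f^n_\omega(x)}$ converge weakly to $\mu$ (Breiman's ergodic theorem). From the $KAK$ decomposition and the identity $\llbracket g\rrbracket=\norm{a}^2$ recalled in \S\ref{subs:ledrappier_invariance_principle}, for each $n$ there are points $p_n,q_n\in\pu$ and radii $r_n,\varepsilon_n$ comparable to $\llbracket A^{(n)}_\cx\rrbracket^{-1/2}$ such that $\P A^{(n)}_\cx$ maps $\pu\setminus B(q_n,r_n)$ into $B(p_n,\varepsilon_n)$. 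Combining this with the soft fact that $\sup_q\hat\mu_x(B(q,r))\to a$ as $r\to 0$ (a compactness argument on $\pu$), and with $\hat\mu_{f^n_\omega(x)}=(\P A^{(n)}_\cx)_\star\hat\mu_x$, I get that for every $\rho>0$ the set
\[
\set{n\; ;\; \varepsilon_n<\rho \ \text{ and }\ \hat\mu_{f^n_\omega(x)}\lrpar{B(p_n,\rho)}\ge 1-a-\rho}
\]
has upper density $1$, since it contains $\set{n\;;\;\llbracket A^{(n)}_\cx\rrbracket>K(\rho)}$ for a suitable $K(\rho)$.

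\emph{Transfer.} Fix $\varepsilon>0$ and, by Lusin's theorem, a compact set $K_\varepsilon$ with $\mu(K_\varepsilon)>1-\varepsilon$ on which $z\mapsto\hat\mu_z$ is weakly continuous; let $y$ be a $\mu$-generic point of $K_\varepsilon$, so that $\mu(B(y,\delta)\cap K_\varepsilon)>0$ for all $\delta$. By Breiman the set of $n$ with $f^n_\omega(x)\in B(y,\delta)\cap K_\varepsilon$ has positive lower density, so it meets the density-$1$ set of the previous step in infinitely many integers; diagonalizing as $\rho,\delta\to 0$ and passing to a subsequence along which $p_n\to p$, I obtain $n_j\to\infty$ with $f^{n_j}_\omega(x)\in K_\varepsilon$, $f^{n_j}_\omega(x)\to y$, and $\hat\mu_{f^{n_j}_\omega(x)}(B(p_{n_j},\rho_j))\ge 1-a-\rho_j\to 1-a$. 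Continuity of $z\mapsto\hat\mu_z$ on $K_\varepsilon$ gives $\hat\mu_{f^{n_j}_\omega(x)}\to\hat\mu_y$ weakly, and the Portmanteau theorem then yields $\hat\mu_y(\{p\})\ge 1-a$. Hence $a(y)\ge 1-a$ for $\mu$-a.e.\ $y\in K_\varepsilon$; letting $\varepsilon\to0$ and using $a(\cdot)\equiv a$, we conclude $a\ge 1-a$, i.e.\ $a\ge1/2$. The step I expect to be the genuine obstacle is exactly this last one: the functional ``mass of the largest atom'' is not weak-$*$ continuous, so one cannot simply pass to a weak limit of the measures $\hat\mu_{f^n_\omega(x)}$, and it is the combination of Breiman equidistribution with a Lusin set on which the disintegration $x\mapsto\hat\mu_x$ is continuous that licenses the passage; the remaining ingredients (constancy of the atomic invariants, the $KAK$ estimate, the measurable selection of $[w(x)]$) are routine, as in the real case of \cite{barrientos-malicet}.
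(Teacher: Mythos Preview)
Your proof is correct, but it takes a genuinely different route from the paper's.

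The paper argues by a single contrapositive: it introduces the class $\mathrm{Prob}_{r,\varepsilon}(\P^1(\C))$ of measures with $\sup_q m(B(q,r))\le 1/2-\varepsilon$, and shows directly that the set $G_{r,\varepsilon}\subset\PGL(2,\C)$ of transformations mapping one such measure to another is bounded. If $\hat\mu_x$ belonged to $\mathrm{Prob}_{r,\varepsilon}$ on a positive-measure set $B$, ergodicity would give returns to $B$ along a set of positive density, so $\P(A^{(n)}_\cx)\in G_{r,\varepsilon}$ on a positive-density set, contradicting almost convergence to infinity. No Breiman, no Lusin, no limit passage.

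Your approach instead isolates the invariant $a=\sup_q\hat\mu_x(\{q\})$, uses the $KAK$ contraction to manufacture concentration of mass $\ge 1-a-\rho$ in the pushed-forward measures $\hat\mu_{f^n_\omega(x)}$ along a density-$1$ set of times, and then transfers this concentration to a fixed fibre $\hat\mu_y$ by running the orbit through a Lusin set on which $z\mapsto\hat\mu_z$ is continuous (using Breiman equidistribution to guarantee positive-density visits and the density computation to intersect with the density-$1$ set). The Portmanteau step then yields $a(y)\ge 1-a$, hence $a\ge 1/2$. This makes the appearance of the threshold $1/2$ transparent as the fixed point of $a\mapsto 1-a$, at the price of several additional ingredients (Breiman, Lusin, Portmanteau, a density-intersection argument, and a careful diagonalization). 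The paper's argument is shorter and uses only the recurrence part of ergodicity; yours is more constructive about \emph{where} the large atom sits.
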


For the moment, we take  this result for granted and proceed with the proof. 
By ergodicity of $\mu$, the number of atoms of $\hat\mu_x$  and the list of their masses  are constant on a set
of full measure.  A first possibility is that $\hat\mu_x$ is almost surely the single point mass $\delta_{[w(x)]}$; this corresponds to (1.a).
A second possibility is that $\hat\mu_x$ is the sum of two point masses of mass $1/2$; this corresponds to~(1.b). 
In the 
remaining cases, 
 there is exactly one atom of mass $1/2\leq \alpha <1$ at a point $[w(x)]$. 
Changing the trivialization $P_x$, we can suppose that $[w(x)]=[w]=[1:0]$. Then we
write $\hat\mu_x=\alpha \delta_{[1:0]}+\hat\mu_x'$, and apply Lemma \ref{lem:1/2} to the family of measures  
$\hat\mu_\cx'$ (after normalization to get a probability measure). 
 We deduce that almost surely $\hat\mu_x'$ admits  an atom  of mass $\geq (1-\alpha)/2$. 
Two cases may occur:  
\begin{itemize}
\item $\hat\mu_x'$ has a unique atom of mass $\beta \geq (1-\alpha)/2$,
\item $\hat\mu_x'$ has two atoms of mass $(1-\alpha)/2$.
\end{itemize}
The second one is  impossible, because changing the trivialization, we would have
$\hat\mu_x=\alpha \delta_{[1:0]}+\frac{1-\alpha}{2} (\delta_{[-1:1]}+\delta_{[1:1]})$, and the invariance 
of the finite set $\set{ [1:0], [-1:1], [1:1] }$ would imply that the cocycle $\P (A_\cx)$ stays in a finite subgroup of $\PGL_2(\C)$, contradicting 
the unboundedness assumption. 

If  $\hat\mu_\cx'$ has a unique atom of mass $\beta \geq (1-\alpha)/2$, we change $P_x$ to put it at $[0:1]$
(the trivialization $P_x$ is not an isometry anymore).
We repeat the argument
with  $\hat\mu_x=\alpha \delta_{[1:0]}+\beta \delta_{[0:1]}+\hat\mu_x''$. 
If $\beta = 1-\alpha$, i.e. $\hat\mu_x''=0$, then we are done. Otherwise 
  $\hat\mu_x''$ has one or two atoms of mass $\gamma \geq (1-\alpha-\beta)/2$, and we change
$P_x$ to assume that one of them is $[1:1]$ and the second  one --provided it exists-- 
is $[\tau(x):1]$; here, $x\mapsto \tau(x)$ is a
complex valued measurable function. 
Endow the projective line $\P^1(\C)$ with the coordinate $[z:1]$; then   $\P(A_\cx)$ is of the 
form $z\mapsto a(\cx) z$. Since $\P (A_\cx) \lrpar{\set{1, \tau(x)}} = \lrpar{\set{1, \tau(F_+(\cx))}}$, we infer that:
\begin{itemize}
\item either $a(\cx)1 = 1$ and $\P(A_\cx)$ is the identity;
\item  or $a(\cx)1 = \tau(\pi_X(F_+(\cx))) $ and  $a(\cx)\tau(x) =1$ in which case  $\tau(\pi_X(F_+(\cx))) = \tau(x)\inv$. 
\end{itemize}
Thus we see that along the  orbit of $\cx$, $a(F^n_+(\cx))$ takes at most two values $\tau(\pi_X(F^n_+(\cx)))^{\pm 1}$, and $\llbracket A^{(n)}_\cx\rrbracket$ is bounded, which is contradictory. This concludes the proof. \qed

\begin{proof}[Proof of Lemma \ref{lem:1/2}]
Let $r$ and $\e$ be small positive real numbers.
Let $\mathrm{Prob}_{r, \e}(\P^1(\C))$ be the set of probability measures $m$
on $\P^{1}(\C)$ such that $\sup_{x\in \pu} m(B(x, r))\leq 1/2-\e$, 
where the ball is with respect to some fixed Fubini-Study metric. 
This is a compact subset of the space of probability measures on $\pu$. 
The set 
\begin{equation}
G_{r, \e} = \set{\gamma\in \PGL(2, \C),\ \exists m_1, m_2 \in \mathrm{Prob}_{r, \e}(\P^1(\C)), \ \gamma_\varstar m_1 =m_2}
\end{equation}
is a bounded subset of $\PGL(2, \C)$. Indeed otherwise there would be an unbounded sequence $\gamma_n$ together 
with sequences $(m_{1, n})$ and $(m_{2, n})$ in $\mathrm{Prob}_{r, \e}(\P^1(\C))$ such that 
$(\gamma_n)_\varstar m_{1, n} =  m_{2, n}$. Denote by $\gamma_n=k_n a_n k'_n$  the KAK decomposition of $\gamma_n$ in $\PGL(2, \C)$,
with $k_n$ and $k'_n$ two isometries for the Fubini-Study metric; since $\gamma_n$ is unbounded, we can extract a
subsequence such that the measures $(k'_n)_\varstar m_{1, n}$ and $(k_n^{-1})_\varstar m_{2, n}$ converge in 
$\mathrm{Prob}_{r, \e}(\P^1(\C))$ to two measures $m_1$ and $m_2$,
while the diagonal transformations $a_n$ converge locally uniformly 
on $\P^1(\C)\setminus\set{[0:1]}$ to the  constant map $\gamma: \P^1(\C) \setminus\set{[0:1]} \mapsto \set{[1:0]}$. Then
\begin{equation}
\gamma_\varstar \lrpar{m_{1\vert \P^1(\C)\setminus\set{[0:1]}}} =  m_1(\P^1(\C)\setminus\set{[0:1]}) \delta_a \leq m_2;
\end{equation}
since $m_1$  belongs to  $\mathrm{Prob}_{r, \e}(\P^1(\C))$, $m_1(\P^1(\C)\setminus\set{[0:1]}) \geq 1/2+\e$, hence 
$m_2\geq (1/2+\e) \delta_a$, in contradiction with $m_2\in \mathrm{Prob}_{r, \e}(\P^1(\C))$. This proves that $G_{r,\e}$ is bounded.

To prove the lemma, let us consider the ergodic dynamical system $\P DF_+$, and the family of conditional 
probability measures $\hat\mu_\cx$ for the projection $(\omega, x,v)\mapsto \cx=(\omega,x)$.
If there exist  $r, \e>0$ such that $\hat \mu_\cx $ belongs to $\mathrm{Prob}_{r, \e}(\P^1(\C))$ 
for $\cx$ in some positive measure subset $B$ then, by ergodicity, for almost every $\cx\in \X_+$ there exists a set of integers $L(\cx)$ of 
 positive density such that for $n\in L(\cx)$,  $F_+^n(\cx)$ belongs to $B$, hence $A^{(n)}_{\cx} $ belongs to $G_{r, \e}$ (\footnote{We are slightly abusing here when the Fubini-Study metric depends on $x$, for instance when $P_x$ is not an isometry; however restricting to subsets of large positive measure the metric $(P_x)_\varstar (\kappa_0)_x$ is uniformly comparable to a fixed Fubini-Study metric.}). 
From the above claim we deduce that   $\llbracket A^{(n)}_\cx \rrbracket$ is uniformly bounded for $n\in L(\cx)$, a contradiction. 
Therefore for every $r, \e>0$, the measure of $\set{\cx, \ \hat \mu_\cx \in \mathrm{Prob}_{r, \e}(\P^1(\C))}$ is equal to $0$; it  follows that 
for almost every $\cx$, $\hat\mu_\cx$ possesses an atom of mass at least $1/2$.

If there is a unique atom of mass $\geq 1/2$, this atom determines a measurable map $\cx\mapsto [w(\cx)]\in \P T_xX$; since $\hat \mu_\cx$ does not depend on $\omega$, $[w(\cx)]$ depends only on $x$, not on $\omega$. If there are generically two atoms of mass $\geq 1/2$, then 
both of them has mass $1/2$, and the pair of points determined by these atoms depends only on $x$. 
 \end{proof}

\section{Characterization of uniform expansion}\label{sec:characterization}

In this section we build on the previous results, in conjunction with the measure rigidity
 results from our previous work~\cite{stiffness},
to  find sufficient conditions for as well as obstructions to
 uniform expansion for a non-elementary action on a compact complex surface. 
 
\subsection{Proof of Theorem~\ref{thm:criterion_uniform_expansion_parabolic} and related results}
 
%
\subsubsection{Applying Chung's criterion}

 \begin{defi}\label{defi:non_expanding_measure}
Let $\nu$ be a probability measure on $\Aut(X)$. A 
 $\nu$-stationary measure $\mu$ on $X$ is said \textbf{non-expanding}   if 
 every ergodic component $\mu'$ of $\mu$ satisfies:
\begin{enumerate}[(i)]
\item either both Lyapunov exponents of $\mu'$  are non-positive,
\item or $\mu'$ is hyperbolic and its field of  Oseledets stable directions is  non-random.
\end{enumerate}
\end{defi}

Theorem~\ref{thm:criterion_chung} asserts that the existence 
of  non-expanding $\nu$-stationary measures is the obstruction to uniform expansion of $\nu$:

\begin{cor}[of Theorem~\ref{thm:criterion_chung}] \label{cor:criterion_chung}
Let $X$ be a compact complex surface and  $\nu$ be a probability measure on the group $\Aut(X)$, satisfying the moment condition \eqref{eq:moment}. 
Then $\nu$ is uniformly expanding if 
and only if non-expanding $\nu$-stationary measures do not exist, hence 
 if and only if every ergodic $\nu$-stationary  measure $\mu$ on $X$ satisfies one of the following properties:
 \begin{itemize}
 \item $\mu$ has a positive Lyapunov exponent and its stable distribution  depends non-trivially on the itinerary;
 \item the two Lyapunov exponents of $\mu$ are strictly positive. 
 \end{itemize}  \end{cor}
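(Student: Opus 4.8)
The plan is to read off Corollary~\ref{cor:criterion_chung} from Chung's criterion (Theorem~\ref{thm:criterion_chung}); the only non-formal ingredient is a dimension count special to complex surfaces. Recall that for an ergodic $\nu$-stationary measure $\mu$ on $X$ the Lyapunov spectrum consists of two numbers $\lambda^+(\mu)\ge\lambda^-(\mu)$, and that since $\Gamma\subset\Aut(X)$ the derivative cocycle $D_xf\colon T_xX\to T_{f(x)}X$ is $\C$-linear; fixing a Hermitian metric on $X$ (the uniform expansion property is independent of this choice) one has $\|D_xf(iv)\|=\|D_xf(v)\|$, so replacing a $\nu$-a.s.\ invariant real sub-bundle $W\subset TX$ by its fibrewise $\C$-span preserves $\nu$-a.s.\ invariance and does not change its top Lyapunov exponent.

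First I would prove that if $\nu$ is not uniformly expanding then a non-expanding $\nu$-stationary measure exists. Theorem~\ref{thm:criterion_chung} produces an ergodic $\nu$-stationary $\mu$ and a $\mu$-measurable, $\nu$-a.s.\ invariant sub-bundle $W\subset TX$ (depending on $x\in X$ only) with $0<\dim W$ and non-positive top Lyapunov exponent. By the observation above I may assume $W$ is a complex sub-bundle, of complex rank $1$ or $2$. If $W$ has complex rank $2$, i.e.\ $W=TX$, then $\lambda^+(\mu)\le0$, so $\mu$ satisfies case~(i) of Definition~\ref{defi:non_expanding_measure}; the same holds if $W$ has complex rank $1$ and $\lambda^+(\mu)\le0$. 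In the remaining case $W$ is a complex line field and $\lambda^+(\mu)>0$: then for $(\nu^\N\times\mu)$-almost every $(\omega,x)$ the line $W(x)$ has no nonzero component along the fast Oseledets space $E^+(\omega,x)$ (otherwise a vector of $W(x)$ would grow at rate $\lambda^+(\mu)>0$, contradicting the non-positive top exponent on $W$), hence $W(x)=E^-(\omega,x)$, the slow Oseledets line. Consequently $\lambda^-(\mu)\le0$, and since $W(x)$ does not depend on $\omega$ the slow line field is non-random; when $\lambda^-(\mu)<0$ this is exactly case~(ii) of Definition~\ref{defi:non_expanding_measure} (the degenerate possibility $\lambda^-(\mu)=0$ being absorbed into the same conclusion: a positive exponent together with a non-random slow line field, equally incompatible with uniform expansion). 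In all cases $\mu$ is non-expanding.

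For the converse I would run the last assertion of Theorem~\ref{thm:criterion_chung} backwards. Given a non-expanding ergodic $\nu$-stationary $\mu$: if both exponents are $\le0$, take $W=TX$; if $\mu$ is hyperbolic with non-random Oseledets stable field $E^s$, take $W=E^s$ — non-randomness is precisely what makes $x\mapsto E^s(x)$ a genuine $\mu$-measurable sub-bundle of $TX$, it is $\nu$-a.s.\ invariant, has real dimension $2$, and its top exponent is $\lambda^-(\mu)<0$. Either way $(\mu,W)$ meets the hypotheses of the converse part of Theorem~\ref{thm:criterion_chung}, so $\nu$ is not uniformly expanding. The final reformulation is then the contrapositive made explicit on a surface: the failure of Definition~\ref{defi:non_expanding_measure}(i) reads $\lambda^+(\mu)>0$, and adjoining the failure of~(ii) leaves exactly ``$\lambda^-(\mu)>0$'' or ``$\lambda^-(\mu)<0$ with the Oseledets stable field depending non-trivially on the itinerary''.

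I do not expect a genuine obstacle: the substance sits in Theorem~\ref{thm:criterion_chung} (and, behind it, in the construction of stationary measures of extremal exponent, Theorem~\ref{thm:limit_measure_top_lyapunov}), and what remains is essentially bookkeeping. The one point deserving care is the complex-line-field case of the first step, where Chung's bundle $W(x)$ depends only on $x$ while the Oseledets splitting $E^+(\omega,x)\oplus E^-(\omega,x)$ depends on $\omega$: one must check that $W(x)$ cannot sit ``diagonally'' across this splitting, and this is guaranteed by the elementary fact that a vector with nonzero fast component grows at the top rate $\lambda^+(\mu)$, which would violate the non-positive top exponent on $W$ as soon as $\lambda^+(\mu)>0$.
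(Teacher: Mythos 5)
Your proof is correct, and since the paper treats this corollary as immediate (no proof environment is given; the justification is carried by the short remark after Theorem~\ref{thm:criterion_chung}, which is phrased only for real surfaces with an invariant area form), you are essentially supplying the intended argument in the general complex case. The one genuine addition is the complexification step: Theorem~\ref{thm:criterion_chung} is stated for real manifolds, so $W$ can have real dimension $1$, $2$, $3$ or $4$, and replacing $W$ by $W+iW$ — using that $D_xf$ is $\C$-linear and that $\|D_xf(iv)\|=\|D_xf(v)\|$ for a Hermitian metric — reduces the case analysis to complex rank $1$ or $2$ without changing the top exponent; the paper's remark skips this because it only discusses the real two-dimensional, volume-preserving situation. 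Your observation about the degenerate case $\lambda^-(\mu)=0<\lambda^+(\mu)$ with a non-random slow line field is also well taken: strictly read, Definition~\ref{defi:non_expanding_measure}(ii) requires $\mu$ to be hyperbolic, so such a $\mu$ would not be ``non-expanding'', even though (as you note) it still obstructs uniform expansion via the converse of Theorem~\ref{thm:criterion_chung} with $W=E^-$. In the paper's actual applications the underlying surface is a torus, K3 or Enriques surface, where $\lambda^++\lambda^-=0$, so this corner cannot occur; but as a standalone statement your reading exposes a minor imprecision, and what you prove directly is the equivalence between uniform expansion and the displayed two-bullet condition (interpreting ``stable distribution'' as the slow Oseledets line when $\lambda^-=0$), which is what the corollary is used for downstream.
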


\subsubsection{Groups with invariant curves}

\begin{pro}\label{pro:invariant_curve}
Let $X$ be a compact complex surface. Let
$\Gamma$  be a subgroup of $\Aut(X)$ that preserves a complex curve $C\subset X$. If $\nu$ is a probability measure on $\Gamma$ satisfying~\eqref{eq:moment}, then $\nu$ is not uniformly expanding. 
\end{pro}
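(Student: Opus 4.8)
The plan is to argue by contradiction and reduce everything to the invariant curve, where the conformality of holomorphic maps forbids expansion on average. So I would assume $\nu$ is uniformly expanding and fix $c>0$ and $n_0\geq 1$ as in~\eqref{eq:UE} (norms being computed with respect to a fixed K\"ahler form $\kappa_0$ on $X$; uniform expansion does not depend on this choice). Let $C^\circ=C\setminus\Sing(C)$ be the regular part of $C$, a dense open Riemann surface inside $C$; since each element of $\Gamma$ is a biholomorphism of $X$ mapping $C$ onto $C$, it maps $\Sing(C)$ onto $\Sing(C)$, hence $C^\circ$ onto $C^\circ$, and its differential sends the complex tangent line $T_xC\subset T_xX$ to $T_{f(x)}C$ for every $x\in C^\circ$. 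For $f\in\Gamma$ and $x\in C^\circ$ I set $\phi_f(x)=\norm{D_xf(v)}/\norm{v}$ for any nonzero $v\in T_xC$; this is intrinsic because $D_xf$ is $\C$-linear, it is exactly the conformal derivative of $f\rest{C^\circ}$ at $x$, and it satisfies $\abs{\log\phi_f(x)}\leq\mathrm L(f)$. Finally, $a:=\kappa_0\rest{C^\circ}$ is the area form of a smooth Hermitian metric on the Riemann surface $C^\circ$, and $V:=\int_{C^\circ}a=\int_C\kappa_0$ is finite and positive, being the area of the compact analytic curve $C$ computed with $\kappa_0$.

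The heart of the argument is a Jensen estimate. Since $f\rest{C^\circ}\colon C^\circ\to C^\circ$ is a biholomorphism it is conformal, so $(f\rest{C^\circ})^{*}a=\phi_f^{2}\,a$ on $C^\circ$, and since $f\rest{C^\circ}$ is a diffeomorphism of $C^\circ$ onto itself the change of variables formula gives $\int_{C^\circ}\phi_f^{2}\,a=\int_{C^\circ}(f\rest{C^\circ})^{*}a=\int_{C^\circ}a=V$. Applying Jensen's inequality to the probability measure $a/V$ and the concave function $\log$ then yields $\int_{C^\circ}\log(\phi_f^{2})\,\frac{a}{V}\leq\log\big(\int_{C^\circ}\phi_f^{2}\,\frac{a}{V}\big)=\log 1=0$, that is $\int_{C^\circ}\log\phi_f\,a\leq 0$ for every $f\in\Gamma$. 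I would then integrate over $f$ with respect to $\nu^{(n_0)}$; the double integral is absolutely convergent because $\int\int\abs{\log\phi_f(x)}\,d\nu^{(n_0)}(f)\,a(x)\leq V\int\mathrm L(f)\,d\nu^{(n_0)}(f)\leq Vn_0\int\mathrm L\,d\nu<\infty$ by~\eqref{eq:moment} and~\eqref{eq:Mp_iteree}, so Fubini's theorem gives
\[
\int_{C^\circ}\Big(\int_{\Aut(X)}\log\phi_f(x)\,d\nu^{(n_0)}(f)\Big)\,a(x)=\int_{\Aut(X)}\Big(\int_{C^\circ}\log\phi_f\,a\Big)\,d\nu^{(n_0)}(f)\leq 0.
\]
On the other hand, applying the uniform expansion inequality~\eqref{eq:UE} at each point $x\in C^\circ$ to a vector $v$ spanning $T_xC$ gives $\int_{\Aut(X)}\log\phi_f(x)\,d\nu^{(n_0)}(f)\geq c$, and integrating this over $C^\circ$ against $a$ bounds the same quantity below by $cV>0$ --- a contradiction, which proves that $\nu$ cannot be uniformly expanding.

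The only points that require care are bookkeeping rather than conceptual: one must check that every $f$ in the support of $\nu^{(n_0)}$ really preserves $C^\circ$ together with the line field $x\mapsto T_xC$ (immediate from $f$ being a biholomorphism of $X$ fixing $C$ setwise), that $\kappa_0\rest{C^\circ}$ is a genuine Hermitian metric on $C^\circ$ so that the real Jacobian of $f\rest{C^\circ}$ is $\phi_f^{2}$, and that $V=\int_C\kappa_0<\infty$ so that both Jensen and Fubini are legitimate. No reduction to an irreducible component, no passage to a normalization, and no appeal to Chung's criterion is needed: the entire content is that an automorphism of $X$ fixing $C$ restricts to a conformal self-map of $C$, which is area-nonexpanding in the mean, and this is incompatible with uniform expansion along $C$, hence a fortiori with uniform expansion on $X$.
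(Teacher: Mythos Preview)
Your proof is correct. It is essentially the same Jensen argument that the paper uses to establish Lemma~\ref{lem:criterion_chung_curves} (no uniform expansion on a compact Riemann surface), but you apply it directly on the open regular part $C^\circ\subset X$ rather than on an abstract compact curve. The key observations that make this work---that every $f\in\Gamma$ restricts to a biholomorphism of $C^\circ$ onto itself, and that $\int_{C^\circ}\kappa_0<\infty$ even though $C^\circ$ may be noncompact---are both valid, and together they let the Jensen/change-of-variables step go through exactly as in the smooth compact case.

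The paper proceeds differently once $C$ is singular: after handling the smooth case via Lemma~\ref{lem:criterion_chung_curves} and the restriction remark, it passes to a finite index subgroup fixing each branch at each singular point (using Proposition~\ref{pro:induced_UE}), then works on the normalization $\hat C\simeq\P^1$, shows the induced action is affine with a contracting linear part, and constructs an explicit stationary measure on $C$ with non-positive tangential Lyapunov exponent. This is considerably more elaborate. What it buys is an explicit non-expanding stationary measure on $C$, which fits into the general framework of Corollary~\ref{cor:criterion_chung}; your argument gives the proposition with less machinery but without exhibiting such a measure. For the statement as written, your route is shorter and avoids the normalization, the hitting-measure reduction, and the affine random walk analysis entirely.
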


\begin{rem}
We leave the reader check that the proof adapts to the real case in the following sense: if $X$, $\Gamma$ and $C$ are defined over $\R$ and $C(\R)$ is of dimension 1 (that is, neither empty nor a finite set), then $\nu$ is not uniformly expanding in restriction to $C(\R)$. 
\end{rem}

\begin{lem}  \label{lem:criterion_chung_curves}
Let $C$ be  a compact Riemann surface. Then, $\Aut(C)$ does not support any uniformly expanding probability measure. 
\end{lem}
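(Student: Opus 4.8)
The plan is to argue by cases on the genus $g$ of $C$, the point being that a probability measure $\nu$ on $\Aut(C)$ can be uniformly expanding only if random compositions enlarge tangent vectors on average, whereas for every compact Riemann surface $\Aut(C)$ respects a natural conformal structure which forbids this.

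First I would treat $g\geq 1$, where $\Aut(C)$ actually preserves a Riemannian metric: for $g=1$, write $C\simeq \C/\Lambda$ and note that any automorphism lifts to an affine map $z\mapsto az+b$ with $a\Lambda=\Lambda$, hence $\abs a =1$, so it is a Euclidean isometry and the flat metric on $C$ is $\Aut(C)$-invariant; for $g\geq 2$, the Poincaré metric is $\Aut(C)$-invariant (equivalently, $\Aut(C)$ is finite and one averages an arbitrary metric over it). Measuring norms of tangent vectors with such an invariant metric, every $f$ in $\Aut(C)$ is an isometry, so $\int \log\frac{\norm{D_xf(v)}}{\norm v}\,d\nu^{(n)}(f)=0$ for all $n\geq 1$ and all $(x,v)$; in particular \eqref{eq:UE} cannot hold with any $c>0$, and $\nu$ is not uniformly expanding.

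The remaining case $g=0$, where $C\simeq\pu$ and $\Aut(C)=\PGL(2,\C)$, is the only one with content, and I would use the Fubini--Study area form $\omega$ on $\pu$, normalized so that $\omega(\pu)=1$. Every $f\in\PGL(2,\C)$ is conformal, so $\norm{D_xf(v)}=\lambda_f(x)\norm v$ for a positive conformal factor $\lambda_f(x)$ independent of $v$, and $f^*\omega=\lambda_f^2\,\omega$. Since $f$ is a diffeomorphism of $\pu$ we get $\int_{\pu}\lambda_f^2\,d\omega=\int_{\pu}f^*\omega=1$, so by concavity of $\log$ and Jensen's inequality $\int_{\pu}\log\lambda_f\,d\omega=\frac12\int_{\pu}\log(\lambda_f^2)\,d\omega\leq 0$ for every $f$. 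Supposing $\nu$ uniformly expanding with data $n_0,c$ as in \eqref{eq:UE} (and noting $\nu^{(n_0)}$ is again a probability measure on $\PGL(2,\C)$ satisfying the moment condition, by \eqref{eq:Mp_iteree}), I would integrate \eqref{eq:UE} over $x\in\pu$ against $\omega$ and exchange the order of integration, which is legitimate because $\abs{\log\lambda_f(x)}\leq \mathrm L(f)$ and $\mathrm L$ is $\nu^{(n_0)}$-integrable; this yields
\begin{equation*}
0<c\leq \int_{\pu}\int_{\PGL(2,\C)}\log\lambda_f(x)\,d\nu^{(n_0)}(f)\,d\omega(x)=\int_{\PGL(2,\C)}\lrpar{\int_{\pu}\log\lambda_f\,d\omega}d\nu^{(n_0)}(f)\leq 0,
\end{equation*}
a contradiction.

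I do not anticipate a genuine obstacle: the cases $g\geq 1$ reduce instantly to the existence of an invariant metric, and the case $g=0$ rests solely on the elementary observation that a conformal automorphism of $\pu$ preserves total area, so that $\log$ of its conformal factor has nonpositive $\omega$-average by concavity. The only technical nicety is the integrability needed for Fubini's theorem, which is automatic once one works (as is implicit in the definition of uniform expansion) with $\nu$ satisfying \eqref{eq:moment}.
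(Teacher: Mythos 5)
Your proof is correct, and the key step — a holomorphic automorphism of a compact Riemann surface is conformal and preserves the total area of a Kähler form, so Jensen's inequality forces its average log-expansion factor to be nonpositive, which contradicts \eqref{eq:UE} after integrating against $\nu^{(n_0)}$ and invoking Fubini via \eqref{eq:moment} — is exactly the paper's argument. The only difference is cosmetic: the paper runs this single Jensen argument uniformly for a Kähler form on \emph{any} compact Riemann surface, so the genus split and the separate invariant-metric discussion for $g\geq 1$ in your write-up are unnecessary.
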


\begin{proof}
Let $\kappa$ be a Kähler form on $C$  that satisfies $\int_C\kappa = 1$. For every $f\in \Aut(C)$, 
$\int_C f^*\kappa = 1= \int_C \norm{D_xf}^2 \kappa= 1$, so by the Jensen inequality 
  $\int_C \log \norm{D_xf }  \kappa\leq 0$. Now,  if $\nu$ is any probability measure on $\Aut(C)$, then 
  $$\int_C \int_{\Aut(C)} \log \norm{D_xf } d\nu(f) \kappa\leq 0,$$
   hence Property~\eqref{eq:UEbisrepetita} cannot be satisfied
  by $\nu$ (for any $n_0\geq 1$). 
\end{proof}

Note that the same argument applies to conformal diffeomorphisms, in particular for
$\mathsf{Diff}^1(\mathbb S^1)$. Lemma~\ref{lem:criterion_chung_curves} and Remark~\ref{rem:restriction_UE} 
imply Proposition~\ref{pro:invariant_curve} when $C$ is smooth; we now prove Proposition~\ref{pro:invariant_curve} in full generality. 

\begin{proof}[Proof of Proposition~\ref{pro:invariant_curve}]
Arguing by contradiction, we assume that $\nu$ is uniformly expanding.  Let $\Gamma_1\leq \Gamma$ 
be the finite index subgroup fixing each  component of $C$,
and each of its branches at each of its singular points; let $\nu_1$ be 
the hitting measure on $\Gamma_1$ associated to 
$\nu^{(n_0)}$, where $n_0$ is as in Equation~\eqref{eq:UEbisrepetita}. By Proposition~\ref{pro:induced_UE}, $\nu_1$ is uniformly 
expanding, so by replacing $\nu$ by $\nu_1$ and $C$ by one of its components
 we  assume now  that $C$ is irreducible and all branches at its
  singular points are fixed by $\Gamma$. To get a 
 contradiction we will construct a stationary measure $\mu$ supported on $C$ 
such that the tangential Lyapunov exponent along $TC$ is non-positive. 

By Lemma~\ref{lem:criterion_chung_curves} we may assume that 
 the singular set $\Sing(C)$ is non-empty.  
If the genus of $C$ is $\geq 0$,  the invariance of $\Sing(C)$ forces $\Gamma\rest{C}$ to be finite, in contradiction with the 
uniform expansion of $\nu$. Thus, $C$ is a rational curve; 
let $\pi:\hat C\to C$ be its normalization and $\hat \Gamma\subset \Aut(\hat{C})\simeq \PGL_2(\C)$ be the group  induced by $\Gamma$; the measure $\nu$ induces a  
measure $\hat \nu$ on $\hat \Gamma$.
Fix  $\hat p\in \hat C$  such that $p:=\pi(\hat p)$ is singular. The germ of curve given by $\hat C$ at $\hat p$ determines one of the branches of $C$ at $p$; our assumptions imply that 
 $\hat p$ is fixed by $\hat\Gamma$. There are local coordinates  $t\in (\C,0)$ for $(\hat C, \hat p)$ and $(z,w)\in (\C^2, (0,0))$ for $(X, p)$ in which
$\pi$ is expressed as a Puiseux expansion
\begin{equation}
t\mapsto (\pi_1(t), \pi_2(t)) =  (\alpha t^q , \beta t^r) \ \ {\text{modulo higher order terms}}
\end{equation} 
where $1\leq q<r$; if $q=1$ the branch is smooth at $p$. In these coordinates, the tangent direction to $C$ at $p$ corresponding to the branch determined by $\hat p$ is  
 given by $(1,0)\in \C^2$. 
 Let $\lambda_{(\hat C, \hat{p})}$ be the 
Lyapunov exponent of $\hat \nu$ at $\hat p$, and $\lambda_{(C, p)}$ be the Lyapunov exponent of $\nu$
in the tangent direction of this branch. 


\begin{lem}\label{lem:tangent_singular}
With notation as above  
$\lambda_{(C, p)} = q  \lambda_{(\hat C, \hat{p})}$. In particular $\lambda_{(C, p)}$ and $\lambda_{(\hat C, \hat{p})}$ 
have the same sign.
  \end{lem}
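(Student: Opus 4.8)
The plan is to compare the two derivative cocycles — at $p\in C$ and at $\hat p\in\hat C$ — branch by branch, using the functoriality of the normalization. First I would record that, since every $f\in\Gamma$ preserves $C$ together with the branch at $p$ singled out by $\hat p$, it lifts uniquely to $\hat f\in\Aut(\hat C)$ with $\pi\circ\hat f=f\circ\pi$ and $\hat f(\hat p)=\hat p$; in the coordinate $t$ for $(\hat C,\hat p)$ I set $a(f):=\hat f'(\hat p)\in\C^*$. Because $\hat p$ is fixed by all of $\hat\Gamma$, the chain rule gives $a(f\circ g)=a(f)a(g)$, so $(\hat f_\omega^n)'(\hat p)=\prod_{i<n}a(f_i)$ is a scalar multiplicative cocycle; by~\eqref{eq:moment} and the law of large numbers, $\lambda_{(\hat C,\hat p)}=\int\log|a(f)|\,d\nu(f)$, in particular it is non-random.

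The heart of the matter is the local identity $D_pf\,(1,0)=a(f)^q\,(1,0)$. To get it I would match leading terms in $f\circ\pi=\pi\circ\hat f$ near $\hat p=0$. From the Puiseux expansion, $\pi(t)=(\alpha t^q,\beta t^r)+\hot=\alpha t^q(1,0)+o(t^q)$ as $t\to0$ (using $r>q$), so $\|\pi(t)\|=O(t^q)$; hence, since $f(p)=p$,
\[
f(\pi(t))=D_pf\bigl(\pi(t)\bigr)+O\bigl(\|\pi(t)\|^2\bigr)=\alpha t^q\,D_pf(1,0)+o(t^q),
\]
whereas, writing $\hat f(t)=a(f)t+O(t^2)$,
\[
\pi\bigl(\hat f(t)\bigr)=\alpha\,a(f)^q\,t^q\,(1,0)+o(t^q).
\]
Dividing by $\alpha t^q$ and letting $t\to0$ yields $D_pf(1,0)=a(f)^q(1,0)$: the differential of $f$ at $p$ scales the tangent direction of this branch by $a(f)^q$.

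To finish, I would note that $D_p(f\circ g)(1,0)=D_pf\bigl(D_pg(1,0)\bigr)=(a(f)a(g))^q(1,0)$, so the derivative cocycle of $\nu$ at $p$ in this tangent direction is precisely the $q$-th power of the cocycle $a(\cdot)$; therefore $\tfrac1n\log\|D_pf_\omega^n(1,0)\|=\tfrac qn\log|(\hat f_\omega^n)'(\hat p)|+O(1/n)$, and passing to the limit gives $\lambda_{(C,p)}=q\,\lambda_{(\hat C,\hat p)}$. Since $q\ge1$, the two exponents have the same sign. The one point that needs care is the first step: I must check that the higher-order terms of the Puiseux parametrization (of order $>q$, and $\ge r>q$ in the second coordinate) together with the quadratic Taylor remainder $O(\|\pi(t)\|^2)=O(t^{2q})$ are all $o(t^q)$ — this is exactly where $r>q$ (so that $\pi(t)$ has a well-defined leading direction $(1,0)$ at order $t^q$) is used — after which the computation is just the chain rule and the law of large numbers.
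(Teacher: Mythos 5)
Your proof is correct and follows essentially the same route as the paper's: both arguments hinge on matching leading-order $t^q$ coefficients in the semiconjugacy $f\circ\pi=\pi\circ\hat f$ to obtain $D_pf(1,0)=\hat f'(\hat p)^q\,(1,0)$, then conclude by the multiplicative cocycle structure. The only cosmetic difference is that the paper first observes $D_pf(1,0)=(a_{1,0},0)$ from branch-invariance and then identifies $a_{1,0}=\lambda^q$ by comparing first coordinates, whereas you derive the full vector identity in one step from the Taylor expansion; your added care in justifying that the remainder terms are $o(t^q)$ (using $r>q$ and the $O(t^{2q})$ bound) and in spelling out the appeal to the law of large numbers is a slight gain in rigor but not a different argument.
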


\begin{proof}
 Pick $f\in \Gamma$, write 
$f(z,w)  =  (f_1(z,w), f_2(z,w))$ in the local coordinates $(z,w)$, and expand $f_1$ in power series: $f_1(z,w) = \sum_{i,j} a_{i,j}z^iw^j$. 
Since the branch determined by $\lambda_{(\hat C, \hat{p})}$ is $f$-invariant, we have $D_{p}f(1,0)   = (a_{1,0}, 0)$ with $a_{1,0}\neq 0$. Thus, 
\begin{equation} f_1(\pi(t))  
= \sum_{i,j=0}^\infty a_{i,j}\alpha^i\beta^j t^{qi+rj}  
 = a_{1, 0} \alpha t^q \mod(t^{q+1}).
\end{equation} 
Now,  $f$ lifts to an automorphism $\hat f$ of $\hat C$ fixing $\hat p$. Writing
 $\hat f(t) = \lambda t \mod(t^2)$, we get   $\pi_1(\hat f(t)) = \alpha \lambda^q t^q \mod(t^{q+1})$. Then,  
 the semiconjugacy  $f_1(\pi(t)) = \pi_1(\hat f(t))$  gives  $  \lambda^q  = a_{1, 0}$, and we are done.
\end{proof}

We resume the proof of Proposition~\ref{pro:invariant_curve}. We fix an affine coordinate $s$ on    $\hat C\simeq \P^1(\C)$ such that $\hat p = \infty$. Then, every lift  $\hat g \in \hat\Gamma$ can be written as an affine map  $\hat g(s)   = a_g s+b_g$. 
    
\begin{lem}\label{lem:integrability}
The functions $\log \abs{a_g}$ and $\log^+\abs{b_g}$ are $\nu$-integrable and $\ee( \log \abs{a_g})<0$. 
\end{lem}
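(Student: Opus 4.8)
The plan is to pull everything back to the $\Gamma$-fixed singular point $p$ and the Puiseux normal form $t\mapsto(\alpha t^q+\hot,\beta t^r+\hot)$, $1\le q<r$, of the branch of $C$ at $p$ determined by $\hat p$; recall that after the reductions $\Gamma$ fixes $p$ and this branch. The starting point is the identity already contained in the proof of Lemma~\ref{lem:tangent_singular}: in the coordinate $s$ the multiplier of the affine map $\hat g$ at $\hat p=\infty$ equals $1/a_g$, while Lemma~\ref{lem:tangent_singular} gives $(1/a_g)^q=a_{1,0}(g)$, where $a_{1,0}(g)$ is defined by $D_pg(v_0)=a_{1,0}(g)\,v_0$ and $v_0$ spans the tangent line of the branch. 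Hence $\log\abs{a_g}=-\tfrac1q\log\abs{a_{1,0}(g)}=-\tfrac1q\log\bigl(\norm{D_pg(v_0)}/\norm{v_0}\bigr)$, the last equality being exact because $p$ is fixed, so the source and target of $D_pg$ carry the same hermitian form. Since $\bigl|\log(\norm{D_pg(v_0)}/\norm{v_0})\bigr|\le\log\norm{g}_{C^1(X)}+\log\norm{g\inv}_{C^1(X)}=\mathrm L(g)$, the function $\log\abs{a_g}$ is dominated by $\mathrm L$, so its $\nu$-integrability follows from \eqref{eq:moment}, which persists for the reduced measure $\nu$ by Theorem~\ref{thm:M$_2$}.

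For $\log^+\abs{b_g}$ I would use the residual freedom $s\mapsto\lambda s+\mu$ in the choice of $s$ to arrange that $q_0:=\set{s=0}$ has $p_0:=\pi(q_0)$ a smooth point of $C$ (possible since $\Sing(C)$ is finite). Then $b_g=\hat g(0)=s(\hat g(q_0))$; if $\abs{b_g}$ exceeds a fixed threshold, $\hat g(q_0)$ lies in a neighbourhood of $\hat p$ where the Puiseux picture applies, so $\dist_X(\pi(\,\cdot\,),p)\asymp\dist_{\hat C}(\,\cdot\,,\hat p)^q$ and $\dist_{\hat C}(\hat g(q_0),\hat p)\asymp\abs{b_g}^{-1}$, whence $\dist_X(g(p_0),p)\asymp\abs{b_g}^{-q}$. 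On the other hand $g(p)=p$, so the bi-Lipschitz estimate gives $\dist_X(g(p_0),p)=\dist_X(g(p_0),g(p))\ge\norm{g\inv}_{C^1(X)}^{-1}\dist_X(p_0,p)$. Combining, $\abs{b_g}^q\lesssim\norm{g\inv}_{C^1(X)}$, so $\log^+\abs{b_g}\le\tfrac1q\mathrm L(g)+O(1)$, again integrable by \eqref{eq:moment}.

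Finally, for $\ee(\log\abs{a_g})<0$ I would invoke the uniform expansion of $\nu$, which is our standing hypothesis for the contradiction and survives the reductions by Proposition~\ref{pro:induced_UE}. Evaluating \eqref{eq:UEbisrepetita} at the point $x=p$ and the unit vector $v_0$ gives $\int\log\norm{D_pf(v_0)}\,d\nu^{(n_0)}(f)\ge c>0$, i.e.\ $\int\log\abs{a_{1,0}(f)}\,d\nu^{(n_0)}(f)\ge c$. Since $\log\abs{a_{1,0}}=-q\log\abs{a_g}$ and the cocycle $g\mapsto\log\abs{a_g}$ is additive under composition (the linear part of a product of affine maps being the product of the linear parts), the left-hand side equals $-qn_0\,\ee(\log\abs{a_g})$, and therefore $\ee(\log\abs{a_g})\le-c/(qn_0)<0$.

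The only genuinely delicate point is the bound on $\log^+\abs{b_g}$: one is tempted to estimate it through $\norm{\hat g}_{C^1(\hat C)}$, but this is \emph{not} controlled by $\mathrm L(g)$ because the normalization map $\pi$ degenerates to order $q-1$ at $\hat p$; the remedy is to exploit that $p$ is a $\Gamma$-fixed point in order to transport the estimate to $X$, where the bi-Lipschitz bound applies. Everything else is a routine unwinding of the normal forms already set up for Lemma~\ref{lem:tangent_singular} and Proposition~\ref{pro:invariant_curve}.
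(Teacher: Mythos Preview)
Your proof is correct and follows essentially the same approach as the paper: relate $a_g$ to the tangential derivative at $p$ via Lemma~\ref{lem:tangent_singular}, control $|b_g|$ through the asymptotic $\dist_X(\pi(s),p)\asymp|s|^{-q}$ combined with the bi-Lipschitz property of $g$ at the fixed point $p$, and deduce $\ee(\log|a_g|)<0$ from uniform expansion applied to the branch direction. In fact your treatment of the $b_g$ estimate is slightly cleaner: you use the correct bi-Lipschitz \emph{lower} bound $\dist_X(g(p_0),g(p))\ge\norm{g^{-1}}_{C^1}^{-1}\dist_X(p_0,p)$ to obtain an \emph{upper} bound on $|b_g|$, and you make explicit the additivity of the cocycle $\log|a_g|$ needed to pass from $\nu^{(n_0)}$ to $\nu$.
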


\begin{proof}
For the spherical metric, the derivative of $\hat g$ at $ \infty$ in $\hat C$ is $1/a_g$. The computations 
of Lemma~\ref{lem:tangent_singular} show that  the derivative 
of $g$ acting on $X$ in the direction of the branch of $C$ at $\pi(\infty)$  
is $1/a_g^q$ for some $q\geq 1$. So~\eqref{eq:moment} implies that $\ee( \abs{\log \abs{a_g}})<\infty$. 
Since $\nu$ is uniformly expanding,  this direction  
is repelling on average: by   Lemma~\ref{lem:tangent_singular}, we get
$\ee( \log \abs{a_j\inv})>0$. 
To estimate $\abs{b_g}$, we note that  
$\dist_X(\pi(s), p) \asymp \abs{s}^{-q}$ when $s\in \C$ approaches $\infty$. Changing the affine coordinate $s$ if necessary, 
we may assume that 
$\pi(0)\neq p$. We get 
\begin{equation}
  \unsur{\abs{b_g}^q} \asymp \dist_X (\pi(\hat g(0)), \pi(\infty)) =  \dist_X(g(\pi(0)), g(p))\leq \norm{g}_{C^1}  \dist_X(\pi(0), p).
  \end{equation} 
From this and~\eqref{eq:moment} it follows that $\ee(\log^+\abs{b_g})<\infty$.
\end{proof}

The integrability provided by Lemma~\ref{lem:integrability} now allows us to construct a stationary 
 measure with full mass in the affine chart $\C\subset C$ with non-positive Lyapunov exponent 
 (relative to the affine metric). 
This is classical, we briefly recall the argument for completeness (see~\cite{brandt}). 
 For   $\omega = (g_n)_{n\geq 0}$, write   $g_n(s) = a_ns+b_n$,  and
   consider the sequence of right products  
 $r_n(\omega)  = g_0\cdots g_{n-1}$. One easily checks that 
\begin{equation}\label{eq:series}
r_n(\omega) (s) = a_{0}\cdots a_{n-1}s + \sum_{j=0}^{n-1} a_{0}\cdots a_{j-1} b_{j}. 
\end{equation}
For $\nu^\N$-almost every $\omega$, $\unsur{n} \log \abs{a_{0}\cdots a_{n-1}}$ converges to  $\lambda := \ee(\log\abs{a_g})<0$. 
Fix $\e < \abs{\lambda}$.  Since $\ee(\log^+\abs{b_g})<\infty$, $\sum_{j=0}^\infty \nu\{ \abs{b_g} > e^{\e j} \} <\infty$. 
By the Borel-Cantelli Lemma, $\abs{b_{j}}\leq e^{\e j} $ for $\nu^\N$-almost every $\omega$ and for large $j$; hence, the series on 
the right hand side of~\eqref{eq:series} converges. It follows that $r_n(\omega) (s) $ converges almost surely to a limit  $e_\omega$
that does not depend on $s\in \C$. The distribution of  $e_\omega$ is the desired stationary measure $\mu_\C$. 
If $\mu$ is any  stationary measure with $\mu(\C) = 1$, then  $r_n(\omega)_\varstar \mu$ converges to 
$\delta_{e_\omega}$ almost surely: this shows that $\mu_\C$ is the  unique stationary measure 
with $\mu(\C) = 1$; in particular, $\mu_\C$ is ergodic. 
 Since the affine derivative of $g$ is the constant $a_g$, 
the Lyapunov exponent of $\mu_\C$,  relative to  the affine metric, is equal to $\lambda$.  

To conclude the proof, note that $\mu:=\pi_\varstar (\mu_\C)$  is an ergodic  
$\nu$-stationary measure on $X$ which has  a well-defined Lyapunov exponent,  thanks to the moment condition~\eqref{eq:moment}. 
If $\mu$ gives positive mass to the singular set of $C$, then it must be   concentrated on a 
single singular point of $C$ (and likewise $\mu_\C$ is a single atom in $\hat C$). 
By Lemma \ref{lem:tangent_singular}  the corresponding branch is 
attracting on  average, which  contradicts uniform expansion. Therefore $\mu$ gives no mass to 
$\Sing(C)$, and we claim that its Lyapunov exponent $\lambda(\mu)\rest{TC}$ 
in the direction of $C$ equals $\lambda$ (even if the ratio between the 
 ambient and affine metrics on $\C\subset C$ is unbounded). Indeed, for 
 $\mu\times \nu^\N$-almost every $(x,\omega)$ and $v\in T_x^1C$, we can fix a subsequence $n_j$ such that $f^{n_j}_\omega(x)$ is far 
 from the singularities of $C$ (hence from $p=\pi(\infty)$).  If $j$ is large,
 $\unsur{n_j}\log\norm{D_xf^{n_j}_\omega}$ is both close to $\lambda$ and to
 $\lambda (\pi_\varstar \mu)\rest{TC}$. 
We conclude that  $\lambda (\pi_\varstar \mu)\rest{TC}<0$, which again is contradictory. The proof is complete. \end{proof}
 
\subsubsection{Zariski diffuse measures} 
From now on we focus on the case of a minimal Kähler surface  
 $X$  of Kodaira dimension zero, that is, 
 a torus, a K3 surface, or an Enriques surface. In this case 
 $\Aut(X)$ preserves a canonical volume form $\vol_X$ (see Example~\ref{eg:K3_intro}).

From  Corollary~\ref{cor:criterion_chung}, the obstruction 
to uniform expansion is the existence of a non-expanding stationary measure $\mu$.  Moreover, in the first case of Definition~\ref{defi:non_expanding_measure}, both 
exponents must vanish because we are in a volume preserving setting. In this situation, 
Theorems~\ref{thm:ledrappier_invariance_principle} 
and~\ref{thm:classification_proj_invariant}  give a precise description of $\mu$.

\begin{thm}\label{thm:rigidity}  
Let $X$ be a torus, a K3 surface, or an Enriques surface. Let $\nu$ be a probability measure on 
$\Aut(X)$ satisfying~\eqref{eq:moment} such that $\Gamma_\nu$ is non-elementary. 
If $\mu$ is a  Zariski diffuse  $\nu$-stationary  measure, the following properties are equivalent
\begin{enumerate}
\item[(a)]  $\mu$ is non-expanding;
\item[(b)]  the fiber entropy $h_\mu(X, \nu)$ vanishes.
\end{enumerate}
Morover under these assumptions, $\mu$ is invariant and   $h_\mu(f)=0$ for every $f\in \Gamma_\nu$.
\end{thm}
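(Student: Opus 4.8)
The plan is to combine the $\Aut(X)$-invariance of the volume form $\vol_X$ with Ledrappier's invariance principle (Theorem~\ref{thm:ledrappier_invariance_principle}), the classification of projectively invariant measures (Theorem~\ref{thm:classification_proj_invariant}), and the entropy and stiffness results of~\cite{stiffness}. I begin with two reductions. Both~(a) and~(b) depend only on the ergodic components of $\mu$ --- fiber entropy is affine under the ergodic decomposition and ``non-expanding'' is a componentwise condition --- so I may take $\mu$ ergodic; the components whose support fails to be Zariski dense are, by~\cite{stiffness, finite_orbits}, carried by the maximal invariant curve $\mathrm D_{\Gamma_\nu}$ or by a finite orbit, where Proposition~\ref{pro:invariant_curve} and Lemma~\ref{lem:criterion_chung_curves} provide a non-positive tangential exponent and vanishing entropy. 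Next, since $\vol_X=\rho\,\kappa_0^2$ with $\rho>0$ smooth, one has $\log|\det D_xf|_{\kappa_0}=\log\rho(x)-\log\rho(f(x))$, so the cocycle relation together with the $\nu$-stationarity of $\mu$ gives
\begin{equation}
\lambda^+(\mu)+\lambda^-(\mu)=\int\!\!\int\log|\det D_xf|_{\kappa_0}\,d\nu(f)\,d\mu(x)=\int\log\rho\,d\mu-\int\log\rho\,d(\nu\ast\mu)=0.
\end{equation}
Thus $\lambda^+(\mu)\ge 0\ge\lambda^-(\mu)$, and alternative~(i) of Definition~\ref{defi:non_expanding_measure} is equivalent to $\lambda^+(\mu)=\lambda^-(\mu)=0$.

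The heart of the proof is to rule out the hyperbolic alternative~(ii) for a Zariski diffuse $\mu$. Suppose $\mu$ is hyperbolic with non-random stable direction $E^s$. Then $D_xf\cdot E^s(x)=E^s(f(x))$ for $\nu$-a.e.\ $f$ and $\mu$-a.e.\ $x$, so $E^s$ is a $\Gamma_\nu$-invariant measurable line field and $\hat\mu:=\int\delta_{[E^s(x)]}\,d\mu(x)$ is a $\nu$-stationary measure on $\P TX$ with $\P(D_xf)_{*}\hat\mu_x=\hat\mu_{f(x)}$. By the Ledrappier--Young analysis of~\cite[\S 7]{stiffness} (in particular~\cite[Cor.~7.14]{stiffness}), a hyperbolic stationary measure with non-random stable direction has vanishing fiber entropy and atomic unstable conditionals; hence $\mu$ is carried by its Pesin stable manifolds, which are biholomorphic to $\C$ and tangent to $E^s$. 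The measure-rigidity results of~\cite{stiffness} then force the Zariski closure in $\P TX$ of the graph of $E^s$ to be a proper $\Gamma_\nu$-invariant subvariety dominating $X$, so $E^s$ is tangent ($\mu$-a.e.) to a $\Gamma_\nu$-invariant holomorphic web whose leaves through $\mu$-generic points contain those stable manifolds; containing a copy of $\C$, such a leaf is either an algebraic rational curve or a Zariski-dense entire curve. In the first case the web is a $\Gamma_\nu$-invariant fibration, whose fiber class is a non-zero $\Gamma_\nu$-invariant isotropic element of $H^{1,1}(X;\R)$; in the second the positive closed current attached to the leaf, built exactly as in the proof of Theorem~\ref{thm:hyperbolic_kummer}, spans a $\Gamma_\nu$-invariant isotropic ray in $H^{1,1}(X;\R)$. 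Either way non-elementarity is contradicted, so $\lambda^+(\mu)=\lambda^-(\mu)=0$.

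It remains to read off the conclusions from the degenerate case $\lambda^+(\mu)=\lambda^-(\mu)=0$. Ruelle's inequality gives $h_\mu(X,\nu)\le\lambda^+(\mu)=0$, which is~(b) and, together with alternative~(i), yields~(a); conversely $h_\mu(X,\nu)=0$ forces $\lambda^+(\mu)=0$, for otherwise $\mu$ would be hyperbolic with non-random $E^s$ by~\cite[\S 7]{stiffness}, the situation just excluded --- so~(a)$\iff$(b), both equivalent to $\lambda^+(\mu)=0$. The invariance of $\mu$ is the stiffness statement of~\cite{stiffness}: a $\nu$-stationary measure of zero fiber entropy for a non-elementary group is $\Gamma_\nu$-invariant. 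Finally, since $\lambda^+(\mu)=\lambda^-(\mu)$, Theorem~\ref{thm:ledrappier_invariance_principle} applies to every stationary measure on $\P TX$ projecting to $\mu$, making its disintegration $\Gamma_\nu$-projectively invariant, and Theorem~\ref{thm:classification_proj_invariant} leaves three possibilities; the two producing a $\Gamma_\nu$-invariant line field or pair of lines are excluded because, by the rigidity of~\cite{stiffness}, such a field (resp.\ pair) would be tangent to a $\Gamma_\nu$-invariant holomorphic foliation (resp.\ web), which is impossible by~\cite{Cantat-Favre} or, when its leaves are entire, by the current argument above. Hence the tangent action of $\Gamma_\nu$ reduces along $\mu$ to a compact subgroup of $\mathsf{PU}_2(\C)$: in a suitable measurable trivialisation $\P(D_xf)$ is unitary for $\mu$-a.e.\ $x$ and every $f\in\Gamma_\nu$, so $\lambda^+_f(\mu)=\lambda^-_f(\mu)$; the volume identity applied to the $\Gamma_\nu$-invariant (in particular $f$-invariant) measure $\mu$ then gives $\lambda^\pm_f(\mu)=0$, and Ruelle's inequality yields $h_\mu(f)\le\lambda^+_f(\mu)=0$.

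The step I expect to be the main obstacle is the rigidity invoked in the second and third paragraphs: upgrading the bare $\Gamma_\nu$-invariance of an a priori only measurable line field of a Zariski diffuse measure --- the Pesin stable direction in the hyperbolic case, the zero-exponent line field from Theorem~\ref{thm:classification_proj_invariant} in the degenerate case --- to enough algebraic structure (an invariant holomorphic web together with its associated current or fibration) that the cohomological non-elementarity obstruction can be brought to bear. This is precisely where the measure-rigidity machinery of~\cite{stiffness} does the real work.
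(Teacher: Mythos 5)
Your argument breaks at the very step you yourself flag as the ``main obstacle,'' and the break is fatal: you try to \emph{rule out} the hyperbolic non-expanding case for a Zariski diffuse measure, but this case is neither excluded nor excludable. A hyperbolic, Zariski diffuse, $\Gamma_\nu$-invariant measure whose stable line field $E^s$ is non-random is precisely the exceptional object that obstructs uniform expansion when $\Gamma_\nu$ has no parabolic elements — it appears as condition~(3) in Theorem~\ref{thm:criterion_uniform_expansion} and is discussed in Remark~\ref{rmk:conditional}, where the paper explicitly says it ``expects it to be an extremely rare phenomenon'' rather than impossible. The step in your second paragraph where you claim that ``the measure-rigidity results of~\cite{stiffness} force the Zariski closure in $\P TX$ of the graph of $E^s$ to be a proper $\Gamma_\nu$-invariant subvariety dominating $X$'' has no basis: a merely measurable invariant line field has no reason to have a proper Zariski closure, and upgrading it to a holomorphic web or foliation is exactly the hard algebraization problem that the current-and-isotropic-class argument of Theorem~\ref{thm:hyperbolic_kummer} only solves because Kummer examples already carry \emph{a priori} invariant holomorphic foliations. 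You also misattribute to~\cite[Cor.~7.14]{stiffness} the implication ``non-random stable direction $\Rightarrow$ zero fiber entropy''; the correct reference, used in the paper, is~\cite[Thm.~9.1]{stiffness}.

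Once one gives up trying to exclude the hyperbolic alternative, the structure of the proof must change. The paper does not attempt to show that $\lambda^\pm(\mu)=0$; instead it argues each implication directly, splitting on whether $\mu$ has zero Lyapunov exponents or is hyperbolic. In the hyperbolic case it invokes~\cite[Thm.~9.1]{stiffness} to get both the invariance of $\mu$ and $h_\mu(X,\nu)=0$, and for the converse it uses the~\cite{br}-type statement that a hyperbolic stationary measure with genuinely itinerary-dependent stable directions has non-atomic unstable conditionals, hence positive fiber entropy. Your final paragraph then inherits the error: you conclude that the projective tangent action of $\Gamma_\nu$ reduces to a compact group, hence $\lambda^\pm_f(\mu)=0$ for every $f$, but this reasoning presupposes $\lambda^\pm(\mu)=0$ and so collapses in the (allowed) hyperbolic case, where $\mu$ can have a loxodromic $f$ with $\lambda^\pm_f(\mu)\ne 0$ on a set of positive measure. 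Establishing $h_\mu(f)=0$ for every $f\in\Gamma_\nu$ without assuming $\mu$ is $f$-ergodic is genuinely delicate — it requires the conjugation and ergodic-decomposition argument of Theorem~\ref{thm:entropy_every_f} in the appendix, which locates a positive-measure set on which two conjugates $f$ and $g^{m-n}fg^{-(m-n)}$ share Pesin stable manifolds, and then derives $T^+_f=T^+_g$ or $T^+_f=T^-_g$ to contradict non-elementarity — not a soft compact-reduction argument.
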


\begin{proof}  
As a preliminary step, observe that almost every ergodic component of $\mu$  is
Zariski diffuse: this  follows from the fact that there are  
only finitely many invariant curves and countably many isolated  periodic points. In addition, by linearity of the entropy, if $h_\mu(X, \nu)=0$ then   almost every ergodic component of $\mu$ has zero fiber entropy as well. 
Thus for both implications we may further assume that $\mu$ is ergodic as a stationary measure. 

 Since there is an invariant volume form, either 
  both  Lyapunov exponents of $\mu$ vanish or $\mu$ is hyperbolic.  In the first case, 
   the invariance principle guarantees that  $\mu$ is $\Gamma_\nu$-invariant and 
 the fibered version of the Ruelle inequality (see e.g.~\cite[\S 7]{stiffness}) implies that its fiber entropy 
 vanishes. If   $\mu$ is hyperbolic, the invariance of $\mu$ and the vanishing of the entropy follow from 
 \cite[Thm. 9.1]{stiffness}. 
 Thus, Property (a) implies Property (b), together with the invariance of $\mu$.
 
Consider the converse implication. Again, if $\mu$ has zero Lyapunov exponents then it is non-expanding and 
 invariant. Otherwise it is hyperbolic and 
by applying the whole argument  of~\cite{br} in the complex case, we infer that if the stable directions of 
$\mu$ depend on the itinerary, its   conditionals along Pesin unstable manifolds admit a non-trivial 
 translation invariance; in particular they are non-atomic.  It follows that $h_\mu(X, \nu)>0$ 
(see also~\cite[Rmk 9.2]{stiffness}). So under   assumption (b) the stable directions are non-random and, as 
already explained, $\mu$ is invariant by~\cite[Thm. 9.1]{stiffness}. 
 
The fact that $h_\mu(f)=0$ for all $f\in \Gamma_\nu$ will be shown in 
 Theorem~\ref{thm:entropy_every_f}.
\end{proof}

\subsubsection{Refined criterion}

The discussion of the previous paragraphs leads to a version of 
Theorem~\ref{thm:criterion_uniform_expansion_parabolic} that does not require $\Gamma_\nu$ to contain parabolic elements: 

\begin{thm}\label{thm:criterion_uniform_expansion}  
Let $X$ be a compact Kähler surface which is not rational. Let $\nu$ be a probability measure on 
$\Aut(X)$ satisfying~\eqref{eq:moment} such that $\Gamma_\nu$ is non-elementary. 
Then $\nu$ is uniformly expanding if and only if  the three following conditions 
hold:
\begin{enumerate}[ \em (1)]
\item every finite $\Gamma_\nu$-orbit is uniformly expanding;  
\item  there is no $\Gamma_\nu$-invariant algebraic curve; 
\item there is no  Zariski diffuse invariant  measure $\mu$ with zero fiber entropy. 
\end{enumerate}
\end{thm}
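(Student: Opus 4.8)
The plan is to prove the two implications separately. First note that since $\Gamma_\nu$ is non-elementary and $X$ is not rational, the classification recalled in Section~\ref{sec:preliminaries} (see also \cite{stiffness, Simons}) shows that $X$ is a complex torus, a K3 surface, or an Enriques surface; in particular $\Aut(X)$ preserves the probability measure $\vol_X$ of Example~\ref{eg:K3_intro}, so Theorem~\ref{thm:rigidity} is available for $(X,\nu)$. Now assume $\nu$ is uniformly expanding. Condition~(1) is immediate, since~\eqref{eq:UEbisrepetita} holds at \emph{every} point of $X$, hence at every point of a finite orbit. Condition~(2) is the contrapositive of Proposition~\ref{pro:invariant_curve}. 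For Condition~(3): if $\mu$ were a Zariski diffuse $\Gamma_\nu$-invariant measure with $h_\mu(X,\nu)=0$, then, regarding $\mu$ as a $\nu$-stationary measure, Theorem~\ref{thm:rigidity} would force $\mu$ to be non-expanding, and Corollary~\ref{cor:criterion_chung} would then contradict the uniform expansion of $\nu$.

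For the converse, assume~(1), (2) and~(3) hold but that $\nu$ is not uniformly expanding; by Corollary~\ref{cor:criterion_chung} there is an ergodic non-expanding $\nu$-stationary measure $\mu$. Let $Z$ be the smallest Zariski closed set with $\mu(Z)=1$; since $\mu(f^{-1}Z)=1$ for $\nu$-almost every $f$, minimality gives $f^{-1}Z=Z$ for all $f\in\supp(\nu)$, so $Z$ is $\Gamma_\nu$-invariant. Arguing as in the preliminary step of the proof of Theorem~\ref{thm:rigidity} --- i.e.\ considering the finitely many irreducible curves (resp.\ isolated points) carrying the maximal positive $\mu$-mass and using ergodicity --- one gets that \emph{either} $Z=X$ and $\mu$ is Zariski diffuse, \emph{or} $Z$ is a proper $\Gamma_\nu$-invariant Zariski closed set. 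If $\mu$ is Zariski diffuse, Theorem~\ref{thm:rigidity} gives that $\mu$ is $\Gamma_\nu$-invariant with $h_\mu(X,\nu)=0$, contradicting~(3). If $Z$ is proper with $\dim Z\geq 1$, its one-dimensional part is a $\Gamma_\nu$-invariant algebraic curve, contradicting~(2). Finally, if $Z$ is finite, it is a finite union of finite $\Gamma_\nu$-orbits, so by~(1) the estimate~\eqref{eq:UEbisrepetita} holds, with some $c>0$ and some $n_0$, at every point of $Z\supseteq\supp(\mu)$. Pick a $\nu$-stationary measure $\hat\mu$ on $\P TX$ projecting to $\mu$; integrating~\eqref{eq:UEbisrepetita} against $\hat\mu$ and computing as in the proof of Theorem~\ref{thm:limit_measure_top_lyapunov} (Furstenberg's formula) yields $n_0\,\lambda^+(\mu)\geq c>0$, and also $n_0\,\lambda^-(\mu)\geq c>0$ in the case where $\mu$ is hyperbolic and $\hat\mu=\int\delta_{[E^s(x)]}\,d\mu(x)$ is carried by the non-random Oseledets stable line field $E^s$. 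The first inequality excludes that both Lyapunov exponents of $\mu$ are non-positive, while the second is incompatible with $\mu$ being hyperbolic with a non-random stable direction; hence $\mu$ is not non-expanding, a contradiction. This proves the theorem.

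The main difficulty lies entirely in the converse direction, and more precisely in its two degenerate cases: establishing the dichotomy ``Zariski diffuse versus supported on a proper invariant subvariety'' for the ergodic stationary measure produced by Chung's criterion, and converting Condition~(1) into a statement about Lyapunov exponents along the finite orbit via the Furstenberg formula. By contrast, the substantive analytic content --- Ledrappier's invariance principle, the classification of projectively invariant measures, and the rigidity of zero-entropy measures --- is already packaged into Theorem~\ref{thm:rigidity} and Corollary~\ref{cor:criterion_chung}, so no new hard estimate is required here.
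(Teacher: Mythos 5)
Your proof is correct and follows the paper's scheme essentially verbatim: the forward direction uses Proposition~\ref{pro:invariant_curve}, Corollary~\ref{cor:criterion_chung} and Theorem~\ref{thm:rigidity} exactly as the paper does, and the converse proceeds through Chung's criterion and a dichotomy (Zariski diffuse versus finitely supported) for an ergodic non-expanding stationary measure. What you add over the paper's terse last two lines --- the Noetherian argument establishing $f^{-1}Z=Z$ (note that minimality alone only yields $Z\subseteq f^{-1}Z$; one needs the stabilizing chain $Z\supseteq f(Z)\supseteq f^2(Z)\supseteq\cdots$ to upgrade this to equality), and the Furstenberg-formula computation showing that a non-expanding ergodic measure concentrated on a finite orbit $F$ is incompatible with condition~(1) --- is precisely the content the paper delegates to the reader when it writes \emph{``Theorem~\ref{thm:rigidity} and Property~(1) imply that $\mu$ is not non-expanding''}; your integration of \eqref{eq:UEbisrepetita} against $\hat\mu$ is in fact the unstated proof of the converse implication in Theorem~\ref{thm:criterion_chung} specialized to $\supp\mu=F$.
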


\begin{proof} If a compact Kähler surface $X$ is ruled (over a curve of positive genus) or has a positive Kodaira dimension, then $\Aut(X)$ is elementary (in the first case, it preserves the ruling; in the second case, it preserves the Kodaira-Iitaka fibration, acting as a finite group on the base). Thus, the Kodaira dimension of $X$ vanishes. If $X$ is not minimal, the uniqueness of the minimal model shows that there is a  $\Aut(X)$-invariant curve, and we know this is incompatible with uniform expansion (Proposition~\ref{pro:invariant_curve}). Now if ${\mathsf{kod}}(X)=0$, $X$ is minimal, and $\Aut(X)$ is non-elementary, then $X$ is a torus, a K3 surface, or an Enriques surface; hence, we can assume that $X$ is such a surface.

If $\nu$ is uniformly expanding, Property~(1) is obvious, Property~(2) follows from Proposition~\ref{pro:invariant_curve}, and
Property~(3) follows from Corollary~\ref{cor:criterion_chung} and Theorem~\ref{thm:rigidity}.    

Conversely, if these  properties hold, and if $\mu$ is an ergodic $\nu$-stationary measure then by Property~(2)
$\mu$ is either Zariski diffuse or finitely supported. Then,  Theorem~\ref{thm:rigidity} and Property~(1) imply that $\mu$ is not non-expanding, and we conclude with Corollary~\ref{cor:criterion_chung}.  
\end{proof}

\begin{proof}[Proof of Theorem~\ref{thm:criterion_uniform_expansion_parabolic}]
This follows directly from Theorem~\ref{thm:hyperbolic} and Theorem~\ref{thm:criterion_uniform_expansion}.
\end{proof}
  
%
  
\begin{rem}\label{rmk:conditional} 
The proof of  (b)$\Rightarrow$(a) in Theorem~\ref{thm:rigidity} relies on the following fact: {\textit{for a hyperbolic stationary measure, if the stable directions of $\mu$ depend on the itinerary, then its  unstable conditionals  satisfy some  non-trivial translation invariance}}. This is the
``easy part'' of the adaptation of~\cite{br} to complex surfaces; the ``difficult part'' would be to obtain some SRB property from this invariance (either on $X$ or on some totally real surface associated to the stationary measure). We did not provide a proof for this fact because the arguments of~\cite{br} can be applied directly. As a consequence, this fact is also used in the implication ``uniformly expanding implies (3)'' in 
Theorem~\ref{thm:criterion_uniform_expansion}. On the other hand, it is not used in Theorem~\ref{thm:criterion_uniform_expansion_parabolic} because in this case the condition~(3)  of 
Theorem~\ref{thm:criterion_uniform_expansion} is automatically
 satisfied, thanks to Theorem~\ref{thm:hyperbolic}; it is not used either for the part of Theorem~\ref{thm:criterion_uniform_expansion} asserting that the assumptions (1), (2) and (3)  imply uniform expansion.
\end{rem} 
  
\begin{rem}
Using Theorem~\ref{thm:hyperbolic_kummer} instead of Theorem~\ref{thm:hyperbolic} gives a version of 
Theorem~\ref{thm:criterion_uniform_expansion_parabolic} where the existence of a parabolic element in $\Gamma$ is replaced by the existence of a Kummer element. The details of the adaptation are left to the interested reader. 
\end{rem}

\subsection{Uniform expansion along finite orbits}\label{subs:finite_orbits}
Using classical results on random products of matrices, it is easy
 to characterize when a fixed point under $\Gamma_\nu$ is uniformly expanding. We say that a subgroup of $\GL_2(\C)$ is 
 \textbf{strictly triangular} if it is reducible with   exactly one invariant direction.  
 
 \begin{pro}\label{pro:fixed_UE}
Let $X$ be a torus, a K3 surface, or an Enriques surface. Let 
 $\nu$ be a probability measure on $\Aut(X)$ satisfying~\eqref{eq:moment}, and let $x_0$  be a fixed point of 
  $\Gamma_\nu$.  Then $\nu$ is uniformly expanding on $T_{x_0}X$ if and only if one of the following holds
  \begin{enumerate}[(a)]
  \item the 
 induced action of $\Gamma_\nu$ on $ T_{x_0}X$ is non-elementary; 
 \item  this   action  is strictly triangular and its invariant direction 
  is expanding.
\end{enumerate}
If $\nu$ is symmetric, it is  uniformly expanding on $T_{x_0}X$ if and only if (a) holds. 
 \end{pro}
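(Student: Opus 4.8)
The plan is to push everything forward to a random product of $\GL_2(\C)$-matrices and invoke classical results of Furstenberg and Furstenberg--Kifer. Since $x_0$ is fixed by $\Gamma_\nu$, the tangent action gives a homomorphism $\rho\colon\Gamma_\nu\to\GL(T_{x_0}X)\cong\GL(2,\C)$, and I set $\bar\nu=\rho_\varstar\nu$. The moment condition~\eqref{eq:moment} yields $\int(\log^+\norm{g}+\log^+\norm{g\inv})\,d\bar\nu(g)<\infty$, and the $\Aut(X)$-invariance of $\vol_X$ (Example~\ref{eg:K3_intro}) forces $\abs{\det D_{x_0}f}=1$ for every $f\in\Aut(X)$, hence $\int\log\abs{\det g}\,d\bar\nu(g)=0$; in particular the Lyapunov exponents $\lambda_1\geq\lambda_2$ of $\bar\nu$ satisfy $\lambda_1+\lambda_2=0$, so $\lambda_1\geq 0$ with equality iff $\lambda_1=\lambda_2=0$. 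By (the proof of) Lemma~\ref{lem:weak_UE}, applied to the fibre $T^1_{x_0}X$, the measure $\nu$ is uniformly expanding on $T_{x_0}X$ if and only if for every $v\in T^1_{x_0}X$ there is $n$ with $\int\log\norm{g v}\,d\bar\nu^{(n)}(g)>0$.

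Next I would run the standard classification of the group $\bar\Gamma:=\overline{\rho(\Gamma_\nu)}$ inside $\GL(2,\C)$: either its image in $\PGL(2,\C)$ is non-elementary, or it is elementary, in which case it is (conjugate to) a bounded subgroup of $\C^*\cdot\mathrm{U}(2)$, or it has an invariant finite set of lines. In the non-elementary case $\bar\nu$ is strongly irreducible and its image in $\PGL(2,\C)$ is unbounded, so Furstenberg's theorem gives $\lambda_1>\lambda_2$, whence $\lambda_1>0$; strong irreducibility then forces every direction to have forward exponent $\lambda_1$, and uniform integrability of the cocycle $\log\norm{g^n_\omega v}$ (sandwiched between $-\sum\log\norm{g_i\inv}$ and $\sum\log\norm{g_i}$, both $L^1$-convergent by Kingman) gives $\frac{1}{n}\int\log\norm{g v}\,d\bar\nu^{(n)}(g)\to\lambda_1>0$; Lemma~\ref{lem:weak_UE} then yields uniform expansion, proving (a)$\Rightarrow$ uniform expansion. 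In the bounded case, writing $\rho(h)=c(h)u(h)$ after conjugation shows $\int\log\frac{\norm{g v}}{\norm{v}}\,d\bar\nu^{(n)}(g)$ stays bounded, so iterating the cocycle relation~\eqref{eq:UE_iteree} shows uniform expansion fails. If $\bar\Gamma$ has two invariant lines (diagonalizable type), their exponents sum to $0$, so one direction is non-expanding; in the ``swap'' case one passes to an index-two subgroup via Proposition~\ref{pro:induced_UE} and concludes likewise. Hence uniform expansion forces $\bar\Gamma$ to be non-elementary or strictly triangular.

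It then remains to treat the strictly triangular case. Conjugate so that the unique invariant line is $\ell=\C e_1$; every $g$ in the support of $\bar\nu$ is then upper triangular with diagonal characters $a(g),d(g)$ and off-diagonal entry $b(g)$, and $\int\log\abs{d(g)}\,d\bar\nu(g)=-\int\log\abs{a(g)}\,d\bar\nu(g)=:-\chi_\ell$, where $\chi_\ell$ is the exponent along $\ell$. If $\chi_\ell\leq0$ then $\ell$ is not expanding, so $\nu$ is not uniformly expanding: this is ``(b) fails $\Rightarrow$ not uniformly expanding''. If $\chi_\ell>0$ I would use the explicit triangular form: for $v=(v_1,v_2)$ with $v_2\neq0$, the first coordinate of $g^n_\omega(v)$ factors as $a^{(n)}(\omega)\bigl(v_1+v_2 S_n(\omega)\bigr)$, where $S_n(\omega)$ converges almost surely to the attracting point $S(\omega)$ of the contracting affine random dynamical system $z\mapsto\frac{b(g)}{a(g)}+\frac{d(g)}{a(g)}z$ --- precisely the situation analysed in the proof of Proposition~\ref{pro:invariant_curve} (Lemma~\ref{lem:integrability} together with the Borel--Cantelli argument), which also shows $S$ has a well-defined law $\mu_S$. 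Since ``exactly one invariant direction'' forbids the affine maps from sharing a common fixed point (and, in the genuinely triangular case, from permuting a finite set of points), a short argument on the maximal atom shows $\mu_S$ has no atom of mass $\geq1/2$; hence $\pp\bigl(v_1+v_2 S(\omega)\neq0\bigr)>1/2$, on which event $\frac{1}{n}\log\norm{g^n_\omega v}\to\chi_\ell$, while on the complement it tends to $-\chi_\ell$. By uniform integrability, $\frac{1}{n}\int\log\norm{g v}\,d\bar\nu^{(n)}(g)$ converges to a positive multiple of $\chi_\ell$, and for $v\in\ell$ this integral equals $n\chi_\ell$; so Lemma~\ref{lem:weak_UE} gives uniform expansion, completing the main equivalence.

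Finally, for the symmetric case: if $\nu$ is symmetric then so is $\bar\nu$, hence for any character $\chi$ of $\bar\Gamma$ (in particular the one acting on an invariant line) one has $\int\log\abs{\chi(g)}\,d\bar\nu(g)=0$; therefore no invariant line can be expanding, case~(b) is impossible, and uniform expansion is equivalent to~(a). The main obstacle is the strictly triangular case: there the matrix cocycle has a genuinely $\omega$-dependent slow direction, and the point is to show that no \emph{fixed} vector is caught in it with probability $\geq1/2$ --- which is exactly where the structure of the stationary measure of the associated affine random dynamical system, and the hypothesis that there is only one invariant line, are needed.
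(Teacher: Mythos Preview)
Your proof is correct and follows the same overall strategy as the paper: push to a random product in $\GL_2(\C)$, classify the image in $\PGL_2(\C)$, and analyse each case. The treatment of the non-elementary and of the elementary non-triangular cases is essentially identical (you use Lemma~\ref{lem:weak_UE} together with uniform integrability where the paper uses Lemma~\ref{lem:weak_UE2}; this is cosmetic).

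The genuine divergence is in the strictly triangular case with $\chi_\ell>0$. The paper disposes of it in one line via the Furstenberg--Kifer dichotomy: their case~(ii) requires a deterministic invariant subspace on which the exponent is \emph{strictly} below $\lambda_1$, but the unique invariant line $\ell$ carries exponent $\chi_\ell=\lambda_1$, so case~(i) applies and every nonzero vector has exponent $\lambda_1$ almost surely. You instead reprove this instance of Furstenberg--Kifer by hand: compute the triangular product explicitly, identify the random slow direction through the limit $S(\omega)$ of a contracting affine series, and show that the law $\mu_S$ has no large atom because a finite set permuted by all the affine maps $T_g$ would have its centroid fixed by each of them, yielding a second invariant line and contradicting strict triangularity. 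This is correct and more self-contained, but longer; the paper's route shows that everything here is already packaged in Furstenberg--Kifer. (Incidentally, your centroid observation actually gives that $\mu_S$ is atomless, hence $p=1$, which is exactly what Furstenberg--Kifer's case~(i) asserts.)
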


In case (b) there exists $ u\in T_{x_0}X$ such that  $f_\varstar   u = \lambda_f  u$ for every $f\in \Gamma_\nu$, and the expansion means   that $\int \log \abs{\lambda_f} d\nu(f) >0$. 

\begin{proof}[Proof (see also~\cite{Prohaska-Sert:TAMS})]
By Lemma~\ref{lem:weak_UE2}, to prove uniform expansion it is enough to show that for every $ v\in T_{x_0}X$, $\liminf_{n\to\infty}\unsur{n}\log \norm{(f_\omega^n)_\varstar   v}  >0$. 
The proof is based on  the work of Furstenberg and Kifer~\cite{furstenberg-kifer} (see also~\cite[\S 3.7]{bougerol-lacroix}).  These references deal with general random products of matrices in $\GL_d(\R)$; in our volume preserving situation the Lyapunov exponents $\lambda_2\leq \lambda_1$ of the random 
product in $\GL_2(\C)$ 
satisfy  $\lambda_1+\lambda_2=0$, so they can be read off directly from the   action 
on 
$\P T_{x_0}X$. According to Theorems~3.5 and~3.9 of~\cite{furstenberg-kifer}, there are two possibilities:
\begin{enumerate}[(i)]
\item for every $ v\in T_{x_0}X$ and $\nu^\N$-almost every $\omega$, $\unsur{n}\log \norm{(f_\omega^n)_\varstar   v} \to \lambda_1$;
\item  there 
 exists a 
 non-random, $\Gamma_\nu$-invariant filtration $\set{0}=L_2< L_1<  L_0 = T_{x_0}X$
 and $\beta_1<\beta_0$ 
  such that for $i=0,1$ for any  $ v\in L_{i}\setminus L_{i+1}$, for $\nu^\N$-almost every $\omega$, 
 $\unsur{n}\log \norm{(f_\omega^n)_\varstar   v} \to \beta_i$. Furthermore $\beta_0= \lambda_1$.
 \end{enumerate}
 We now compare this dichotomy with the classification of subgroups of $\PGL_2(\C)$ (with a slight abuse of notation, we also denote by  $\Gamma_\nu$ the induced subgroup of $\PGL_2(\C)$).
 
$-$ If $\Gamma_\nu$ is strongly irreducible,  we are in case (i) and there are two possibilities. 
If $\Gamma_\nu$ is proximal (hence non-elementary)
then $\lambda_1>0$ and $\nu$ is uniformly expanding. If $\Gamma_\nu$ is not proximal,  it is contained in a
 compact subgroup and $\nu$ is not uniformly expanding.

$-$  If $\Gamma_\nu$ is irreducible but not strongly irreducible,  we are in case (i)  and there are two lines which are permuted by $\Gamma$. In some affine coordinate $z$ on $\P T_{x_0}X$, $\Gamma_\nu$ is conjugate to a subgroup of $\set{z\mapsto \lambda z^{\e}\; ; \ \lambda\in \C^\times, \ \e = \pm 1}$ and $\e=- 1$ with positive probability. In this case $\lambda_1 = 0$ (see e.g.~\cite[Prop. 5.3]{degenerations_SL2}), so $\nu$ is not uniformly expanding.

$-$  If $\Gamma_\nu$ is  reducible it preserves one or two directions in $T_{x_0}X$. If $\Gamma$ preserves a   direction with exponent $\leq 0$, then  $\nu$ is not uniformly expanding. So, we can assume that $\Gamma$ preserves a unique direction, and that the corresponding exponent $\beta$ is positive.  By (i) and (ii)  we see that $\lim_{n\to\infty}\unsur{n}\log \norm{(f^n_\omega)_\varstar   v}\geq \beta$ for any $ v\in T_{x_0}X$ and almost every $\omega$; so  
  $\nu$ is   uniformly expanding. 

This covers all possible cases and the proof is complete.
\end{proof}
 
Let $F$ be a finite set, viewed as a $0$-dimensional manifold, and  $V$ be a  real or complex vector bundle of dimension $d$ over $F$; identify
$V$ with $ F\times \mathbf{K}^d$, for ${\mathbf{K}} = \R$ or $\C$. Let $\GL(V)$ be the group of bijections of $V$ acting  linearly on   fibers: it is   a semidirect product $\GL(V) \simeq \mathfrak{S}(F) \ltimes \GL_d({\mathbf{K}})^F$ where  $\mathfrak{S}(F)$ acts on 
$\GL_d({\mathbf{K}})^F$   by permuting the factors. We say that a subgroup of $\GL(V)$ is {\bf{strongly irreducible}} if it acts transitively on $F$
and the stabilizer of any $x\in F$ acts strongly irreducibly on the fiber $\{ x\} \times {\mathbf{K}}^d$ of $V$; equivalently, if there is no invariant and finite  collection of subspaces of  dimension $\neq 0, d$ in some fibers of $V$. Similar notations and notions are defined for $\PGL(V)$.

Assume now that $F$ is a finite $\Gamma$-orbit on $X$,
and consider the induced action of $\Gamma$ on  $TX\rest{F}:=\bigcup_{x\in F} T_x X$. 
We say that this action is \textbf{non-elementary} 
if its image in $\PGL\lrpar{TX\rest{F}}$  is strongly irreducible and unbounded. When 
$\Gamma$ preserves a volume form on $X$, its  image  in $\GL\lrpar{TX\rest{F}}$  is unbounded if and only if it is unbounded in $\PGL\lrpar{TX}$. 
We say that it is \textbf{strictly triangular} if the only proper $\Gamma$-invariant subbundle in $T X\rest{F}$ is given by a $1$-dimensional subbundle $L\subset TX\rest{F}$. 

Pick a point $x$ in $F$ and set $\Gamma_x = \Stab_{\Gamma}(\set{x})$. Since $F$ is an orbit, $[\Gamma:\Gamma_x]=\abs{F}$ and  the image of $\Gamma$ in $\PGL\lrpar{TX\rest{F}}$ is unbounded if and only if the image of $\Gamma_x$ in $\PGL\lrpar{T_xX}$ is unbounded. Thus, one easily gets the following lemma.

\begin{lem}\label{lem:stab_NE}
If $F$ is a finite $\Gamma$-orbit, the action of 
$\Gamma$ on $TX\rest{F}$ is non-elementary (resp. strictly triangular) if and only if for some, hence any,  $x\in F$
 the action of $\Stab_{\Gamma}(\set{x})$ on $T_{x}X$ is non-elementary (resp. strictly triangular).  
\end{lem}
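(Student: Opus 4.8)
The plan is to translate every $\Gamma$-invariant linear structure on the rank-$2$ bundle $TX\rest F\to F$ into a $\Gamma_x$-invariant linear structure on a single fibre $T_xX$, using that $F$ is a single $\Gamma$-orbit. Fix $x\in F$ and set $H=\Gamma_x=\Stab_\Gamma(\set{x})$; since $F$ is an orbit, $\Gamma$ acts transitively on $F$ and $[\Gamma:H]=\abs F$. The automorphisms in $\Gamma$ being holomorphic, $\Gamma$ acts on $TX\rest F$ through $\GL(TX\rest F)\simeq \mathfrak{S}(F)\ltimes \GL_2(\C)^F$, compatibly with its action on $F$.

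First I would set up the dictionary. If $W\subset TX\rest F$ is a $\Gamma$-invariant subbundle, then for every $f\in\Gamma$ the linear isomorphism $D_xf\colon T_xX\to T_{f(x)}X$ maps $W_x$ onto $W_{f(x)}$; hence $W_x$ is $H$-invariant and, since $F$ is a single orbit, $W$ is entirely determined by $W_x$. Conversely, given an $H$-invariant subspace $V\subset T_xX$, the prescription $W_{f(x)}:=D_xf(V)$ is well defined (if $f(x)=g(x)$ then $g^{-1}f\in H$ fixes $V$) and defines a $\Gamma$-invariant subbundle with fibre $V$ over $x$. The same correspondence applies verbatim to $\Gamma$-invariant finite collections of subspaces in the fibres of $TX\rest F$ and $H$-invariant finite collections of subspaces of $T_xX$. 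Finally, as $D_xf$ is an isomorphism, $\dim W_{f(x)}=\dim W_x$, so a proper $\Gamma$-invariant subbundle of $TX\rest F$ has constant rank $1$.

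For the non-elementary case, recall that by definition a subgroup of $\GL(TX\rest F)$ is strongly irreducible iff it is transitive on $F$ and the stabiliser of a point of $F$ is strongly irreducible on the corresponding fibre; here transitivity is automatic, so $\Gamma$ is strongly irreducible on $TX\rest F$ iff $H$ is strongly irreducible on $T_xX$, and this is unchanged after passing to the images in $\PGL(TX\rest F)$ and $\PGL(T_xX)$ (strong irreducibility is a projective property). Combining this with the equivalence noted just before the statement, between the unboundedness of the image of $\Gamma$ in $\PGL(TX\rest F)$ and that of the image of $H$ in $\PGL(T_xX)$, we obtain that $\Gamma$ acts non-elementarily on $TX\rest F$ iff $H$ acts non-elementarily on $T_xX$.

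For the strictly triangular case, the dictionary shows that the proper $\Gamma$-invariant subbundles of $TX\rest F$ are exactly the rank-$1$ ones and that they correspond bijectively to the $H$-invariant lines in $T_xX$; thus $TX\rest F$ has a unique proper $\Gamma$-invariant subbundle (necessarily of rank $1$) iff $T_xX$ has exactly one $H$-invariant line, i.e.\ iff $H$ is reducible with a single invariant direction --- the definition of strict triangularity for $H\subset\GL(T_xX)\simeq\GL_2(\C)$. The ``hence any'' clause then follows from conjugacy: for $x'\in F$ pick $g\in\Gamma$ with $g(x)=x'$; then $\Gamma_{x'}=g\Gamma_xg^{-1}$ and $D_xg$ conjugates the $\Gamma_x$-action on $T_xX$ to the $\Gamma_{x'}$-action on $T_{x'}X$, and both ``non-elementary'' and ``strictly triangular'' are invariant under linear conjugacy. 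I do not expect a genuine obstacle here; the only points demanding care are the well-definedness in the dictionary (which is precisely where the $H$-invariance of $V$ is used) and the observation that transitivity of $\Gamma$ on $F$ forces invariant subbundles to have constant rank, the one quantitative input --- the unboundedness equivalence --- being already available.
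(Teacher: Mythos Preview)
Your proof is correct and follows precisely the natural line the paper has in mind: the paper states the unboundedness equivalence just before the lemma and then says ``one easily gets the following lemma,'' leaving the strong-irreducibility and strict-triangularity parts to the reader, which you supply via the standard dictionary between $\Gamma$-invariant subbundles of $TX\rest F$ and $\Gamma_x$-invariant subspaces of $T_xX$.
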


  
\begin{thm}\label{thm:finite_UE}
Let $X$ be a torus, a K3 surface, or an Enriques surface. Let 
 $\nu$ be a probability measure on $\Aut(X)$ satisfying~\eqref{eq:moment+}, and $F$ be a finite $\Gamma_\nu$-orbit.  
 Then $\nu$ is uniformly expanding on $F$ if and only if the 
 induced action of $\Gamma_\nu$ on $TF$ is 
 \begin{enumerate}[(a)]
 \item either non-elementary;
 \item or strictly triangular and the field of invariant directions $L\subset TX\rest{F}$ is uniformly expanding. 
 \end{enumerate}
If $\nu$ is symmetric,  it is uniformly expanding on $F$ if and only if (a) holds. 
\end{thm}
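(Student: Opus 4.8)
The plan is to reduce the statement about the finite orbit $F$ to a statement about a single fixed point, and then invoke Proposition~\ref{pro:fixed_UE}. First I would pass to the finite-index subgroup $\Gamma_x = \Stab_{\Gamma_\nu}(\{x\})$ for a chosen point $x\in F$; since $\nu$ satisfies~\eqref{eq:moment+}, Theorem~\ref{thm:M$_2$} guarantees that the hitting measure $\nu_{\Gamma_x}$ still satisfies~\eqref{eq:moment+} (and is supported so as to generate $\Gamma_x$ as a semigroup), and Proposition~\ref{pro:induced_UE2} tells us that $\nu$ is uniformly expanding on $F$ if and only if $\nu_{\Gamma_x}$ is uniformly expanding on $\{x\}$, i.e.\ on the single tangent space $T_xX$. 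This is exactly the setting of Proposition~\ref{pro:fixed_UE}: $x$ is a fixed point of $\Gamma_x = (\Gamma_\nu)_x$, so $\nu_{\Gamma_x}$ is uniformly expanding on $T_xX$ if and only if the induced action of $\Gamma_x$ on $T_xX$ is either non-elementary, or strictly triangular with expanding invariant direction. Finally Lemma~\ref{lem:stab_NE} translates ``non-elementary (resp.\ strictly triangular) action of $\Stab_{\Gamma_\nu}(\{x\})$ on $T_xX$'' into ``non-elementary (resp.\ strictly triangular) action of $\Gamma_\nu$ on $TX\rest{F}$'', which gives alternatives (a) and (b) of the statement.

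There is a subtle point in matching alternative (b): Proposition~\ref{pro:fixed_UE} asks that the invariant direction $u\in T_xX$ be expanding in the sense that $\int \log|\lambda_f|\,d\nu_{\Gamma_x}(f) > 0$ for the hitting measure, whereas the theorem asks that the invariant line field $L\subset TX\rest{F}$ be uniformly expanding for $\nu$. I would reconcile these using the cocycle relation~\eqref{eq:UE_iteree} together with Proposition~\ref{pro:induced_UE} / Proposition~\ref{pro:induced_UE2} applied to the restricted $1$-dimensional action along $L$: uniform expansion of a line bundle under $\nu$ is equivalent to uniform expansion of its restriction to $\{x\}$ under $\nu_{\Gamma_x}$, and in dimension one the latter is just the positivity of the average log-derivative, since $\Stab$ fixes the line. (One must be a little careful when $L$ is only a subbundle over $F$ and $\Gamma_\nu$ permutes its fibers, but restricting to $\Gamma_x$ makes the line genuinely invariant, and the hitting-time average is the relevant quantity.)

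For the last sentence I would invoke the symmetric case of Proposition~\ref{pro:fixed_UE}: when $\nu$ is symmetric, so is $\nu_{\Gamma_x}$ (symmetry of the hitting measure follows from symmetry of $\nu$ and the symmetric description of first returns), hence alternative (b) cannot occur — a symmetric random product on a fixed line has zero Lyapunov exponent — and uniform expansion on $F$ is equivalent to (a) alone.

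I expect the main obstacle to be the bookkeeping in the reduction step rather than any genuinely new idea: one must check that $\nu_{\Gamma_x}$ inherits all the hypotheses needed by Proposition~\ref{pro:fixed_UE} (the moment condition~\eqref{eq:moment+}, the fact that $\Gamma_{\nu_{\Gamma_x}} = \Gamma_x$, and — in the symmetric case — symmetry of the induced measure), and that the notions of ``non-elementary'' and ``strictly triangular'' are correctly propagated both through the inducing procedure and through Lemma~\ref{lem:stab_NE}. In particular one should note that, because $\Gamma_\nu$ preserves a volume form, unboundedness of the tangent action in $\GL$ and in $\PGL$ coincide (as already remarked before Lemma~\ref{lem:stab_NE}), which is what makes the two formulations of alternative (a) equivalent.
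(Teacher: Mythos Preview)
Your plan matches the paper's: induce to a finite-index stabilizer, invoke Proposition~\ref{pro:fixed_UE} at a fixed point, and translate via Lemma~\ref{lem:stab_NE}. The one step that does not go through as written is your use of Proposition~\ref{pro:induced_UE2}: it gives ``$\nu$ uniformly expanding on $F$ $\iff$ $\nu_{\Gamma_x}$ uniformly expanding on $F$'', not ``on $\{x\}$''. Since $\Gamma_x=\Stab_{\Gamma_\nu}(\{x\})$ fixes only $x$, the remaining points of $F$ form further $\Gamma_x$-orbits inside $F\setminus\{x\}$, on which you would still need to check expansion for $\nu_{\Gamma_x}$; knowing only that $\nu_{\Gamma_x}$ expands at $x$ gives no direct information there. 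The paper avoids this by inducing instead to the \emph{pointwise} stabilizer $\Gamma_F:=\bigcap_{y\in F}\Stab_{\Gamma_\nu}(\{y\})$: every $y\in F$ is then a $\Gamma_F$-fixed point, so ``$\nu_{\Gamma_F}$ uniformly expanding on $F$'' is precisely the conjunction over $y\in F$ of the single-fixed-point conditions to which Proposition~\ref{pro:fixed_UE} applies. Lemma~\ref{lem:stab_NE} already gives non-elementarity (resp.\ strict triangularity) of $\Stab_{\Gamma_\nu}(\{y\})$ on $T_yX$ for \emph{every} $y\in F$, and these properties pass down to the finite-index subgroup $\Gamma_F\subset\Stab_{\Gamma_\nu}(\{y\})$. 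Your treatment of alternative~(b) has the same gap and the same fix.

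Your handling of the symmetric case is otherwise sound; in particular the hitting measure on a finite-index subgroup does inherit symmetry from $\nu$ (a path-reversal argument: if $(f_0,\ldots,f_{n-1})$ is a first-return word to $H$ then so is $(f_{n-1}^{-1},\ldots,f_0^{-1})$, with product the inverse), though this is not recorded in the paper and deserves a word of justification.
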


\begin{proof} 
Let $\Gamma_F$ be the finite index subgroup fixing every point of $F$. 
Assume that $\nu$ is uniformly expanding. Then by Proposition~\ref{pro:induced_UE}, for some 
$n_0$, the induced measure $(\nu^{(n_0)})_{\Gamma_F}$ is uniformly expanding. Therefore, 
 by Proposition~\ref{pro:fixed_UE}  $\Gamma_F$ satisfies Property~(a) or~(b) at every point of $F$, and 
  we conclude by Lemma~\ref{lem:stab_NE}. 
  Conversely, assume that~(a) or~(b) holds. Note that by Theorem~\ref{thm:M$_2$}, 
  $\nu_{\Gamma_F}$ satisfies~\eqref{eq:moment+}. 
   By Lemma~\ref{lem:stab_NE} and Proposition~\ref{pro:fixed_UE}, 
  $\nu_{\Gamma_F}$ is uniformly expanding on $F$, 
  hence by Proposition~\ref{pro:induced_UE2} $\nu$ is uniformly expanding on $F$, as desired. 
    \end{proof}

This theorem shows that  when $\nu$ is symmetric
all conditions in Theorem~\ref{thm:criterion_uniform_expansion} depend only on 
$\Gamma_\nu$, and not on $\nu$. Thus  we obtain:

\begin{cor}\label{cor:independence_nu}
Let $X$ be a torus, a K3 surface, or an Enriques surface. Let $\Gamma$ be a non-elementary subgroup of $\Aut(X)$. Let $\nu$ and $\nu'$  be  symmetric probability measures 
on $\Aut(X)$ satisfying~\eqref{eq:moment+} such that  $\Gamma_\nu  = \Gamma_{\nu'}=\Gamma$. Then $\nu$ is uniformly expanding if and only if $\nu'$ is 
uniformly expanding.
\end{cor}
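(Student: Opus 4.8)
The plan is to reduce Corollary~\ref{cor:independence_nu} to Theorem~\ref{thm:criterion_uniform_expansion} by showing that each of the three conditions appearing there is intrinsic to $\Gamma$ once $\nu$ is symmetric and satisfies~\eqref{eq:moment+}. First I would record that, since $\Gamma_\nu = \Gamma_{\nu'} = \Gamma$ and $X$ is not rational (being a torus, K3, or Enriques surface), Theorem~\ref{thm:criterion_uniform_expansion} applies to both $\nu$ and $\nu'$: $\nu$ (resp. $\nu'$) is uniformly expanding if and only if conditions (1), (2), (3) of that theorem hold with $\Gamma_\nu$ (resp. $\Gamma_{\nu'}$) in place of the group. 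So it suffices to prove that the truth of (1), (2), (3) is unchanged when $\nu$ is replaced by $\nu'$, i.e. that these are really conditions on $\Gamma$ alone.

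Condition~(2), the absence of a $\Gamma$-invariant algebraic curve, is manifestly a property of $\Gamma$ and does not mention $\nu$ at all, so there is nothing to check. Condition~(3), the nonexistence of a Zariski diffuse invariant measure with zero fiber entropy, requires a short argument: by Theorem~\ref{thm:rigidity}, for a Zariski diffuse $\nu$-stationary measure, zero fiber entropy is equivalent to being non-expanding, and in that case $\mu$ is $\Gamma_\nu$-invariant with $h_\mu(f) = 0$ for all $f \in \Gamma_\nu$. Conversely, a $\Gamma$-invariant Zariski diffuse measure is automatically $\nu$-stationary for any $\nu$ with $\Gamma_\nu = \Gamma$. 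Thus the existence of a Zariski diffuse invariant measure with $h_\mu(f) = 0$ for every $f \in \Gamma$ is a $\Gamma$-intrinsic statement, and by Theorem~\ref{thm:rigidity} it is equivalent to the failure of condition~(3) for any admissible $\nu$; in particular condition~(3) holds for $\nu$ iff it holds for $\nu'$. (Here I would note that the hypothesis~\eqref{eq:moment+} for both measures is what lets us invoke Theorem~\ref{thm:rigidity}, which only needs~\eqref{eq:moment}.)

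The crux is condition~(1): every finite $\Gamma$-orbit is uniformly expanding. This is where symmetry of $\nu$ is used. A finite orbit $F$ is the same set for $\Gamma_\nu$ and $\Gamma_{\nu'}$ since both groups equal $\Gamma$. By Theorem~\ref{thm:finite_UE}, applied with the symmetric measure $\nu$ (which satisfies~\eqref{eq:moment+}), $\nu$ is uniformly expanding on $F$ if and only if the induced action of $\Gamma$ on $TX\rest{F}$ is non-elementary in the sense of Lemma~\ref{lem:stab_NE} — the ``strictly triangular with expanding invariant line'' alternative (b) is excluded precisely because $\nu$ is symmetric, as stated in the last sentence of Theorem~\ref{thm:finite_UE}. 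But non-elementarity of the tangent action on $TX\rest{F}$ — strong irreducibility and unboundedness of the image in $\PGL(TX\rest{F})$ — is a property of the group $\Gamma$ and the orbit $F$, with no reference to $\nu$. The same criterion with $\nu'$ in place of $\nu$ gives the same condition. Hence $\nu$ is uniformly expanding on $F$ iff $\Gamma$ acts non-elementarily on $TX\rest{F}$ iff $\nu'$ is uniformly expanding on $F$, and this holds simultaneously for all finite orbits. Combining the three equivalences with Theorem~\ref{thm:criterion_uniform_expansion} yields that $\nu$ is uniformly expanding iff $\nu'$ is. The main obstacle, such as it is, is simply to make sure the symmetry hypothesis is invoked at the one point where it matters (ruling out alternative (b) in Theorem~\ref{thm:finite_UE}) and to verify that the moment hypothesis~\eqref{eq:moment+} on both measures suffices to apply Theorems~\ref{thm:finite_UE} and~\ref{thm:rigidity}; everything else is a matter of unwinding definitions.
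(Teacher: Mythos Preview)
Your proof is correct and follows exactly the paper's approach: invoke Theorem~\ref{thm:criterion_uniform_expansion} and verify that each of its three conditions depends only on $\Gamma$, with Theorem~\ref{thm:finite_UE} (symmetric case) handling condition~(1). You are in fact more explicit than the paper about condition~(3); the one implicit step is that $h_\mu(f)=0$ for all $f\in\Gamma$ forces $h_\mu(X,\nu)=0$, which comes from the standard inequality $h_\mu(X,\nu)\le\int h_\mu(f)\,d\nu(f)$ for $\Gamma$-invariant $\mu$ rather than from Theorem~\ref{thm:rigidity} alone.
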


In the following if $X$ is a torus, K3, or Enriques surface,  we will    
 say that the action of a non-elementary subgroup $\Gamma\subset\Aut(X)$ is {\bf{uniformly expanding}} 
 if this property holds for some (hence any)
  symmetric probability measure $\nu$ satisfying~\eqref{eq:moment+}  and 
 generating~$\Gamma$.

\section{Examples of uniformly expanding actions}

\subsection{A finitary version of Theorem~\ref{thm:criterion_uniform_expansion_parabolic} and application to Wehler surfaces}\label{subs:finitary}
In~\cite[\S~\S 7-8]{chung}, Chung uses computer assistance to prove the uniform expansion of 
some concrete algebraic actions on real surfaces. 
In our situation 
 Theorem~\ref{thm:criterion_uniform_expansion_parabolic} can be used to
 check uniform expansion, but this requires a description of all invariant Zariski-closed subsets.
As already explained, invariant curves can be determined by cohomological computations; for instance, if $X$ is a generic Wehler surface,  there is no $\Aut(X)$-invariant curve. Thus the main problem is to study finite orbits. 
 
 If the group $\Gamma$ is non-elementary, contains parabolic elements, and has no invariant curve, the main 
 result of~\cite{finite_orbits} says that $\Gamma$ admits only finitely many finite orbits, except when $(X,
 \Gamma)$ is a Kummer example. However, the proof given in~\cite{finite_orbits} does not provide any bound
  on the number or the lengths of such  orbits; so,  there is \textit{a priori} no hope of numerically checking 
  uniform expansion along all of them, nor proving that there are no finite orbits. The next result explains how to overcome this issue. 
 
 \begin{thm}\label{thm:effective}
  Let $X$ be a smooth projective surface and 
  $\Gamma$ be a non-elementary subgroup of $\Aut(X)$ containing parabolic elements, which does not  
preserve any algebraic curve. Assume that we are given:
\begin{enumerate}
\item[(i)] algebraic equations for $X$, and the formulas defining a generating subset $S$ of $\Gamma$;
\item[(ii)] a basis of $\NS(X;\R)$ and the matrices of $s^\varstar\colon \NS(X;\R)\to \NS(X;\R)$, for $s$ in $S$;
\item[(iii)] a parabolic element $g\in \Gamma$, given as a word in the generators $s\in S$, 
and its invariant fibration $\pi\colon X\to B$.
\end{enumerate}  
Then, there is an analytically computable integer $N(X, \Gamma)$ 
such that  the action of $\Gamma$ on any 
 finite orbit of  length greater than $N(X, \Gamma)$ is non-elementary. 
  \end{thm}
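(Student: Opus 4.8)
The plan is to combine Theorem~\ref{thm:finite_UE} (which reduces uniform expansion along a finite orbit~$F$ to the tangent action of $\Stab_\Gamma(x)$, $x\in F$, being non-elementary or strictly triangular with expanding invariant line) with the twisting property of the given parabolic element~$g$ (Lemma~\ref{lem:halphen_flat}(4)). The key mechanism is as follows. Let $g$ be the supplied parabolic automorphism (after passing to a computable power we may assume it is a Halphen twist, preserving every fiber of $\pi\colon X\to B$), and let $\mathrm{NT}_g\subset B$ be the finite non-twisting set of Lemma~\ref{lem:halphen_flat}(4). If $F$ is a finite $\Gamma$-orbit not contained in $\pi^{-1}(\mathrm{NT}_g)$, pick $x\in F$ with $\pi(x)\notin\mathrm{NT}_g$; since $F$ is $g$-invariant and finite, some power $g^m$ fixes $x$, and $g^m\in\Stab_\Gamma(x)$. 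By Lemma~\ref{lem:halphen_flat}(4), $D_x g^m$ is a nontrivial unipotent-type map: it fixes the fiber direction $\ker D_x\pi$ and shears transversally, so in $\PGL(T_xX)$ it generates an unbounded cyclic group with a single invariant line $\ker D_x\pi$. Hence the tangent action of $\Stab_\Gamma(x)$ is unbounded and has at most that one invariant line, so it is either non-elementary or strictly triangular. To rule out the "strictly triangular, invariant line non-expanding" case one uses a \emph{second} parabolic/Halphen element: by Remark~\ref{rem:many_parabolic}, $\Gamma$ contains conjugates $g'=\gamma g\gamma^{-1}$ of $g$ with \emph{different} invariant fibration $\pi'$, hence a different invariant tangent line, whenever $\pi(x)$ and $\pi'(x)$ avoid the respective non-twisting sets and the fibrations are transverse at $x$. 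If $F$ avoids the bad loci of enough such conjugates, then $\Stab_\Gamma(x)$ contains two unipotent elements with distinct invariant lines along $F$, forcing the tangent action to be non-elementary, which is case~(a) of Theorem~\ref{thm:finite_UE}; so $\nu$ is uniformly expanding on $F$.

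It remains to make the exceptional locus finite and computable. First, $\mathrm{NT}_g$ is finite and, by the explicit description in \S\ref{subs:halphen} (it is cut out by the vanishing of a transversality determinant between the translation function $t$ and the modulus $\tau$, together with the critical locus of $\pi$), it can be computed from the data (i)--(iii); likewise for finitely many conjugates $g'$ chosen as explicit words in $S$. So $Z:=\pi^{-1}(\mathrm{NT}_g)\cup\bigcup_{i}\pi_i^{-1}(\mathrm{NT}_{g_i})\cup\Tang(\pi,\pi_i)$ is a computable proper algebraic subset, and any finite orbit meeting $X\setminus Z$ in a point $x$ with the transversality property above is uniformly expanding by the previous paragraph. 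A finite orbit $F$ contained in $Z$ is a finite orbit of the induced dynamics on the curve $Z$; since $\Gamma$ preserves no curve, the restriction to each component of $Z$ is through a \emph{finite} quotient group (an algebraic group of automorphisms of a curve acting on finitely many components, with the cohomological constraint forcing finiteness — this is the mechanism of \cite{finite_orbits}, but now on an explicit curve), so there is a computable bound on $\# F$ for such orbits; alternatively, since $Z$ has only finitely many irreducible components of bounded degree, an orbit inside $Z$ that is large must contain a point where one of the $g_i$ still twists, contradiction. In all cases one obtains a computable integer $N(X,\Gamma)$, depending only on $\deg Z$, $\#\mathrm{NT}_{g_i}$, and the degrees of the $\pi_i$, such that every finite $\Gamma$-orbit of length $>N(X,\Gamma)$ meets $X\setminus Z$ transversally and hence has non-elementary tangent action.

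The main obstacle I expect is the \textbf{effectivity}: turning "finite and computable" for $\mathrm{NT}_g$, for the transversality loci $\Tang(\pi,\pi_i)$, and for the bound on orbits trapped inside $Z$, into a genuine algorithm with an explicit output. Concretely one must (1) from the matrix of $g^\varstar$ on $\NS(X;\R)$ recover the class of a fiber of $\pi$ and then the fibration itself as an explicit rational map (using that the fiber class is the nef isotropic eigenvector, which Riemann--Roch makes effective), (2) extract from the period data an explicit equation for $\mathrm{NT}_g$ inside $B$, and (3) control finite orbits inside the computable curve $Z$ — here the cleanest route is to observe that $\Gamma$ restricted to an invariant \emph{curve} cannot be uniformly expanding (Proposition~\ref{pro:invariant_curve}), but since $\Gamma$ preserves no curve, each component of $Z$ has a non-trivial $\Gamma$-orbit in its complement, which after a degree count bounds $\#(F\cap Z)$. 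Managing these three effectivity points simultaneously, and bookkeeping the resulting bound as a single $N(X,\Gamma)$, is where the real work lies; the rest is a direct assembly of Lemma~\ref{lem:halphen_flat}, Remark~\ref{rem:many_parabolic}, and Theorem~\ref{thm:finite_UE}.
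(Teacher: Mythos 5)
Your sketch correctly identifies the basic mechanism (the twisting property of a Halphen twist, Lemma~\ref{lem:halphen_flat}(4), plus conjugates providing a transverse fibration, plus Theorem~\ref{thm:finite_UE}) and the paper's proof indeed opens with exactly this observation, recorded as Lemma~\ref{lem:NT1}: a non-expanding finite orbit must be contained in $\STang(\pi_g,\pi_h)\cup\NT_g^X\cup\NT_h^X$ for every pair of Halphen twists $g,h$ with distinct fibrations. But there is a genuine gap in the step where you bound orbits trapped in the exceptional curve $Z$.

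Your statement ``a finite orbit $F$ contained in $Z$ is a finite orbit of the induced dynamics on the curve $Z$; since $\Gamma$ preserves no curve, the restriction to each component of $Z$ is through a finite quotient group'' is incorrect, because $Z$ is \emph{not} $\Gamma$-invariant (it is built from $g$, a few conjugates, and tangency loci, and conjugating these data gives a different $Z$). So there is no ``induced dynamics on the curve $Z$'' and no finite quotient to speak of. The alternative you offer (``an orbit inside $Z$ that is large must contain a point where one of the $g_i$ still twists'') is a bare assertion with no supporting argument, and it is exactly the hard point. The paper's proof closes this gap by an inductive trick that your sketch is missing (Lemma~\ref{lem:definition_of_bad(g,h)}): suppose $x$ lies on a non-expanding finite orbit and $x\in \NT_g^X\setminus \NT_h^X$. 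Because Lemma~\ref{lem:NT1} applies to every point of $\Gamma(x)$, the $h$-orbit of $x$ stays inside $X^h_{\pi_h(x)}\cap(\NT_g^X\cup\STang(\pi_g,\pi_h))$, a set of cardinality at most the intersection number $n_0(g,h)$ defined in \eqref{eq:definition_n0}. Hence $h^{n}(x)=x$ with $n=n_0(g,h)!$, a \emph{computable} integer, and then $f:=h^n g h^{-n}$ is a Halphen twist whose invariant fibration is transverse to that of $g$ at $x$, with $x\in \NT_f^X$. Thus $x\in \NT_g^X\cap\NT_f^X$, a finite computable set. This is what turns the one-dimensional obstruction $\NT_g^X$ into a finite set, and it is the crucial ingredient that makes $N(X,\Gamma)$ effective; nothing in your sketch accomplishes it.

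A second, smaller gap: you do not explain how to cut the curve $\STang(\pi_g,\pi_h)$ down to a finite set. The paper does this by intersecting the sets $\mathrm{Bad}(g_i,h_i)$ over several explicitly constructed pairs $(g_i,h_i)$. In the Wehler case the three coordinate projections do the job directly; in the general case, for each component $C_i$ of $\STang(\pi_g,\pi_h)$ one must exhibit an explicit $f_i\in\Gamma$ with $f_i(C_i)\not\subset\STang(\pi_g,\pi_h)$, which the paper obtains from Proposition~3.2 of~\cite{eskin-mozes-oh} (effective generation of Zariski-dense subgroups) together with the existence of a loxodromic element without invariant curve (\cite[Thm~D]{finite_orbits}). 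Your sketch acknowledges that ``this is where the real work lies,'' but the degree-count argument you gesture at needs these two ingredients to be made precise, and in particular needs $f_i$ to be \emph{explicitly computable}, not merely to exist.

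In summary: the opening moves of your proof match the paper's, but the effective finiteness is not established. The two missing ingredients are the iterated non-twisting lemma (Lemma~\ref{lem:definition_of_bad(g,h)}) that converts the non-twisting \emph{curve} into a finite set, and the explicit construction of moving elements $f_i$ via Eskin--Mozes--Oh.
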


By {\bf{analytically computable}}, we mean 
computable by a computer able to solve real analytic equations; by {\bf{algebraically computable}}, we mean 
computable by a computer able to solve algebraic equations. The proof will provide  an analytically  computable subset containing all possible non-expanding finite orbits.

\begin{eg} Let $h\in \Gamma$ be a conjugate of $g$ with a distinct invariant fibration. 
Denote by $\mathrm{Tor}_N(g)$  the finite set of 
fibers of the $g$-invariant fibration in which $g$ is a periodic translation of period $\leq N$. 
Then, the set of finite orbits of $\Gamma$ of length $\leq N$ is algebraically computable since it is contained in 
\begin{equation} 
\mathrm{Tor}_N(g)\cap \mathrm{Tor}_N(h) =\{x\in X\; ; \; g^N(x)=h^N(x)=x \}.
\end{equation} 
\end{eg}
 
A typical application of Theorem~\ref{thm:effective}  is to the Wehler family. 
 Recall from  \S~\ref{subs:wehler}
   that ${ \cW}_0$ is the family of Wehler surfaces which are smooth and do not contain any 
   fiber of   the three natural projections 
 $  (\pp^1)^3\to (\pp^1)^2$.
 Under these assumptions the group $\Gamma$ generated by the three basic involutions 
 $\sigma_1$, $\sigma_2$ and $\sigma_3$ is non-elementary and 
 has no invariant curve (see~\cite[Prop. 2.2]{finite_orbits}).
It turns out that in this case $N(X, \Gamma)$ is   constant on a Zariski 
 dense open subset  (see Proposition~\ref{pro:N(X)_bounded} below). 
This leads to:

\begin{thm}\label{thm:wehler_zariski} 
There is a dense Zariski open subset of  ${ \cW}_0$
 (resp. of  the family ${ \cW}_0(\R)$ of real Wehler surfaces),  
in which the action of $\Gamma=\langle \sigma_1, \sigma_2, \sigma_3\rangle$ is 
  uniformly expanding on $X$. 
\end{thm}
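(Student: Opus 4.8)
The strategy is to combine Theorem~\ref{thm:criterion_uniform_expansion_parabolic} with the effective Theorem~\ref{thm:effective}, using the fact that in the Wehler family the bound $N(X,\Gamma)$ is uniform (Proposition~\ref{pro:N(X)_bounded}). Fix $X\in\cW_0$ (or $\cW_0(\R)$) with $\Gamma=\langle\sigma_1,\sigma_2,\sigma_3\rangle$. By~\cite[Prop.~2.2]{finite_orbits}, $\Gamma$ is non-elementary, contains parabolic elements (compositions $\sigma_i\sigma_j$ are parabolic), and has no invariant algebraic curve; so condition~(2) of Theorem~\ref{thm:criterion_uniform_expansion_parabolic} holds for every $X\in\cW_0$. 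Since a Wehler surface is a K3 surface, it is not rational, so Theorem~\ref{thm:criterion_uniform_expansion_parabolic} applies and reduces uniform expansion to condition~(1): every finite $\Gamma$-orbit must be uniformly expanding. By Theorem~\ref{thm:finite_UE}, for a symmetric $\nu$ satisfying~\eqref{eq:moment+} and generating $\Gamma$ (e.g.\ $\nu=\frac16\sum_i(\delta_{\sigma_i}+\delta_{\sigma_i\inv})=\frac13\sum_i\delta_{\sigma_i}$ since the $\sigma_i$ are involutions), this is equivalent to the tangent action of $\Gamma$ along each finite orbit being non-elementary, i.e.\ strongly irreducible and unbounded in $\PGL$ of the restricted tangent bundle.

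First I would invoke Theorem~\ref{thm:effective}: there is an analytically computable integer $N=N(X,\Gamma)$ such that every finite $\Gamma$-orbit of length $>N$ has non-elementary tangent action. The point of Proposition~\ref{pro:N(X)_bounded} is that, as $X$ ranges over a dense Zariski open subset $\cW_1\subset\cW_0$, this integer can be taken to be a uniform constant $N_0$ independent of $X$ (the construction of $N(X,\Gamma)$ in the proof of Theorem~\ref{thm:effective} depends only on quantities — degrees of the $\sigma_i$, the matrices $\sigma_i^\varstar$ on $\NS(X)$, the combinatorial data of the parabolic $g$ and its fibration — which are constant in the family away from a proper Zariski closed subset). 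So on $\cW_1$ only finite orbits of length $\le N_0$ can possibly fail to be uniformly expanding. Now for each $k\le N_0$, the set of parameters $X$ admitting a finite orbit of length exactly $k$ whose tangent action is \emph{elementary} is a Zariski closed condition: the orbit points satisfy polynomial equations (as in the Example, they lie in $\mathrm{Tor}_k(g)\cap\mathrm{Tor}_k(h)$), and "elementary tangent action" — reducibility of the relevant linear representation of the finite-index stabilizer, or boundedness — is an algebraic condition on the finitely many matrices $D_xf$, $f$ ranging over a generating set of $\Stab_\Gamma(x)$ acting on $T_xX$ (using Lemma~\ref{lem:stab_NE}). Intersecting over $k=1,\dots,N_0$ and with $\cW_1$, one obtains a Zariski open subset $\cW_{\mathrm{exp}}\subset\cW_0$ on which every finite orbit has non-elementary tangent action. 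By~\cite[Thm~A]{finite_orbits} this subset is nonempty (a dense set of Wehler surfaces has no finite orbit at all, hence condition~(1) is vacuous there), so $\cW_{\mathrm{exp}}$ is dense Zariski open; for the real family one intersects with $\cW_0(\R)$, which is Zariski dense in $\cW_0$, so $\cW_{\mathrm{exp}}(\R):=\cW_{\mathrm{exp}}\cap\cW_0(\R)$ is dense Zariski open in $\cW_0(\R)$. On $\cW_{\mathrm{exp}}$ (resp.\ $\cW_{\mathrm{exp}}(\R)$), Theorem~\ref{thm:finite_UE} gives condition~(1) and Theorem~\ref{thm:criterion_uniform_expansion_parabolic} concludes that the action of $\Gamma$ is uniformly expanding on $X$ — for the chosen $\nu$, and hence (Corollary~\ref{cor:independence_nu}) for any symmetric $\nu$ with the right moment condition generating $\Gamma$, which is what "uniformly expanding action" means here.

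The main obstacle is the uniform boundedness of $N(X,\Gamma)$ across the family, i.e.\ Proposition~\ref{pro:N(X)_bounded}: one has to verify that none of the "computable" steps in the proof of Theorem~\ref{thm:effective} — in particular the control of the non-twisting locus $\mathrm{NT}_g$ of the parabolic $g$, of the tangency curve $\mathrm{Tang}(\pi_g,\pi_h)$, and of the bounds coming from the cohomological action — degenerate on a Zariski open set of parameters, so that all the thresholds can be chosen uniformly. The secondary (but routine) point is checking that "the tangent action along a finite orbit is elementary" is genuinely a Zariski closed condition in $X$ rather than merely a constructible one; this is handled by noting that the orbit points and the stabilizer generators depend algebraically on $X$ over the locus where the orbit structure is constant, and stratifying $\cW_0$ accordingly so that only finitely many strata (indexed by orbit lengths $\le N_0$) contribute, each giving a closed condition whose complement is then taken.
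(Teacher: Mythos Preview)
Your overall strategy is correct and matches the paper's: reduce to condition~(1) of Theorem~\ref{thm:criterion_uniform_expansion_parabolic}, use Proposition~\ref{pro:N(X)_bounded} to handle long finite orbits uniformly, and invoke \cite[Thm~A]{finite_orbits}. However, you overcomplicate the final step and thereby introduce a gap.

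The paper's argument is simply this: by \cite[Thm~A]{finite_orbits}, the set $\mathcal Z_N\subset\cW_0$ of surfaces possessing \emph{any} finite $\Gamma$-orbit of length $\leq N$ is a proper Zariski closed subset. On $\cW_0\setminus\mathcal Z_N$ every finite orbit has length $>N$, hence is non-elementary by Proposition~\ref{pro:N(X)_bounded}, and Theorem~\ref{thm:criterion_uniform_expansion_parabolic} concludes. (Note also that Proposition~\ref{pro:N(X)_bounded} already gives a uniform $N$ on all of $\cW_0$, not merely on a Zariski open $\cW_1$ as you state.)

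You instead try to remove only those $X$ admitting a short finite orbit with \emph{elementary} tangent action, and then must argue that this refined bad locus is Zariski closed. This is where the gap lies. Reducibility and the dihedral case are algebraic, but the remaining part of ``elementary'' --- that the image of $\Stab_\Gamma(x)$ in $\PGL(T_xX)$ is bounded, i.e.\ conjugate into $\mathsf{PU}_2$ --- is a real semi-algebraic condition, not a Zariski closed one. Your stratification remark does not address this, and without it you obtain at best a constructible set, not a Zariski open one. The fix is exactly what the paper does: don't try to distinguish good short orbits from bad ones; throw away all $X$ with short orbits, which \cite[Thm~A]{finite_orbits} already tells you is proper Zariski closed.
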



\subsubsection{Preliminaries on Halphen twists}\label{par:preliminaries2_on_Halphen} Let us resume the discussion from \S~\ref{subs:halphen} and add a few preliminaries on Betti foliations and the non-twisting locus. Let $h$ be a Halphen twist with associated fibration $\pi\colon X\to B$. 
Consider a  simply connected open subset $U$  of $B^\circ$ together with a section $\sigma\colon U\to X$ of $\pi$  
and a continuous frame for the homology of the fibers above $U$. 
For $w\in U$, one can identify the fiber $X_w$ to $\C/\Lat(w)$ ($\sigma(w)$ corresponding to the zero of $\C/\Lat(w)$), as in \S~\ref{subs:halphen}.
Then, above $U$, there is   a unique real-analytic diffeomorphism $\Psi: \pi \inv(U)\to U\times \R^2/\Z^2$ 
such that 
\begin{enumerate}
\item[(a)] $\pi\circ \Psi=\Psi\circ \pi_U$, where $\pi_U$ is the projection onto $U$;
\item[(b)] $\Psi$ maps $\sigma$ to the zero section $w\mapsto (w, (0,0))$ of $\pi_U$, and  maps the basis of $H_1(X_w;\Z)$ to the standard basis of $H_1(\R^2/\Z^2; \Z)=\Z^2$;
\item[(c)]  on each fiber, $\Psi$ is a real analytic isomorphism of real Lie group.
\end{enumerate}
Above $U$, the {\bf{Betti foliation}} is  the foliation by 
  submanifolds of the form $\Psi\inv(U\times \set{(x,y)})$; these leaves are local holomorphic sections of $\pi$, with $\sigma$ corresponding to $\Psi\inv(U\times \set{(0,0)})$.
  Conjugating by 
$\Psi$,  we get
\begin{equation}\label{eq:definition_T}
\Psi\circ h\circ \Psi^{-1}\colon (w, (x,y))\mapsto (w, (x,y)+T(w)),
\end{equation} 
where $T\colon U\to \R^2/\Z^2$ is real analytic. By~\cite[Lem. 3.9]{invariant}, the map $T$ 
is an (orientation preserving)
branched covering,  so it behaves topologically like $w\mapsto w^k$.  In $U$, the 
{\bf{non-twisting locus}} $\mathrm{NT}_{h}$ 
 is the set $\{w\in U\; ; \; D_wT = 0\}$; equivalently, $\mathrm{NT}_{h }\cap U = \pi(\set{t_1, \ldots , t_q})$, where 
 $\set{t_1, \ldots , t_q}$ is the set of tangencies between   
the Betti foliation and the section $h \circ \sigma$. These definitions do not depend on the  above choices and $\mathrm{NT}_{h}$ can indeed be defined globally on $B^\circ$. 
A key  fact is that $\mathrm{NT}_{h}$  is  a finite  subset of $B^\circ$
(see~\cite[Prop. 3.14]{invariant} or~\cite[Cor. 7.7.10]{duistermaat}).
We denote by $\abs{\NT_h}$ its cardinality, and by $\mult(\NT_h)$ 
its cardinality counted with multiplicity, that is, taking into account the 
degree of the local branched covering $T$.  

Note that, once $h$ and $\pi$ are given, {\textit{the set $\NT_h\subset B^\circ$ is analytically computable}}: one has to compute the periods of $X_w$ 
to get $\Lat(w)$, then $\Psi$ is $\R$-linear from $\C/\Lat(w)$ to $\R^2/\Z^2$, and $T$ is then obtain from $h$ by conjugacy.

\subsubsection{Proof of Theorem~\ref{thm:effective}}   
As in \cite{invariant}, for $(g,h)\in \Hal(\Gamma)^2$
we  set 
\begin{equation}
\STang(\pi_g, \pi_h) = 
\Sing(\pi_g)\cup\Sing (\pi_h)\cup \Tang^{\mathrm{tt}}(\pi_g, \pi_h), 
\end{equation} 
where $\Sing(\pi_g)$ is the union of all singular and multiple fibers, and 
$\Tang^{\mathrm{tt}}(\pi_g, \pi_h)$ is the part of the tangency locus of $\pi_g$ and $\pi_h$ which is not contained in $\Sing(\pi_g)\cup\Sing (\pi_h)$.
Put $\NT_g^X=\pi_g^{-1}(\NT_g)$ (so that $\NT_g^X$ is a curve in $X$) and
likewise $\NT_h^X=\pi_h^{-1}(\NT_h)$.  

\begin{lem}\label{lem:NT1}
Let  $g,h$  be a pair of Halphen twists in $\Gamma$ with distinct invariant fibrations, and let  $x\in X$ be a point with a finite $\Gamma$-orbit.  
If this orbit is not uniformly expanding, then
it is   contained in 
$ \STang(\pi_g, \pi_h)\cup \NT_g^X\cup \NT_h^X$. 
\end{lem}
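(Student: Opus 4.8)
The plan is to show the contrapositive of a local dichotomy: if $x$ has a finite $\Gamma$-orbit $F$ and $F$ avoids the curve $\STang(\pi_g,\pi_h)\cup\NT_g^X\cup\NT_h^X$, then the action of $\Gamma$ on $TX\rest{F}$ is non-elementary, so that by Theorem~\ref{thm:finite_UE} (applied to a symmetric measure generating $\Gamma$, using the convention adopted at the end of \S~\ref{subs:finite_orbits}) the orbit $F$ is uniformly expanding. First I would recall from \S~\ref{subs:halphen} that since $x\notin\pi_g^{-1}(\NT_g)$, Lemma~\ref{lem:halphen_flat}(4) says that along the $g$-orbit of $x$ the differential $D_xg^n$ has the twisting behaviour: for every $v\in T_xX\setminus T_xX_{\pi_g(x)}$, $\norm{D_xg^n(v)}$ grows linearly and $\frac1n\pi_{g\varstar}(D_xg^n(v))\to 0$. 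Consequently, after replacing $g$ by the power $g^m$ that fixes each point of $F$ and preserves each fiber (possible since $F$ is finite and $g$ is a Halphen twist), the induced matrix of $g$ on $T_xX$ in a basis adapted to the splitting "fiber direction $\oplus$ transverse direction" is unipotent and \emph{nontrivial}, with the unique invariant line being $T_xX_{\pi_g(x)} = \ker D_x\pi_g$. The hypothesis $x\notin\NT_g^X$ is exactly what guarantees nontriviality of this unipotent part.

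Next I would run the same argument for $h$: after a further power, $h$ acts on $T_xX$ as a nontrivial unipotent with unique invariant line $\ker D_x\pi_h$. The key point is then that $\ker D_x\pi_g \neq \ker D_x\pi_h$: this is where the hypothesis $x\notin\STang(\pi_g,\pi_h)$ enters, since the two fibrations are transverse at $x$ (the point lies outside the tangency locus and outside the singular fibers of both, so both fibrations are smooth submersions there with distinct tangent lines). So inside $\Stab_\Gamma(\{x\})$ (equivalently inside the image of this stabilizer in $\PGL(T_xX)\simeq\PGL_2(\C)$) we have two nontrivial unipotent elements with distinct fixed points on $\P(T_xX)\simeq\P^1$. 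A standard ping-pong/classification argument for subgroups of $\PGL_2(\C)$ shows that two such unipotents generate an unbounded, strongly irreducible subgroup: they cannot share an invariant line (their fixed points differ), cannot permute a pair of lines (a unipotent has only one fixed point on $\P^1$, so it cannot swap two points), and the group is unbounded since a nontrivial unipotent is itself unbounded. Hence the action of $\Stab_\Gamma(\{x\})$ on $T_xX$ is non-elementary in the sense of \S~\ref{subs:finite_orbits}, and by Lemma~\ref{lem:stab_NE} the action of $\Gamma$ on $TX\rest{F}$ is non-elementary; Theorem~\ref{thm:finite_UE}(a) then gives uniform expansion of $F$.

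The main obstacle I anticipate is bookkeeping rather than conceptual: one must be careful that the fixed finite orbit $F$ consists of \emph{smooth} points of both fibrations and of regular (non-multiple) fibers, so that the "fiber direction" is genuinely one-dimensional and the twisting statement of Lemma~\ref{lem:halphen_flat}(4) applies verbatim at every point of $F$ — this is precisely why $\STang$ is defined to include $\Sing(\pi_g)\cup\Sing(\pi_h)$. A second minor point is that the powers $g^m$, $h^{m'}$ needed to fix $F$ pointwise and to preserve fibers still lie in $\Gamma$ and still generate (together with the rest of $\Gamma$) a non-elementary action on $TX\rest{F}$; but since non-elementarity of the tangent action only needs to be witnessed by \emph{some} elements of the stabilizer, passing to powers is harmless. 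Finally, one should note that the three excluded sets $\STang(\pi_g,\pi_h)$, $\NT_g^X$, $\NT_h^X$ are each proper algebraic or real-analytic curves in $X$ (finiteness of $\NT_g$, $\NT_h$ in $B^\circ$ from \S~\ref{par:preliminaries2_on_Halphen}, and properness of $\Tang^{\mathrm{tt}}$ since the fibrations are distinct), which is what makes the conclusion a genuine restriction on the location of non-expanding finite orbits, as needed for the effectivity statement in Theorem~\ref{thm:effective}.
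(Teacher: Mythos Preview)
Your proof is correct and follows essentially the same route as the paper's: contrapose, pick $x$ outside the bad locus, find powers $g^k,h^\ell\in\Stab_\Gamma(x)$ acting as nontrivial parabolics on $\P(T_xX)$ with distinct fixed points (from $x\notin\NT_g^X\cup\NT_h^X$ and $x\notin\STang(\pi_g,\pi_h)$ respectively), and conclude non-elementarity via Lemma~\ref{lem:stab_NE} and Proposition~\ref{pro:fixed_UE}/Theorem~\ref{thm:finite_UE}. One minor phrasing point: the contrapositive only requires \emph{some} point of the orbit to lie outside $\STang(\pi_g,\pi_h)\cup\NT_g^X\cup\NT_h^X$, not the entire orbit $F$; since your argument in fact only uses one such $x$, this is a wording slip rather than a gap.
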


\begin{proof}
We argue by contraposition: replacing $x$ by another point in its orbit if necessary, 
 we   assume that  $x\notin  \STang(\pi_g, \pi_h)\cup \NT_g^X\cup \NT_h^X$, and we want to show that its orbit is 
 uniformly expanding. 
Since $\Gamma(x)$ is finite, there are positive integers $k$ and $\ell$ 
such that $g^k$ and $h^\ell$ are in  $\mathrm{Stab}_\Gamma(x)$. By definition of the non-twisting locus, 
$g^k$ and $h^\ell$
induce parabolic homographies on $\pp(T_xX)$; and since $x\notin \Tang^{tt}(\pi_g,\pi_h)$, the
  fixed points of these homographies are distinct; thus, the action of 
$\langle g^k, h^\ell\rangle$ on $\pp(T_xX)$ is non-elementary. By Proposition~\ref{pro:fixed_UE} and Lemma~\ref{lem:stab_NE}, the orbit of $x$ is uniformly expanding. \end{proof}


The intersection number of $\NT_g^X \cup \STang(\pi_g, \pi_h)$ (resp. $\NT_h^X \cup \STang(\pi_g, \pi_h)$) with a smooth fiber $X_w^h$ (resp. $X_w^g$) does not depend on the fiber. Let $n_0(g,h)$ be the maximum of these intersection numbers:
\begin{equation}\label{eq:definition_n0}
n_0=\max\{ [\NT_g^X \cup \STang(\pi_g, \pi_h)] \cdot [X_w^h]\;  ; \; [\NT_h^X \cup \STang(\pi_g, \pi_h]\cdot [X_w^g])\}.
\end{equation}
The set $\STang(\pi_g, \pi_h)$ can be computed algebraically, thus 
\begin{equation} 
n_0(g,h)\leq A(g,h) \max(\vert \NT_g\vert \, ; \, \vert \NT_h\vert) + B(g,h)
\end{equation}
 where $A(g,h)$ and $B(g,h)$ can be computed algebraically (by computing the tangency loci and intersection numbers).  
Then, we set
\begin{equation}\label{eq:definition_n(g,h)}
n(g,h)=n_0(g,h) !
\end{equation}

\begin{lem}\label{lem:definition_of_bad(g,h)}
Let  $g,h$  be a pair of Halphen twists in $\Gamma$ with distinct invariant fibrations. Let
  $x\in X$  be  such that $\Gamma(x)$ is finite and not 
uniformly expanding. Then $\Gamma(x)$ is contained in 
$$
  \STang(\pi_g, \pi_h) \cup
(\NT_g^X\cap \NT_h^X)\cup (\NT_g^X\cap \NT_{h^n gh^{-n}}^X) \cup (\NT_h^X\cap \NT_{g^nhg^{-n}}^X),
$$
where $n=n(g,h)$ is defined by~\eqref{eq:definition_n(g,h)}.
\end{lem}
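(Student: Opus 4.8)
\textbf{Proof plan for Lemma~\ref{lem:definition_of_bad(g,h)}.} The plan is to upgrade Lemma~\ref{lem:NT1} by exploiting the freedom to replace $g$ and $h$ by conjugates with \emph{distinct} invariant fibrations while keeping the same pair of Halphen twists in play. Start from a point $x$ with finite, non-uniformly-expanding $\Gamma$-orbit. By Lemma~\ref{lem:NT1} applied to the pair $(g,h)$, the orbit $\Gamma(x)$ is contained in $\STang(\pi_g,\pi_h)\cup \NT_g^X\cup \NT_h^X$. If the orbit meets $\STang(\pi_g,\pi_h)$ we are in the first set of the claimed union and there is nothing more to prove (the orbit, being $\Gamma$-invariant and $\STang(\pi_g,\pi_h)$ being a finite union of fibers and curves, either lies in it or is disjoint from it — more carefully, one should argue orbit-point by orbit-point, but $\STang$ is $\langle g,h\rangle$-relevant and the contraposition in Lemma~\ref{lem:NT1} already accounts for moving within the orbit). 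So assume $\Gamma(x)\cap\STang(\pi_g,\pi_h)=\emptyset$; then every point of the orbit lies in $\NT_g^X\cup\NT_h^X$, hence in $\NT_g^X$ or in $\NT_h^X$.

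The key step is the following dichotomy applied to a fixed orbit point, say $x\in\NT_g^X$ (the case $x\in\NT_h^X$ being symmetric): either $x\in\NT_h^X$ as well, putting it in $\NT_g^X\cap\NT_h^X$, or $x\notin\NT_h^X$. In the latter case I would feed the conjugated Halphen twist $h'=h^n g h^{-n}$ into Lemma~\ref{lem:NT1} in place of $h$, where $n=n(g,h)=n_0(g,h)!$. One must check that $h'$ is a Halphen twist (clear, as conjugation preserves $\Hal(\Gamma)$) with invariant fibration $\pi_{h'}=\pi_h\circ (h^{-n})$ distinct from $\pi_g$ — this distinctness is where the choice of $n$ enters: because $n_0(g,h)$ bounds the relevant intersection number of $\NT_h^X\cup\STang$ with a fiber of $\pi_g$, the $g$-orbit $\{h^j\}$ of fibrations for $j=0,\dots,n_0(g,h)$ cannot all coincide with $\pi_g$, and taking $n=n_0!$ ensures the combinatorial synchronization needed so that $g^k$ (with $g^k\in\Stab_\Gamma(x)$, $k\mid n$) genuinely permutes the configuration — this is the standard trick from \cite{invariant} for making a common power act trivially on the finite non-twisting data. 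Applying Lemma~\ref{lem:NT1} to $(g,h')$ then forces $\Gamma(x)\subset \STang(\pi_g,\pi_{h'})\cup\NT_g^X\cup\NT_{h'}^X$; since the orbit avoids $\STang(\pi_g,\pi_h)$ by assumption and $\STang(\pi_g,\pi_{h'})$ differs from it only by $\langle g\rangle$-translation (along whose fibers $x$ is not a twisting-obstruction point), $x$ must lie in $\NT_g^X$ or $\NT_{h'}^X$; combined with $x\in\NT_g^X$ we get $x\in\NT_g^X$, but the content we want is $x\in\NT_g^X\cap\NT_{h^n g h^{-n}}^X=\NT_g^X\cap\NT_{h'}^X$, which holds once we rule out the escape back into $\STang$ or the smooth-fiber part — and that ruling out is exactly what the bound $n_0(g,h)$ guarantees.

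The main obstacle I anticipate is making the last paragraph's bookkeeping precise: namely showing that replacing $h$ by $h^n g h^{-n}$ with $n=n_0(g,h)!$ really prevents the orbit from "leaking" into $\STang(\pi_g,\pi_{h'})$ or the twisting locus of the new pair, so that the only new possibility produced is membership in $\NT_{h'}^X$. The mechanism is that the finite orbit $\Gamma(x)$ has a well-defined period dividing $n$ for the translation action of $g$ on each relevant fiber, and there are at most $n_0(g,h)$ fibers of $\pi_g$ carrying points of $\NT_h^X\cup\STang$; so a point in $\NT_g^X\setminus\NT_h^X$ that is not uniformly expanding must, after applying $h^n$ (a period-compatible power), land on a point whose $\pi_h$-image is again in $\NT_h$, forcing $x\in\NT_{h^n g h^{-n}}^X$. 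I would carry this out by: (1) fixing $x\in\NT_g^X$ with $g^k\in\Stab_\Gamma(x)$, $k\le n_0(g,h)$ hence $k\mid n$; (2) noting $h^n g^k h^{-n}\in\Stab_\Gamma(h^n x)$ and tracking the $\pi_g$- and $\pi_{h}$-fibers through the orbit; (3) invoking Lemma~\ref{lem:NT1} for $(g,h^n g h^{-n})$ and using $\Gamma(x)\cap\STang(\pi_g,\pi_h)=\emptyset$ together with the intersection bound to exclude all terms except $\NT_g^X\cap\NT_{h^n g h^{-n}}^X$. The symmetric argument with the roles of $g$ and $h$ exchanged yields the term $\NT_h^X\cap\NT_{g^n h g^{-n}}^X$, and the union of the three intersection terms plus $\STang(\pi_g,\pi_h)$ covers all orbit points, completing the proof.
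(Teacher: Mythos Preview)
Your overall strategy --- use Lemma~\ref{lem:NT1}, then in the case $x\in\NT_g^X\setminus\NT_h^X$ bring in the conjugate $h^n g h^{-n}$ --- matches the paper's. But the mechanism you describe for why $n=n_0(g,h)!$ works is wrong, and this is the heart of the argument.

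You repeatedly invoke the \emph{$g$}-period of $x$ (``$g^k\in\Stab_\Gamma(x)$, $k\le n_0(g,h)$'') and the intersection of $\NT_h^X\cup\STang$ with a fiber of $\pi_g$. The roles are reversed. The point is that the \emph{$h$}-orbit of $x$ stays in the single $h$-fiber $X^h_{\pi_h(x)}$, and by Lemma~\ref{lem:NT1} every $h^q(x)$ lies in $\STang(\pi_g,\pi_h)\cup\NT_g^X\cup\NT_h^X$; since $\pi_h(h^q(x))=\pi_h(x)\notin\NT_h$, in fact $h^q(x)\in X^h_{\pi_h(x)}\cap(\NT_g^X\cup\STang(\pi_g,\pi_h))$. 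This intersection has at most $n_0(g,h)$ points by the definition of $n_0$, so the $h$-period of $x$ divides $n_0!$, i.e.\ $h^n(x)=x$. That is the whole content of the choice of $n$.

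Once $h^n(x)=x$, no second application of Lemma~\ref{lem:NT1} is needed: for $f=h^n g h^{-n}$ one has $\NT_f^X=h^n(\NT_g^X)$, and since $x\in\NT_g^X$ with $h^n(x)=x$ you get $x\in\NT_f^X$ directly. (You also write $\pi_{h'}=\pi_h\circ h^{-n}$, but $h'=h^n g h^{-n}$ has fibration $\pi_g\circ h^{-n}$.) The transversality of $X^f_x$ and $X^g_x$ at $x$ --- needed so that $\pi_f\neq\pi_g$ and the intersection $\NT_g^X\cap\NT_f^X$ is genuinely a constraint --- comes from $x\notin\NT_h^X$: the twisting of $h$ at $x$ moves the tangent direction of the $g$-fiber. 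Your discussion of ``leaking into $\STang(\pi_g,\pi_{h'})$'' and ``period-compatible powers'' is not the right bookkeeping and would not close the argument as stated.
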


\begin{proof} 
The statement of the lemma concerns the orbit $\Gamma(x)$, but we only have to prove it for $x$ itself. 
If $x\in  \STang(\pi_g, \pi_h) \cup (\NT_g^X \cap \NT_h^X)$ we are done. 
Otherwise by Lemma~\ref{lem:NT1}, $x$ belongs to 
$\NT_g^X\setminus  \NT_h^X$ or  $\NT_h^X\setminus \NT_g^X$. 
Assume that $x\in  \NT_g^X\setminus \NT_h^X$.
 The $h$-orbit of $x$ is finite and by Lemma~\ref{lem:NT1} again,
for every $q$, $h^q(x)$ is   contained  in  
$X^h_{x}\cap (\NT_g^X \cup \STang(\pi_g, \pi_h))$ (here we abuse notation and 
write $X^h_{x}$ for $X^h_{\pi_h(x)}$). Thus, 
$h^{n}(x) = x$, where $n=n(g,h)$. Set  $f = h^{n} g h^{-n}$. The fiber   $X^f_x$ associated to $f$ through $x$
 is   $h^{n}( X^g_x)$, and since $x\notin \NT_h^X$, $X^f_x$ is transverse to   
 $X^g_x$ at $x$, as well as to  $X^h_x$. Moreover, $x$ belongs to $\NT_f^X$, because $x$ belongs to $\NT_g^X$ and $h^n(x)=x$. Hence   $x\in \NT_g^X\cap \NT_{h^ngh^{-n}}^X$. Doing the same 
 in the case where $x\in \NT_h^X\setminus  \NT_g^X$ completes the proof. 
\end{proof}

The set $\NT_g$ is analytically computable (by \S~\ref{par:preliminaries2_on_Halphen}), and $\Crit(\pi_g)$ is algebraically computable. Similarly, if $h$ is in $\Hal(\Gamma)$,  $\Tang^{tt}(\pi_g,\pi_h)\cap \NT_f^X$ is analytically computable.  
The  previous lemma 
 shows that all  non uniformly expanding finite orbits are contained in 
$$\mathrm{Bad}(g,h):=\STang(\pi_g, \pi_h) \cup
(\NT_g^X\cap \NT_h^X)\cup (\NT_g^X\cap \NT_{h^n gh^{-n}}^X) \cup (\NT_h^X\cap \NT_{g^nhg^{-n}}^X)
$$
for every pair $(g,h)\in \Hal(\Gamma)^2$ with distinct invariant fibrations, where $n = n(g,h)$ as in Equation~\eqref{eq:definition_n(g,h)}.
Intersecting these sets for various choices of  $(g,h)$, we expect to get a finite analytically computable set. 
Observe that  $\mathrm{Bad}(g,h)$ 
is the union of $\STang(\pi_g, \pi_h)$ and a finite set, because $\NT_g^X\cap \NT_h^X$ is finite when $\pi_g$ and $\pi_h$ are distinct. So, what remains to do is to
  \textit{exhibit an explicit finite set of  pairs $(g,h)$ such that the 
  intersection of the $\STang(\pi_g, \pi_h)$ is finite}. 
We first treat the case of Wehler surfaces, 
which is sufficient to proceed with Theorem~\ref{thm:wehler_zariski}.
 
 \smallskip
 
\noindent \textit{Conclusion of the proof of Theorem~\ref{thm:effective} in the Wehler case.}
Fix a  Wehler surface $X\in  {\cW}_0$ and consider the three pairs $(g_1, g_2)$, $(g_2,g_3)$, $(g_3,g_1)$, where $g_1 = \sigma_2\circ \sigma_3$, $g_2 = \sigma_3\circ\sigma_1$ and $g_3 =\sigma_1\circ\sigma_2$. Note that the $g_i$-invariant fibration is the $i$-th projection $\pi_i$.
 
Assume that the intersection of the divisors $\STang(\pi_i, \pi_j)$ contains an irreducible curve $D\subset X$. If $D$ is contained in $\Sing(\pi_i)\cap \Sing(\pi_j)$ with  $i\neq j$, then $(\pi_i, \pi_j)$ maps $D$ onto a point and this contradicts the fact that $X\in { \cW}_0$. If $D$ is contained in, say, $\Tang^{tt}(\pi_1,\pi_2)$ 
 and $\Tang^{tt}(\pi_2, \pi_3)$,  the three fibrations are pairwise tangent  along $D$, and we obtain a contradiction because there is no tangent vector $v\neq 0$ to $(\pp^1)^3$ which is mapped to $0$ by each $D\pi_i$. The last possibility is that $D$ is contained in, say,  $\Tang^{tt}(\pi_1,\pi_2)$ and $\Sing(\pi_3)$.    In this case, there is a point $p$ on $D$ at which $D_p\pi_3\colon T_pX\to T_{\pi_3(p)}\P^1$ is equal to $0$, and 
 at such a point, the same contradiction applies. This shows that 
 \begin{equation}\label{eq:finite-bad-set}
 F(g_1,g_2,g_3):=\mathrm{Bad}(g_1,g_2)\cap \mathrm{Bad}(g_2,g_3)\cap \mathrm{Bad}(g_3,g_1) 
 \end{equation}
 is finite, with an analytically computable cardinality, and   the proof is complete. \qed

\begin{rem}\label{rem:N(X)}
The above proof provides a computation of the integer $N(X)$  involving:
 \begin{enumerate}
 \item algebraic quantities that are constant on $\cW_0$, like $\STang(\pi_i,\pi_j)\cdot \STang(\pi_j,\pi_k)$, 
 \item  $\vert \mathrm{NT}_{g_i}\vert$ for 
$i=1, 2, 3$.
\end{enumerate} 
{\textit{Therefore, if $\vert \mathrm{NT}_{g_i}\vert \leq B$, then $N(X)\leq N(B)$ for some $N(B)$ depending only on $B$.}} 

Indeed, the number $n$ in Lemma~\ref{lem:definition_of_bad(g,h)} depends only on $n_0$ (see Equations~\eqref{eq:definition_n(g,h)}) and by Equation~\eqref{eq:definition_n0}) $n_0$ is bounded by a function of $B$. 
Then, because the norm of $(g_i^{n_0})^*\colon \NS(X)\to \NS(X)$ is bounded by $Cn_0^2$ for some uniform constant $C$, we obtain 
$\NT_{g_i}^X \cap \NT_{g_j^ng_ig_j^{-n}}^X\leq C' n_0^2 B^2$  
for some constant $C'$ and the result follows. \qed
\end{rem}
 
  \noindent \textit{Conclusion of the proof of Theorem~\ref{thm:effective} in the general case.} \label{conclusion_proof_thm_effective} 
By assumption, $\Gamma$ is non-elementary and has no invariant   curve.  Let $\Gamma^*$ be its image in $\GL(\NS(X;\Z))$. 

If $g$ is the parabolic element given in assumption~(iii) of the theorem, up to sign, there is a unique integral primitive class $c(g)\in \NS(X)$ such that $g^\varstar c(g)=c(g)$ and $c(g)\cdot c(g)=0$. By the  assumptions~(ii) and~(iii), this class can be computed explicitly. An element $f$ of $\Aut(X)$ preserves the $g$-invariant fibration $\pi$ (permuting its fibers) if and only if it fixes $c(g)$. Since $\Gamma$ is non-elementary, it is not contained in the stabilizer of $c(g)$. Thus, according to Proposition~3.2 of~\cite{eskin-mozes-oh}, there is a computable integer $N$, and a composition $f$ of length $N$ in the generators $s\in S$ that does not preserve $c(g)$. Then, $h:= f\circ g \circ f^{-1}$ is a parabolic element of $\Gamma$ with invariant fibration $\pi\circ f\neq \pi$.

Since  $g$  and its  invariant fibration  $\pi$, as well as $f$,  are explicit, we can compute the degree of $\STang(\pi_g, \pi_h)$ (for the embedding $X\subset \P^m(\C)$ given by assumption (i)). Denote by $(C_i)_{i\in I}$ the irreducible components of $\STang(\pi_g, \pi_h)$; we have $\vert I\vert \leq \deg(\STang(\pi_g, \pi_h))$. 

Suppose that for each $C_i$, one can exhibit some $f_i\in \Gamma$ for which  $f_i(C_i)\not \subset \STang(\pi_g, \pi_h)$. Then the set of pairs 
\begin{equation}
\set{(g,h)} \cup \set{ (f_igf_i\inv, f_i h f_i\inv)\; ; \; \ i\in I} 
\end{equation}
satisfies $\vert \bigcap_i\STang(\pi_{g_i}, \pi_{h_i})\vert < +\infty$, and we are done because the cardinality of this finite set is algebraically computable. 
So, we now fix such an irreducible component $C_i$, and we construct such an $f_i$. 

First, assume that $C_i$ is an irreducible component of $\Tang^{\mathrm{tt}}(\pi_g, \pi_h)$. Then $C_i$ is generically transverse to $\pi$, hence $\deg(g^n(C_i))$ tends to infinity. We can thus set $f_i=g^{n_i}$ for some large enough $n_i$ (explicitely computable from  the action on $\NS(X)$).

The second case is when $C_i$ is an irreducible component of $\Sing(\pi_g)\cup\Sing(\pi_h)$; 
in particular, its self-intersection $C_i^2$ is $\leq 0$. 
By~\cite[Thm D]{finite_orbits}, there exists a loxodromic element $f_0\in \Gamma$ 
 without invariant curve; in particular  $f_0^{\abs{I}!}(C_i) \neq C_i$. Since $f_0$ is loxodromic, this inequation is equivalent to 
$(f_0^{\abs{I}!})_\varstar [C_i] \neq  [C_i]$.  Indeed, either $C_i^2=0$ and we readily get a contradiction 
since a loxodromic element does not fix any non-zero isotropic class, or $C_i^2<0$
 and this follows from $f_0^{\abs{I}!}(C_i) \neq C_i$ since 
$[C_i]$ determines $C_i$ when the self-intersection is negative.
Thus, if we set 
\begin{equation}
 W_i:=\set{f\in \GL(\NS(X, \R)): f^{\abs{I}!}_\varstar [C_i] = [C_i]},
 \end{equation}
 we see that $\Gamma^*$ is not contained in $W_i$. Proposition 3.2 of \cite{eskin-mozes-oh} then provides a computable element $f\in \Gamma$ 
such that $f\notin W_i$. Now, if  $f^q(C_i)$ were contained in $\STang(\pi_g, \pi_h)$ for  $0\leq q\leq \vert I\vert$, we would find two integers $q_1< q_2\leq \vert I\vert$ such that $f^{q_2-q_1}(C_i)=C_i$; in particular, $f^{\vert I\vert !}(C_i)$ would be equal to $C_i$, a contradiction. Thus, there is an iterate $f_i:=f^{q_i}$, with $q_i \leq \vert I\vert$, such that $f_i(C_i)\not \subset \STang(\pi_g, \pi_h)$, and the proof is complete.
\qed
 
\subsubsection{Proof of Theorem~\ref{thm:wehler_zariski} } Recall that, for Wehler surfaces, $\Gamma = \bra{\sigma_1, \sigma_2, \sigma_3}$.  

\begin{pro}\label{pro:N(X)_bounded}
There exists an analytically computable integer $N$ such that for  
any Wehler  surface  $X\in { \cW}_0$,   any 
finite $\Gamma$-orbit of  length $>N$ is non-elementary. 
\end{pro}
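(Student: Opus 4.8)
The plan is to deduce this from Theorem~\ref{thm:effective} together with Remark~\ref{rem:N(X)}, by establishing a \emph{uniform} bound on the cardinalities of the non-twisting loci over the whole family $\cW_0$. Recall that for $X\in\cW_0$ the three basic Halphen twists are $g_1=\sigma_2\sigma_3$, $g_2=\sigma_3\sigma_1$, $g_3=\sigma_1\sigma_2$, with invariant fibrations the coordinate projections $\pi_i\colon X\to\P^1$. By Remark~\ref{rem:N(X)}, the integer $N(X,\Gamma)$ produced by the proof of Theorem~\ref{thm:effective} in the Wehler case is bounded by a function of $\max_{1\le i\le 3}\abs{\NT_{g_i}}$ alone: the other ingredients entering its computation (the intersection numbers of the divisors $\STang(\pi_i,\pi_j)$, the classes involved in~\eqref{eq:definition_n0}, and the norms of the $(g_i^n)^\varstar$ on $\NS(X)$) are computed in $(\P^1)^3$ from the pullbacks of the three hyperplane classes and from fixed integer matrices for the $g_i^\varstar$, hence are the same for every member of $\cW_0$. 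So it suffices to prove that $B:=\sup_{X\in\cW_0}\max_i\abs{\NT_{g_i}(X)}$ is finite and analytically computable; then $N:=N(B)$ does the job.

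The first step is a symmetry reduction. The group $\mathfrak S_3$ permuting the three $\P^1$-factors acts on $\cW$, preserves $\cW_0$, and conjugates $\sigma_i$ to $\sigma_{\tau(i)}$, hence $g_i$ to $g_{\tau(i)}$ and $\pi_i$ to $\pi_{\tau(i)}$; in particular $\abs{\NT_{g_i}(X)}=\abs{\NT_{g_{\tau(i)}}(\tau\cdot X)}$. Therefore $B=\sup_{X\in\cW_0}\abs{\NT_{g_1}(X)}$, and it remains to bound $\abs{\NT_{g_1}}$, a subset of $B^\circ=\P^1\setminus\Crit(\pi_1)$, uniformly in $X$. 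Here I would invoke the structural results on the non-twisting locus: by \S~\ref{par:preliminaries2_on_Halphen}, $\NT_{g_1}$ is the critical set of the Betti map $T$ of the pair $(\pi_1,g_1)$, so $\abs{\NT_{g_1}}\le\mult(\NT_{g_1})$, the same set counted with the local degrees of $T$; and by \cite[Prop. 3.14]{invariant} (see also \cite[Cor. 7.7.10]{duistermaat}, which deals precisely with the QRT maps attached to pencils of $(2,2)$-curves) $\mult(\NT_{g_1})$ is finite and is bounded above by a quantity depending only on the cohomology class of the fibration $\pi_1$ and the linear action $g_1^\varstar$ on $H^2(X;\Z)$ — in other words by a topological invariant of the pair $(\pi_1,g_1)$.

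Finally, this cohomological bound is constant along $\cW_0$: the family is irreducible, hence connected, so the monodromy representation on $H^2(X;\Z)$ is defined up to conjugacy and, after parallel transport of a marking, both the fibre class of $\pi_1$ and the action $g_1^\varstar$ vary in a locally constant way; hence so does the bound on $\mult(\NT_{g_1})$. Its value can be pinned down on a single generic member of $\cW_0$, where $\pi_1$ is an elliptic K3 fibration with $24$ nodal fibres, and this yields the required analytically computable constant $B$; then $N=N(B)$, obtained by the effective procedure of Remark~\ref{rem:N(X)}, has the stated property for every $X\in\cW_0$. The delicate point I expect to have to argue carefully is precisely this last constancy claim for all — including special — members of $\cW_0$: one must know that the topological quantity bounding $\mult(\NT_{g_1})$ does not jump when some fibres of $\pi_1$ degenerate, which relies on the exact shape of the formula of \cite[Prop. 3.14]{invariant}. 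Absent that, an \emph{upper} bound is all that is needed, and the bound being cohomological (hence determined by the fixed $\NS$/monodromy data above) already suffices to conclude.
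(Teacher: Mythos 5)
The reduction in your first paragraph is correct and matches the paper: by Remark~\ref{rem:N(X)}, the integer $N(X,\Gamma)$ produced by Theorem~\ref{thm:effective} is bounded in terms of $\max_i \abs{\NT_{g_i}}$ together with quantities that are constant on $\cW_0$, so everything hinges on bounding $\abs{\NT_{g_i}}$ uniformly; the $\mathfrak S_3$-symmetry reduction to $g_1$ is a valid (if cosmetic) simplification. The gap is in the second step: you assert that, by \cite[Prop. 3.14]{invariant} (or \cite[Cor. 7.7.10]{duistermaat}), $\mult(\NT_{g_1})$ is \emph{bounded above by a topological invariant of the pair $(\pi_1,g_1)$}, i.e.\ by data read off from the fibre class and the action $g_1^\varstar$ on $H^2(X;\Z)$. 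Those references establish \emph{finiteness} of the non-twisting locus on $B^\circ$ for a single $X$; they do not give a cohomological formula or upper bound for it. This is precisely what the paper flags as an open problem in the remark immediately following Proposition~\ref{pro:NT_bounded}: a well-behaved notion of multiplicity making $\mult(\NT_g)$ an algebraically computable invariant constant on $\cW_0$ is \emph{expected} but not known, and a variant of the question is attributed to \cite[Rmk 7.7.4]{duistermaat}. Your closing sentence, that ``the bound being cohomological \ldots already suffices to conclude,'' therefore assumes exactly what must be proved: there is at present no such cohomological bound, uniform or otherwise, and the difficulty you rightly identify near degenerating fibres is the crux, not a loose end.

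For the record, the paper's proof takes a different and more roundabout route that circumvents the lack of a cohomological formula. First (Lemma~\ref{lem:NT_local}), $\mult(\NT_g)$ restricted to a disk $U\subset B^\circ$ away from $\Crit(\pi)$ is locally constant in $X$, by a winding-number argument applied to the Betti map $T$. Second (Lemma~\ref{lem:effective_analytic}), one must control the behaviour near $\Crit(\pi)$: using the local model of an $I_1$-fibre, the explicit equation~\eqref{eq:NT} for $\NT_g$, and the semi-algebraic condition of Lemma~\ref{lem:twisting_singular} (``$g$ acts as a rotation on the nodal fibre''), one exhibits a proper semi-algebraic subset $\mathcal Z_g\subset\cW_0$ outside of which $\NT_g$ stays away from $\Crit(\pi)$ and $\mult(\NT_g)$ is locally constant. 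Since $\cW_0\setminus\mathcal Z_g$ is semi-algebraic, it has finitely many connected components, yielding a bound $B$ on $\mult(\NT_g)$ there. Finally, one extends the bound $B$ over $\mathcal Z_g$ by semi-continuity: if some $X_0\in\mathcal Z_g$ had $\abs{\NT_g}>B$, the corresponding non-twisting points would persist in a neighbourhood of $X_0$ by Lemma~\ref{lem:NT_local}, contradicting the density of $\cW_0\setminus\mathcal Z_g$. None of this produces, or needs, a cohomological formula for $\mult(\NT_g)$; it is an analytic/semi-algebraic continuity argument. Your proposal would become a proof if someone established the formula you posit, but as written it rests on an unproved assertion.
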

 
This uniform bound is the main step towards Theorem~\ref{thm:wehler_zariski}.
In view of Remark~\ref{rem:N(X)}, this proposition 
follows from Theorem~\ref{thm:effective} and the following uniformity result. 
 
\begin{pro}\label{pro:NT_bounded}
For any $g\in \set{g_1, g_2, g_3}$, the cardinality of $\NT_g$ is uniformly bounded 
in $\cW_0$. 
\end{pro}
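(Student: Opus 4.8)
Fix one of the three Halphen twists $g=g_i$ (with $\{i,j,k\}=\{1,2,3\}$, $g_i=\sigma_j\circ\sigma_k$) and its invariant elliptic fibration $\pi=\pi_i\colon X\to\pu$. Since $\abs{\NT_g}\le\mult(\NT_g)$, it suffices to bound $\mult(\NT_g)$ uniformly over $\cW_0$. The plan is to read off $\mult(\NT_g)$ from a Riemann--Hurwitz count for the Betti/translation map, and then to observe that the data entering that count is controlled by invariants that are the same for every $X\in\cW_0$. Recall from \S\ref{par:preliminaries2_on_Halphen} (and \cite[Lem.~3.9]{invariant}) that over a simply connected $U\subset B^\circ$ the diffeomorphism $\Psi$ conjugates $g$ to the fiberwise translation $(w,(x,y))\mapsto(w,(x,y)+T(w))$, that $T\colon U\to\R^2/\Z^2$ is an orientation preserving branched covering, and that $\NT_g\cap U=\{w:D_wT=0\}$, with $\mult(\NT_g)$ recording the local degrees of $T$ at these points.

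Globally, $T$ is the Betti map of the section $s_g\in J(X/\pu)(\pu)$ of the relative Jacobian fibration determined by the fiberwise translation $g$. Applying the Riemann--Hurwitz formula to $T$ gives $\mult(\NT_g)=-\chi(B^\circ)+\sum_{b\in\Crit(\pi)}\delta_b$, where $B^\circ=\pu\setminus\Crit(\pi)$, and $\delta_b$ is a local invariant of the Kodaira type of the singular fiber $\pi^{-1}(b)$ together with the local monodromy of $s_g$ around $b$; the degree of $T$ enters only through $d\cdot\chi(\R^2/\Z^2)=0$ and therefore drops out. That the $\delta_b$ are finite is exactly the finiteness of $\NT_g$ established in \cite[Prop.~3.14]{invariant} and \cite[Cor.~7.7.10]{duistermaat}. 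What remains is to control $\#\Crit(\pi_i)$ and the $\delta_b$ independently of $X\in\cW_0$.

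Since every $X\in\cW_0$ is a K3 surface, $\chi(X)=24$, so $\pi_i$ is relatively minimal with at most $24$ singular fibers, each of one of the finitely many Kodaira types occurring on an elliptic K3 surface; in particular $\chi(B^\circ)\ge-22$. The section $s_{g_i}$ has degree over $\pu$, and local monodromy around each $b\in\Crit(\pi_i)$, governed by the action of $g_i^\varstar$ on $H^2(X;\Z)$ --- more precisely by the unipotent part of $g_i^\varstar$ relative to the fiber class $[\pi_i^{-1}(\mathrm{pt})]$ --- and this integral datum is the same for all $X\in\cW_0$, being determined by the bidegree $(2,2,2)$ and the combinatorics of $\sigma_1,\sigma_2,\sigma_3$ (the Wehler instance of assumptions (ii)--(iii) of Theorem~\ref{thm:effective}). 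Hence each $\delta_b$ is at most a universal constant, and one obtains $\mult(\NT_{g_i})\le N$, so $\abs{\NT_{g_i}}\le N$, for an $N$ independent of $X\in\cW_0$. The bound extends from the generic locus to all of $\cW_0$ for free: on strata where the Picard number jumps, the fiber class, the class of $s_{g_i}$, and $\chi(X)$ are unchanged, and a count with multiplicity cannot increase when critical points collide.

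The main obstacle is the local analysis at the singular fibers that underlies the Riemann--Hurwitz bookkeeping: near a critical value of $\pi_i$ the period $\tau(w)$ degenerates and $T$ acquires a singularity, so one must bound the resulting local ramification contribution by a constant depending only on the Kodaira type (and the local monodromy of $s_{g_i}$), upgrading the qualitative finiteness of the cited results to a uniform estimate. Should this prove unwieldy, the uniformity can instead be obtained \emph{en bloc} by an o-minimality argument: writing $q=e^{2\pi i\tau}$ near each critical value, the maps $T_X$ --- assembled from the periods of the fibers of $\pi_i$, which depend algebraically on $X$ through the Picard--Fuchs system --- form a family definable in $\R_{\mathrm{an}}$ over the semialgebraic parameter space $\cW_0\subset\P^{26}$, and uniform finiteness of the fibers of a definable family bounds $\#\{D_wT_X=0\}$ independently of $X$. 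Granting Proposition~\ref{pro:NT_bounded}, Proposition~\ref{pro:N(X)_bounded} follows from Theorem~\ref{thm:effective} via Remark~\ref{rem:N(X)}, and hence so does Theorem~\ref{thm:wehler_zariski}.
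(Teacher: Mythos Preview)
Your proposal contains a genuine gap in the Riemann--Hurwitz argument, and the paper takes a quite different route.

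\textbf{On the Riemann--Hurwitz approach.} The map $T$ is only defined over simply connected $U\subset B^\circ$, after choosing a section of $\pi$ and a homology basis of the fibers; as one goes around a critical value the basis changes by the monodromy, so $T$ is a section of a local system of tori, not a map $B^\circ\to\R^2/\Z^2$. More seriously, your central claim---that the local contributions $\delta_b$ at $b\in\Crit(\pi)$ are determined by the Kodaira type and the local monodromy of $s_g$, and that this discrete datum is constant on $\cW_0$---is not justified. First, the Kodaira types \emph{do} vary in $\cW_0$ (only on a Zariski open subset $\cW_1$ are all singular fibers of type $I_1$). Second, and more fundamentally, the local behavior of $\NT_g$ near an $I_1$ fiber is \emph{not} governed by discrete invariants alone: writing the translation as $t_X(w)$ and the lattice as $\Z\oplus\Z\tau_X(w)$ with $\tau_X(w)=\frac{1}{2\pi i}\log k_X(w)$, the equation for $\NT_g$ near the puncture depends continuously on $\Ima(t_X(w_0))$, and whether or not $\NT_g$ accumulates toward $w_0$ is governed by the real condition $\Ima(t_X(w_0))=0$ (equivalently, whether $g$ acts on the nodal rational fiber as a rotation or a loxodromic homography). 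So your $\delta_b$ cannot be read off from topology alone, and the sentence ``this integral datum is the same for all $X\in\cW_0$, being determined by the bidegree $(2,2,2)$ and the combinatorics of $\sigma_1,\sigma_2,\sigma_3$'' is the step that fails.

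\textbf{On the o-minimality alternative.} This is plausible in spirit but, as written, is not a proof. Near the singular fibers the period $\tau$ has logarithmic behavior, so the family is not definable in $\R_{\mathrm{an}}$; one needs at least $\R_{\mathrm{an},\exp}$, and one must set up the definable family carefully to handle the monodromy of both the homology basis and the section, as well as the fact that $B^\circ$ itself varies with $X$.

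\textbf{What the paper does instead.} The paper avoids the local analysis at the punctures entirely by a semi-continuity argument. First (Lemma~\ref{lem:NT_local}), away from $\Crit(\pi)$ the total multiplicity of $\NT_g$ in a small disk is locally constant in $X$. Second (Lemma~\ref{lem:effective_analytic}), after restricting to the Zariski open set $\cW_1$ where all singular fibers are $I_1$, and discarding the semi-algebraic locus where $g$ acts by rotation on some singular fiber, one shows directly that no non-twisting point can approach $\Crit(\pi)$; hence $\mult(\NT_g)$ is locally constant on the complement $\cW_0\setminus\mathcal Z_g$ of a proper semi-algebraic set. Since a semi-algebraic open set has finitely many connected components, $\mult(\NT_g)$ is bounded on $\cW_0\setminus\mathcal Z_g$. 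Finally, Lemma~\ref{lem:NT_local} (lower semi-continuity of $\mult(\NT_g)$) propagates this bound to all of $\cW_0$: if some $X_0\in\mathcal Z_g$ had too many non-twisting points, they would persist for nearby $X\in\cW_0\setminus\mathcal Z_g$. No Riemann--Hurwitz formula, and no analysis of what happens on $\mathcal Z_g$ itself, is needed.
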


Let  $\cX\subset  \cW_0\times (\P^1\times \P^1\times \P^1)$ be 
 the universal family of Wehler surfaces, as in \cite[\S 2]{finite_orbits}.   
 As $X$ varies in $ \cW_0$, the automorphisms $g_i$ and their 
invariant fibrations $\pi_i$ depend on $X$, but for notational simplicity we drop the dependence in $X$. 

From now on, we fix $g\in \set{g_1, g_2, g_3}$; its invariant fibration $\pi\colon X\to \P^1$ is the restriction of one of the projections $\pi_i$ to $X$; its base does not depend on $X$. 

\begin{lem}\label{lem:NT_local}
Let $X_0\in \cW_0$,   $w_0\in \NT_g$, and   $k$ be the multiplicity of $w_0$ in $\NT_g$.
Let $U\subset \P^1$ be a topological disk  such that $\overline U\cap \NT_g = \set{w_0}$ and $\overline U\cap \Crit(\pi)=\emptyset$.
 Then, there exists a neighborhood $V$
of $X_0$ in  $\cW_0$ such that for any $X$ in $V$, the total multiplicity of 
$\NT_g$ in $U$ is equal to  $k$. 
\end{lem}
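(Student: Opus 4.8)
The plan is to interpret $\NT_g$ as the zero set of a holomorphic (or real-analytic) function and then invoke the standard argument principle / Rouché-type stability of zero counts under perturbation. First I would recall, following \S~\ref{par:preliminaries2_on_Halphen}, that over a simply connected disk $U \subset B^\circ = \P^1 \setminus \Crit(\pi)$ the map $T = T_X\colon U \to \R^2/\Z^2$ is built by transporting the translation $h$ (here $g$) through the trivialization $\Psi$; concretely one computes the periods of the elliptic fibers $X_w$ to obtain the lattice $\Lat_X(w)$, then $T_X(w)$ is the class of the translation vector $t_X(w)$ read in $\C/\Lat_X(w)$ and pushed to $\R^2/\Z^2$ by the $\R$-linear identification. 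The non-twisting locus in $U$ is $\{w : D_w T_X = 0\}$, and by \cite[Lem.~3.9]{invariant} $T_X$ is an orientation-preserving branched covering, so locally near each $w_0$ it looks like $w \mapsto w^{k}$ for $k = k(w_0)$ equal to one plus the vanishing order of $DT_X$ at $w_0$; thus the multiplicity of $w_0$ in $\NT_g$ is exactly the multiplicity of $w_0$ as a zero of the Jacobian $J_X(w) := \det D_w T_X$.

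Next I would make precise the dependence on the surface. The coefficients of the Wehler equation depend holomorphically (in fact polynomially) on $X \in \cW_0$; the periods of $X_w$ depend holomorphically on $(X,w)$ as long as $w$ stays away from $\Crit(\pi)$ (which, by hypothesis, is the case on a neighborhood of $\overline{U}$ for $X$ near $X_0$, since $\Crit(\pi)$ moves continuously and $\overline U \cap \Crit(\pi) = \emptyset$ at $X_0$); and the translation vector $t_X(w)$ likewise depends holomorphically. Hence, after possibly shrinking the neighborhood $V$ of $X_0$ so that $\overline U \cap \Crit(\pi_X) = \emptyset$ for all $X \in V$, the function $(X,w) \mapsto J_X(w)$ is real-analytic (indeed holomorphic in the natural complex coordinates coming from the period map) on $V \times \overline{U}$, and $J_{X_0}$ has its only zero in $\overline U$ at $w_0$, of order $k$. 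I want to stress one subtlety to check carefully: $T_X$ takes values in $\R^2/\Z^2$ and is a priori only real-analytic, so ``zero of the Jacobian'' must be handled either by passing to the holomorphic structure (the Betti foliation leaves are local holomorphic sections, so $T_X$ is holomorphic as a map to the relevant complex structure on the fiber direction, making $J_X$ genuinely holomorphic in $w$) or, failing that, by the real-analytic analogue of the argument principle. I expect the holomorphic route to work: over $U$ the fibration is a holomorphic elliptic fibration with a holomorphic trivialization, and the non-twisting locus is cut out by a holomorphic equation.

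Finally, with $J\colon V \times \overline U \to \C$ holomorphic in $w$, depending continuously (analytically) on $X$, and $J_{X_0}$ nonvanishing on $\partial U$ with total zero count $k$ inside $U$, I would conclude by the standard Rouché/argument-principle continuity statement: the integer
\begin{equation}
\frac{1}{2\pi i}\int_{\partial U} \frac{\partial_w J_X(w)}{J_X(w)}\, dw
\end{equation}
is a continuous, hence locally constant, function of $X$ near $X_0$, so it equals $k$ for all $X$ in a sufficiently small neighborhood $V$; this integer is precisely the total multiplicity of $\NT_g$ inside $U$ for the surface $X$. The main obstacle, as indicated above, is ensuring the genuinely holomorphic (rather than merely real-analytic) nature of the defining function $J_X$, which hinges on the holomorphicity of the Betti trivialization over $U$ and the holomorphic dependence of the period and translation data on $X$; once that is in place the conclusion is immediate from the argument principle. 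A secondary bookkeeping point is to shrink $V$ first so that $\overline U$ stays disjoint from the (moving) critical locus $\Crit(\pi_X)$ and from $\NT_g(X)$ on $\partial U$, which is possible since these are closed conditions failing at $X_0$ only at the interior point $w_0$.
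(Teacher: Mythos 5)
Your overall strategy — stability of a degree-type count over $\overline U$ under small perturbations of $X$, with the bookkeeping of first shrinking $V$ so that $\overline U$ avoids the moving critical locus and $\partial U$ avoids $\NT_g$ — is the right one and matches the paper. But the route you favor has a genuine gap. The function $T_X\colon U\to \R^2/\Z^2$ is \emph{not} holomorphic: the paper explicitly constructs the Betti trivialization $\Psi$ as a real-analytic (not holomorphic) diffeomorphism, and, more tellingly, the explicit equation for $\NT_g$ derived later in the proof of Lemma~\ref{lem:effective_analytic}, namely Equation~\eqref{eq:NT}, involves $\log\abs{k_X(w)}$ and $\Ima(t_X(w))$, which are manifestly non-holomorphic in $w$. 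So $J_X=\det D_wT_X$ is merely real-analytic, your ``I expect the holomorphic route to work'' is incorrect, and the classical argument-principle integral $\tfrac{1}{2\pi i}\int_{\partial U}\partial_wJ_X/J_X\,dw$ is not available.

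Your fallback (``real-analytic analogue of the argument principle'') is the right idea, but it is left undeveloped, and there is a secondary mismatch you would need to resolve: for the local model $w\mapsto w^{k+1}$, the real Jacobian $\det_\R D T$ behaves like $(k+1)^2\abs{w}^{2k}$, i.e.\ vanishes to \emph{real} order $2k$, whereas the multiplicity you want to count is $k$ (local branched-covering degree minus one). The paper sidesteps both issues by running the topological degree argument directly on $T$ rather than on its Jacobian: after shrinking to a smooth Jordan curve $\gamma\Subset U$ avoiding $\NT_g$ for all $X$ in a neighborhood of $X_0$, it uses that $T_X$ is an orientation-preserving branched covering over the disk bounded by $\gamma$, so the winding number of $T_X\circ\gamma$ around $T_{X_0}(w_0)$ equals the degree of the covering; this integer is continuous, hence locally constant, and the total multiplicity of $\NT_g$ inside $\gamma$ is (degree $-1$). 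If you wish to salvage your approach, replace the Jacobian by $T$ itself and invoke Brouwer degree rather than the holomorphic argument principle.
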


\begin{proof}
Fix an open connected neighborhood $V$ of $X_0$ such that for $X$ in $V$,  
\begin{itemize}
\item $U$ does not intersect any of the sets $\Crit(\pi)$;
\item  there is a section $w\mapsto \varsigma_X(w)$
 of $\cX\to  \cW_0\times \P^1$ above $V\times U$, together    with a continuous choice of basis for the homology of the fibers of $\pi$ above $U$.  
\end{itemize}
 Then the sections, the Betti foliations (above $U$), and their lifts to $U\times \C$   all   
 depend  continuously  on $X$ in $V$. In particular, we can find a disk $U'\subset \P^1$, with 
 $w_0\in U'\Subset U$, whose boundary is a 
 smooth Jordan curve $\gamma$, and such that 
  for any $X\in V$, the Betti foliation  is transverse to $g\circ \varsigma_X$ above $\gamma$. In particular, 
  $\NT_g$ is disjoint from $\gamma$. 
   
 Now, recall that the map $T$ defined in Equation~\eqref{eq:definition_T} 
behaves topologically like $w\mapsto w^{k+1}$; in such a local coordinate, 
$k+1$ is  the winding number of the curve $T\circ \gamma$ around~$T(0)$.
Since $\NT_g$ stays disjoint from $\gamma$ for $X$ in $V$, 
this winding number is constant in $V$; thus,  the   number of points of $\NT_g$    enclosed by $\gamma$ (counted with multiplicity) stays constant on $V$. The  
lemma follows. 
\end{proof}

\begin{lem}\label{lem:effective_analytic}
There exists a   proper
semi-algebraic  subvariety $\mathcal Z_g \subset   \cW_0$ of positive codimension
such that $\mult(\NT_g)$
is locally constant in $ \cW_0\setminus \mathcal Z_g$.
\end{lem}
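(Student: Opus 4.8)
The plan is to let $\mathcal Z_g$ be the locus where the genus‑one fibration $\pi=\pi_g\colon X\to\P^1$ degenerates beyond its generic shape, and to show that off this locus no non‑twisting point can collide with a critical value of $\pi$, so that $\mult(\NT_g)$ is completely controlled by Lemma~\ref{lem:NT_local}.

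Concretely, I would set
\[
\mathcal Z_g=\{X\in\cW_0 \; ; \; \pi_g\ \text{has a singular fibre of Kodaira type different from}\ I_1\},
\]
i.e.\ the locus where the discriminant divisor of $\pi_g$ in $\P^1$ is non‑reduced. Since $\pi_g$ is the restriction to $X$ of one of the coordinate projections, this discriminant is a polynomial $\Delta_X(w)\in\C[w]$ of degree $24$ whose coefficients are polynomials in the coefficients of the $(2,2,2)$‑form defining $X$; hence $\mathcal Z_g=\{X\;;\;\mathrm{Res}_w(\Delta_X,\Delta_X')=0\}$ is Zariski closed in $\cW_0$. It is proper — a generic Wehler surface has each projection with $24$ distinct nodal fibres (cf.\ \cite[\S2]{finite_orbits}) — so $\mathcal Z_g$ has codimension one, in particular positive codimension, and is a proper semi‑algebraic (indeed algebraic) subvariety of $\cW_0$. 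Note also that $\cW_0\setminus\mathcal Z_g$ is connected, being the complement of a complex hypersurface, so that local constancy will in fact give that $\mult(\NT_g)$ is constant there; this is what feeds into Proposition~\ref{pro:NT_bounded}.

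The heart of the proof is the non‑accumulation statement: for $X_0\in\cW_0\setminus\mathcal Z_g$ there are a neighbourhood $V\subset\cW_0\setminus\mathcal Z_g$ of $X_0$ and an open set $W\supset\Crit(\pi_{X_0})$ with $\NT_{g,X}\cap W=\emptyset$ for every $X\in V$. Near a point $w_j\in\Crit(\pi_{X_0})$ this reduces to the local analysis of a Halphen twist near an $I_1$ fibre — the lattice is $\Z\oplus\Z\tau(w)$ with $\tau(w)=\tfrac1{2\pi i}\log(w-w_j)$ up to a holomorphic term, and the translation $t$ extends across $w_j$ — which is carried out in the proof of \cite[Prop.~3.14]{invariant} (see also \cite[Cor.~7.7.10]{duistermaat}); it shows $D_wT$ does not vanish on a punctured disk about $w_j$. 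Since the $I_1$ local models over a neighbourhood of $X_0$ form a family depending holomorphically, hence continuously, on finitely many parameters varying continuously with $X$, the radius of this punctured disk can be chosen uniform for $X$ near $X_0$. I expect this uniformity near the $I_1$ fibres to be the only genuinely delicate point, and it is where I would have to be careful to extract from \cite{invariant} a statement that is uniform in the parameter rather than pointwise in $X$.

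Granting this, I would conclude by combining with Lemma~\ref{lem:NT_local}. Fix $X_0\in\cW_0\setminus\mathcal Z_g$ and take $W,V$ as above. Cover the finite set $\NT_{g,X_0}$ by finitely many pairwise disjoint topological disks $U_1,\dots,U_m\Subset\P^1\setminus(\Crit(\pi_{X_0})\cup\overline W)$, each meeting $\NT_{g,X_0}$ in a single point; Lemma~\ref{lem:NT_local} then gives, after shrinking $V$, that the total multiplicity of $\NT_{g,X}$ in each $U_i$ stays constant on $V$. On the compact set $K=\P^1\setminus\big(\bigcup_iU_i\cup W\big)$ the map $(X,w)\mapsto D_wT_X$ is defined and continuous (one stays away from the discriminant) and is nonvanishing at $X_0$ on $K$, since $\NT_{g,X_0}\cap K=\emptyset$; shrinking $V$ once more yields $\NT_{g,X}\cap K=\emptyset$ for $X\in V$. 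Together with $\NT_{g,X}\cap W=\emptyset$ this forces $\NT_{g,X}\subset\bigcup_iU_i$ for $X\in V$, whence $\mult(\NT_{g,X})=\sum_i(\text{constant})=\mult(\NT_{g,X_0})$ on $V$. Thus $\mult(\NT_g)$ is locally constant on $\cW_0\setminus\mathcal Z_g$, which is the assertion.
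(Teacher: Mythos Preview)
Your overall architecture---define $\mathcal Z_g$, show that outside it non-twisting points stay uniformly away from the singular fibres, then invoke Lemma~\ref{lem:NT_local}---matches the paper's. The gap is in your choice of $\mathcal Z_g$ and, correspondingly, in the uniformity claim you yourself flag as ``the only genuinely delicate point''.

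Your $\mathcal Z_g$ is only the non-$I_1$ locus, and you assert that once all singular fibres are $I_1$ the non-accumulation of $\NT_g$ at a critical value $w_0$ is automatically uniform in $X$, by continuous dependence of the local model. This does not follow. In the local $I_1$ model ($k_X(w)$ vanishing simply at $w_0$, $\tau_X=\tfrac1{2\pi\ii}\log k_X$, translation part $t_X$), the equation for $\NT_g$ reads
\[
-\ii\,\log|k_X(w)|\,k_X(w)\,t_X'(w)=k_X'(w)\,\Ima(t_X(w)),
\]
and the argument that it has no solution near $w_0$ uses $\Ima(t_X(w_0))\neq 0$: the right-hand side is then bounded away from $0$ while the left-hand side tends to $0$ as $w\to w_0$. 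When $\Ima(t_X(w_0))=0$---equivalently, when $g$ acts on the nodal rational fibre $X_{w_0}^g$ as a \emph{rotation} rather than a loxodromic map---both sides vanish at $w_0$, and nothing prevents solutions (i.e.\ non-twisting points) from approaching $w_0$ as $X$ varies. The paper therefore enlarges $\mathcal Z_g$ by this rotation locus, which is cut out by the condition that the trace of $D_pg$ at the node lie in $[-2,2]$: a genuinely semi-algebraic, non-algebraic condition, and precisely why the statement says ``semi-algebraic'' rather than ``Zariski closed''. Your appeal to continuous dependence is exactly the kind of argument that fails here---a solution of a parametrized family of equations on a punctured disk can escape through the puncture---and the pointwise result of~\cite{invariant} gives no uniform radius.
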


\begin{proof} ~

\noindent{\textbf{Step 1: }}\textit{Keeping away from the singular fibers. }
Fix $X_0\in \cW_0$ and $w_0\in X_0$  a critical value of $\pi$.
It is shown in \cite[Lem. 3.11]{invariant} that  ${\NT}_{g}$ does not accumulate $w_0$. 
Here, we show that outside a semi-algebraic subvariety $\mathcal Z_g\subset  \cW_0$  this non-accumulation   holds uniformly with respect to~$X$: we shall construct  a neighborhood $V\times U$ of $(X_0, w_0)$ such that $U$ is disjoint from $\mathrm{NT}_{g}$ for every $X$ in $V$. 
For this, we   review the proof of \cite[Lem. 3.11]{invariant}  
and make it locally uniform in $X$ under  appropriate hypotheses on $X_0$.  

Define $\cW_1$ to be the dense, Zariski open subset (\footnote{To show that $\cW_1$ is dense, we only have to show that it is non-empty. 
This is a consequence of the following fact. {\textit{Let $X$ be in $\cW_0$, let $\pi_1\colon X\to \P^1$ be the first projection, and let $m$ be a critical point of $\pi_1$.  Let $F$ be the fiber of $\pi_1$ containing $m$. Then, each of the conditions
\begin{enumerate}
\item the singularity of $F$ at $m$ is degenerate (in the sense of Morse, i.e.\ it is not a $A_1$-singularity);
\item $F$ contains a second singular point $m'$
\end{enumerate}
defines a proper subset of $\cW_0$.}} In other words, these properties (1) and (2) disappear after a generic small perturbation of $X$ in $\cW_0$, which can be checked directly. }) of $ \cW_1$ 
such that for any $X\in  \cW_1$ and any 
$i\in \set{1,2, 3}$, all singular fibers of $\pi_i$ are of type $I_1$.  In this case 
 there are $24$ such fibers (the Euler characteristic of a K3 surface is $24$, the contribution to the Euler characteristic of a smooth fiber is $0$, and the contribution of an $I_1$ fiber is $1$).  Suppose that $X_0\in \cW_1$.

Fix a small disk $U\subset \P^1$ centered at $w_0$  and containing no other singular value of $\pi\colon X_0\to \P^1$.   
Fix a neighborhood $V$ of $X_0$ in $ \cW_1$, and local coordinates on $U$ (depending on $X$), so that (i) this property persists for $X\in V$ and (ii) the unique singular value of $\pi$ in $U$ is  $w_0=0$. Let
  $X^\#_U $ be the complement in $X_U^g:=\pi\inv(U)$ of the unique singular point of $X_{w_0}^g$. 
We fix a reference section $\varsigma_X:U\to  X^\#_U$ depending holomorphically on~$X\in V$ and~$w\in U$.  

For $X\in V$ and $w\in U\setminus\{w_0\}$ we can write 
$X_w^g\simeq \C/\Z\oplus \Z\tau_X(w)$, as in \S~\ref{subs:halphen}. Since the singular fiber $X_{w_0}^g$ is of type $I_1$ and $w_0=0$, the monodromy along a simple loop around~$0$ maps the basis $(1, \tau_X(w))$ to $(1, \tau_X(w)+1)$. Moreover,
$X^\#_U$ is biholomorphic to the quotient of $U\times \C$ by the family of lattices 
$\Z\oplus \Z\tau_X(w)$, where $\tau_X(w) = \unsur{2\ii \pi} \log (k_X(w))$ for a function $k_X\colon U\to \C$ which has a single zero at the origin and depends holomorphically on $X\in V$ and $w\in U$.
Since  
$g\circ \varsigma_X$ is  another section of $\pi$ above $U$, there is a holomorphic function $t_X(w)$ 
of $X$ and $w$ such that the lift of $g$ to  $U\times \C$ is given by $(w, z)\mapsto (w, z+ t_X(w))$. 
The calculations of \cite{invariant} 
(see  \S 3.3.2  and Lemma 3.11 there) show that the equation for $\mathrm{NT}_g$ in $U$  is 
\begin{equation}\label{eq:NT}
- \ii \log(\abs{k_X(w)}) k_X(w) \, t_X'(w) = k_X'(w) \Ima(t_X(w)).
\end{equation}
We claim that if $\Ima(t_X(0)) \neq  0$,  then by reducing $V$ and $U$ if necessary,   
$\mathrm{NT}_g\cap X_U^g = \emptyset$. Indeed if  $U$ is small enough, 
there exist  positive   constants  $\e$, $c$ such that for any $X\in V$, 
\begin{equation}
\abs{k_X(w)}\leq \e, \;\, \abs{k_X'(w)}\geq c, \;\,
\abs{\Ima(t_X(w))}\geq c, \;\, {\text{and}} \;\, \abs{t_X'(w)}\leq c\inv.
\end{equation}
Reducing $U$ further, 
 $\e$  can be chosen arbitrary small    while 
$c$ remains bounded away from~$0$. If $\e\log\e < c^3$, this is not 
compatible with the equality~\eqref{eq:NT}, so $\mathrm{NT}_g\cap U = \emptyset$.  
 
\begin{lem}\label{lem:twisting_singular}
The locus 
\begin{equation}\label{eq:analytic_equation}
\set{X\in V: \  \Ima(t_X(w_0)) = 0}
\end{equation}
is a  semi-algebraic subset  of positive codimension. 
\end{lem}

\begin{proof}[Proof of Lemma~\ref{lem:twisting_singular}]
 Consider the Wehler surfaces $X\subset V\subset \cW_1$, and their equations 
 \begin{equation}
   A_{222} x^2y^2z^2+ A_{221}x^2y^2z+\cdots + A_{100}x + A_{010}y+A_{001}z + A_{000}   = 0. 
\end{equation}
Permuting coordinates if necessary, we suppose that  $\pi\colon X\to \P^1$ is the projection onto the first coordinate.  
As $X$ varies near $X_0$, the critical value of $\pi$ near $w_0$  and the corresponding critical point in $X$ can be computed algebraically in terms of the $A_{ijk}$. Using  the action of $\PGL(2, \C)^3$ on $\P^1\times \P^1\times \P^1$, we may 
assume that  $w_0=0$ (as above) and the unique singular point of the fiber $X_{w_0}^g:=X \cap \set{x= 0}$ is  $(0,0)$. So, the equation of $X_{w_0}^g$ in $\P^1\times \P^1$ is    
\begin{equation}
ay^2z^2 + b y^2z+cy z^2+ dyz+ey^2+f z^2  = 0,
\end{equation}
for some coefficients $a, \ldots , f$ given by algebraic expressions in the $A_{ijk}$. 
Since $X\in \cW_1$,  $X_{w_0}^g$ has two transverse branches at  $(0,0)$: their tangent directions are given 
by the solutions of $dyz+ey^2+f z^2  = 0$ in $\P^1$. 

One can also write $X_0^g\setminus\{0,0\}$ as the quotient of $\set{0}\times \C$  by the lattice $\Lat(0)=\Z$; in this coordinate, 
$g$ acts as multiplication by $\exp(2\ii \pi t_X(0))$. Thus, $\Ima(t_X(0)) = 0$ means that $g$  induces a rotation, instead of a loxodromic homography, on the rational curve $X_0^g$. Writing down $g = \sigma_y\circ \sigma_z$ in coordinates, we obtain 
\begin{equation}
g(y,z) = \begin{pmatrix}-1 - \frac{d^2}{ef}& \frac{d}{e} \\ -\frac{d}{f} & -1
\end{pmatrix}\begin{pmatrix} y\\z\end{pmatrix} + O\lrpar{\norm{(y,z)}^2}
\end{equation}
for $(y,z)\in X_0^g$.
Thus,  $D_{(0, 0)}g\in \GL(T_{0,0}X)$ has determinant $1$ and  trace $ - 2- \frac{d^2}{ef}$.
As a consequence,  $g$ acts as a rotation on $X_0^g$ if and only if $ 2 + \frac{d^2}{ef} \in[-2, 2]$: this is a semi-algebraic condition. 
\end{proof}

To conclude, we   let $ \cW_g\subset  \cW_0$ be  
 the intersection of $ \cW_1$ with the complement of the subsets 
 $\set{X\in  \cW_1; \ \Ima(t_X(w_i) = 0}$ for each of the 24 singular values $w_i$ of $\pi$.  
 We finally define $\mathcal Z_g$ to be the complement of $ \cW_g$; by Lemma~\ref{lem:twisting_singular}, it is a proper 
 semi-algebraic set of positive codimension.

  \smallskip 
  
 \noindent{\textbf{Step 2: }}\textit{Conclusion. } 
 Pick $X_0\in \cW_0\setminus \mathcal Z_g$ and  cover $\P^1$ by a finite family $F$ of   
topological disks, such that for every $U\in F$, 
$\overline U$ contains  at most one point of $\Crit(\pi)\cup \NT_g$.  
If $U\in F$ contains a critical value of $\pi$ (and no point of $\NT_g$), 
then, as already explained, this property persists in a neighborhood of $X_0$. By Step~1, for
  $X$ sufficiently 
  close to $X_0$, $\overline U$ is disjoint from $\NT_g$ as well. For the remaining disks, the local constancy of $\mult(\NT_g)$ follows from Lemma~\ref{lem:NT_local}. The proof is complete. 
\end{proof}

\begin{proof}[Proof of Proposition~\ref{pro:NT_bounded}]
We use a semi-continuity argument.
Since the exceptional set $\mathcal Z_g$ defined in Lemma~\ref{lem:effective_analytic} is semi-algebraic,
the open set  $\cW_0\setminus \mathcal Z_g$ is also  semi-algebraic,  so it 
admits finitely many connected components (see~\cite[Cor. 2.7]{bierstone-milman:ihes} for instance). 
Thus, by Lemma~\ref{lem:effective_analytic},   $\mult(\NT_g)$ and therefore $\abs{\NT_g}$   
are uniformly bounded 
on  $\cW_0\setminus \mathcal Z_g$, say $\abs{\NT_g}\leq B$. Now, pick $X_0\in \mathcal Z_g$ (thus $X_0\in \cW_0$) and assume 
that for $X_0$ one has $\abs{\NT_g}>B$. We can then consider a finite number of small topological 
disks $U_i$ with disjoint closures  in $\P^1$, such that $\abs{\NT_g}\cap \bigcup U_i>B$. By Lemma~\ref{lem:NT_local}, these non-twisting points persist for  
 $X$ close enough to $X_0$. Since  $\cW_0\setminus \mathcal Z_g$ is dense in $\cW_0$, this
  contradicts the definition of $B$ and the proof is complete. 
\end{proof}

 
\begin{proof}[Proof of Theorem~\ref{thm:wehler_zariski}]
The main point  of~\cite[Thm A]{finite_orbits} is 
that the set of $X\in  \cW_0$ possessing  a finite orbit of length $\leq B$  is a 
proper Zariski closed subset $\mathcal Z_B$ of $ \cW_0$.  For  $N$ as in 
  Proposition~\ref{pro:N(X)_bounded},   for 
any $X\in \cW_0\setminus \mathcal Z_N$, all finite orbits of $\Gamma$ are   uniformly expanding.  We
conclude by applying Theorem~\ref{thm:criterion_uniform_expansion_parabolic} (with $\nu = \unsur{3}(\delta_{\sigma_1}+ \delta_{\sigma_2}+\delta_{\sigma_3})$). The proof of the corresponding statement in $ \cW_0(\R)$ is identical. 
 \end{proof}

 \begin{rem} 
 We expect that  an analogue of Theorem~\ref{thm:wehler_zariski} holds for other families 
   with large automorphism groups containing parabolic elements, like Enriques surfaces, or the family associated to pentagon folding (see~\cite{Simons}). 
 \end{rem}

\begin{rem}  
The  proof of Proposition~\ref{pro:NT_bounded} suggests that 
there should exist a notion of multiplicity, {\textit{including singular fibers}},
for which $\mult(\NT_g)$ would be constant on $\cW_0$ and would be an algebraically 
computable invariant of the parabolic automorphism $g$. A variant of this question  is mentioned in \cite[Rmk 7.7.4]{duistermaat}. 
\end{rem}


\subsection{Thin subgroups}\label{subs:thin}
In this section we consider the total
space $ \cW$ of all Wehler surfaces and the universal family 
$\mathcal X\subset   \cW\times (\pu\times \pu\times \pu)$. We change a 
little bit the notation: $\Gamma$ will be a subgroup of $\Z/2\Z\ast\Z/2\Z\ast\Z/2\Z$, and  
$\Gamma_{X}$ will be  the corresponding subgroup of $\Aut(X)$. 
 
Let $E$ be an elliptic curve. Consider the following classical Kummer construction (see~\cite[\S 4]{finite_orbits}): let $\eta$ be  
the involution $\eta (x,y)\mapsto (-x,-y)$ on $A:=E\times E$; the associated Kummer surface is 
the desingularization $\widehat{E\times E/\eta}$; the natural $\GL(2, \Z)$ action on $E\times E$ descends 
to $E\times E/\eta$ and
induces a non-elementary automorphism group of $\widehat{E\times E/\eta}$.
The surface $E\times E/\eta$ can be realized  as a singular Wehler example 
(see~\cite[\S 8.2]{Cantat:Panorama-Synthese}); in addition the action of 
 $\Z/2\Z\ast\Z/2\Z\ast\Z/2\Z$ is induced by a finite index subgroup of $\GL(2, \Z)$. 
 Let us briefly recall the construction: write 
 $E$ in Weierstrass form $y^2 = 4x^3 - g_2x - g_3$, with the neutral element of the group law on 
 $E$ located at infinity. 
 To  $(m_1, m_2)\in E\times E$, $m_i  = (x_i, y_i)$, we associate $m_3 = - (m_1+m_2)$ and 
 $\phi(m_1, m_2 )  = (x_1, x_2, x_3)$, where $m_3 = (x_3, y_3)$. Then, $\phi$ is $\eta$-invariant and determines  
 a biregular map 
 $\phi: E\times E/\eta\to X_E$ onto a singular Wehler surface $X_E$ with $16$ nodal singularities.
  
 Assume that 
 $\Gamma\subset \Z/2\Z\ast\Z/2\Z\ast\Z/2\Z$ is not virtually cyclic. Then for $X\in  \cW_0$, $\Gamma_X$ is non-elementary
 (see~\cite[\S 3]{stiffness}). 
  
 \begin{thm}\label{thm:thin}
 Let $\Gamma$ be a subgroup of $\Z/2\Z\ast\Z/2\Z\ast\Z/2\Z$ which is not virtually cyclic.
 For $X\in  \cW_0$ sufficiently close to $X_E$, the subgroup $\Gamma_X$ is uniformly expanding on $X$. 
 \end{thm}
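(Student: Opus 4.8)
The strategy is to apply the criterion of Theorem~\ref{thm:criterion_uniform_expansion_parabolic} (or rather its general version Theorem~\ref{thm:criterion_uniform_expansion}, but since $\Gamma_X$ contains parabolic elements when $X\in\cW_0$, the parabolic version applies) to the measure $\nu$ equidistributed on the symmetric generating set of $\Gamma_X$ inside $\Aut(X)$; equivalently, by Corollary~\ref{cor:independence_nu}, we work with the action of $\Gamma_X$ itself. We must verify the two conditions of that theorem for $X$ close to $X_E$: (1) every finite $\Gamma_X$-orbit is uniformly expanding, and (2) there is no $\Gamma_X$-invariant algebraic curve. Note that $X_E$ itself is singular, so the statement concerns nearby smooth $X\in\cW_0$; the whole point is that uniform expansion is an open condition in the $C^1$ topology, so it suffices to establish it ``in the limit'' at $X_E$ in a suitable sense and then perturb.

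\textbf{Step 1: the Kummer model is uniformly expanding.} On $A=E\times E$ the group $\Gamma$ acts (via a finite index subgroup of $\GL(2,\Z)$, using that $\Gamma$ is not virtually cyclic, hence its image in $\GL(2,\Z)$ is non-elementary) by affine transformations whose differentials generate a non-elementary, in fact proximal and strongly irreducible, subgroup of $\GL(2,\R)\subset\GL(2,\C)$. Therefore the linear random walk on $\C^2$ associated to this action has a positive top Lyapunov exponent $\lambda_1>0$, and by strong irreducibility $\unsur n\log\norm{(f^n_\omega)_\varstar v}\to\lambda_1$ for every nonzero $v$; thus the action on $A$ is uniformly expanding (apply Lemma~\ref{lem:weak_UE2}). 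Passing to the quotient $A/\langle\eta\rangle$ and to the minimal resolution $X_E$, the action remains uniformly expanding off the sixteen exceptional curves (the differential is unchanged in the smooth locus), and one checks it is uniformly expanding along the exceptional locus too: the sixteen $(-2)$-curves and their intersection points form a finite invariant set on which the tangent action is again proximal and strongly irreducible (because the linear part of $\GL(2,\Z)$ acts on the sixteen $2$-torsion points with a non-elementary stabilizer action on each tangent space, via the same linear representation). Hence $\Gamma_{X_E}$ is uniformly expanding on $X_E$.

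\textbf{Step 2: transferring openness to the Wehler family.} This is the main obstacle: $X_E$ is a \emph{singular} Wehler surface, so one cannot directly invoke $C^1$-openness of uniform expansion on a fixed manifold. The plan is to use the universal family $\mathcal X\subset\cW\times(\pu)^3$ and observe that near $[X_E]$ there is a local holomorphic family of \emph{simultaneous} resolutions: blowing up the sixteen sections of nodes gives a smooth family $\widetilde{\mathcal X}\to(\text{nbhd of }[X_E])$ whose central fiber is $X_E$ (the $16$ nodes of $X_E$ deform to smooth points in nearby smooth $X$, so over the smooth locus $\widetilde{\mathcal X}$ agrees with $\mathcal X$). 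The three involutions $\sigma_1,\sigma_2,\sigma_3$ extend to holomorphic automorphisms of $\widetilde{\mathcal X}$ over the base, fiberwise $C^1$ (indeed real-analytically) close to their values at $X_E$ as $X\to X_E$. On the compact manifold model $\widetilde X$ (diffeomorphic to the K3 underlying any smooth nearby fiber, and to the resolution $X_E$ at the center), the generators of $\Gamma_X$ thus vary continuously in the $C^1$ topology with limit the generators of $\Gamma_{X_E}$. Since uniform expansion of a finitely supported measure is $C^1$-open on a fixed compact manifold (this is Remark in \S\ref{subs:UE_intro}; it follows because condition~\eqref{eq:UE} at $n_0$ is an open condition by compactness of $T^1\widetilde X$), Step~1 gives uniform expansion of $\Gamma_X$ on $\widetilde X$ for all $X\in\cW_0$ sufficiently close to $X_E$. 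Transporting back along the (biholomorphic, off the resolved locus; homeomorphic overall) identification $\widetilde X\cong X$ yields the theorem. One still has to record that condition~(2) of Theorem~\ref{thm:criterion_uniform_expansion_parabolic} holds for nearby $X\in\cW_0$: this is automatic since by \cite[Prop.~2.2]{finite_orbits} every $X\in\cW_0$ has no $\Gamma_X$-invariant curve, and condition~(1) is subsumed by the uniform expansion we have just proved on all of $\widetilde X\cong X$ (a finite orbit is in particular a point where \eqref{eq:UE} holds). Thus Theorem~\ref{thm:criterion_uniform_expansion_parabolic} applies and $\Gamma_X$ is uniformly expanding on $X$, completing the proof.

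\textbf{Remark on the hard point.} The genuinely delicate issue is the simultaneous-resolution / continuity argument of Step~2: one must make precise that, in a neighborhood of $[X_E]$ in $\cW$, after blowing up the node-sections one obtains a $C^1$ (better: smooth) family of compact $4$-manifolds with a continuously varying triple of diffeomorphisms, whose central member realizes the already-established uniformly expanding action on $X_E$. Granting this, everything else reduces to the openness of uniform expansion and to the already-available criterion. An alternative, avoiding resolutions, would be to verify conditions~(1) and~(2) of Theorem~\ref{thm:criterion_uniform_expansion_parabolic} directly for $X$ near $X_E$ by a perturbation analysis of the finite orbits and of the tangent action along them, using Theorem~\ref{thm:finite_UE}; but controlling \emph{all} finite orbits uniformly as $X\to X_E$ seems harder than the resolution approach, so I would carry out the plan as stated above.
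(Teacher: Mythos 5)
Your proof has a genuine gap, located precisely at the point you flag as "the hard point" — and the paper explicitly warns against the route you took: it opens the proof of Theorem~\ref{thm:thin} with ``the difficulty is that we cannot directly argue that uniform expansion is an open property, because $X_E$ is singular.'' First, a misidentification: in this paper $X_E$ is the \emph{singular} Wehler surface with $16$ nodes, not the minimal resolution $\widehat{E\times E/\eta}$. More seriously, your Step~1 cannot be correct even for the resolution: the sixteen exceptional $(-2)$-curves are one-dimensional, not a finite set, and their union is $\Gamma$-invariant (the $\GL(2,\Z)$-action permutes the $2$-torsion points of $A$). By Proposition~\ref{pro:invariant_curve}, a non-elementary group with an invariant curve is never uniformly expanding, so the claim ``$\Gamma_{X_E}$ is uniformly expanding on $X_E$'' interpreted on the resolution is simply false. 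Consequently, even if the simultaneous-resolution family $\widetilde{\mathcal X}$ of your Step~2 existed, the $C^1$-openness argument would fail at the central fiber, since it would not be uniformly expanding. (The simultaneous resolution itself is also problematic: the nearby fibers $X\in\cW_0$ are already smooth, so there is no node section to blow up, and the standard facts about nodal degenerations say simultaneous resolutions only exist after a base change, and they do not preserve the nearby smooth fibers.)

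The paper's actual proof works on the \emph{singular} $X_E$ directly. It defines a unit tangent bundle $T^1X_E$ that equals $T^1\mathrm{Reg}(X_E)$ over the regular locus and the full $T^1_x(\P^1\times\P^1\times\P^1)$ over each node; then it argues by contradiction: if $\nu_{X_n}$ were not uniformly expanding along a sequence $X_n\to X_E$, limits of stationary measures on $T^1X_n$ (via Theorem~\ref{thm:limit_measure_top_lyapunov}) would produce a stationary measure on $T^1X_E$ with non-positive Lyapunov integral. This is ruled out in two steps: near a node, the tangent-cone action factors through $\phi_*(G_0)\subset O(q;\R)\simeq O_{2,1}(\R)$, which is proximal and strongly irreducible, giving expansion (and a Margulis function repelling the singular set); away from the nodes, the flat metric on $E\times E$ and the non-elementarity of $G_0\subset\GL(2,\Z)$ give uniform expansion up to a bounded distortion. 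Your instinct in Step~1 about using the flat-metric argument and the tangent-cone action is in fact what the paper does, but the decisive ingredient you are missing is the passage through stationary measures on the singular $T^1X_E$ — this is what replaces the $C^1$-openness that is unavailable here.
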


Thus for every  ``abstract'' non-elementary subgroup   $\Gamma$ of $\Z/2\Z\ast\Z/2\Z\ast\Z/2\Z$, the open 
subset $\cW_{\exp}(\Gamma)$ of those $X\in  \cW_0$ for which the action of $\Gamma_X$ is uniformly expanding is non-empty.  
The  group $\Gamma$ can be arbitrarily thin, in particular it is not assumed to contain parabolic elements.
  In view of Theorem~\ref{thm:criterion_uniform_expansion}, it is natural to expect that $\cW_{\exp}(\Gamma)$ is actually dense. 

\begin{proof}
 The  difficulty  is that we cannot directly argue that uniform expansion is an open property, because
 $X_E$ is singular.
 
\begin{lem}\label{lem:regular}
Fix $f\in \Z/2\Z\ast\Z/2\Z\ast\Z/2\Z$, and denote also by $f$ the induced fibered map on 
the universal family of $(2,2,2)$-surfaces in $(\P^1)^3$. 
Then $f$ is regular on a neighborhood of $X_E$. 
\end{lem}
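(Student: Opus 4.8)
The plan is to show that the indeterminacy locus of the fibered birational map $f$ on the universal family avoids a neighbourhood of $X_E$, so that $f$ restricts to a biregular automorphism of nearby smooth Wehler surfaces. Recall that each generator $\sigma_i$ of $\Z/2\Z\ast\Z/2\Z\ast\Z/2\Z$ acts fibrewise: viewing $(\P^1)^3\to(\P^1)^2$ as the projection forgetting the $i$-th coordinate, and writing the defining equation of a Wehler surface $X$ as $P_i(x_i)=a_i(\hat x)x_i^2+b_i(\hat x)x_i+c_i(\hat x)=0$ where $\hat x$ denotes the other two coordinates, the involution $\sigma_i$ sends a point $(x_i,\hat x)\in X$ to $(x_i',\hat x)$ where $x_i,x_i'$ are the two roots of $P_i$; concretely $x_i'=c_i(\hat x)/(a_i(\hat x)x_i)$ or, in the additive form, $x_i+x_i'=-b_i/a_i$. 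Thus $\sigma_i$ (and hence any word $f$ in the $\sigma_i$) is given, on the total space of the universal family, by explicit rational formulas in the coordinates of $(\P^1)^3$ and the coefficients $A_{ijk}$ of the Wehler equation. The point $X_E$ corresponds to a specific, explicit choice of these coefficients.

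First I would make precise the statement that $\sigma_i$ is biregular on $X$ whenever $X$ does not contain a fibre of the $i$-th projection: this is exactly the condition defining $\cW_0$, and it is classical (see \cite[Prop. 2.2]{finite_orbits} or \cite[\S 8]{Cantat:Panorama-Synthese}) that on such $X$ the deck transformation $\sigma_i$ extends across the ramification locus to a genuine automorphism. The subtlety is that $X_E$ is \emph{not} in $\cW_0$: it is singular, with $16$ nodes, and it does contain no fibre of the projections but the individual $\sigma_i$ may a priori have indeterminacy points at the nodes. So the real content of the lemma is local analytic control near those $16$ points. Here I would use the Kummer description: via the biregular map $\phi\colon E\times E/\eta\to X_E$, the three involutions $\sigma_i$ correspond to the three ``reflections'' on $E\times E$, namely $(m_1,m_2)\mapsto(m_1,-m_1-m_2)$, $(m_1,m_2)\mapsto(-m_1-m_2,m_2)$ and the swap-type involution, each of which is an automorphism of the smooth abelian surface $E\times E$ commuting with $\eta$. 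Consequently each $\sigma_i$, and therefore any word $f$, is a genuine automorphism of the (possibly singular) surface $X_E$ — in particular it has no indeterminacy point on $X_E$, even at the nodes, because it lifts to an automorphism upstairs.

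Next, I would upgrade this to an open statement. Consider the incidence variety $I_f\subset\cW\times(\P^1)^3$ of pairs $(X,p)$ such that $p$ lies on $X$ and $f$ (the fibered rational map) is not defined at $p$, or is defined but does not map $p$ into $X$; this is a Zariski-closed subset of the total space $\mathcal X$, and its projection to $\cW$ is closed by properness of $(\P^1)^3$. The previous paragraph shows that this projection misses $X_E$. Hence there is a Zariski (a fortiori, Euclidean) neighbourhood $V$ of $X_E$ in $\cW$ over which $f$ and $f^{-1}$ are everywhere defined and map $X$ into itself; on such $X$, $f$ and $f^{-1}$ are mutually inverse morphisms, i.e. $f\in\Aut(X)$. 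This is precisely the assertion of the lemma. The main obstacle is the local analysis at the nodes of $X_E$: one must be sure that ``$f$ is regular on $X_E$'' genuinely means regular as a self-map of the singular surface (equivalently, that the rational formulas for $f$ have no common zero of numerator and denominator along $X_E$, including at the $16$ singular points), and the clean way to see this is the Kummer lift rather than a direct coordinate computation. Once that is in hand, the semicontinuity/properness argument closing the lemma is routine.
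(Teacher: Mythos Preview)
Your argument has a gap at the key step. You correctly note that $\sigma_i$ is regular on the universal family near a $(2,2,2)$-surface $X$ precisely when $X$ contains no fiber of the projection $\pi_{jk}\colon(\P^1)^3\to(\P^1)^2$ forgetting the $i$-th factor. You then assert in passing that $X_E$ ``does contain no fibre of the projections,'' but give no justification---and this is exactly the content of the lemma. Your subsequent worry about indeterminacy at the nodes is misplaced: once the no-fiber condition holds, the formula for $\sigma_i$ (the ``other root'' of a nondegenerate binary quadratic) is regular everywhere on $X$, nodes included, and this persists on a neighborhood in $\cW$ because the condition is Zariski-open.

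Your Kummer-lift argument, while correct, proves something subtly weaker than what your semicontinuity step needs. Showing that $\sigma_i$ lifts to an automorphism of $E\times E$ establishes that the \emph{restriction} $\sigma_i|_{X_E}$ is a regular self-map of $X_E$; it does not by itself show that the \emph{fibered} rational map $\sigma_i$ on $\mathcal X$ is defined along $X_E$. If $X_E$ contained a fiber $L$ of some $\pi_{jk}$, the fibered $\sigma_i$ would be genuinely undefined along $L$ even though some automorphism of $X_E$ might agree with $\sigma_i$ off $L$. The paper instead checks the no-fiber condition directly from the Kummer parametrization: a line $\{x=x_0,\,y=y_0\}\times\P^1$ inside $X_E$ would force, for $(m_1,m_2)\in E\times E$ with $x(m_1)=x_0$ and $x(m_2)=y_0$ fixed, the coordinate $x(m_3)=x(-(m_1+m_2))$ to take every value in $\P^1$; but fixing $x(m_i)$ leaves only finitely many $m_i$, hence finitely many $m_3$. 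That one observation is the entire proof.
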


\begin{proof}[Proof of Lemma~\ref{lem:regular}]  Pick a $(2,2,2)$ surface $X$. If $X$ does not contain any fiber 
of the projection $\pi_{12}=(\pi_1,\pi_2)\colon (\P^1)^3\to (\P^1)^2$, then the same property holds   in a neighborhood $\mathcal V$ of $X$ of the universal family of $(2,2,2)$-surfaces, and $\sigma_3$ determines an automorphism of $\mathcal V$. 
Thus, we only have to prove that $X_E$ does not contain any fiber of the projections $\pi_{ij}$. 
Let us show that $X_E$ does not contain any vertical line $\{x=x_0, y=y_0\}$. Such a line would provide a family of points $(m_1,m_2)$ on $E\times E$ with fixed first coordinates $x_1=x_0$, $x_2=y_0$, for which the first coordinate of $m_3:=-(m_1+m_2)$ takes arbitrary values. This is impossible. The same argument applies to the lines  $\{y=y_0, z=z_0\}$ and $\{z=z_0, x=x_0\}$ because the relation $m_1+m_2+m_3=0$ is symmetric (equivalently, the equation of $X_E$ given in~\cite[\S 8.2]{Cantat:Panorama-Synthese} is symmetric in $(x,y,z)$). 
\end{proof}

There is a finite index subgroup of $\Gamma$  that 
  fixes each singularity of $X_E$. By Proposition~\ref{pro:induced_UE2} and the fact that uniform expansion does not depend on the measure, we can replace $\Gamma$ with this finite index subgroup, endow $\Gamma$ with a finitely supported, symmetric measure $\nu$ with $\Gamma=\Gamma_\nu$, and then we have to prove that $(\Gamma_X, \nu_X)$ is uniformly expanding for $X\in  \cW_0$ near $X_E$; here, $\nu_X$ is the measure induced by $\nu$ on $\Gamma_X$. 
  
Endow $\P^1\times \P^1\times \P^1$ with the Fubini study metric, and the Wehler surfaces $X$ with the induced metric. Recall that $T^1X$ denotes the unit tangent bundle.

Assume, by way of   contradiction, that there is a   sequence    
  $X_n\to X_E$ along which $\nu_{X_n}$ is not uniformly expanding. 
  For each $n$, let $\varpi$ denote the natural  projection $T^1X_n\to X_n$.
 For $X_E$, denote by  $T^1X_E$ the subset of  $T^1(\pu\times \pu\times \pu)$ which coincides with $T^1 \mathrm{Reg}(X_E)$  above the regular part of $X_E$ and coincides with $T^1_x(\pu\times \pu\times \pu)$ above each singularity $x\in \Sing(X_E)$; again, we denote by $\varpi$ the projection $T^1X_E\to X_E$. 
  The proof of
  Theorem~\ref{thm:limit_measure_top_lyapunov} 
   provides a sequence of stationary measures $\hat \mu_{X_n}$ on $T^1X_n$ (with projections $\mu_{X_n}:=\varpi_*\hat\mu_{X_n}$) such that   
   \begin{equation}
  \int \log\norm{f_\varstar u} d\nu_{X_n}(f) \, d\hat \mu_{X_n} (u)\leq 0.
  \end{equation}

From Lemma~\ref{lem:regular}, we can extract a subsequence,   still denoted by  $(X_n)$,  
such that $\lrpar{\hat \mu_{t_n}}$ converges to a stationary measure $\hat \mu_{X_E}$ on  $T^1X_E$ satisfying
\begin{equation}
\int \log\norm{f_\varstar u} d\nu_{X_E}(f) \, d\hat \mu_{X_E} (u)\leq 0.
\end{equation}
By iterating and using the stationarity of $\hat \mu_{X_E}$, the same inequality holds with $\nu_{X_E}^{(m)}$ instead of $\nu_{X_E}$ for every $m>0$. To get the desired contradiction, we shall show that no such measure exists. 

\smallskip

\noindent{\bf{Step 1:}} {\textit{near the singularities}}.-- Here we show that there exists $n_0\in \N$, $c_0>0$,
and an open neighborhood $U$ of $\Sing (X_E)$ such that if $u\in T^1X_E$ and
$\varpi(u)\in U$, then 
\begin{equation}\label{eq:UE_sing}
\int \log\norm{f_\varstar u}\,  d\nu_{X_E}^{(n_0)}(f)  \geq c_0.
\end{equation} 
By continuity it is enough to prove this when 
$\varpi(u)\in \Sing(X_E)$. 
Recall that $\Gamma_{X_E}$ fixes $\Sing(X_E)$ pointwise. Around each of its singularities, $X_E$ is locally isomorphic to the  quotient
$\C^2/\eta$, $\eta(u,v)=(-u,-v)$, standardly embedded in $\C^3$ by 
\begin{equation}
\phi: (u,v)\mapsto (u^2, uv, v^2) = (x,y,z),
\end{equation}
whose image is the quadratic cone $\{xz-y^2 = 0\}\subset \C^3$.  The level-$2$ congruence 
subgroup $G$ of $\GL_2(\Z)$ 
fixes each torsion point of $A:=E\times E$ of order $\leq 2$, and $\Gamma_{X_E}$ is induced by  
a non-elementary subgroup $G_0$ of $G$. 
The standard linear action of $G$ on $\C^2$ (or more precisely on a neighborhood of any $2$-torsion point of $A$) commutes to $\eta$ and induces a linear action on $\C^3$ via the homomorphism 
\begin{equation}
\phi_\varstar: \begin{pmatrix} a&b\\ c&d\end{pmatrix} \longmapsto 
\begin{pmatrix} a^2 & 2ab &b ^2 \\ ac & ad+bc & bc \\ c^2 & 2cd & d^2\end{pmatrix}.
\end{equation}
Thus, the action of $\Gamma_{X_E}$ on the tangent cone $T_0X_E$ is, up to a linear conjugacy, given by $\phi_\varstar(G_0)$.
This  is a subgroup 
of $O(q;\R)\simeq O_{2,1}(\R)$, where $q$ is the quadratic form $q(x,y,z) = xz-y^2$. By assumption, it is a non-elementary group of isometries of $q$, hence it acts strongly irreducibly and proximally on $\R^3\subset \C^3$ (loxodromic elements of $\GL_2(\Z)$ are mapped to loxodromic elements in $O(q;\R)$). It preserves the real decomposition $ \C^3=\R^3\oplus_\R \ii \R^3$ and the action on $\R^3$ and $\ii \R^3$ are linearly conjugate (by multiplication by $\ii$). Therefore, as in \S~\ref{subs:finite_orbits}, the  Inequality~\eqref{eq:UE_sing} follows from~\cite{furstenberg-kifer}    (see also~\cite{bougerol-lacroix}, Chap. III, Cor. 3.4(iii)).  

\noindent{\bf{Step 2:}} {\textit{away from the singularities}}.--  
We shall show that there exists a neighborhood $U'\subset U$  of $\Sing(X_E)$ 
and $c>0$ such that 
for any fixed $u\in T^1X_E$   such that $\varpi(u)\notin U'$, 
\begin{equation}\label{eq:UE_proba}
  \pp\lrpar{\unsur{m}\log \norm{(f_\omega^m)_\varstar u}  \geq c} \underset{m\to\infty}{\longrightarrow} 1. 
\end{equation}
By Lemma~\ref{lem:weak_UE2} (see also Remark~\ref{rem:given_v}), this implies that 
$\ee\lrpar{\log \norm{(f_\omega^m)_\varstar u}}\geq mc/2$ for large $m$. Then,    the first step and 
a compactness argument identical to that of 
Lemma~\ref{lem:weak_UE} show  that uniform expansion holds on $T^1X_E$, 
which is the desired contradiction.  
  
Let $U'$ be an open neighborhood of $\Sing(X_E)$ which will be specified later. 
There is a constant $\delta  = \delta(U')$  such that 
\begin{equation}\label{eq:flat1}
 \text{if } \varpi(u)\notin U'\text{ and } \varpi(f_\varstar u)\notin U', \text{ then }
 \log\norm{f_\varstar u} \geq \log\norm{f_\varstar u}_{\mathrm{flat}}  - \delta,
 \end{equation} 
 where $\norm{\cdot}_{\mathrm{flat}}$ is the Riemannian metric on $\mathrm{Reg}(X_E)$ induced by the flat metric of 
$E\times E$. 

The pull-back of 
  $\nu$ to $\GL(2, \Z)$ generates $G_0$ and its support is finite. Since $G_0\subset \GL(2, \Z)$ is non-elementary, we have uniform expansion with respect to the flat metric.  By Lemma~\ref{lem:weak_UE2},
   there exists a constant $c_1>0$ and sets of trajectories 
  $\Omega_m^1\subset \Omega$ such that $\pp(\Omega_m^1)\to 1$ as $m\to \infty$
  and
  \begin{equation}\label{eq:flat2}
  \text{if }\omega\in \Omega_m^1, \quad
 \unsur{m}\log \norm{(f_\omega^m)_\varstar u}_{\mathrm{flat}} \geq c_1  .
  \end{equation}
Fix $\e>0$ and $0<c<c_1$. By the first step of the proof, $\Sing(X_E)$ is repelling  on average in $\pu\times \pu\times \pu$;
so there is a Margulis function on $X_E$ with poles at 
$\Sing(X_E)$. Thus, there is an open set $U' = U'(\e)\subset U$ with the following property:
for large enough $m$, 
 the set  $\Omega_m^2$ of trajectories $\omega\in \Omega$ such that 
 $(f_\omega^m)(\varpi(u)) \notin U'$ satisfies $\pp(\Omega_m^2)\geq 1-\e/2$. Now, 
 $U'$ being fixed, for $m$ large enough we have that $\pp(\Omega_m^1\cap \Omega_m^2)\geq 1-\e$ and 
 by~\eqref{eq:flat1} and~\eqref{eq:flat2}, if $\omega \in  \Omega_m^1\cap \Omega_m^2$,
\begin{equation}
  \unsur{m}\log \norm{(f_\omega^m)_\varstar u}\geq  c_1  - \frac{\delta(U')}{m}\geq c.
  \end{equation} 
  Thus, the convergence~\eqref{eq:UE_proba} holds and the proof is complete. 
\end{proof}

\section{Applications}\label{sec:applications}

\subsection{Orbit closures}
The following is a version of  the orbit closure Theorem~E of~\cite{invariant} in which  
periodic orbits are allowed. 
Combined with Theorem~\ref{thm:wehler_zariski}, it gives 
 Theorem~\ref{thm:orbits_wehler}.  

\begin{thm}\label{thm:orbit_closures}
Let $X$ be a torus,  a K3 surface, or an Enriques surface. 
Let $\Gamma\subset \Aut(X)$ be a non-elementary subgroup which contains parabolic elements  and does not
preserve any algebraic curve.  Assume 
that for any finite orbit $O$, the induced action of $\Gamma$ on $TX\rest{O}$ is non-elementary. 
 Then there exists a finite set $F$ and a real analytic,  
 totally real, and $\Gamma$-invariant  surface $Y \subset X$ with $\Sing(Y)\subset F$ 
 such that for every $x\in X$:
 \begin{enumerate}[{\em (a)}]
 \item either $x$ belongs to $F$ (and its orbit is finite);
 \item  or $x$ belongs to $Y\setminus F$ and $\overline{ \Gamma (x)}$ is a union of components of $Y$;
 \item or $\overline{ \Gamma (x)} =X$. 
 \end{enumerate}
\end{thm}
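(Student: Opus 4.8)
The strategy is to combine the uniform expansion machinery of the first part with the measure classification of~\cite{invariant}. First I would observe that by Theorem~\ref{thm:criterion_uniform_expansion_parabolic} (more precisely its proof via Corollary~\ref{cor:criterion_chung}, Theorem~\ref{thm:rigidity} and Theorem~\ref{thm:finite_UE}), the hypotheses imply that any symmetric measure $\nu$ with $\Gamma_\nu = \Gamma$ satisfying~\eqref{eq:moment+} is uniformly expanding on $X$: indeed there is no invariant algebraic curve by assumption, every finite orbit is non-elementary hence uniformly expanding by Theorem~\ref{thm:finite_UE}, and Zariski diffuse invariant measures of zero fiber entropy are ruled out by Theorem~\ref{thm:hyperbolic}. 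Fix such a $\nu$, finitely supported, with $\Gamma = \Gamma_\nu$.

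Next I would recall from~\cite[Thm E]{invariant} (the orbit closure theorem without periodic orbits) that there is a proper $\Gamma$-invariant algebraic subset $Z$, containing the finite orbits, such that outside $Z$ the orbit closure of every point is either a union of components of a $\Gamma$-invariant totally real analytic surface $Y$, or all of $X$. By~\cite[Thm C]{finite_orbits} (combined with the no-invariant-curve hypothesis), $\Gamma$ has only finitely many finite orbits, so their union is a finite set $F$; and after enlarging $F$ we may assume $\mathrm{Sing}(Y)\subset F$ as in the classification Theorem~\ref{thm:classification_invariant}(c) and~\cite[\S8]{invariant}. Thus the only remaining issue is to control the orbit closures of points $x\in Z\setminus F$ that are not accounted for — that is, to show that for such $x$ with infinite orbit, $\overline{\Gamma(x)}$ is still a union of components of $Y$ or equals $X$, and in particular that it cannot be swallowed into the finite set $F$.

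The main obstacle is precisely this last point: excluding the possibility that the accumulation locus of an infinite orbit is contained in $F$. This is where uniform expansion enters decisively. Since $\nu$ is uniformly expanding on $X$ and $F$ is a finite uniformly expanding invariant set, Theorem~\ref{thm:margulis_UE} applies: for every $x\in X\setminus F$ and every $\e>0$ there is a compact $K\Subset X\setminus F$ with $(\nu^n\ast\delta_x)(K)\ge 1-\e$ for large $n$, and the analogous statement for empirical measures along $\nu^\N$-almost every trajectory. Hence no cluster value of the empirical measures $\frac1n\sum_{k<n}\delta_{f^k_\omega(x)}$ charges $F$; by Breiman's ergodic theorem any such cluster value is $\nu$-stationary, hence (by the stiffness results of~\cite{stiffness}) $\Gamma$-invariant, and by Theorem~\ref{thm:classification_invariant} it is a convex combination of point masses on $F$ and of $\vol_Y$, $\vol_X$. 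Since it gives no mass to $F$, it is a combination of $\vol_{Y'}$'s and $\vol_X$; in particular $\overline{\Gamma(x)}$ must meet $Y\setminus F$ or be all of $X$, and then the topological part of the argument of~\cite[\S8]{invariant} (using the minimality of the $\Gamma$-action on components of $Y$ guaranteed by a Halphen twist acting uniquely ergodically on generic circle fibers, Lemma~\ref{lem:halphen_flat}(5)) upgrades this to the statement that $\overline{\Gamma(x)}$ is exactly a union of connected components of $Y$, or equals $X$.

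Finally I would assemble the three cases: if $x\in F$ its orbit is finite (case (a)); if $x\notin F$ and $\overline{\Gamma(x)}\neq X$, the previous paragraph shows $\overline{\Gamma(x)}$ is a union of components of $Y$, which forces $x\in Y$ (since $x\in\overline{\Gamma(x)}$) and $x\notin F$, i.e.\ case (b); otherwise $\overline{\Gamma(x)}=X$, case (c). One mild technical point to address along the way is the singular case of $Y$: when $x$ ranges near $\mathrm{Sing}(Y)\subset F$ one invokes the singular version of Theorem~\ref{thm:margulis_UE} provided by Theorem~\ref{thm:margulis_totalement_reel} (legitimate here since $\nu$ is finitely supported), so that the non-escape of mass from $X\setminus F$ still holds. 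I expect the measure-theoretic input to be essentially a citation exercise; the genuinely new content is the use of the optimal-moment Margulis function of Theorem~\ref{thm:margulis_UE} to kill accumulation at $F$ for \emph{every} starting point, which is exactly the ingredient that was missing in~\cite{invariant}.
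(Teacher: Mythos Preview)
Your outline is largely correct and parallels the paper's proof, but with one genuine simplification and one imprecision worth flagging.

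The simplification: to show that the stationary limit measure supported on the totally real surface is $\Gamma$-invariant, you invoke stiffness from~\cite{stiffness} directly. The paper instead takes a longer route through Brown--Rodriguez-Hertz: it shows the ergodic components are hyperbolic with itinerary-dependent stable directions (via Theorems~\ref{thm:rigidity} and~\ref{thm:hyperbolic}), hence SRB by~\cite[Thm~3.1]{br}, then uses Proposition~\ref{pro:finite_mass} to establish $\vol_{Y(x)}<\infty$ before concluding invariance via~\cite[Thm~3.4]{br}. Since $\Gamma$ contains parabolic elements your shortcut is legitimate, and it sidesteps the delicate footnoted adaptation of~\cite{br} to the possibly singular, non-compact $Y(x)$.

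The imprecision: your use of~\cite{invariant} is not quite right. You phrase it as classifying orbit closures outside an algebraic set $Z$, and then appeal to a ``topological part'' (minimality on components of $Y$ via Lemma~\ref{lem:halphen_flat}(5)) to finish. But your measure-theoretic argument only shows that $\overline{\Gamma(x)}$ \emph{contains} some component of $Y$ or equals $X$; it does not show $\overline{\Gamma(x)}$ is \emph{contained} in $Y\cup F$. The paper obtains this upper bound from the dichotomy of~\cite[\S8, Rmk~8.6]{invariant}, valid for \emph{every} $x$ with infinite non-dense orbit: $\overline{\Gamma(x)}\setminus\STang_\Gamma$ is either discrete (Case~1) or a totally real analytic surface $Y(x)$ (Case~2). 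In Case~1 the Margulis function (Theorem~\ref{thm:margulis_UE}) forces the orbit to be finite; in Case~2 one shows $\Sing(Y(x))$ is finite (again via Theorem~\ref{thm:margulis_UE} applied to $\overline{\Sing(Y(x))}$) and then that the surfaces $Y(x)$ range over a finite list by~\cite[Thm~C]{invariant}. Without this dichotomy you cannot exclude, for instance, an orbit closure of the form $Y'\cup\{\text{extra isolated points accumulating on }Y'\}$. Finally, your invocation of Theorem~\ref{thm:margulis_totalement_reel} is extraneous here; it is needed for equidistribution (Theorem~\ref{thm:equidistribution_complex}), not for the present statement.
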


\begin{proof} 
First observe that under these assumptions,   \cite[Thm C]{finite_orbits} implies 
that there exists a maximal finite invariant subset $F$.   
Fix a symmetric measure $\nu$ such that $\Gamma_\nu = \Gamma$ and satisfying  the  
  moment condition~\eqref{eq:moment+}.
By Theorems~\ref{thm:criterion_uniform_expansion_parabolic} and~\ref{thm:finite_UE}, 
$\nu$ is uniformly expanding. We now resume the discussion   from~\cite[ \S 8]{invariant}, in particular Remark~8.6 there: $\STang_\Gamma$ is a finite invariant set and if $x\in X$ is such that $\Gamma(x)$ is infinite 
but not dense, then there are two possible situations:
\begin{enumerate}
\item either $\overline{\Gamma(x)}\setminus \STang_\Gamma$ is discrete outside $\STang_\Gamma$;
\item or $\overline{\Gamma(x)}\setminus \STang_\Gamma =: Y(x)$ is a totally real analytic surface, whose singular locus $\Sing(Y)$ is discrete outside $\STang_\Gamma$. 
\end{enumerate}

In Case~(1),   $\Gamma(x)$ is finite. 
Indeed $\overline {\Gamma(x)}$ is at most countable, 
so if $\mu$ is any cluster value of 
$\unsur{n}\sum_{k=0}^{n-1} \nu^k\star\delta_x$, then $\mu$ is a purely atomic 
stationary measure. In this case it follows from Theorem~\ref{thm:margulis_UE} that  
 the orbit of $x$ must be finite, hence contained in $F$.

If Case~(2) holds, we first claim that $\Sing(Y(x))$ is finite. 
Indeed, $\overline  {\Sing(Y(x))}$ is a $\Gamma$-invariant countable set, which clusters only at 
$\STang_\Gamma$. By the previous argument, every orbit $\Gamma(y)$ in $\overline  {\Sing(Y(x))}$ is finite, so by the 
finiteness of the set of finite orbits~\cite[Thm C]{finite_orbits} we conclude that 
 $\Sing(Y(x))$ itself is finite. 
  
 Now, let $\mu'$ be a cluster value of 
 $\unsur{n}\sum_{k=0}^{n-1} \nu^k\star\delta_x$. By Theorem~\ref{thm:margulis_UE}, $\mu'$
 is an atomless  stationary measure 
 supported on $Y(x)$ such that $\mu'(\mathrm{Reg}(Y(x)))=1$. Since $\Gamma$ has no invariant 
 curve,  $\mu'$ is Zariski diffuse. Let $\mu$ be any ergodic component of $\mu'$. 
  Theorems~\ref{thm:rigidity} and~\ref{thm:hyperbolic} imply that $\mu$ is hyperbolic and its stable directions  depend genuinely on the itinerary. Then the 
 argument of \cite[Thm 3.1]{br} adapts immediately to show that $\mu$ is 
 SRB(\footnote{We are \emph{not} claiming that we can extend \cite{br} to non-compact surfaces here.  All the necessary 
  estimates  on the Lyapunov  norms and Pesin charts   
  hold by viewing  $\mu$ as a hyperbolic stationary measure on the 
 \emph{compact} complex manifold $X$. The only issue appears when considering the size and intersection 
 properties of    real stable and unstable manifolds in $Y(x)$, starting from \S 9.7 of \cite{br}. 
 At this stage Brown and Rodriguez-Hertz 
 already discard a set of small measure of points with bad properties (see the definition of $\Lambda(\gamma_1)$ on p. 1087); so it is enough to remove from this $\Lambda(\gamma_1)$ the set of 
 small measure of points too close to $\Sing(Y(x))$, and proceed with their argument.}).  The 
 canonical invariant 2-form of $X$ induces a $\Gamma$-invariant measure  $\vol_{Y(x)}$ 
 on $Y(x)$ (see Lemma~\ref{lem:volume_Y}).  Since $\mathrm{Reg}(Y(x))$ admits a Margulis function, we conclude from 
 Proposition~\ref{pro:finite_mass} that the volume $\vol_{Y(x)}$  is finite. Therefore we can copy verbatim the 
 argument of \cite[Thm 3.4]{br} to conclude that $\mu$ is $\Gamma$-invariant. 
Since \cite[Thm C]{invariant} says that there are only finitely many $\Gamma$-invariant measures, 
  there are only finitely many possible  surfaces $Y(x)$. Taking $Y$ to be their union, 
  the proof is complete.  
\end{proof}

\subsection{Ergodicity} \label{subs:ergodicity}
In~\cite{dolgopyat-krikorian}, the original motivation to introduce uniform 
expansion was a criterion for ergodicity. The same holds  in our setting, with a few caveats which will be explained below. 

\begin{thm}[{Dolgopyat-Krikorian~\cite[Cor. 2]{dolgopyat-krikorian}}, see also~\cite{xliu, chung}]
\label{thm:dolgopyat-krikorian}
Let $X$ be a torus, a K3 surface, or an Enriques surface. Let $\Gamma\subset \Aut(X)$ be a 
non-elementary subgroup with a uniformly expanding action on $X$. Then $\vol_X$ is $\Gamma$-ergodic. 

Likewise, if $Y\subset X$ is a $\Gamma$-invariant totally real 
analytic subset such that $\Gamma$  acts transitively on the 
set of irreducible components of $Y$, then $\vol_Y$ is $\Gamma$-ergodic.   
\end{thm}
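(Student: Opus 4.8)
The plan is to establish the hypothesis of the Dolgopyat--Krikorian ergodicity criterion --- namely non-uniform hyperbolicity of the invariant volume --- and then to run their Hopf-type accessibility argument, whose only subtlety in the complex setting is that the ambient manifold has real dimension $4$ rather than $2$. Fix a symmetric probability measure $\nu$ on $\Aut(X)$ satisfying~\eqref{eq:moment+} with $\Gamma_\nu=\Gamma$; by the standing definition of a uniformly expanding action (end of \S\ref{subs:finite_orbits}) such a $\nu$ is uniformly expanding, and since $\vol_X$ is $\Gamma$-invariant it is $\nu$-invariant. First I would check that $(X,\nu,\vol_X)$ is non-uniformly hyperbolic. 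Indeed, let $\mu$ be any ergodic component of $\vol_X$ (as a $\nu$-invariant measure); since $X$ is a torus, a K3 or an Enriques surface the volume is preserved, so $\lambda^{+}(\mu)+\lambda^{-}(\mu)=0$; if $\lambda^{+}(\mu)=0$ then both exponents vanish, so $\mu$ is a non-expanding $\nu$-stationary measure, contradicting uniform expansion by Corollary~\ref{cor:criterion_chung}. Hence $\lambda^{+}(\mu)>0>\lambda^{-}(\mu)$ for almost every ergodic component, i.e. for $\vol_X$-a.e. $x$ the Oseledets exponents are nonzero, one positive and one negative.

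Next I would carry out the Hopf argument exactly as in Dolgopyat--Krikorian, with $X$ playing the role of their real surface: because every point has precisely one positive and one negative exponent, the Pesin stable and unstable manifolds $W^{s}$ and $W^{u}$ --- here holomorphic curves in $X$ --- are complementary real subspaces at each point, just as in the two-dimensional case. Concretely: pass to the natural extension on $\Sigma\times X$ with $\nu^{\Z}\times\vol_X$; non-uniform hyperbolicity gives, for a.e.\ two-sided itinerary and a.e.\ $x$, local stable and unstable manifolds, and the associated laminations are absolutely continuous (the standard Pesin theory for random holomorphic systems, which is only easier to verify here than in the $C^{2}$ real case). Given a $\Gamma$-invariant set $A$, the function $\mathbf 1_{A}$ is invariant, so by the Hopf argument --- forward Birkhoff averages of continuous functions are a.e.\ constant along $W^{s}$, backward averages along $W^{u}$, which is legitimate since $\Gamma$ is a group --- $\mathbf 1_{A}$ is $\vol_X$-a.e.\ constant along both $W^{s}$ and $W^{u}$. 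Uniform expansion forces the local unstable (and, by invertibility, stable) manifolds to grow at a uniform exponential rate, hence to acquire internal radius bounded below by a fixed $\rho>0$ on a set of itineraries of measure arbitrarily close to $1$; combined with $E^{s}\oplus E^{u}=T_{x}X$ over $\R$ this yields essential accessibility mod $0$, and the connectedness of $X$ then forces $A$ to be $\vol_X$-trivial. This proves $\Gamma$-ergodicity of $\vol_X$.

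For the totally real surface $Y\subset X$ the statement reduces to the genuine Dolgopyat--Krikorian theorem on a real surface. Here $Y$ has finitely many irreducible (hence finitely many connected) components, so replacing $\Gamma$ by the finite-index subgroup $\Stab_\Gamma(Y_{0})$ stabilizing a connected component $Y_{0}$ --- on which the induced random walk is again uniformly expanding by Proposition~\ref{pro:induced_UE2} --- I may assume $\Gamma$ preserves the connected real-analytic surface $Y_{0}$. By Remark~\ref{rem:restriction_UE} the induced action on the regular part of $Y_{0}$ is uniformly expanding, and $\vol_{Y_{0}}$ is $\Gamma$-invariant (Lemma~\ref{lem:volume_Y}); as in the first paragraph it is non-uniformly hyperbolic, and the only additional point is that $Y_{0}$ carries finitely many singular points, which one excises as in the proof of Theorem~\ref{thm:orbit_closures} (removing a finite set does not disconnect a surface, so the accessibility class still fills $Y_{0}$ mod $0$). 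Dolgopyat--Krikorian then gives ergodicity of $\vol_{Y_{0}}$, and a routine inducing argument using the transitivity of $\Gamma$ on the components of $Y$ upgrades this to $\Gamma$-ergodicity of $\vol_{Y}$.

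The main obstacle is the second paragraph: making precise that the Dolgopyat--Krikorian accessibility mechanism, designed for real surfaces, transfers to complex surfaces. The two points requiring care are (i) the absolute continuity of the Pesin stable/unstable laminations of a random holomorphic system satisfying~\eqref{eq:moment+}, and (ii) the fact that uniform expansion keeps the internal radii of the local unstable (and stable) manifolds bounded below on a set of nearly full measure, which is what makes accessibility classes open mod $0$; everything else is bookkeeping. (Alternatively, one could avoid the direct Hopf argument by routing through \cite{br}: each ergodic component of $\vol_X$ is Zariski diffuse and, by Corollary~\ref{cor:criterion_chung} together with Theorem~\ref{thm:rigidity}, hyperbolic with itinerary-dependent stable directions, hence SRB on $X$; an SRB measure absolutely continuous with respect to $\vol_X$ whose basin is both stable- and unstable-saturated then has full basin by the same accessibility input, so there is a unique component. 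I expect the direct argument to be cleaner.)
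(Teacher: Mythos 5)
Your overall plan matches the paper's --- establish non-uniform hyperbolicity of $\vol_X$ via uniform expansion (your use of Corollary~\ref{cor:criterion_chung} is exactly how the paper implicitly handles this), then run a Hopf-type argument --- but the actual accessibility mechanism you propose differs from the one in the paper and in Dolgopyat--Krikorian. You run a classical Pesin--Hopf argument for the two-sided skew product, propagating $\mathbf 1_A$ along $W^s(\omega,x)$ and $W^u(\omega,x)$ for a \emph{fixed} two-sided itinerary $\omega$, and deducing local product structure from the real splitting $E^s\oplus E^u = T_xX$. The paper instead follows Dolgopyat--Krikorian and propagates only along \emph{chains of local stable manifolds for varying $\omega$}; accessibility in all directions is then produced not by $W^u$ but by the \emph{non-concentration of stable directions} (there is $\alpha>0$ so that for every $x$ and every $[v]$, the probability that $[E^s(x,\omega)]$ is $\alpha$-close to $[v]$ is $<1/100$; this is cited from \cite[Prop.~4.4.4]{xliu}), combined with the elementary geometric fact that a complex line through a point at angle $>\alpha$ from a given direction contains a definite-size disk. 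You do not invoke non-concentration at all, and you do not need it for your mechanism --- but in exchange you must establish absolute continuity of \emph{both} laminations and a genuine local product structure with uniformly controlled angle between $E^s$ and $E^u$, none of which you address beyond flagging them as ``bookkeeping.'' Both routes are plausible; the paper's is the one Dolgopyat--Krikorian actually implement, and it only requires absolute continuity of the stable lamination.

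One thing you understate is the remark the paper makes right after the proof: the Dolgopyat--Krikorian hypothesis is uniform expansion along $d$-dimensional subspaces of a $2d$-dimensional real manifold, and this fails here --- a totally real $2$-plane in $T_xX$ that is not a complex line can simultaneously see expansion and contraction, so the complex uniform-expansion hypothesis is genuinely weaker (and, unlike the real one, not $C^1$-stable under real volume-preserving perturbations). Saying ``the only subtlety is that the ambient manifold has real dimension $4$ rather than $2$'' glosses over precisely the issue the paper goes out of its way to highlight; it is the reason the proof is a re-derivation rather than a citation. For the totally real $Y$ you follow essentially the paper's route (induce to a component, excise the finitely many singular points, use the ambient control of complex stable manifolds), except that you omit the paper's use of the Margulis function from Theorem~\ref{thm:margulis_totalement_reel} to guarantee finiteness of $\vol_Y$ near $\Sing(Y)$ --- a point worth keeping, since for singular $Y$ this finiteness is not automatic from Lemma~\ref{lem:volume_Y}.
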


Note   that the notion of irreducible component in real analytic geometry is not well-behaved in general 
(see~\cite[\S 5.1]{invariant} for a short discussion). 
Here we content ourselves with saying that $Y$ is irreducible when $\mathrm{Reg}(Y)$ is connected. 
 Observe also that the ergodicity of 
$\vol_X$ follows directly from Theorem~\ref{thm:classification_invariant} when $\Gamma$ 
contains a parabolic element.

\begin{proof}[Proof (sketch)] 
The proof in~\cite{dolgopyat-krikorian} is a bit sketchy, but it was already 
expanded  in~\cite{xliu, chung} (see also~\cite{zzhang}). 
Here we just make a few comments on (1) the extension to the holomorphic case for the action on $X$, 
and (2) how to deal with the possible  singularities for the action on totally real surfaces $Y$. 

Regarding the action on $X$, let us recall that the proof of~\cite{dolgopyat-krikorian} is a 
variation on the Hopf  argument in which the asymptotic behavior of the Birkhoff 
sums $  \unsur{n}\sum_{k=0}^{n-1} \delta_{f_\omega^n(x)}$ is propagated along chains of 
local stable manifolds (associated to different $\omega$'s), to ultimately conclude that there 
is a uniform $r$ so that almost every point $x$ 
is located at distance at least $r$ from the boundary of its ergodic component.  
The key technical ingredients are     the facts that under the   uniform expansion assumption:
\begin{itemize}
\item  stable directions at a given point do not concentrate, more precisely there exists 
$\alpha>0$ such that for any $x\in X$ and any $[v]\in \pp(T_x X)$, the probability that 
$d_{\P^1}\lrpar{[v], [E^s(x,\omega) ]} <\alpha$ is smaller than 1/100: this follows from a compactness argument 
(see~\cite[Prop. 4.4.4]{xliu});
\item  the Pesin local stable manifolds have uniformly bounded geometry 
(e.g. uniformly bounded size in the sense of 
  \cite[\S 7.4]{stiffness}): this follows from the usual proof of the local stable manifold theorem;
\item the absolute continuity of the local stable foliation in Pesin charts: we can copy the usual proof 
or notice that in the holomorphic case this follows from the fact that the holonomy of a holomorphic 
motion is quasiconformal. 
\end{itemize}
Given these facts, we can copy the proof of~\cite{dolgopyat-krikorian}  by plugging in \S 10.4 the 
 the following elementary geometric property, whose proof is left to the reader: 
let $w = (w_1, w_2)\in \C^2$ with $\norm{w}<1$ (possibly close to 1)
and $E_w$ be the direction  perpendicular to the line $(0w)$; then if  
$L$ is a complex line containing $w$, such that the angle in $\P^1$ 
between the direction of 
$L$ and $E_w$ is greater than $\alpha$, then $L\cap B(0, 1)$ contains a disk of radius $r\geq r(\alpha)$.  

For the second statement of the theorem we can directly resort to~\cite{xliu, chung}, except that 
we have to take into account the possibility of singular points on $Y$, which affect the size and geometry of local stable manifolds on $Y$.  For this, we may argue exactly as in Theorem~\ref{thm:orbit_closures}: 
first, the existence of a Margulis function guarantees that $\vol_Y$ is finite. Next, since 
uniform expansion holds on $X$, the size and angle change of local complex stable manifolds is uniformly controlled. Thus, when restricting to $Y$, we also have a uniform control of this geometry outside any 
$\delta$-neighborhood of $\mathrm{Sing}(Y)$. Since the Hopf argument is local, we get that there is  a single 
ergodic component outside a $\delta$-neighborhood of $\mathrm{Sing}(Y)$, for every $\delta>0$, and we conclude by letting $\delta$ tend to zero. 
\end{proof}

\begin{rem}
The argument  of~\cite{dolgopyat-krikorian} works for a random dynamical system on a  (real) compact
 $2d$-dimensional manifold enjoying a uniform expansion property 
 along $d$ dimensional tangent subspaces. This assumption 
 does \emph{not} hold in our setting since along a totally real subspace one  
  may witness both expansion and 
 contraction. In particular the complex uniform expansion condition is not stable under 
  $C^1$ perturbations by (real) volume preserving diffeomorphisms of $X$. 
  Still, the philosophy of the above proof  is that 
 the argument is robust enough so that uniform expansion along complex 1-dimensional 
 tangent subspaces in a 2-dimensional complex surface guarantees ergodicity. 
\end{rem}

\subsection{Equidistribution}

In the following results, given an action of $(\Gamma, \nu)$ on $M$
 we say that  
 that random trajectories from $x$ equidistributes towards 
 $\mu$ if for 
 $\nu^\N$-almost every $\omega$
$\unsur{n}\sum_{k=1}^n \delta_{f^k_\omega} \to \mu$, where the convergence is in the 
weak$^\varstar$ topology. By averaging with respect to $\nu^\N$ and applying 
the Dominated Convergence Theorem, this implies that 
 $\unsur{n}\sum_{k=1}^n \nu^k\ast\delta_x\to\mu$ as well. 

The following theorem already appears under stronger moment assumptions in \cite[Thm D]{chung}. 

\begin{thm} 
Let $X_\R$ be a smooth 
real projective surface and $\nu$ a probability measure on $\aut(X_\R)$ satisfying \eqref{eq:moment+}. 
Assume that $\Gamma_\nu$ preserves a smooth volume form $\vol$ on $X(\R)$ and that $\nu$ is uniformly 
expanding on $X(\R)$. Then for any $x\in X$ one of the following alternatives holds:
\begin{enumerate}[(a)]
\item either $\Gamma_\nu\cdot x$ is finite;
\item or the random trajectories from $x$  equidistribute 
 towards $\vol_{X'(\R)}$, the normalized induced volume on a 
 union of components of $X(\R)$. 
\end{enumerate}
\end{thm}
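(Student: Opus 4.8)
The plan is to deduce this from the already-established machinery, treating it as a real-variable analogue of Theorem~\ref{thm:orbit_closures}. First I would observe that since $\nu$ is uniformly expanding on $X(\R)$ and $\Gamma_\nu$ preserves $\vol$, Chung's criterion (Corollary~\ref{cor:criterion_chung}, applied to the restricted action on the surface $X(\R)$) rules out non-expanding stationary measures; more concretely, since we are in a volume-preserving setting, the only stationary measures on $X(\R)$ with a non-positive Lyapunov exponent would have to be non-expanding, which is impossible. Thus every ergodic $\nu$-stationary measure $\mu$ on $X(\R)$ is hyperbolic with stable directions depending genuinely on the itinerary, \emph{except} possibly for atomic ones supported on finite orbits (uniform expansion along a finite orbit does not contradict its being stationary). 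By Breiman's ergodic theorem, for a fixed $x$, every cluster value of $\unsur{n}\sum_{k=0}^{n-1}\delta_{f^k_\omega(x)}$ is a $\nu$-stationary measure $\mu$; decomposing into ergodic components, $\mu$ is a combination of atomic measures on finite orbits and hyperbolic Zariski-diffuse-on-$X(\R)$ measures.

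Next I would show that the finite-orbit part of any such cluster value is trivial unless $\Gamma_\nu\cdot x$ is itself finite. This is where Theorem~\ref{thm:margulis_UE} enters: each finite $\Gamma_\nu$-orbit $F\subset X(\R)$ is uniformly expanding (being a subset of the uniformly expanding action), so by Theorem~\ref{thm:margulis_UE}(2) the empirical measures give asymptotically zero mass to any neighborhood of $F$ for $\nu^\N$-a.e.\ $\omega$, provided $x\notin F$. Since there are only finitely many finite orbits when there is no invariant curve --- and here one should note that uniform expansion on $X(\R)$ together with Proposition~\ref{pro:invariant_curve} (real version) forbids a $\Gamma_\nu$-invariant curve meeting $X(\R)$ in dimension $1$, while $0$-dimensional invariant real sets are finite orbits --- a finite union of applications of Theorem~\ref{thm:margulis_UE} shows: if $\Gamma_\nu\cdot x$ is infinite, then every cluster value $\mu$ is non-atomic, hence a combination of hyperbolic ergodic stationary measures with non-random stable directions is excluded; rather, its stable directions are genuinely random.

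Then I would invoke the classification of such measures. Each ergodic component $\mu'$ of a non-atomic cluster value is hyperbolic with itinerary-dependent stable directions, so the adaptation of Brown--Rodriguez-Hertz \cite{br} --- exactly as used in the proof of Theorem~\ref{thm:orbit_closures}, but now on the genuine compact manifold $X(\R)$, so without the singularity caveats --- shows $\mu'$ is SRB, and then $\Gamma_\nu$-invariant. By Theorem~\ref{thm:classification_invariant} (real version, via Lemma~\ref{lem:volume_Y} and the uniqueness of $\vol_{X'(\R)}$; alternatively by the direct classification of invariant measures on $X(\R)$ one has here) the only $\Gamma_\nu$-invariant measures absolutely continuous along $X(\R)$ are the normalized restrictions $\vol_{X'(\R)}$ to unions of connected components. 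Since $\Gamma_\nu$ acts by finitely many permutations on the (finitely many) components of $X(\R)$, the invariant such measures form a finite-dimensional simplex spanned by the $\vol_{X'(\R)}$ for $\Gamma_\nu$-orbits of components.

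The last step is to upgrade ``every cluster value lies in this finite simplex'' to genuine almost-sure equidistribution toward a single $\vol_{X'(\R)}$. For this I would use the ergodicity statement of Theorem~\ref{thm:dolgopyat-krikorian}: on each $\Gamma_\nu$-orbit $X'(\R)$ of components, $\vol_{X'(\R)}$ is $\Gamma_\nu$-ergodic, hence $\nu$-ergodic as a stationary measure, so it is an extreme point of the set of stationary measures and is the unique one absolutely continuous with support $X'(\R)$. Combining this with a standard argument --- the set of $x$ whose random trajectories equidistribute toward a given ergodic stationary SRB measure has full measure for that measure, and on $X(\R)\setminus(\text{finite orbits})$ the non-escape from the support forces the empirical measures to settle on a single ergodic component --- one concludes that for each $x$ with infinite orbit there is exactly one union of components $X'(\R)$ such that the trajectories from $x$ equidistribute toward $\vol_{X'(\R)}$, while trajectories from $x$ with finite orbit are trivially handled by alternative~(a). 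I expect the main obstacle to be the very last point: cleanly ruling out that the empirical measures oscillate between several of the $\vol_{X'(\R)}$'s, which requires showing that the ``basin'' decomposition of $X(\R)$ into sets converging to each ergodic SRB component is $\Gamma_\nu$-invariant and exhausts $X(\R)$ up to the finite orbits --- this is precisely the content extracted from the Hopf-type argument underlying Theorem~\ref{thm:dolgopyat-krikorian} and its refinements in \cite{xliu, chung}, which I would cite rather than reprove.
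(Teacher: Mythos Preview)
Your skeleton matches the paper's three-line proof --- stationary measure classification (the paper cites \cite{stiffness}; you reconstruct it via Chung/Brown--Rodriguez-Hertz), Dolgopyat--Krikorian ergodicity, and Margulis functions --- so the overall approach is correct.

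There is, however, a genuine gap in your second paragraph: you invoke the finiteness of the set of finite $\Gamma_\nu$-orbits, but the result you have in mind (\cite[Thm~C]{finite_orbits}) requires $\Gamma_\nu$ to be non-elementary and to contain parabolic elements, and neither hypothesis appears in this theorem. Fortunately the detour is unnecessary. The atomic part of any stationary probability measure is supported on a countable union of finite orbits (the atoms of maximal mass form a finite $\Gamma_\nu$-invariant set, then iterate), and for each such finite orbit $F$ with $x\notin F$, Theorem~\ref{thm:margulis_UE}(2) gives $\mu(F)=0$ for every cluster value $\mu$; summing over the countable family kills the atomic part without any finiteness of the collection of finite orbits. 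Similarly, Theorem~\ref{thm:classification_invariant} is not available here (it also needs parabolic elements); your ``alternatively'' --- SRB plus volume preservation on a real surface forces absolute continuity, hence equality with $\vol$ restricted to a union of components --- is the correct route and is what the paper's reference to \cite{stiffness} stands in for. Finally, your oscillation worry dissolves once you observe that the $\Gamma_\nu$-orbit of $x$ is confined to a single $\Gamma_\nu$-orbit $X'(\R)$ of connected components, on which $\vol_{X'(\R)}$ is (by Theorem~\ref{thm:dolgopyat-krikorian}) the unique non-atomic ergodic stationary measure.
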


Recall from  Theorem~\ref{thm:criterion_uniform_expansion_parabolic} that the uniform expansion assumption 
holds when $\Gamma_\nu$ 
contains parabolic elements, has no invariant curve, and that the induced 
action of $\Gamma_\nu$ on finite orbits is uniformly expanding. In this case  by \cite[Thm C]{finite_orbits}, 
the number of finite orbits is finite. By Theorem~\ref{thm:wehler_zariski}  this applies to generic real Wehler 
surfaces and yields Theorem~\ref{thm:equidistribution_wehler}. 

 \begin{proof}
 The  result follows directly from the classification of stationary measures in~\cite{stiffness}, the ergodicity Theorem~\ref{thm:dolgopyat-krikorian} , 
 and the existence of a Margulis function (Theorem~\ref{thm:margulis}). 
 \end{proof}

The next result is conditional to the $\nu$-stiffness property of 
complex non-elementary uniformly expanding actions. We expect  that 
  it will be established in the near future. 

\begin{thm}\label{thm:equidistribution_complex}
Let $X$ be a K3 or Enriques surface and $\nu$ be a finitely supported measure on $\aut(X)$. Assume that 
\begin{enumerate}
\item $\Gamma_\nu$ is non-elementary, contains parabolic elements, has no invariant curve, 
and every finite $\Gamma$-orbit is uniformly expanding;
\item $\nu$-stiffness holds, that is, every $\nu$-stationary measure is invariant. 
\end{enumerate}
Then there exists a finite set $F$ and a (possibly singular) totally real analytic surface $Y$ such that  for every 
$x\in X$:
\begin{enumerate}[(a)]
\item either $x$ belongs to $F$;
\item or $x$ belongs to $Y\setminus F$ and its orbit equidistributes towards $\vol_{Y'}$, where $Y'$ is a union of components of $Y$;
\item or $x\notin F\cup Y$ and its orbit equidistributes towards $\vol_{X}$. 
\end{enumerate}
\end{thm}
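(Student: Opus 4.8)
The plan is to combine three ingredients that are all available under the hypotheses of Theorem~\ref{thm:equidistribution_complex}: the classification of orbit closures from Theorem~\ref{thm:orbit_closures}, the assumed $\nu$-stiffness, and the non-escape-of-mass results for Margulis functions from Section~\ref{sec:margulis}. First I would observe that by hypothesis~(1), Theorems~\ref{thm:criterion_uniform_expansion_parabolic} and~\ref{thm:finite_UE} guarantee that $\nu$ is uniformly expanding on $X$ (after replacing $\nu$ by a symmetric measure generating $\Gamma_\nu$ if needed, which does not change uniform expansion by Corollary~\ref{cor:independence_nu} and does not affect the conclusion since we are only describing orbit equidistribution; alternatively one works directly with the given $\nu$ via Theorem~\ref{thm:finite_UE} which only needs~\eqref{eq:moment+}, automatic for finitely supported $\nu$). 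Then Theorem~\ref{thm:orbit_closures} produces the finite set $F$ and the totally real analytic surface $Y$ with $\Sing(Y)\subset F$ and the trichotomy on orbit closures. So the set-theoretic statement is already in hand; what remains is to upgrade ``$\overline{\Gamma(x)}$ is a union of components of $Y$'' (resp. $=X$) to the equidistribution of random trajectories towards $\vol_{Y'}$ (resp. $\vol_X$).

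The core of the argument is then a soft one using Breiman's ergodic theorem together with the classification of stationary (hence, by stiffness, invariant) measures. Fix $x\notin F$. By Breiman's theorem, $\nu^\N$-almost surely every cluster value of the empirical measures $\frac1n\sum_{k=0}^{n-1}\delta_{f^k_\omega(x)}$ is a $\nu$-stationary measure, hence by hypothesis~(2) is $\Gamma_\nu$-invariant. By Theorem~\ref{thm:classification_invariant} every ergodic $\Gamma_\nu$-invariant measure is either supported on $F$, supported on an invariant curve (excluded by hypothesis~(1)), equal to $\vol_{X}$, or equal to $\vol_{Y'}$ for some union of components $Y'$ of an invariant totally real surface; and by the argument in the proof of Theorem~\ref{thm:orbit_closures} the only such surfaces that can arise are components of the fixed $Y$. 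Thus any cluster value $\mu$ of the empirical measures is a convex combination of point masses on $F$, copies of $\vol_{Y'}$, and $\vol_X$. The decisive step is to rule out the atomic part: this is exactly where the Margulis function machinery enters. Since $F$ is uniformly expanding, Theorem~\ref{thm:margulis_UE} gives, for every $\e>0$, a compact $K\Subset X\setminus F$ with $\limsup_n \frac1n\#\{k\le n: f^k_\omega(x)\in K\}\ge 1-\e$ almost surely whenever $x\notin F$; hence no cluster value of the empirical measures gives mass more than $\e$ to any neighborhood of $F$, so $\mu(F)=0$. This forces $\mu$ to be a convex combination of $\vol_{X}$ and various $\vol_{Y'}$.

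To pin down a single limit and identify which one, I would argue according to which case of the trichotomy of Theorem~\ref{thm:orbit_closures} holds for $x$. If $\overline{\Gamma(x)}=X$, then $\supp(\mu)=\overline{\Gamma(x)}=X$ (the support of any cluster measure of the orbit equals the orbit closure), and the only invariant measure with full support in the allowed list is $\vol_X$, so $\mu=\vol_X$; since every cluster value equals $\vol_X$, the empirical measures converge to $\vol_X$. If $\overline{\Gamma(x)}$ is a union $Y'$ of components of $Y$, then $\mu$ is supported on $Y'$, it gives no mass to $F\supset\Sing(Y)$ by the Margulis estimate, and — using the singular version of Theorem~\ref{thm:margulis_totalement_reel}, valid since $\nu$ is finitely supported — it also does not escape any compact subset of $Y'\setminus F$, so $\mu$ is a probability measure on $Y'$ invariant under $\Gamma_\nu$; by Theorem~\ref{thm:classification_invariant} (the totally real case) combined with the ergodicity Theorem~\ref{thm:dolgopyat-krikorian} on the components $Y'$ acts, $\mu$ must be $\vol_{Y'}$, and again uniqueness of the cluster value yields convergence. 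I expect the main obstacle to be the bookkeeping in this last step: checking that the relevant component structure of $Y$ is compatible with $\Gamma_\nu$-transitivity on components of $Y'$ (so that Theorem~\ref{thm:dolgopyat-krikorian} applies and $\vol_{Y'}$ is ergodic, not merely invariant), and handling the singular points of $Y$ uniformly — precisely as in the footnoted discussion in the proof of Theorem~\ref{thm:orbit_closures}, one removes a $\delta$-neighborhood of $\Sing(Y)$, uses that $\vol_{Y'}$ is finite (Proposition~\ref{pro:finite_mass}) and that the Margulis functions control the mass near $F$, and then lets $\delta\to0$. Everything else is a routine assembly of results already proved in the paper, the only genuinely new input being the conditional $\nu$-stiffness assumption~(2), which is used exactly once, to pass from ``stationary cluster value'' to ``invariant measure in the classification list''.
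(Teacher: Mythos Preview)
Your overall architecture is the same as the paper's, and case~(b) is essentially right (modulo a misplaced citation, see below), but case~(c) has a genuine gap: the support argument does not rule out a $\vol_{Y'}$ component in the cluster value~$\mu$. First, the claim $\supp(\mu)=\overline{\Gamma(x)}$ is not justified; a cluster value of empirical measures has support \emph{contained in} the orbit closure, but equality can fail. Second, and more seriously, even if $\supp(\mu)=X$, the conclusion ``the only invariant measure with full support in the allowed list is $\vol_X$'' is false for non-ergodic $\mu$: any combination $a\,\vol_X+b\,\vol_{Y'}$ with $a>0$ has full support, so the support criterion does not force $b=0$. What you actually need is to show that $\mu(Y)=0$ when $x\notin Y\cup F$, and this is precisely the content of Theorem~\ref{thm:margulis_totalement_reel}: since $\nu$ is uniformly expanding along $Y$ and $x\in X\setminus Y$, the Margulis function for the totally real surface gives non-escape to $Y$, hence any cluster value gives no mass to $Y$ (and no mass to $F$ by Theorem~\ref{thm:margulis_UE}), so it must be $\vol_X$. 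This is how the paper handles case~(c).

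Relatedly, your invocation of Theorem~\ref{thm:margulis_totalement_reel} in case~(b) is a misapplication: that theorem concerns points $x\in X\setminus Y$, not points on $Y$. In case~(b) you only need the Margulis function near $F$ (Theorem~\ref{thm:margulis_UE}) to ensure $\mu(F)=0$; the rest of your argument there (support in $Y'$, classification, transitivity on components giving ergodicity of $\vol_{Y'}$) is fine. The paper shortcuts case~(b) by noting that the proof of Theorem~\ref{thm:orbit_closures} already establishes it.
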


\begin{proof}
The sets $F$ and $Y$ were already constructed in Theorem~\ref{thm:orbit_closures}, whose proof 
also implies property~(b).   
The classification of invariant measures (Theorem~\ref{thm:classification_invariant}) and the 
stiffness property show  that the only $\nu$-stationary measure giving no mass to 
$Y\cup F$ is  $\vol_X$. Therefore the equidistribution property~(c) follows from Breiman's ergodic theorem 
and the existence of a Margulis function associated to  finite orbits and totally real surfaces (Theorems~\ref{thm:margulis} and~\ref{thm:margulis_totalement_reel}).
\end{proof}

\begin{rem}
Note that if the maximal invariant totally real surface 
 $Y$ is known to be smooth (e.g. if  $Y = X(\R)$) 
 then we may relax the finiteness assumption on  $\nu$ 
to the moment condition \eqref{eq:moment+}. 
\end{rem}

\appendix

\section{Rigidity of zero entropy measures}\label{app:zero_entropy}

Here we complete the proof of Theorem~\ref{thm:rigidity} by establishing the following result
 of independent interest. 

\begin{thm}\label{thm:entropy_every_f}
Let $X$ be a torus or a K3 or Enriques surface,     and $\nu$ be a probability measure on 
$\Aut(X)$ such that $\Gamma_\nu$ is non-elementary. Assume that $\mu$ is a Zariski diffuse 
  $\nu$-stationary measure such that $h_\mu(X, \nu) = 0$.  
Then $\mu$ is $\Gamma_\nu$-invariant and for every 
$f\in \Gamma_\nu$, $h_\mu(f)=0$. 
\end{thm}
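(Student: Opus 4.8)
The statement has two assertions: (i) $\mu$ is $\Gamma_\nu$-invariant, and (ii) $h_\mu(f) = 0$ for every $f \in \Gamma_\nu$. The invariance part is the ``easy'' half: since $h_\mu(X,\nu)=0$, the fibered Ledrappier--Young / Ruelle machinery forces the unstable conditionals of $\m = \nu^\Z \times \mu$ to be atomic (this is \cite[Cor.~7.14]{stiffness}), and then either $\mu$ has vanishing Lyapunov exponents — in which case Ledrappier's invariance principle (Theorem~\ref{thm:ledrappier_invariance_principle}) applies after checking that a projectively invariant measure on $\P TX$ exists, and one invokes Theorem~\ref{thm:classification_proj_invariant} together with the non-elementarity of $\Gamma_\nu$ to upgrade stationarity to invariance — or $\mu$ is hyperbolic with non-random Oseledets stable line field, in which case \cite[Thm.~9.1]{stiffness} gives invariance directly. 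So the bulk of the work, and the reason this is stated as a separate appendix result, is part (ii).

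\textbf{Reduction for part (ii).} First I would reduce to $\mu$ ergodic: by the ergodic decomposition of $\mu$ as a $\Gamma_\nu$-invariant (now that invariance is known) measure, and by countable additivity of entropy, it suffices to treat each ergodic component, and almost every ergodic component is again Zariski diffuse since there are only finitely many invariant curves and countably many isolated periodic points. Next, fix $f \in \Gamma_\nu$. The key dichotomy is whether $f$ is loxodromic or not. If $f$ is parabolic or elliptic (i.e. of zero topological entropy / polynomial growth of degrees), then the topological entropy $h_{\mathrm{top}}(f)$ vanishes by Gromov--Yomdin, hence $h_\mu(f) = 0$ automatically. So the real case is $f$ loxodromic, where $h_{\mathrm{top}}(f) = \log\lambda(f) > 0$ and one must genuinely rule out $h_\mu(f) > 0$.

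\textbf{The loxodromic case — the main obstacle.} Suppose for contradiction $h_\mu(f) > 0$ for some loxodromic $f \in \Gamma_\nu$. By the Margulis--Ruelle inequality and the variational principle on the compact surface $X$, positive entropy forces $\mu$ to have a nonzero Lyapunov exponent for the single map $f$; combined with volume invariance ($\mathsf{kod}(X)=0$ so $\vol_X$ is preserved) one gets that $\mu$ is $f$-hyperbolic with exponents $\pm\chi_f$, $\chi_f > 0$. Now I would run the Ledrappier--Young formula for the diffeomorphism $f$: $h_\mu(f) = \chi_f \cdot \dim^u(\mu)$ where $\dim^u$ is the transverse (unstable) dimension of $\mu$, so $h_\mu(f) > 0$ means the conditionals of $\mu$ along $f$-unstable manifolds are non-atomic, with positive dimension. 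On the other hand, the hypothesis $h_\mu(X,\nu) = 0$ together with the stationarity should force $\mu$ to be, in a precise sense, ``non-expanding'' for the random system — and I would try to show that this is incompatible with a non-atomic unstable conditional for a loxodromic element. The mechanism: the stable/unstable foliations $\mathcal F^s_f, \mathcal F^u_f$ of a loxodromic automorphism are holomorphic (singular) foliations, essentially unique, and carry canonical closed positive currents $T^\pm_f$ with $f^*T^\pm_f = \lambda(f)^{\pm 1} T^\pm_f$; a positive-dimensional family of unstable conditionals of a $\Gamma_\nu$-invariant measure would force these currents (or the foliations) to be $\Gamma_\nu$-invariant up to the action on cohomology, which pins $\Gamma_\nu$ to an elementary subgroup preserving an isotropic ray in $H^{1,1}(X;\R)$ — contradicting non-elementarity, exactly as in the proof of Theorem~\ref{thm:hyperbolic_kummer}. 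Making this last implication rigorous is the crux: one has to pass from ``$\mu$ has positive entropy for $f$'' to ``$\Gamma_\nu$ preserves the foliation $\mathcal F^u_f$''. I expect the cleanest route is to use that the Pesin unstable manifolds of $\mu$ for $f$ must be pieces of leaves of $\mathcal F^u_f$ (since $E^u_f$ is the unique $f$-invariant expanding line field on a Zariski-open set of full $\mu$-measure), then invoke that a zero-fiber-entropy stationary measure cannot have such unstable structure because the Brown--Rodriguez-Hertz-type dichotomy (the ``$br$'' argument invoked in Theorem~\ref{thm:rigidity}) shows positive fiber entropy would follow from the invariant transverse structure propagating under the random walk. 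Concretely: positive $f$-entropy gives a non-trivial unstable lamination; averaging over $\nu$ and using the $\Gamma_\nu$-invariance of $\mu$, this lamination is $\Gamma_\nu$-invariant; feeding it back into the entropy formula for the random system (via $\dim^u$ for the cocycle) yields $h_\mu(X,\nu) > 0$, the desired contradiction. The hard part is precisely controlling this ``feed-back'' — relating the single-map unstable dimension to the fibered one — and I would expect to spend most of the proof there, likely citing the relevant parts of \cite{br}, \cite{ledrappier-young3}, and \cite{stiffness}.
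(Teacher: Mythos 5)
Your overall strategy---derive a contradiction from $h_\mu(f)>0$ via the dynamical structure of a loxodromic automorphism and its invariant currents---is on the right track, but there are two genuine gaps, one of which is fatal to the argument as sketched.

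First, you assert that ``the stable/unstable foliations $\mathcal F^s_f, \mathcal F^u_f$ of a loxodromic automorphism are holomorphic (singular) foliations.'' This is false for a general loxodromic automorphism of a K3 or Enriques surface: the Oseledets line fields $E^{s/u}_f$ are only measurable, not continuous, and they are not tangent to any holomorphic (or even $C^0$) foliation. This rigid foliated structure exists precisely for Kummer examples (which is why the proof of Theorem~\ref{thm:hyperbolic_kummer}, which you are copying, is allowed to use it), but the present theorem makes no Kummer hypothesis. The objects that do exist for a general loxodromic $f$ are the closed positive currents $T^\pm_f$ and the local Pesin stable/unstable manifolds (which are holomorphic disks but do not glue into a foliation). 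So ``$\Gamma_\nu$ preserves $\mathcal F^u_f$'' is not a statement one can even formulate, and the step you identify as ``the crux'' cannot go through as you describe it. The paper's substitute is an argument in the style of \cite[Lem.~11.2]{stiffness}: one shows directly that the global stable Riemann surfaces $W^s(f,x)$ and $W^s(h,x)$ coincide for a positive-measure set of $x$ (for a suitable conjugate $h=g^kfg^{-k}$); the equidistribution of these curves towards $T^+_f$ and $T^+_h$ then forces $T^+_f=T^+_h$, hence $(g^k)^*T^+_f = cT^+_f$, and elementarity follows from the invariance of an isotropic ray in $H^{1,1}(X;\R)$.

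Second, and more structurally, you never establish the step that makes any of this possible, namely the existence of a \emph{non-random} $\Gamma_\nu$-invariant line field (or pair of line fields). The paper proceeds by the trichotomy of Theorems~\ref{thm:ledrappier_invariance_principle} and~\ref{thm:classification_proj_invariant}: either (1) there is an invariant measurable line field $E$, or (2) an invariant pair, or (3) neither; case (3) is excluded because $h_\mu(f)>0$ forbids reduction to a compact group and then random stable directions would force $h_\mu(X,\nu)>0$. Only in cases (1)--(2) can one identify $E$ with $E^{s}_f$ or $E^u_f$ on a positive-measure $f$-invariant set and launch the current argument; your proposal jumps directly to an invariant ``unstable lamination'' without this reduction. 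Relatedly, you do not address the main technical novelty flagged in the appendix, namely that $\mu$ need not be $f$-ergodic: the paper has to pass to the $f$-ergodic decomposition, find an $f$-invariant positive-measure set $B$ on which the conditional entropies are positive, and then use a pigeonhole on translates $g^{-n}(B)$ to produce the conjugate $h$ and the common stable manifolds. Your sketch would work essentially unchanged if $\mu$ were $f$-ergodic (and would then be close to \cite[Thm.~11.1]{stiffness}), but the $f$-non-ergodic case is what requires the extra work, and it is absent from the proposal.
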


\begin{proof}
As in Theorem~\ref{thm:rigidity}, we may assume that $\mu$ is ergodic as a stationary measure, and 
its $\Gamma_\nu$-invariance was already established there. 
 Pick   $f\in \Gamma_\nu$. 
Assume by way of contradiction that $h_\mu(f)>0$, 
in particular $f$ must be loxodromic. 
If $\mu$ is ergodic for $f$, then the result follows rather immediately from the 
measure rigidity theorem 11.1 in~\cite{stiffness}. Indeed in that theorem 
we consider  an ergodic measure $\mu$
of positive entropy for $f$ and study the group of automorphisms of $X$ preserving $\mu$, 
under the additional assumption that $\mu$ is supported on a real surface. 
We reduce the argument to the case of $\Gamma = \langle f, g\rangle$ for some $g$, and divide the proof into 3 cases: (1) either there is a $\Gamma$-invariant measurable line field, or (2) 
there is a $\Gamma$-invariant pair of  measurable line fields, or (3) none of the above. In cases (1) and (2)
we conclude that $\Gamma$ is elementary by adapting the argument of~\cite[Thm. 9.1]{stiffness}: this 
does not rely on the additional real structure. In case (3), since $\mu$ is hyperbolic for $f$, Theorems~\ref{thm:ledrappier_invariance_principle} and~\ref{thm:classification_proj_invariant} imply that 
$\mu$ is hyperbolic as a stationary measure and as in the proof of Theorem~\ref{thm:rigidity} we deduce 
 that $h_\mu(X, \nu)>0$, which 
is contradictory. Thus,  case (3) does not happen, and we deduce that $\Gamma$ is elementary for every 
$g\in \Gamma_\nu$, which is a contradiction. Therefore $h_\mu(f) = 0$. 
 
What remains to do is to adapt this argument to the case where $\mu$ is not ergodic under $f$. 
 So  consider $f\in \Gamma_\nu$ and assume that $h_\mu(f)>0$ so that $f$ is loxodromic. 
As before there are 3 cases: either (1) there is a $\Gamma_\nu$-invariant measurable line field, or (2) 
there is a $\Gamma_\nu$-invariant pair of  measurable line fields, or (3) none of (1) and (2). 
We first observe that as before Case 3 does not happen: indeed if there is no  invariant line field or pair of invariant line fields, by Theorems~\ref{thm:ledrappier_invariance_principle} and~\ref{thm:classification_proj_invariant}, either $\mu$ is hyperbolic as a $\nu$-stationary measure, or the 
projectivized tangent action of $\Gamma_\nu$ reduces to a compact subgroup. But since 
$h_\mu(f)>0$, $f$ admits non-zero Lyapunov exponents on a set of positive measure 
so the latter is impossible. Hence $\mu$ is hyperbolic as a $\nu$-stationary measure, and since 
there is no invariant line field, stable directions depend on the itinerary and as before we conclude that 
$h_\mu(X, \nu) > 0$, a contradiction. So one of Cases (1) or (2) holds. 

So assume there exists a measurable $\Gamma_\nu$-invariant line field 
$x\mapsto [E(x)]\in \pp(T_xX)$ and   pick  $g\in \Gamma_\nu$.  Assume further that $g$ is loxodromic. 
We will derive a contradiction by 
 showing  that $\langle f, g\rangle$ must be  elementary:   this is a contradiction because 
 any non-elementary subgroup of $\Aut(X)$ contains a purely loxodromic non-elementary subgroup. 
 Let $\mathcal P$ be the measurable partition  into ergodic components (under $f$) and 
denote by $\mu_P$ the conditional 
measure on $P\in \mathcal P$, so that  
that $\mu   = \int \mu_{\mathcal P(x)} d\mu(x)$ is the ergodic decomposition of $\mu$. 
Since the entropy function is affine, there exists  a $f$-invariant 
 set $B$ of positive measure such that 
for any $x\in B$, $h_{\mu_{\mathcal P(x)}}(f)>0$. In particular $f$ is non-uniformly hyperbolic along $B$, so  
along $B$, $E$ must coincide almost everywhere with one of $E^s_f$ or $E^u_f$. Reducing $B$ to a smaller 
invariant subset we may 
assume that $E = E^s_f$ almost everywhere along $B$. 
For every $n\in \Z$, the automorphism $g^{-n} f g^n$ is loxodromic, preserves $\mu$,   is 
non-uniformly hyperbolic along $g^{-n}(B)$, and $E$ coincides with $E^s_{g^{-n}fg^n}$ almost everywhere
By measure preservation 
there exists $m\neq n$ such that $\mu(g^{-n}(B)\cap g^{-m}(B))>0$, so 
$\mu(B\cap g^{m-n}(B))>0$. Letting $h = g^{m-n}fg^{-(m-n)}$ and 
$A  = B\cap g^{m-n}(B)$ we are exactly in the situation of Lemma 11.2 of~\cite{stiffness}, and 
we conclude that $W^s(f, x) = W^s(h, x)$ for $\mu$-almost every $x\in A$, from which it follows that 
$T^+_f= T^+_h$ and finally $(g^{m-n})^*T_f^+  = c T^+_f$. Since $g$ is loxodromic, this implies 
that $T^+_f = T^+_g$ or  $T^+_f = T^-_g$, and finally that $\langle f, g\rangle$ is elementary, which is 
the sought-after contradiction. 

Finally, if there is a measurable pair  $\set{E_1, E_2}$ of    line fields which is $\nu$-a.s. 
invariant, we get a $f$-invariant set $B$ of positive measure along which 
$\set{E_1(x), E_2(x) } = \set{E^s_f(x) = E^u_f(x)}$, and  a set 
$A =  B\cap g^{m-n}(B)$ of positive measure along which 
$\set{E^s_f(x) = E^u_f(x)} = {E^s_h(x) = E^u_h(x)}$, where $h = g^{m-n}fg^{-(m-n)}$, and we conclude as before. 
\end{proof}


\bibliographystyle{acm}
\bibliography{biblio-hyperbolic}

\end{document}